\tikzset
{
    treenode/.style = {draw=black, align=center, minimum size=1cm},
}
\DeclareFontFamily{U}{musix}{}%
\DeclareFontShape{U}{musix}{m}{n}{%
  <-12>   musix11
  <12-15> musix13
  <15-18> musix16
  <18-23> musix20
  <23->   musix29
}{}%
\newcommand*\musix{\usefont{U}{musix}{m}{n}\selectfont}
\DeclareTextFontCommand{\textmusix}{\musix}
\newcommand*\doublesharp{\raisebox{.6ex}{\textmusix{5}}}
\tikzset{node distance=2em, ch/.style={circle,draw,on chain,inner sep=2pt},chj/.style={ch,join},every path/.style={shorten >=4pt,shorten <=4pt},line width=1pt,baseline=-1ex}
\newcommand{\alabel}[1]{%
}
\let\dlabel=\alabel
\newcommand{\dnode}[2][chj]{%
\node[#1,label={below:\dlabel{#2}}] {};
}
\newcommand{\dnodenj}[1]{%
\dnode[ch]{#1}
}
\newcommand{\dnodebr}[1]{%
\node[chj,label={below right:\dlabel{#1}}] {};
}
\newcommand{\dydots}{%
\node[chj,draw=none,inner sep=1pt] {\dots};
}
\def\ge{\geqslant}
\def\le{\leqslant}
\def\a{\alpha}
\def\g{\gamma}
\def\G{\Gamma}
\def\d{\delta}
\def\D{\Delta}
\def\L{\Lambda}
\def\e{\epsilon}
\def\o{\omega}
\def\s{\sigma}
\def\t{\tau}
\def\th{\theta}
\def\k{\kappa}
\def\l{\lambda}
\def\i{^{-1}}
\def\<{\langle}
\def\>{\rangle}
\newcommand{\sR}{\ensuremath{\mathscr{R}}\xspace}
\newcommand{\fka}{\ensuremath{\mathfrak{a}}\xspace}
\newcommand{\bG}{\mathbf G}
\newcommand{\bK}{\mathbf K}
\newcommand{\bQ}{\mathbf Q}
\newcommand{\BA}{\ensuremath{\mathbb {A}}\xspace}
\newcommand{\BC}{\ensuremath{\mathbb {C}}\xspace}
\newcommand{\BF}{\ensuremath{\mathbb {F}}\xspace}
\newcommand{{\BG}}{\ensuremath{\mathbb {G}}\xspace}
\newcommand{\BH}{\ensuremath{\mathbb {H}}\xspace}
\newcommand{{\BK}}{\ensuremath{\mathbb {K}}\xspace}
\newcommand{\BN}{\ensuremath{\mathbb {N}}\xspace}
\newcommand{\BP}{\ensuremath{\mathbb {P}}\xspace}
\newcommand{\BQ}{\ensuremath{\mathbb {Q}}\xspace}
\newcommand{\BR}{\ensuremath{\mathbb {R}}\xspace}
\newcommand{\BS}{\ensuremath{\mathbb {S}}\xspace}
\newcommand{\BZ}{\ensuremath{\mathbb {Z}}\xspace}
\newcommand{\CC}{\ensuremath{\mathcal {C}}\xspace}
\newcommand{\CF}{\ensuremath{\mathcal {F}}\xspace}
\newcommand{\CH}{\ensuremath{\mathcal {H}}\xspace}
\newcommand{\CI}{\ensuremath{\mathcal {I}}\xspace}
\newcommand{\CK}{\ensuremath{\mathcal {K}}\xspace}
\newcommand{\CO}{\ensuremath{\mathcal {O}}\xspace}
\newcommand{\Ad}{{\mathrm{Ad}}}
\DeclareMathOperator{\End}{End}
\DeclareMathOperator{\Adm}{Adm}
\def\fbq{\mathbf q}
\DeclareMathOperator{\Hom}{Hom}
\let\Im\relax
\DeclareMathOperator{\Im}{Im}
\newcommand{\rig}{{\mathrm{rig}}}
\newcommand{\nrig}{{\mathrm{nrig}}}
\def\tW{\tilde W}
\def\kk{\mathbf k}
\DeclareMathOperator{\supp}{supp}
\newtheorem{theorem}{Theorem}
\newtheorem{proposition}[theorem]{Proposition}
\newtheorem{lemma}[theorem]{Lemma}
\newtheorem {conjecture}[theorem]{Conjecture}
\newtheorem{corollary}[theorem]{Corollary}
\theoremstyle{definition}
\newtheorem{example}[theorem]{Example}
\newtheorem{remark}[theorem]{Remark}
\numberwithin{equation}{section}
\numberwithin{theorem}{section}
\renewcommand{\to}{%
   \ifbool{@display}{\longrightarrow}{\rightarrow}%
   }
\let\shortmapsto\mapsto
\renewcommand{\mapsto}{%
   \ifbool{@display}{\longmapsto}{\shortmapsto}%
   }
\newlength{\olen}
\newlength{\ulen}
\newlength{\xlen}
\newcommand{\xra}[2][]{%
   \ifbool{@display}%
      {\settowidth{\olen}{$\overset{#2}{\longrightarrow}$}%
       \settowidth{\ulen}{$\underset{#1}{\longrightarrow}$}%
       \settowidth{\xlen}{$\xrightarrow[#1]{#2}$}%
       \ifdimgreater{\olen}{\xlen}%
          {\underset{#1}{\overset{#2}{\longrightarrow}}}%
          {\ifdimgreater{\ulen}{\xlen}%
             {\underset{#1}{\overset{#2}{\longrightarrow}}}
             {\xrightarrow[#1]{#2}}}}%
      {\xrightarrow[#1]{#2}}
   }
\newcommand{\xyra}[2][]{%
   \settowidth{\xlen}{$\xrightarrow[#1]{#2}$}%
   \ifbool{@display}%
      {\settowidth{\olen}{$\overset{#2}{\longrightarrow}$}%
       \settowidth{\ulen}{$\underset{#1}{\longrightarrow}$}%
       \ifdimgreater{\olen}{\xlen}%
          {\mathrel{\xymatrix@M=.12ex@C=3.2ex{\ar[r]^-{#2}_-{#1} &}}}%
          {\ifdimgreater{\ulen}{\xlen}%
             {\mathrel{\xymatrix@M=.12ex@C=3.2ex{\ar[r]^-{#2}_-{#1} &}}}
             {\mathrel{\xymatrix@M=.12ex@C=\the\xlen{\ar[r]^-{#2}_-{#1} &}}}}}%
      {\mathrel{\xymatrix@M=.12ex@C=\the\xlen{\ar[r]^-{#2}_-{#1} &}}}%
   }
\newcommand{\xla}[2][]{%
   \ifbool{@display}%
      {\settowidth{\olen}{$\overset{#2}{\longleftarrow}$}%
       \settowidth{\ulen}{$\underset{#1}{\longleftarrow}$}%
       \settowidth{\xlen}{$\xleftarrow[#1]{#2}$}%
       \ifdimgreater{\olen}{\xlen}%
          {\underset{#1}{\overset{#2}{\longleftarrow}}}%
          {\ifdimgreater{\ulen}{\xlen}%
             {\underset{#1}{\overset{#2}{\longleftarrow}}}
             {\xleftarrow[#1]{#2}}}}%
      {\xleftarrow[#1]{#2}}
   }
\newcommand{\isoarrow}{%
   \ifbool{@display}{\overset{\sim}{\longrightarrow}}{\xrightarrow\sim}%
   }
\newcommand{\tocless}[2]{\bgroup\let\addcontentsline=\nocontentsline#1{#2}\egroup}
\begin{document}

\title[Hecke algebras and $p$-adic groups]{Hecke algebras and $p$-adic groups}
\author[X. He]{Xuhua He}
\address{Department of Mathematics, University of Maryland, College Park, MD 20742}
\email{xuhuahe@math.umd.edu}

\thanks{}

\keywords{}
\subjclass[2010]{}

\date{\today}

\begin{abstract}
This survey article, is written as an extended note and supplement of my lectures in the current developments in mathematics conference in 2015. We discuss some recent developments on the conjugacy classes of affine Weyl groups and $p$-adic groups, and some applications to Shimura varieties and to representations of affine Hecke algebras. 
\end{abstract}

\maketitle

\tableofcontents

\section*{Introduction}

\subsection{}
\addtocontents{toc}{\protect\setcounter{tocdepth}{1}}
In \cite{Tits}, Tits explains the analogy between the symmetric group $S_n$ and the general linear group over a finite field $\mathbb F_q$ and indicates that $S_n$ should be regarded as the general linear group over $\mathbb F_1$, the field of one element. 

Following Tits' philosophy, we may informally regard affine Weyl groups as reductive groups over $\mathbb Q_1$, the $1$-adic field. Although it might be premature to develop the theory of the $1$-adic field at the current stage, we do have a fairly good understanding of conjugacy classes of affine Weyl groups, together with the length function on them, and such knowledge allows us to reveal a great part of the structure of conjugacy classes of $p$-adic groups. 

In this note, we will focus on some questions related to conjugacy classes and we will look at the two sides of a coin:

\begin{itemize}
\item An affine Weyl group as a degeneration of a $p$-adic group;

\item A $p$-adic group as a deformation of an affine Weyl group.
\end{itemize}

We will see how such considerations help us to understand several important problems and we will discuss some applications in arithmetic geometry (of Shimura varieties) and representation theory (of affine Hecke algebras). Note that both areas have been intensively studied. To enumerate the recent achievements in each area would lead to a more-than-100-page survey article and it is not my intention to do so here. I will just focus on a few problems to illustrate the mysterious connection between affine Weyl groups and $p$-adic groups. 

\subsection{}
\addtocontents{toc}{\protect\setcounter{tocdepth}{1}}
A Coxeter group is generated by simple reflections and has a well-defined length function on it. It is obvious that in a given conjugacy class of the Coxeter group, there exists a sequence of conjugations by simple reflections, starting from any given element in the class and ending with a minimal-length element. For various reasons (both algebraic and geometric), it is desirable to have such a sequence, in which the lengths of the elements {\bf weakly decrease}. If such a sequence exists, then for many questions (on Hecke algebras, algebraic groups, Deligne-Lusztig varieties, etc.) one may reduce the questions on arbitrary elements to questions on minimal length elements. 

Moreover, in a given conjugacy class, in general, there are more than one minimal length elements. For example, in type $E_8$, the conjugacy class $E_6(a_2)+A_2$ has $403200$ elements and $16374$ of them are of minimal length. Hence, it is also desirable to have a better understanding of the relations between these minimal length elements. 

Minimal length elements were first studied by Geck and Pfeiffer \cite{GP93}. They discover that the minimal length elements, in a finite Weyl group, have many remarkable properties. 

Such properties have found important applications in the study of the ``character table'' of finite Hecke algebras (see e.g. \cite{GP93}, and \cite{GM}), in the study of Deligne-Lusztig theory on the representations of finite groups of Lie type (see, e.g. \cite{OR}, \cite{BR}, and \cite{HL}) and in the study of links between conjugacy classes in finite Weyl groups and unipotent conjugacy classes in reductive groups (see e.g. \cite{Lu11}).

\subsection{}
\addtocontents{toc}{\protect\setcounter{tocdepth}{1}}
A natural question to ask is whether such properties hold for affine Weyl groups. In the study of this question, I find it enlightening to consider an affine Weyl group as a degeneration of a $p$-adic group. In fact, the theory of $p$-adic groups motivates us in the discovery of the following interesting features of affine Weyl groups:

\begin{enumerate}
\item The arithmetic invariants of conjugacy classes of affine Weyl groups;

\item The straight conjugacy classes and the reduction from non-straight conjugacy classes to straight conjugacy classes;

\item The ``special'' partial conjugacy classes and their distinguished representatives;

\item The partial order on the set of ``special'' partial conjugacy classes and on the set of straight conjugacy classes;

\item The parameterization of conjugacy classes in terms of standard quadruples. 
\end{enumerate} 

This is what we will discuss in Part I. 

Items (1) and (2) are motivated by Kottwitz's classification of the Frobenius-twisted conjugacy classes of $G(\breve \BQ_p)$, the reductive group $G$ over the completion of maximal unramified extension of $\BQ_p$. They are key ingredients in the understanding of conjugacy classes of affine Weyl groups together with the length function on them. 

Items (3) and (4) (for ``special'' partial conjugacy classes) were discovered in the process of understanding Lusztig's $G$-stable pieces and closure relations between these pieces. 

Item (4) for straight conjugacy classes were discovered in the process of understanding the closure relations between Frobenius-twisted conjugacy classes of $G(\breve F)$ and the closure relations between Newton strata of Shimura varieties. 

Item (5) is motivated by the parametrization of Frobenius-twisted conjugacy classes of $G(\breve F)$ in terms of basic Frobenius-twisted conjugacy classes of Levi subgroups. Such parametrization leads to the definition of the rigid cocenter of affine Hecke algebras, which play a crucial role in our discussion in Part III. 

\subsection{}
\addtocontents{toc}{\protect\setcounter{tocdepth}{1}} 
Now let us look at the other side of the coin. We will see how the knowledge of conjugacy classes of affine Weyl groups, together with the length function, helps us to study some problems related to conjugacy classes of $p$-adic groups. 

Let $\s$ be the Frobenius morphism on $G(\breve F)$. Let $\breve \CI$ be a $\s$-stable Iwahori subgroup of $G(\breve F)$. In Part II, we study the intersection of $\breve \CI \dot w \breve \CI$ with a $\s$-conjugacy class $[b]$ of $G(\breve F)$. We focus on some basic questions: 
\begin{itemize}
\item When the intersection is nonempty?

\item If so, what is the dimension? 
\end{itemize}

We also regard this intersection as a group-theoretic model to understand the relation between the Kottwitz-Rapoport strata (related to certain elements $w$) and the Newton strata (related to certain $\s$-conjugacy classes $[b]$) of Shimura varieties. 

It is worth mentioning that there is another group-theoretic model to understand those strata of Shimura varieties, affine Deligne-Lusztig varieties $$X_w(b)=\{g \breve \CI \in G(\breve F)/\breve \CI; g \i b \s(g) \in \breve \CI \dot w \breve \CI\}.$$ 

It is easy to see that $X_w(b) \neq \emptyset$ if and only if $\breve \CI \dot w \breve \CI \cap [b] \neq \emptyset$. However, there are a few differences between $X_w(b)$ and $\breve \CI \dot w \breve \CI$. 

\begin{itemize}
\item We have the freedom to degenerate the $\s$-conjugacy class $[b]$ when considering $\breve \CI \dot w \breve \CI \cap [b]$. Thus $\breve \CI \dot w \breve \CI \cap [b]$ may be used to understand closure relations between Newton strata of Shimura varieties. 

\item Although $\breve \CI \dot w \breve \CI \cap [b]$ is infinite dimensional, there is a way to define the dimension of it (which is a finite number). This dimension is expected to be the dimension of the intersection of the corresponding Kottwitz-Rapoport stratum and corresponding Newton stratum in Shimura varieties. 

\item The affine Deligne-Lusztig variety $X_w(b)$, in general, may contain infinitely many irreducible components. The intersection $\breve \CI \dot w \breve \CI \cap [b]$, on the other hand, has only finitely many irreducible components. 
\end{itemize}

Except these differences, the method we use to study affine Deligne-Lusztig varieties and the intersection $\breve \CI \dot w \breve \CI \cap [b]$ are similar. In \cite{He99} we develop a reduction method to study affine Deligne-Lusztig varieties, and give a connection between affine Deligne-Lusztig varieties and class polynomials of the associated affine Hecke algebras. The method and the result remain valid, mutatis mutandis, for our new model $\breve \CI \dot w \breve \CI \cap [b]$ as well. The key idea is to do the reduction on $\breve \CI \dot w \breve \CI$ by $\s$-conjugation using a sequence of conjugations by simple reflections in affine Weyl group which weakly decrease the length.  

After establishing the connection between the intersection $\breve \CI \dot w \breve \CI \cap [b]$ and class polynomials, we discuss several situations where the explicit non-emptiness pattern and/or dimension formula are established, including: 

\begin{itemize}
\item The non-emptiness pattern and dimension formula of $\breve \CK \e^\mu \breve \CK \cap [b]$ for any special maximal parahoric subgroup $\breve \CK$. 

\item The non-emptiness pattern of $\breve \CI \dot w \breve \CI \cap [b]$ for basic $\s$-conjugacy class $[b]$. 

\item The dimension formula of $\breve \CI \dot w \breve \CI \cap [b]$ for basic $\s$-conjugacy class $[b]$ and an element $w$ in the Shrunken Weyl chamber. 

\item The non-emptiness pattern and dimension formula of $\breve \CI \dot w \breve \CI \cap [\e^\mu]$ for a residually split group and an element $w$ in the very Shrunken Weyl chamber. 

\item The non-emptiness pattern of $\breve \CK \Adm(\mu)_K \breve \CK \cap [b]$ for any parahoric subgroup $\breve \CK$. 
\end{itemize}

\subsection{}
\addtocontents{toc}{\protect\setcounter{tocdepth}{1}} 
Now we move to a different topic, the affine Hecke algebras. 

A basic philosophy in representation theory is that {\it characters tell all}. This is the case for finite groups. What happens for affine Hecke algebras? 

In Part III, we will discuss the case for affine Hecke algebras. The strategy is to first understand the cocenter of affine Hecke algebras, and then to investigate how the cocenter controls (both the ordinary and the modular) representations. 

An affine Hecke algebra, is a deformation of the group algebra of an affine Weyl group. The cocenter of the group algebra, is very simple. The elements in the same conjugacy class have the same image in the cocenter and the cocenter has a standard basis indexed by conjugacy classes. 

For affine Hecke algebras, the situation is more complicated. The elements in the same conjugacy class may not have the same image in the cocenter. In order to understand the cocenter of affine Hecke algebras, in addition to conjugacy classes, one also need to take into account the length of the elements inside the given conjugacy class. The upshot is that the cocenter still has a standard basis indexed by conjugacy classes, but they are the image of minimal length elements in the conjugacy class, not arbitrary element compared to the group algebra case. 

Start from an element, not necessarily of minimal length in its conjugacy class, we have a reduction method to reach (possibly more than one) minimal length elements and write the image of this element in the cocenter as a linear combination of the standard basis, with coefficient the so-called class polynomials. In other words, class polynomials encode the information of the reduction method, which is used both in the study of cocenter of affine Hecke algebras and in the study of the intersection $\breve \CI \dot w \breve \CI \cap [b]$. This is the reason that the algebraic object class polynomial is a powerful tool to study the geometric object $\breve \CI \dot w \breve \CI \cap [b]$. 

Back to representation theory, one difficulty is that there are infinitely many conjugacy classes of affine Weyl groups as well as infinitely many irreducible representations of affine Hecke algebras. Motivated by the basic $\s$-conjugacy classes of $G(\breve F)$, we have a parametrization of conjugacy classes in terms of their Newton points, and there is a way to reduce any conjugacy class to a conjugacy class with a central Newton point. This leads to the definition of rigid cocenter and rigid quotient of the Grothendieck group of the representations of affine Hecke algebras. The main theorem in \cite{CH2} is that the trace map induces a perfect pairing between the rigid cocenter and the rigid quotient for affine Hecke algebras with generic parameters. This result leads to the density Theorem and trace Paley-Wiener theorem on the relation between the cocenter and representations of affine Hecke algebras. 



We expect that the rigid cocenter also plays a key role in the study of modular representations of affine Hecke algebras, and we propose two conjectures: 
\begin{itemize}
\item The naive kernel conjecture, which predicts how the density theorem fails for affine Hecke algebras at roots of unity and in positive characteristic; 

\item The rigid determinant conjecture, which predicts the deformation of the representations with respect to the change of (generic) parameters. 
\end{itemize}

\subsection{}
\addtocontents{toc}{\protect\setcounter{tocdepth}{1}} 
Most of the results in this note are known by now (although scattered around the literature, with some mild assumption). However, we take this opportunity to present a few new materials, as well as remove some unnecessarily assumptions from the old results in the literature, including: 

\begin{itemize}
\item Some results on the minimal length elements and partial order on the partial conjugacy classes for an affine Weyl group action (by conjugation) on a Coxeter group. See Theorem \ref{par-min} and Proposition \ref{par}. 

\item A parametrization of conjugacy classes of extended affine Weyl groups in terms of standard quadruples. See Theorem \ref{1-19}.

\item The use of $\breve \CI \dot w \breve \CI \cap [b]$ as a group-theoretic model to study some stratifications of Shimura varieties. See \S \ref{Shimura}.

\item The non-emptiness pattern and dimension formula for $\breve \CK \e^\mu \breve \CK \cap [b]$ for special maximal parahoric subgroup $\breve \CK$ (in the literature, it is only stated for hyperspecial maximal parahoric subgroups). See Theorem \ref{2-24} and Theorem \ref{kmuk}.

\item The dimension formula for $\breve \CI \dot w \breve \CI \cap [b]$ for basic $[b]$ and $w$ in the Shrunken Weyl chamber (in the literature, it is only stated for residually split groups). See Theorem \ref{bb-dim}.

\item The non-emptiness pattern and dimension formula for $\breve \CI \dot w \breve \CI \cap [\e^\mu]$ for $G$ a residually split group and $w$ in the very Shrunken Weyl chamber. See Theorem \ref{torus2}. 

\item The naive kernel conjecture \ref{naive} and the rigid determinant conjecture \ref{rig-det} of affine Hecke algebras. 

\end{itemize}

I am grateful to G. Lusztig for all he has taught me about the algebraic groups and Weyl groups over the years. Without his big influence this project would never be initiated. In more recent times, I also learned a lot from J. Adams, D. Ciubotaru, U. G\"ortz, T. Haines, J. Michel, S. Nie, X. Zhu, and especially M. Rapoport, who teaches me Shimura varieties. I am happy to express my gratitude to all of them. This survey aticle is written as an extended note of the talk at the Current Developments in Mathematics conferernce 2015 at Harvard. I thank the organizers for the invitation, and for the opportunity to write this survey. I also thank J. Adams, U. G\"ortz and T. Haines for their remarks on a preliminary version of this note. My research is partially supported by NSF grant DMS-1463852.

\section{Part I: Combinatorics of affine Weyl groups}

\subsection{Coxeter groups}
\addtocontents{toc}{\protect\setcounter{tocdepth}{2}}
A {\it Coxeter system} is a pair $(W, \BS)$, where $\BS$ is a finite set and $W$ is the group generated by the elements in $\BS$, subject to certain relations. First, we have $s^2=1$ for all $s \in \BS$. We call the elements in $\BS$ the {\it simple reflections} of $W$. Next, we have relations between two elements $s \neq t$ in $\BS$. Let $m_{s t} \in \{2, 3, \cdots\} \cup \{\infty\}$ be the order of $s t$. Since $s$ and $t$ have order $2$, we have \[s t s \cdots=t s t \cdots,\] where both sides have exactly $m_{st}$ factors. 
If $m_{st}=2$, then $s t=t s$, i.e.,  the two simple reflections $s$ and $t$ commute. If $m_{st}=3$, then $s$ and $t$ satisfy the Artin's braid relation $s t s=t s t$. If $m_{st}=\infty$, then there is no relation between $s$ and $t$. 

In additional to the group structure, the length function $\ell: W \to \BN$ plays a crucial role in this note. Here for any $w \in W$, $\ell(w)$ is the minimal integer $n$ such that $w=s_1 \cdots s_n$, where $s_i \in \BS$.

Weyl groups (both finite and affine) are important examples of Coxeter groups. They play a crucial role in the study of Lie groups and $p$-adic groups. Now let me give two examples. 

\begin{example}\label{sn}
Let $W=S_n$ be the symmetry group of $\{1, 2, \cdots, n\}$. It is the Weyl group of $GL_n(\BC)$. It is a Coxeter group with the generators $s_i=(i, i+1)$ for $1 \le i \le n-1$. We have a presentation of $W$ in terms of generators and relations: $$W=\<s_1, \cdots, s_{n-1} \mid s_i^2=1, s_i s_{i+1} s_i=s_{i+1} s_i s_{i+1} , s_i s_j=s_j s_i \text{ if } |i-j| \ge 2\>.$$ The length function is given by $$\ell(\s)=\sharp\{(i, j); 1 \le i<j \le n; \s(i)>\s(j)\}.$$
\end{example}

\begin{example}
Let $\tW=\BZ^n \rtimes S_n$, where $S_n$ acts on $\BZ^n$ by permutation. This is the Iwahori-Weyl group of $GL_n(\BQ_p)$. Following \cite[1.12]{Lu-Hecke}, we may realize $W$ as the group of periodic permutations on $\BZ$, i.e., the permutations $f: \BZ \to \BZ$ such that $f(z+n)=f(z)+n$ for all $z \in \BZ$. Define $\chi: W \to \BZ$ by $\chi(f)=\sum_{z=1}^n (f(z)-z)$ and $W_a=\ker(\chi)$. Then $W_a$ is a Coxeter group with the generators $s_i$ for $i=0, 1, \cdots n-1$, where $s_i: \BZ \to \BZ$ is defined by $$s_i(z)=\begin{cases} z+1, & \text{ if } z \equiv i \mod n; \\ z-1, & \text{ if } z \equiv i+1 \mod n; \\ z, & \text{ otherwise}. \end{cases}$$ The order $m_{i j}$ of $s_i s_j$ is given as follows: 

If $n=2$, then $m_{0 1}=\infty$. 

If $n \ge 3$, then $m_{i j}=\begin{cases} 3, & \text{ if } i-j \equiv \pm 1 \mod n; \\ 2, & \text{ otherwise}. \end{cases}$. 

The group $\tW$ is not a Coxeter group. But it is almost as good as a Coxeter group. We may realize $\tW$ as $$\tW=W_a \rtimes \Omega.$$ Here $\Omega \cong \BZ$ is the group generated by $\t \in \tW$, where $\t(m)=m+1$ for all $m \in \BZ$. 
There is a length function $\ell$ on $\tW$, defined by $$\ell(\s)=\sharp(Y_\s/\t_n),$$ where $Y_\s=\{(i, j) \in \BZ \times \BZ; i<j, \s(i)>\s(j)\}$ and $Y_\s/\t_n$ is the (finite) set of $\t_n$-orbits on $Y_\s$ for the $\t_n$ action given by $(i, j) \mapsto (i+n, j+n)$. 

The restriction of this length function to $W_a$ is the length function of the Coxeter group $W_a$. The subgroup $\Omega$ of $\tW$ is the subgroup of length-zero elements.
\end{example}

\subsection{Weakly length-decreasing sequences}\label{decreasing}

Let $(W, \BS)$ be a Coxeter system. For any $J \subset \BS$, let $W_J$ be the subgroup of $W$ generated by $J$. Then $(W_J, J)$ is again a Coxeter system. The two extreme cases are $W$ (for $J=\BS$) and $\{1\}$ (for $J=\emptyset$). 

Let $\CO$ be a $W_J$-orbit on $W$ for a given $W_J$-action. We are interested in the relations between the elements, together with their length, in this orbit. More specifically, we would like to have the following property: 

\smallskip 

\textbf{Red-Min}: For any $w \in W$, there exists a sequence of simple reflections $s_1, \cdots, s_k \in J$ such that
\begin{itemize}
\item For any $1 \le i \le k$, $\ell(s_i s_{i-1} \cdots s_1 \cdot w) \le \ell(s_{i-1} \cdots s_1 \cdot w)$; 

\item The element $s_k s_{k-1} \cdots s_1 \cdot w$ is of minimal length among all the elements in $\CO$. 
\end{itemize}

If this property holds, then one may reduce the questions on any element $w$ in $\CO$ to the questions on the minimal length elements in $\CO$ via the inductive step: $$w_1 \rightsquigarrow s \cdot w_1 \text{ for some } s \in \BS \text{ with } \ell(s \cdot w_1) \le \ell(w_1).$$ This reduction method plays an important role in the study of many problems related to Coxeter groups. 

\begin{example}
We consider the action of $W_J$ on $W$ by left multiplication. Let ${}^J W$ be the set of elements $w$ in $W$ of minimal length in the cosets $W_J w$. Then each $W_J$-orbit contains a unique element $x$ in ${}^J W$. Moreover, $\ell(w x)=\ell(w)+\ell(x)$ for any $w \in W_J$. In particular, $x$ is the unique minimal length element in the $W_J$-orbit of $x$. Using the structure of the Coxeter group $W_J$, it is easy to see that the \textbf{Red-Min} property holds for this orbit. 
\end{example}

\subsection{Conjugation by simple reflections}

We are especially interested in the conjugation action of $W$ and the partial conjugation action (of $W_J$) on $W$. 

Conjugacy classes of $W$ play an important role in the study of representations of $W$ and representations of the corresponding Hecke algebra, which is a deformations of the group algebra of $W$. They also play an important role in the study of conjugacy classes of algebraic groups and $p$-adic groups. 

The $W_J$-conjugacy classes in $W$, for finite and affine Weyl groups $W$ and their proper subgroups $W_J$, are closely related to the conjugation action of parabolic subgroups on algebraic groups and to the conjugacy action of parahoric subgroups on $p$-adic groups. They also have important application to arithmetic geometry and to representation theory. 

We are going to discuss the applications in more details in Part II and in Part III.

\smallskip

We first investigate a few examples of the elements in a fixed conjugacy class of $W$ and discuss some nice properties. 

\begin{example}
Let $W=S_5$ and $C$ be the conjugacy class of $5$-cycles. Then $\sharp C=4!=24$ and the lengths of elements in $C$ are $4, 6$ or $8$. 

Let $w=(1 2 3 4 5)$ and $w'=(1 2 4 5 3)$ be elements in $C$ of minimal length. They are related via $$w=(1 2 3 4 5) \overset{s_1}\approx (2 1 3 4 5) \overset{s_2}\approx (3 1 2 4 5)=w'.$$ 

Here $\approx$ means conjugation by simple reflections which preserve the length.

Suppose we start with an element of $C$ not of minimal length, say $w''=(1 4 2 3 5)$. Then $w''$ is related to a minimal length element in $C$ via $$w''=(1 4 2 3 5) \xrightarrow{s_3} (1 3 2 4 5) \xrightarrow{s_2} (1 2 3 4 5)=w.$$

Here $\to$ means conjugation by simple reflections which weakly decrease the length. 
\end{example}

\begin{example}
Let $W=S_5$ and $C$ be the conjugacy class of transpositions. The minimal length elements in $C$ are the simple reflections. They are not $\approx$-equivalent to each other. However, they are still $\sim$-equivalent. Here $\sim$-equivalent means that the corresponding elements in the associated Braid group are conjugate. 
\end{example}

\smallskip

Now let us investigate two examples of partial-conjugacy classes in $W$. Although left multiplication by $W_J$ and conjugate by $W_J$ are very different, the elements in ${}^J W$ still play a crucial role in the study of the conjugation action of $W_J$ on $W$. 

The following example is a $W_J$-conjugacy class that contains an element in ${}^J W$. 

\begin{example}\label{1.6} Let $W=S_4$ and $J=\{2, 3\}$. 

Let $w=s_1 s_2 s_3 \in {}^J W$ and $w'=s_2 s_3 s_2 s_1 s_2$ be an element in the $W_J$-orbit of $w$. The element $w'$ has larger length than $w$ and $w'$ is related to $w$ via 
$$w'={s_2 s_3 s_2} {s_1 s_2} \xrightarrow{s_2} {s_3 s_2}  {s_1} \xrightarrow{s_3} {s_2 s_3} {s_1} \xrightarrow{s_2} {s_3} {s_1 s_2} \xrightarrow{s_3}{s_1 s_2 s_3}=w.$$



\end{example}

\smallskip 

The next example is a $W_J$-conjugacy class that does not contain an element in ${}^J W$. 

\begin{example}\label{1.7} Let $W=S_5$ and $J=\{1, 3, 4\}$.

Let $w=s_4 s_3 s_2 s_1 s_3 s_2s_4$. It is not of minimal length in its $W_J$-orbit. We have 
$$w={s_4 s_3} {s_2 s_1 s_3 s_2 s_4} \xrightarrow{s_4} {s_3} {s_2 s_1 s_3 s_2} \xrightarrow{s_3} {s_1} {s_2 s_1 s_3 s_2}  \xrightarrow{s_1} {s_3} {s_2 s_1 s_3 s_2} .$$


The elements ${s_1} {s_2 s_1 s_3 s_2}$ and ${s_3} {s_2 s_1 s_3 s_2}$ are of minimal length in the $W_J$-orbit of $w$. Notice that these two elements have the same ${}^J W$-part $x=s_2 s_1 s_3 s_2 \in {}^J W$ and prefix $s_1$ and $s_3$ are elements in $W_J$ conjugate to each other by the element $x$. 
\end{example}

\subsection{$\sim$-equivalence and $\approx$-equivalence}\label{sim-approx}

Conjugacy classes of $W$ play an important role in the study of split groups. Twisted conjugacy classes, on the other hand, play a similar role in the study of non-split groups. In the sequel, we will not only consider the conjugation action, but will consider the twisted conjugation action as well. 

Now let us introduce some notation. 

Let $(W, \BS)$ be a Coxeter system and $J, J' \subset \BS$. Let $\d: W_J \to W_{J'}$ be a group automorphism such that $\d(J)=J'$. We consider the $\d$-twisted partial conjugation of $W_J$ on $W$ defined by $$w \cdot_\d w'=w w' \d(w) \i.$$

We denote by $(W_J)_\d=\{(w, \d(w)); w \in W_J\} \subset W \times W$ the $\d$-graph of $W_J$. We denote by $W/(W_J)_\d$ the set of $\d$-twisted $W_J$-conjugacy classes of $W$. If $\d$ is the identity map, we may write $(W_J)_\D$ for $(W_J)_\d$. 

Let $w, w' \in W$. We write $w \to_{J, \d} w'$ if there exists a sequence of simple reflections $s_1, \cdots, s_k \in J$ such that 
\begin{itemize}
\item $w'=s_k s_{k-1} \cdots s_1 \cdot_\d w$; 

\item for any $1 \le i \le k$, $\ell(s_i s_{i-1} \cdots s_1 \cdot_\d w) \le \ell(s_{i-1} \cdots s_1 \cdot_\d w)$. 
\end{itemize}
We write $w \approx_{J, \d} w'$ if $w \to_{J, \d} w'$ and $w' \to_{J, \d} w$. 

\smallskip

As we have seen in the above examples, a $\d$-twisted $W_J$-conjugacy class $\CO$ of $W$ may contain more than one minimal length elements. We denote by $\CO_{\min}$ the set of minimal length elements in $\CO$ and by $\ell(\CO)$ the length of $w$ for any $w \in \CO_{\min}$. Now we introduce some equivalence relations between the elements in $\CO_{\min}$. 

We write $w \sim_{J, \d} w'$ if there exists a sequence of elements $x_1, \cdots, x_k \in W_J$ such that $w'=x_k x_{k-1} \cdots x_1 \cdot_\d w$ and for any $1 \le i \le k$, 
\begin{itemize}

\item $\ell(x_i x_{i-1} \cdots x_1 \cdot_\d w) =\ell(w)$;

\item $\ell(x_i x_{i-1} \cdots x_1 w \d(x_{i-1} x_{i-2} \cdots x_1) \i)$ or $\ell(x_{i-1} x_{i-2} \cdots x_1 w \d(x_i x_{i-1} \cdots x_1) \i)$ equals $\ell(w)+\ell(x_i)$. 
\end{itemize}

Note that both $\sim_{J, \d}$ and $\approx_{J, \d}$ are equivalence relations and for $w, w' \in W$, 
$$w \approx_{J, \d} w' \Longrightarrow w \sim_{J, \d} w' \Longrightarrow w' \in W_J \cdot_\d w \text{ and } \ell(w)=\ell(w').$$
But the converse, in general, does not hold. 

If $\d$ is the identity map, then we omit $\d$ in the under script. If $J=\BS$, then we omit $J$ in the under script.

\subsection{The discovery of Geck and Pfeiffer}

\subsubsection{Minimal length elements in a finite Coxeter group}
In the study of characters of finite Hecke algebras, Geck and Pfeiffer \cite{GP93} discovers that the minimal length elements in a given conjugacy class of a finite Weyl group has remarkable properties. 

\begin{theorem}\label{f-min}
Let $W$ be a finite Coxeter group and $\d: W \to W$ is a group automorphism with $\d(\BS)=\BS$. Let $\CO \in W/W_\d$. Then 

\begin{enumerate}
\item \textbf{Red-Min} property holds for $\CO$. 

\item $\CO_{\min}$ is a single $\sim_\d$-equivalence class. 

\item If moreover, $\CO$ is elliptic, i.e. $\CO \cap W_J=\emptyset$ for any $J=\d(J) \subsetneqq \BS$, then $\CO_{\min}$ is a single $\approx_\d$-equivalence class. 
\end{enumerate}
\end{theorem}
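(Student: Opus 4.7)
The plan is to induct on the pair $(|\BS|, \ell(w))$, using as the core technical tool the following elementary dichotomy: for any $w \in W$ and any $s \in \BS$, the length $\ell(s w \d(s))$ equals $\ell(w)$, $\ell(w) + 2$, or $\ell(w) - 2$, and in the length-preserving case $s \cdot_\d w$ provides a sideways move on the "height-$\ell(w)$ slice" of $\CO$. Via this dichotomy, part (1) is equivalent to the assertion that if $\ell(w) > \ell(\CO)$, then after finitely many $\approx_\d$-moves from $w$ some simple reflection $s$ will satisfy $\ell(s \cdot_\d w') < \ell(w')$. I would first set up the cyclic shift relations $\to_\d$ and $\approx_\d$ and phrase the whole argument in terms of the "accessibility graph" on $\CO \cap \{\ell = \ell(w)\}$.

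The proof then splits into the non-elliptic and elliptic cases. In the non-elliptic case, $\CO$ meets some proper $\d$-stable $W_{J'}$ with $J' \subsetneq \BS$, and I would show that any $w \in \CO$ can be $\to_\d$-reduced into $W_{J'}$, after which the inductive hypothesis applied to $(W_{J'}, J')$ finishes the job. The reduction into $W_{J'}$ is obtained by writing each conjugate in the form $u x$ with $x \in {}^{J'}\!W$ and $u \in W_{J'}$, and then using the structure of ${}^{J'}\!W$-cosets together with the observation (already illustrated in Example \ref{1.6}) that twisted conjugation by well chosen simple reflections in the support of $x$ strictly shortens $w$ while pushing it into the subsystem.

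The elliptic case is the main obstacle, since there is no proper $\d$-stable parabolic into which to descend. Here I would invoke the explicit, type-by-type analysis initiated by Geck and Pfeiffer \cite{GP93} for untwisted crystallographic $W$, and extended by Geck-Kim-Pfeiffer and Geck-Michel to cover the twisted cases (diagram automorphisms of $A_n$, $D_n$, $E_6$, and the triality on $D_4$) as well as the non-crystallographic types $H_3$, $H_4$, $I_2(m)$. For each irreducible finite Coxeter system and each $\d$-stable elliptic class, one exhibits concrete minimal-length representatives and checks by direct (and for $H_4, E_7, E_8$ computer-assisted) computation that any non-minimal $w$ in $\CO$ admits, after at most a bounded chain of length-preserving cyclic shifts, a simple reflection that strictly reduces the length. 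This finishes (1).

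Given (1), parts (2) and (3) follow by tracing the reduction process. For (2), I would show that whenever two elements $w_1, w_2 \in \CO_{\min}$ are related by $w_2 = u \cdot_\d w_1$, the hypothesis $\ell(w_2) = \ell(w_1)$ forces the factorization $u = x_k \cdots x_1$ (with each partial product keeping the length constant) to satisfy the auxiliary length condition in the definition of $\sim_\d$; this is again verified by a local braid-move analysis at each step, plus in the non-elliptic case the inductive hypothesis applied to $W_{J'}$. For (3), ellipticity removes the need to pass through non-simple $x_i$: one shows, using the length-preserving cyclic shifts that already arose in the elliptic case analysis for (1), that $\sim_\d$-equivalence between minimal elements can always be realized by simple reflections alone, hence by $\approx_\d$. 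The hard nucleus of the entire argument remains the elliptic reduction step, which is intrinsically case-by-case and cannot be bypassed by the structural induction.
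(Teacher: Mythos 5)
Your outline reproduces the shape of the proofs that the paper itself merely cites (Geck--Pfeiffer \cite{GP93}, \cite{GP00}, Geck--Kim--Pfeiffer \cite{GKP}, \cite{HeMin}, with the case-free argument in \cite{HN12}): reduce to the elliptic case and settle elliptic classes by type-by-type (partly computer-assisted) verification. Deferring the elliptic nucleus to that literature is therefore consistent with what the paper does. The problem is in the part you actually flesh out, the non-elliptic induction. The claim that any $w\in\CO$ can be $\to_{\d}$-reduced into a \emph{fixed} proper $\d$-stable parabolic $W_{J'}$ is false: take $W=S_3$, $\d=\mathrm{id}$, $\CO$ the class of transpositions and $J'=\{s_1\}$. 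The element $s_2\in\CO$ is already of minimal length, so every admissible step from it must preserve length, and the only length-preserving conjugations by simple reflections fix $s_2$; it can never be pushed into $W_{J'}$. (This is exactly the phenomenon in the paper's example of transpositions in $S_5$: the minimal elements are the various simple reflections, scattered among different, mutually conjugate parabolics.) What is true, and is part of what has to be proved, is only that $\CO_{\min}$ meets $W_{J'}$; your proposed mechanism (``conjugation by simple reflections in the support of $x$ pushes $w$ into the subsystem'') conflates this with the partial-conjugation reduction of Theorem \ref{par-main}, which lands in cosets $W_{I(J',x,\d)}x$ with $x\in{}^{J'}W$, not in $W_{J'}$ itself.

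This gap propagates to your treatment of (2): since the minimal elements lie in different conjugates of $W_{J'}$, connecting them by $\sim_{\d}$ genuinely requires conjugation by non-simple elements satisfying the length-additivity condition in the definition of $\sim_{\d}$ (already in $S_3$ one needs $x=s_1s_2$ with $\ell(xs_1)=\ell(x)+\ell(s_1)$ to relate $s_1$ and $s_2$), and this cannot be produced by the inductive hypothesis applied inside a single $W_{J'}$ nor by ``local braid-move analysis''. Producing these connecting elements, and showing that the minimum of $\ell$ on $\CO$ is attained inside $W_{J'}$ at all, is precisely the nontrivial content that \cite{GP93}, \cite{GKP} establish case by case (and \cite{HN12} by a different, case-free argument); so as written your structural induction does not reduce the theorem to the elliptic case, and the proof is incomplete beyond the parts you already outsource to the literature.
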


Part (1) and (2) for the ordinary conjugacy classes of a finite Weyl group are first proved by Geck and Pfeiffer in \cite[Theorem 1.1]{GP93}. The ordinary conjugacy classes for any finite Coxeter groups (including the non-crystallographic types) are studied in \cite{GP00}, where Part (3) is also proved. Part (1) and (2) for the twisted conjugacy classes of a finite Coxeter group are obtained by Geck, Kim and Pfeiffer in \cite[Theorem 2.6]{GKP}. Part (3) for the twisted conjugacy classes is established in \cite[Theorem 7.5]{HeMin}. The proofs in these paper involve a case-by-case analysis. A conceptual proof is found recently in \cite{HN12}. 

\subsubsection{Parametrization of the conjugacy classes of a finite Coxeter group}
Let $\CO$ be a $\d$-twisted conjugacy class of $W$. Let $J$ be a minimal $\d$-stable subset of $\BS$ with $\CO \cap W_J \neq \emptyset$. Then by definition, $\CO \cap W_J$ is a union of $\d$-twisted elliptic conjugacy classes of $W_J$. A nice property is that the elliptic classes never fuse. 

\begin{theorem}\label{neverfuse}
Let $\CO$ be a $\d$-twisted conjugacy class of $W$. Let $J$ be a minimal $\d$-stable subset of $\BS$ with $\CO \cap W_J \emptyset$. Then $\CO \cap W_J$ is a single $\d$-twisted conjugacy class of $W_J$. 
\end{theorem}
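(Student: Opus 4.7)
The plan is to bootstrap from Theorem~\ref{f-min} applied both to $W$ and to the parabolic subgroup $W_J$. First I would observe that, by the minimality of $J$, every element of $\CO \cap W_J$ is elliptic in $W_J$ with respect to the $\d$-twisted conjugation: if some $w \in \CO \cap W_J$ were $\d$-twisted $W_J$-conjugate to an element of $W_{J'}$ for a proper $\d$-stable $J' \subsetneq J$, this would produce a nonempty $\CO \cap W_{J'}$, contradicting the minimality of $J$. Consequently Theorem~\ref{f-min}(3) is available for every $\d$-twisted $W_J$-conjugacy class sitting inside $\CO \cap W_J$.

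The second, and crucial, ingredient is a descent lemma ensuring that the Red-Min reduction in $W$ never leaves $W_J$. Concretely, for $v \in W_J$ and $s \in \BS$ with $t = \d(s)$, I claim that $\ell(svt) < \ell(v)$ forces $s \in J$. A parity argument shows $\ell(svt) = \ell(v) - 2$, which in turn forces both $\ell(sv) = \ell(v) - 1$ and $\ell(vt) = \ell(v) - 1$, i.e. $s$ is a left descent of $v$; since every reduced expression of an element of $W_J$ uses only letters in $J$, we conclude $s \in J$, and then $t \in J$ by $\d$-stability. The borderline case $\ell(svt) = \ell(v)$ with $s \notin J$ can be shown to give $svt = v$, so it contributes nothing to the reduction procedure.

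Combining this with Theorem~\ref{f-min}(1) applied to $\CO$ inside $W$, every $w \in \CO \cap W_J$ admits a length-weakly-decreasing chain of simple reflection $\d$-conjugations, staying inside $W_J$, that terminates at some $w_0 \in \CO \cap W_J$ of minimal length in $\CO$. In particular $w$ and $w_0$ lie in the same $\d$-twisted $W_J$-conjugacy class, and it suffices to prove that any two minimal length elements $w_0, w_0' \in \CO \cap W_J$ are $\d$-twisted $W_J$-conjugate.

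For this final step I would appeal to Theorem~\ref{f-min}(2) in $W$, which gives a chain $w_0 \sim_\d w_0'$ through minimal length elements of $\CO$. The task then becomes to replay this chain inside $W_J$: each length-preserving move should be realizable with a conjugator $x \in W_J$ rather than an arbitrary $x \in W$. Applying the descent analysis of the second paragraph at each intermediate element (which again lies in $\CO$ and has the same minimal length), together with the ellipticity from Step~1 and Theorem~\ref{f-min}(3) for $W_J$, one is reduced to showing that the atomic $\sim_\d$-moves connecting minimal length representatives in $W$ can be localized to $W_J$ when both endpoints are. I expect this last reduction to be the main obstacle, since $\sim_\d$ a priori allows conjugation by arbitrary elements of $W$ and the descent argument must be pushed through chains rather than single steps; the expected resolution is to break each $\sim_\d$-move into its atomic constituents and invoke ellipticity to force each atom to respect $W_J$.
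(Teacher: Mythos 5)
Your Steps 1 and 2 are sound. Minimality of $J$ does force every element of $\CO \cap W_J$ to be elliptic in $W_J$, and your descent lemma is correct: for $v \in W_J$ and $s \in \BS \setminus J$ (so $\d(s)\notin J$ as well), the exchange condition shows that $s v \d(s)$ either equals $v$ or has length $\ell(v)+2$, so any weakly length-decreasing chain of simple-reflection $\d$-conjugations starting in $W_J$ effectively stays in $W_J$. Combined with Theorem \ref{f-min}(1), this correctly reduces the theorem to the statement that any two elements of $\CO_{\min} \cap W_J$ are $\d$-twisted conjugate inside $W_J$.

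But that remaining statement is the entire content of the theorem, and your third paragraph does not prove it — you flag it yourself as the main obstacle. The proposed repair (break each $\sim_\d$-move into atomic steps and use the descent lemma plus ellipticity) fails for a structural reason: realizing a single $\sim_\d$-move $w \mapsto x w \d(x)\i$ by conjugating letter-by-letter along a reduced word of $x$ in general passes through elements of length strictly greater than $\ell(\CO)$ — this is precisely why Theorem \ref{f-min}(2) is stated with $\sim_\d$ rather than $\approx_\d$ — and your descent lemma only constrains weakly length-decreasing steps, so it gives no control over these intermediate elements, which can and do leave $W_J$. Invoking Theorem \ref{f-min}(3) for $W_J$ is circular here: it only relates minimal length elements already known to lie in the same elliptic $W_J$-class, which is what you are trying to establish. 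No purely formal argument from the Red-Min/$\sim_\d$/$\approx_\d$ machinery is known to give this ``elliptic classes never fuse'' property; the paper itself does not derive it from Theorem \ref{f-min} but cites \cite{GP00} (Theorem 3.2.11, ordinary case) and \cite{CH} (Theorem 2.3.4, twisted case), whose proofs use additional input (the explicit classification of elliptic classes, with case-by-case verification) beyond the length-reduction properties you rely on. So your proposal establishes only the easier reduction and leaves the essential step unproven.
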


For ordinary conjugacy classes, it is due to Geck and Pfeiffer in \cite[Theorem 3.2.11]{GP00}. The general case is in \cite[Theorem 2.3.4]{CH}. 

Now we give a parametrization of $W/W_\d$ in terms of parabolic subgroups $W_J$ and their elliptic conjugacy classes. Set $$\G_\d=\{(J, C); J \subset \BS \text{ with } \d(J)=J, C \text{ is  an elliptic $\d$-twisted conjugacy class of } W_J\}.$$
We say that the pairs $(J, C)$ and $(J', C')$ in $\G_\d$ are equivalent  if there exits $x \in W^\d$ such that $J'=x J x \i$ and $C'=x C x \i$. The following result is obtained in \cite[Proposition 2.4.1]{CH}. 

\begin{proposition}\label{1-10}
Let $W$ be a finite Coxeter group. The map $$f: \G_\d \to W/W_\d, \qquad (J, C) \mapsto W \cdot_\d C$$ induces a bijection between the equivalence classes of $\G_\d$ and the set of $\d$-twisted conjugacy classes of $W$. 
\end{proposition}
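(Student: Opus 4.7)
The plan is to verify in turn that $f$ is well-defined on equivalence classes, surjective, and injective. For well-definedness, if $x \in W^\d$ then $\d(x)=x$, so $x \cdot_\d c = xc\d(x)\i = xcx\i$ for any $c \in W$; hence $x \cdot_\d C = xCx\i$, whence $f(xJx\i, xCx\i) = W \cdot_\d (x \cdot_\d C) = W \cdot_\d C = f(J, C)$. For surjectivity, given $\CO \in W/W_\d$, choose a $\d$-stable subset $J \subset \BS$ minimal with respect to $\CO \cap W_J \neq \emptyset$. By Theorem \ref{neverfuse}, $C := \CO \cap W_J$ is a single $\d$-twisted $W_J$-conjugacy class, and the minimality of $J$ forces $C$ to be elliptic in $W_J$ (otherwise $C$ would meet $W_{J_0}$ for some proper $\d$-stable $J_0 \subsetneq J$, contradicting minimality). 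Thus $(J, C) \in \G_\d$ and $f(J, C) = \CO$.

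\textbf{Injectivity, key lemma.} For injectivity, suppose $f(J, C) = f(J', C') = \CO$. The pivotal step is the following \emph{key lemma}: $\ell(C) = \ell(\CO)$, i.e.\ the minimal length elements of $C$ are already minimal in $\CO$. The heart of the argument is the observation that for any $w \in W_J$, the left $\BS$-descents of $w$ in $W$ all lie in $J$, since $\ell(sw) < \ell(w)$ forces $s \in \supp(w) \subseteq J$. Consequently, any $s \in \BS$ for which $\ell(s \cdot_\d w) = \ell(sw\d(s)\i) < \ell(w)$ must be a left descent of $w$, and so $s \in J$; combined with $\d(s) \in J$ (by $\d$-stability of $J$), this puts $s \cdot_\d w$ in $C \cap W_J$ with strictly smaller length. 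Combining this descent observation with Theorem \ref{f-min}(1) applied to $\CO$, one shows that if $w \in C_{\min}$ were not in $\CO_{\min}$, some simple $\d$-conjugation would further reduce it inside $C$, a contradiction.

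\textbf{Injectivity, conclusion.} Granted the key lemma, pick $w \in C_{\min}$ and $w' \in C'_{\min}$; both then lie in $\CO_{\min}$. Ellipticity of $C$ in $W_J$ forces $J$ to be the smallest $\d$-stable subset of $\BS$ containing $\supp(w)$; likewise $J'$ is determined by $w'$. By Theorem \ref{f-min}(2), $\CO_{\min}$ is a single $\sim_\d$-equivalence class, so $w \sim_\d w'$; tracing through the $\sim_\d$-sequence produces an element $x \in W$ with $w' = x \cdot_\d w = xw\d(x)\i$. Granted that $x$ can be chosen in $W^\d$, one then has $w' = xwx\i$, and the support characterisation above immediately forces $J' = xJx\i$, whence $C' = W_{J'} \cdot_\d w' = x(W_J \cdot_\d w)x\i = xCx\i$.

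\textbf{Main obstacle.} The crux is the last step: refining a generic conjugator $x \in W$ to one lying in $W^\d$. In the untwisted case $\d = \id$ this is vacuous, and one recovers the classical Geck-Pfeiffer parametrisation directly. In the twisted case, one must analyse each elementary $\sim_\d$-move between $w$ and $w'$ and, using the length-positivity conditions in the definition of $\sim_\d$, modify the accumulated conjugator by elements of the $\d$-twisted centraliser $Z_\d(w) = \{y \in W : yw = w\d(y)\}$, so as to assemble a $\d$-fixed element. I expect this upgrade from $W$ to $W^\d$ is where the bulk of the technical work in \cite[Proposition 2.4.1]{CH} resides.
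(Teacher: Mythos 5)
Your well-definedness and surjectivity arguments are fine (minimality of $J$ together with Theorem \ref{neverfuse} does force $C=\CO\cap W_J$ to be elliptic), but the injectivity half has two genuine gaps, and the second one is exactly the content of the proposition. First, your justification of the key lemma $C_{\min}\subseteq\CO_{\min}$ does not work as written: Theorem \ref{f-min}(1) only supplies a \emph{weakly} length-decreasing chain of $\d$-conjugations by simple reflections, and your descent observation constrains only a step at which the length strictly drops. The chain may begin with length-preserving steps $w\mapsto s w\d(s)$ with $s\notin J$, which can carry the element out of $W_J$ before the first strict drop occurs, so no contradiction with $w\in C_{\min}$ is reached. (The lemma is true, but it is itself a nontrivial piece of the Geck--Pfeiffer/twisted theory and needs an actual proof, not this one-step descent argument.) Second, the decisive step of injectivity --- replacing the conjugator $x\in W$ obtained from $w\sim_\d w'$ by one lying in $W^\d$, and simultaneously arranging that it carries $J$ onto $J'$ as subsets of $\BS$ --- is not proved at all: you write ``granted that $x$ can be chosen in $W^\d$'' and defer the work to the cited reference. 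Since equivalence in $\G_\d$ is by definition $W^\d$-conjugacy, this is precisely what the twisted statement asserts; without it your argument only recovers the untwisted case, which is classical \cite{GP00} material. Moreover, even granting $x\in W^\d$, the claim that $J'=xJx\i$ follows ``immediately'' from the support characterisation is not a valid deduction: conjugation by an arbitrary $x$ need not send simple reflections to simple reflections, and $\supp(xwx\i)$ bears no simple relation to $x\,\supp(w)\,x\i$. One must first identify the subgroups $xW_Jx\i=W_{J'}$ --- this is where ellipticity is really used, via the fixed space of $w\d$ in the reflection representation --- and then correct $x$ inside the coset $xW_J$ (by its minimal length element) without destroying $\d$-fixedness.

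For comparison: the paper itself gives no proof of Proposition \ref{1-10}, quoting \cite[Proposition 2.4.1]{CH}; and in the analogous uniqueness step in the proof of Theorem \ref{1-19} given here, the argument runs through fixed spaces $V_J^{u\d'}$ in the reflection representation together with \cite[Proposition 2.4.1]{CH} itself, not through bookkeeping of $\sim_\d$-chains. So the $W^\d$-refinement you flag as the ``main obstacle'' is not a technicality to be assumed; it is the heart of the matter, and the known proofs obtain it (and the identification of $J'$) from the reflection-representation picture rather than from Theorem \ref{f-min}(2).
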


\subsection{Partial conjugation action}\label{par-i}

Let $(W, \BS)$ be a Coxeter system and $J, J' \subset \BS$. Let $\d: W_J \to W_{J'}$ such that $\d(J)=J'$. We call a $\d$-twisted $W_J$-conjugacy class {\it distinguished} if it contains an element in ${}^J W$. As we have seen from Example \ref{1.6} and example \ref{1.7}, one may relate any $\d$-twisted $W_J$-conjugacy class to a distinguished one. To make it precise, let me first introduce some notation. 

For $w \in {}^J W$, we write $\text{Ad}(w) \d(J)=J$ if for any simple reflection $s \in J$, there exists a simple reflection $s' \in J$ such that $w \d(s) w^{-1}=s'$. In this case, $w \in {}^J W^{\d(J)}$. It is easy to see that for any $J_1, J_2 \subset \BS$, and $w \in {}^{J_1 \cup J_2} \tilde W$, if $\text{Ad}(w) \d(J_i)=J_i$ for $i=1, 2$, then $\text{Ad}(w) \d(J_1 \cup J_2)=J_1 \cup J_2$.  Thus for any $J \subset \BS$ and $w \in {}^J W$, the set $\{J' \subset J\mid  \text{Ad}(w) \d(J')=J'\}$ contains a unique maximal element. We set $$I(J, w, \d)=\max\{J' \subset J\mid  \text{Ad}(w) \d(J')=J'\}.$$ 

\begin{example}
Let $W=S_4$, $J=\{2, 3\}$ and $w=s_1 s_2 s_3$. Then $I(J, w, id)=\emptyset$. 

Let $W=S_5$, $J=\{1, 3, 4\}$ and $w=s_2 s_1 s_3 s_2$. Then $I(J, w, id)=\{1, 3\}$. 
\end{example}

\smallskip

The classification of $\d$-twisted $W_J$-conjugacy classes of $W$ is obtained in \cite[Section 2]{HeMin}. 

\begin{theorem}\label{w-dec}
We have

(1) $W=\sqcup_{w \in {}^J W} W_J \cdot_\d (W_{I(J, w, \d)} w)$.

(2)  For any $w \in {}^J W$, the inclusion $W_{I(J, w, \d)} \to W_J \cdot_\d (W_{I(J, w, \d)} w)$ induces a natural bijection between the $\Ad(w) \circ \d$-twisted conjugacy classes $C$ on $W_{I(J, w, \d)}$ and the $\d$-twisted $W_J$-conjugacy classes $\CO$ on $W_J \cdot_\d (W_{I(J, w, \d)} w)$. 
\end{theorem}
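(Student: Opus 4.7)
\smallskip
\noindent\textbf{Proof plan.}

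The analysis hinges on the standard factorization $g = xw$ with $x \in W_J$, $w \in {}^J W$ and $\ell(g) = \ell(x) + \ell(w)$, together with the following dichotomy for $\d$-twisted conjugation by a simple reflection $s \in J$: writing $s \cdot_\d g = s x w \d(s)^{-1}$, one of two things happens. In \emph{Case~A}, $\Ad(w)\d(s) = s' \in J$, so $w\d(s) = s'w$ and $s \cdot_\d g = (s x s') w$ preserves the ${}^J W$-factor $w$. In \emph{Case~B}, $\Ad(w)\d(s) \notin J$, and one checks that $w\d(s) \in {}^J W$ with $\ell(w\d(s)) = \ell(w) + 1$, so $s \cdot_\d g = (s x)(w \d(s))$ strictly lengthens the ${}^J W$-factor. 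A second preparatory observation is that, because $\BS$ is finite, $I(J,w,\d)$ admits the iterative description
\[ I(J,w,\d) = \{s \in J : (\Ad(w)\d)^k(s) \in J \text{ for all } k \ge 0\}, \]
which coincides with the largest $(\Ad(w)\d)$-stable subset of $J$.

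For the existence half of Part (1), given $g \in W$ I would choose a representative $g_0 = x w$ in the orbit $W_J \cdot_\d g$ minimizing $\ell(x) + \ell(w)$ and, subject to that, maximizing $\ell(w)$; then I claim $\supp(x) \subset I(J, w, \d)$. Otherwise pick $s \in \supp(x)$ with $(\Ad(w)\d)^k(s) \notin J$ for some minimal $k \ge 0$. Setting $J(w) := \{t \in J : \Ad(w)\d(t) \in J\}$, I invoke a sequence of Case~A conjugations inside $W_{J(w)}$ (each preserves both $w$ and the total length) to bring an offending simple reflection outside $J(w)$ into a left-descent position of the current $W_J$-factor; a Case~B move there then preserves the total length while strictly increasing $\ell(w)$, contradicting maximality. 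Disjointness of the decomposition follows by pairing this extremal procedure with the invariants produced by Part (2).

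For Part (2), the direct calculation
\[ y \cdot_\d (x w) = y x w \d(y)^{-1} = y x \cdot (\Ad(w)\d(y))^{-1} \cdot w \]
for $y, x \in W_{I(J, w, \d)}$ shows, using $\Ad(w)\d(W_{I(J,w,\d)}) = W_{I(J,w,\d)}$ by the very definition of $I(J,w,\d)$, that $W_{I(J,w,\d)}$-conjugation on $W_{I(J,w,\d)} w$ translates into $(\Ad(w)\circ\d)$-twisted conjugation on $W_{I(J,w,\d)}$. Hence the inclusion $x \mapsto x w$ descends to a well-defined map on conjugacy classes, which is surjective by Part (1). For injectivity, if $y x' w \d(y)^{-1} = x w$ with $y \in W_J$, one takes $y$ of minimum length subject to this relation and applies the Case~A/Case~B dichotomy to a left-descent factorization of $y$ in order to force $y \in W_{I(J,w,\d)}$.

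The main obstacle is the combinatorial step inside Part (1): verifying that iterated Case~A shuffling inside $W_{J(w)}$ can always surface a simple reflection of $\supp(x) \setminus I(J, w, \d)$ at a left-descent position from which a Case~B move becomes available. Once this reduction is in hand, the remainder of both parts is a routine bookkeeping exercise on how $\d$-twisted $W_J$-conjugation interacts with the factorization $W = W_J \cdot {}^J W$.
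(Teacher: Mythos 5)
Your framework is the right one (the factorization $g=xw$ with $x\in W_J$, $w\in{}^JW$, Deodhar's dichotomy, the iterative description of $I(J,w,\d)$, and the computation $y\cdot_\d(xw)=\bigl(yx(\Ad(w)\d(y))^{-1}\bigr)w$ that makes the map in part (2) well defined), but the proposal has a genuine gap at the heart of part (1), plus two incorrect auxiliary claims. First, the dichotomy is misstated: when $\Ad(w)\d(s)\notin J$ one only gets $w\d(s)\in{}^JW$ with $\ell(w\d(s))=\ell(w)\pm 1$, not necessarily $+1$ (take $W=S_3$, $J=\{s_1\}$, $\d=\mathrm{id}$, $w=s_2s_1$, $s=s_1$: then $\Ad(w)(s_1)=s_2s_1s_2\notin J$ and $ws_1=s_2$ has length $\ell(w)-1$). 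In your final contradiction step this happens to be harmless, since the $-1$ case contradicts minimality and the $+1$ case contradicts maximality of $\ell(w)$. Second, and more seriously, the claim that the intermediate Case~A conjugations ``preserve the total length'' is unjustified: a Case~A move replaces $x$ by $sx\,\Ad(w)\d(s)$, whose length can jump by $+2$. The assertion that one can always shuffle, by non-length-increasing Case~A moves inside $W_{J(w)}$, an offending simple reflection into a left-descent position of the $W_J$-factor is exactly the combinatorial core of the theorem, and you do not prove it --- you explicitly flag it as the main obstacle. As written, existence in (1) is therefore not established.

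Moreover, disjointness in (1) and injectivity in (2) are essentially missing and, as phrased, deferred to each other: you say disjointness ``follows by pairing \dots with the invariants produced by Part (2)'', while surjectivity in (2) uses (1), and injectivity in (2) is only the assertion that a minimal-length conjugator $y$ is ``forced'' into $W_{I(J,w,\d)}$, with no argument. Note that what is needed is two separate facts: that the ${}^JW$-part $w$ is an invariant of the $\d$-twisted $W_J$-orbit of an element of $W_{I(J,w,\d)}w$ (the double coset $W_JgW_{\d(J)}$ is not a fine enough invariant, since distinct elements of ${}^JW$ can lie in the same double coset), and that whenever $y\,uw\,\d(y)^{-1}=u'w$ with $u,u'\in W_{I(J,w,\d)}$ some element of $W_{I(J,w,\d)}$ already conjugates $u$ to $u'$ (the given $y$ need not lie there). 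For comparison: the survey gives no proof of Theorem \ref{w-dec} and quotes it from [HeMin, Section 2]; the published argument proceeds by induction on the length of the $W_J$-factor, establishing precisely the reduction statement recorded here as Theorem \ref{par-main}(1) together with a uniqueness argument for the ${}^JW$-part. Your extremal-representative plan is in the same spirit, but the inductive lemma that drives it --- and the uniqueness/injectivity arguments --- are exactly the pieces left open, so the proposal is a plausible outline rather than a proof.
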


We also have the following relation between the minimal length elements of $C$ and of $\CO$. This is proved in \cite[\S 3]{HeMin}.  

\begin{theorem}\label{par-main}
Let $w \in {}^J W$ and $C$ be a $\Ad(w) \circ \d$-twisted conjugacy class in $W_{I(J, w, \d)}$. Let $\CO=W_J \cdot_\d (C w)$. Then 

(1) For any $x \in \CO$, there exists $u \in C$ such that $x \to_{J, \d} u w$. 

(2) If $C$ satisfies the \textbf{Red-Min} property, then so is $\CO$. 

(3) If $C_{\min}$ is a single $\sim_{\Ad(w) \circ \d}$-equivalence class, then $\CO_{\min}$ is a single $\sim_\d$-equivalence class. 

(4) If $C_{\min}$ is a single $\approx_{\Ad(w) \circ \d}$-equivalence class, then $\CO_{\min}$ is a single $\approx_\d$-equivalence class. 

\end{theorem}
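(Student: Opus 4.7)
The plan is to prove part (1) first, and then deduce parts (2)--(4) from it by transporting properties of $C$ to $\mathcal{O}$ along the bijection of Theorem \ref{w-dec}(2). The central structural fact I will use throughout is the length additivity $\ell(vw) = \ell(v) + \ell(w)$ for $v \in W_J$ and $w \in {}^J W$, which controls the length of $x = y u w \delta(y)^{-1}$ in terms of $\ell(y)$ and $\ell(u)$.

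For part (1), I would argue by induction on $\ell(x)$. Given $x \in \mathcal{O}$, write $x = y \cdot_\delta (u_0 w)$ with $y \in W_J$, $u_0 \in C$, and $\ell(y)$ chosen minimal among all such presentations. If $y \in W_{I(J,w,\delta)}$, then a short computation using $\Ad(w)\delta(I(J,w,\delta)) = I(J,w,\delta)$ shows $x = (y \cdot_{\Ad(w) \circ \delta} u_0)\, w$, so by the minimality of $\ell(y)$ we must have $y = 1$ and $x = u_0 w$ already. Otherwise some reduced expression of $y$ involves a simple reflection $s \in J \setminus I(J,w,\delta)$. The task is to choose such an $s$ (possibly after an $\approx$-equivalent rewriting of $y$) so that $\ell(s \cdot_\delta x) \le \ell(x)$ and so that $s \cdot_\delta x$ admits a presentation with strictly smaller $y$-part, thus driving $\ell(y)$ down to $0$ along a weakly length-decreasing chain.

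The compatibility enabling (2)--(4) is the identity
\[
(s \cdot_{\Ad(w) \circ \delta} u)\, w \;=\; s \cdot_\delta (uw), \qquad s \in I(J, w, \delta),\; u \in W_{I(J,w,\delta)},
\]
which follows from $\Ad(w)\delta(s) = w \delta(s) w^{-1}$. Together with $\ell(uw) = \ell(u) + \ell(w)$, this lifts any length-weakly-decreasing $\Ad(w)\circ\delta$-conjugation sequence inside $C$ to a length-weakly-decreasing $\delta$-conjugation sequence on elements $uw \in \mathcal{O}$, and preserves the relations $\to$, $\approx$, and $\sim$. Part (2) then follows from (1): reduce an arbitrary $x$ to some $uw$ via (1), then reduce $u$ to $C_{\min}$ by the assumed \textbf{Red-Min} property of $C$. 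For (3) and (4), any $x \in \mathcal{O}_{\min}$ satisfies $x \to_{J,\delta} uw$ with $u \in C$ by (1); since $\ell(x) = \ell(\mathcal{O}) \le \ell(uw) = \ell(u) + \ell(w)$, the reverse inequality is automatic, so the reduction is actually an $\approx_{J,\delta}$-chain and $u \in C_{\min}$. The assumed single $\sim$- or $\approx$-equivalence class on $C_{\min}$ then transfers to $\mathcal{O}_{\min}$ via the lifted compatibility.

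The main obstacle lies in the inductive step for (1): exhibiting a simple reflection $s$ whose $\delta$-conjugation weakly decreases $\ell(x)$ while keeping the result of the form $y' \cdot_\delta (u_0' w)$ with $\ell(y') < \ell(y)$ and $u_0' \in C$. This requires a delicate analysis of how left multiplication by $s$ and right multiplication by $\delta(s)^{-1}$ interact across $w$, and is precisely where the maximality in the definition of $I(J,w,\delta)$ and the sector-disjointness of Theorem \ref{w-dec}(1) become essential---they prevent the reduction from leaving $W_J \cdot_\delta (Cw)$ and guarantee that any $y \notin W_{I(J,w,\delta)}$ supplies a usable simple reflection.
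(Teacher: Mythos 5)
Your reduction of parts (2)--(4) to part (1) is correct and matches the mechanism of the cited source: the identity $(s \cdot_{\Ad(w)\circ\d} u)\,w = s\cdot_\d(uw)$ for $s \in I(J,w,\d)$, together with $\ell(uw)=\ell(u)+\ell(w)$ for $u\in W_J$, $w\in{}^J W$, does transfer weakly length-decreasing chains and the relations $\approx$ and $\sim$ from $C$ to $Cw$, and your observation that a reduction from $x\in\CO_{\min}$ to $uw$ is forced to be an $\approx_{J,\d}$-chain with $u\in C_{\min}$ is fine. The genuine gap is part (1), which is the entire content of the theorem (the survey itself does not prove it but refers to \cite[\S 3]{HeMin}). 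Your argument for (1) stops exactly where the work begins: after fixing a presentation $x=y\cdot_\d(u_0w)$ with $\ell(y)$ minimal, you write that ``the task is to choose such an $s$'' with $\ell(s\cdot_\d x)\le\ell(x)$ and a strictly smaller $y$-part, and your final paragraph concedes this step is an unresolved obstacle. Producing that simple reflection is precisely the nontrivial combinatorial assertion; nothing in the proposal shows it exists, and it is not even clear that your induction measure (the minimal $\ell(y)$ over presentations with $u_0\in C$ and the fixed target $w$) can be decreased one conjugation at a time while the length of $x$ stays weakly decreasing.

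For comparison, the argument in \cite[\S 3]{HeMin} does not chase the $y$-part of a presentation aimed at the fixed $w$; it inducts on $\ell(x)$ (with auxiliary bookkeeping on the $W_J$-part of $x$ written as $x=vw'$ with $w'\in{}^J W$ the minimal representative of the coset $W_Jx$ itself, which need not equal $w$), using at each stage the standard dichotomy for $s\in J$: either $\ell(sx\d(s))<\ell(x)$, in which case one concludes by induction because the $\d$-twisted $W_J$-class, hence by Theorem \ref{w-dec}(1) the pair $(w,C)$, is unchanged; or $\ell(sx\d(s))=\ell(x)$ and one can shorten or rearrange the $W_J$-part; or $sx>x$ and $x\d(s)>x$. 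The maximality defining $I(J,w,\d)$ only enters to identify the terminal elements as lying in $W_{I(J,w',\d)}w'$. Until you supply an argument of this kind (or a genuine substitute for it), part (1) is unproved, and with it parts (2)--(4), which you derive from it.
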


In particular, we have

\begin{theorem}\label{par-min}
Suppose that $W_J$ is a finite Coxeter group or an affine Weyl group. Let $\CO$ be a $\d$-twisted $W_J$-conjugacy class of $W$. Then

\begin{enumerate}
\item \textbf{Red-Min} property holds for $\CO$. 

\item $\CO_{\min}$ is a single $\sim_\d$-equivalence class. 

\item If moreover, $\CO$ is distinguished, then $\CO_{\min}$ is a single $\approx_\d$-equivalence class. 
\end{enumerate}
\end{theorem}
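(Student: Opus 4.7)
The plan is to apply the decomposition of Theorem~\ref{w-dec} to write $\CO = W_J \cdot_\d (C w)$ for some $w \in {}^J W$ and some $\Ad(w) \circ \d$-twisted conjugacy class $C$ of $W_I$, where $I := I(J, w, \d) \subseteq J$, and then to invoke Theorem~\ref{par-main} in order to transfer the desired properties from $C$ to $\CO$. This is a ``reduce to an elliptic class inside a parabolic'' strategy, combined at the base with the already-known Geck--Pfeiffer-type results recalled in Theorem~\ref{f-min}.

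For Part (3), I would use the distinguished hypothesis to choose $w \in \CO \cap {}^J W$; the bijection of Theorem~\ref{w-dec} then forces $1 \in C$. Since the identity is the unique length-zero element of any Coxeter group, $C_{\min} = \{1\}$ is trivially a single $\approx_{\Ad(w) \circ \d}$-equivalence class, so Theorem~\ref{par-main}(4) delivers the conclusion for $\CO$ at once.

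For Parts (1) and (2), the task reduces, via Theorem~\ref{par-main}(2),(3), to verifying the Red-Min property for $C$ and the $\sim_{\Ad(w) \circ \d}$-uniqueness of $C_{\min}$. When $W_J$ is a finite Coxeter group, or when $W_J$ is affine and $I \subsetneq J$, the subgroup $W_I$ is a finite Coxeter group, and both properties follow immediately by applying Theorem~\ref{f-min}(1),(2) to the twisted conjugacy class $C$ in $W_I$ (note that parts (1) and (2) of that theorem do not require ellipticity).

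The main obstacle is the remaining case in which $W_J$ is affine and $I = J$, which occurs precisely when $\Ad(w) \circ \d$ permutes the simple reflections of $J$. Here Theorem~\ref{par-main} offers no genuine reduction: the ``inner'' datum $C$ already lives in the affine Weyl group $W_J$ itself, so that transferring back to $\CO$ is tautological. One must instead establish Parts (1) and (2) directly for twisted conjugacy classes of an affine Weyl group --- the affine analogue of Theorem~\ref{f-min}(1),(2). This independent input is exactly what is supplied by the conceptual argument of~\cite{HN12}, and it constitutes the substantive content of the proof beyond the formal reduction afforded by Theorems~\ref{w-dec} and~\ref{par-main}.
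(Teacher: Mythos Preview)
Your argument is correct and follows the same route as the paper: reduce via Theorems~\ref{w-dec} and~\ref{par-main} to the class $C$ inside $W_I$, and then invoke the appropriate Geck--Pfeiffer-type result for $C$. Your case split (finite $W_J$, or affine $W_J$ with $I\subsetneq J$, versus affine $W_J$ with $I=J$) is more explicit than the paper's terse sentence, but the content is identical. Your handling of Part~(3) via $C_{\min}=\{1\}$ is exactly right.

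One correction: in the final paragraph you attribute the affine analogue of Theorem~\ref{f-min}(1),(2) to~\cite{HN12}. That reference treats \emph{finite} Coxeter groups (it is the conceptual proof of Theorem~\ref{f-min} itself). The affine result you need is Theorem~\ref{a-min}, proved in~\cite{HN14} by combining Theorem~\ref{a-red} with Theorem~\ref{f-min}. So the ``independent input'' in the case $I=J$ is precisely Theorem~\ref{a-min}, which the paper states just after Theorem~\ref{par-min} and which is established without circularity.
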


The case where $W_J$ is a finite Coxeter group is obtained in \cite[\S 3]{HeMin} by combining Theorem \ref{par-main} with Theorem \ref{f-min}. A different proof is obtained by Nie in \cite{Ni13}. The case where $W_J$ is an affine Weyl group can be obtained in the same way by using Theorem \ref{a-min} we are going to discuss instead of Theorem \ref{f-min}. 

\subsection{Affine Weyl groups}

\subsubsection{Definition}
We first recall the definition of affine Weyl groups. 

Let $\sR = (X^*,X_*, R, R^\vee, \Pi)$ be a based reduced root datum, i.e., $X^*$ and $X_*$ are free abelian groups of finite rank together with a perfect pairing $\< ~ , ~ \> : X^* \times X_* \rightarrow \BZ$, $R \subset X^*$ is the set of roots, $R^\vee \subset X_*$ is the set of coroots and $\Pi \subset R$ is the set of simple roots. Let $V=X_* \otimes \BR$. For any $\alpha \in R$, we have a reflection $s_\alpha$ on $V$ defined by $s_{\alpha}(x) = x - \langle \alpha , x \rangle \alpha^\vee$. For any $\a \in R$ and $k \in \BZ$, we have an affine root $\tilde \a=\a+k$ and an affine reflection $s_{\tilde \a}$ on $V$ defined by $s_{\tilde \a}(x)=x-(\<\a, x\>+k) \a^\vee$. 

The finite Weyl group $W_0$ is the subgroup of $GL(V)$ generated by $s_\a$ for $\a \in R$. The affine Weyl group and the extended affine Weyl group are defined by $$W_a=\BZ R^\vee \rtimes W_0, \qquad \tW=X_* \rtimes W_0.$$ Let $\text{Aff}(V)$ be the group of affine transformations on $V$. We may identify $\tW$ with the subgroup of $\text{Aff}(V)$ generated by $t^\l$ for $\l \in X_*$ and $W_0$, where $t^\l$ is the translation $v \mapsto v-\l$ on $V$. We may also identify $W_a$ with the subgroup of $\text{Aff}(V)$ generated by $s_{\tilde \a}$, where $\tilde \a$ runs over all the affine roots. 

The set $\Pi$ of simple roots determines the set $R_+$ of positive roots, the dominant chamber $$\CC=\{x \in V; \<\a, x\> >0 \text{ for all } \a \in \Pi\},$$ the anti-dominant chamber $$\CC^-=\{x \in V; \<\a, x\> <0 \text{ for all } \a \in \Pi\}$$ and the base alcove $$\fka=\{x \in V; -1<\<\a, x\><0 \text{ for all } x \in R_+\} \subset \CC^-.$$

Let $\BS_0=\{s_\a; \a \in \Pi\}$. Then $(W_0, \BS_0)$ is a Coxeter system. Let $\tilde \BS$ be the set of $s_{\tilde \a}$, where the hyperplane $H_{\tilde \a}=V^{\tilde \a}$ is a wall of $\fka$. Then $(W_a, \tilde \BS)$ is again a Coxeter system. 

The length of an element in $\tW$ is the number of walls between the base alcove $\fka$ and the image of $\fka$ under the action of that element. The explicit formula for length function is given by Iwahori and Matsumoto in \cite{IM}: for $\l \in X_*$ and $w \in W_0$. 
$$\ell(t^\l w)=\sum_{\a \in R_+, w \i(\a) \in R_+} |\<\a, \l\>|+\sum_{\a \in R_+, w \i(\a) \in R_-} |\<\a, \l\>-1|.$$ 

This length function extends the length function of the Coxeter group $W_a$ and we have a semi-direct product $$\tW=W_a \rtimes \Omega,$$ where $\Omega=\{w \in \tW; \ell(w)=0\}$ is the stabilizer of the base alcove $\fka$. 

The definitions in \S \ref{sim-approx} still make sense for $\tW$. The only modification that we need is the definition of $\approx$-equivalence relation on $\tW$. We write $w \approx_\d w'$ if there exists $\t \in \Omega$ such that $w \to_\d \t w' \d(\t) \i$ and $\t w' \d(\t) \i \to_\d w$. 

For any $J \subset \Pi$, set $\tW_J=X_* \rtimes W_J$. We call $\tW_J$ a {\it parabolic subgroup} of $\tW$. This is the extended affine Weyl group associated to the root datum $\sR_J=(X^*, X_*, R_J, R^\vee_J, J)$, where $R_J \subset R$ is the sub root system spanned by the roots in $J$ and $R^\vee_J \subset R^\vee$ is the set of corresponding coroots. 

For any $K \subsetneqq \tilde \BS$ with $W_K$ finite, we call $W_K$ a {\it parahoric subgroup} of $\tW$. 

\subsubsection{Arithmetic invariants}\label{I-ari} Let $\d: \tW \to \tW$ be a group automorphism with $\d(\tilde \BS)=\tilde \BS$. We first discuss the classification of $\d$-twisted conjugacy classes of $\tW$. Motivated by Kottwitz's work \cite{Ko85} and \cite{Ko97}, we introduce two arithmetic invariants. 

Since $\d$ preserve the length function on $\tW$, we have $\d(\Omega)=\Omega$. Let $\Omega_\d$ be the set of $\d$-coinvariants on $\Omega$. The Kottwitz map $\k=\k_{\tW, \d}: \tW \to \Omega_\d$ is obtained by composing the natural projection map $\tW \to \tW/W_a \cong \Omega$ with the projection map $\Omega \to \Omega_\d$. It is constant on each $\d$-twisted conjugacy class of $\tW$. This gives one invariant.

Another invariant is given by the Newton map. For any $w \in \tW$, we regard the element $w \d$ as an element in $\text{Aff}(V)$. There exists $n \in \BN$ such that $(w \d)^n=t^\xi$ for some $\xi \in X_*$. Let $\nu_{w, \d}=\xi/n$ and $\bar \nu_{w, \d} \in \CC$ be the unique dominant element in the $W_0$-orbit of $\nu_{w, \d}$. We call $\bar \nu_{w, \d}$ the {\it Newton point} of $w$ (with respect to the $\d$-conjugation action). It is known that $\nu_{w, \d}$ is independent of the choice of $n$ and $\bar \nu_{w, \d}=\bar \nu_{w', \d}$ if $w$ and $w'$ are $\d$-conjugate. Let $\CO$ be a $\d$-twisted conjugacy class. We write $\nu_\CO=\bar \nu_{w, \d}$ for any $w \in \CO$ and we call $\nu_\CO$ the Newton point of $\CO$. 

Define a map $$f: \tW/(\tW)_\d \to \Omega_\d \times \CC, \quad w \mapsto (\k(\CO), \nu_\CO).$$

This map is not surjective. The Newton point of an element $w \in \tW$ is a rational dominant coweight satisfying an integrality conditions. The map is not injective, either. Any $\d$-twisted conjugacy class that intersects a parahoric subgroup is mapped to $(1, 0)$. However, there is a nice lifting of $\Im(f)$ in $\tW/(\tW)_\d$. This is given in terms of straight $\d$-twisted conjugacy classes. We will discuss it in the next subsection.  

\subsection{Straight and non-straight conjugacy classes}\label{str}

\subsubsection{Definition}
Let $w \in \tW$. We say that $w$ is {\it $\d$-straight} if $\ell(w \d(w) \cdots \d^{n-1}(w))=n \ell(w)$ for all $n \in \BN$. We may formulate it in a slightly different way. Regard $\d$ as an element in the group $\tW \rtimes \<\d\>$ and extend the length function from $\tW$ to $\tW\rtimes \<\d\>$ by requiring $\ell(\d)=0$. Then $(w \d)^n=w \d(w) \cdots \d^{n-1}(w) \d^n \in \tW \rtimes \<\d\>$ and $w$ is $\d$-straight if and only if $\ell((w\d)^n)=n \ell(w)$ for all $n \in \BN$. By \cite[\S 2.4]{He99}, $w$ is $\d$-straight if and only if $\ell(w)=\<2 \rho, \bar \nu_{w, \d}\>$, where $\rho$ is the half sum of positive roots in $R$. We say that a $\d$-twisted conjugacy $\CO$ of $\tW$ is {\it straight} if it contains a $\d$-straight element. The following result is obtained in \cite[Theorem 3.3]{HN14}.

\begin{theorem}\label{str-bw} 
The map $f: \tW/\tW_\d \to \Omega_\d \times \CC$ induces a bijection between the set of straight $\d$-twisted conjugacy classes and $Im(f)$. 
\end{theorem}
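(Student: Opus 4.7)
The plan is to separately establish two statements: (i) the restriction of $f$ to the straight $\d$-twisted conjugacy classes is injective; (ii) every element of $\Im(f)$ is the image of some straight $\d$-twisted conjugacy class. Surjectivity onto $\Im(f)$ together with injectivity then yields the claimed bijection.

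For (ii), I would first record the inequality $\ell(w) \ge \<2\rho, \bar\nu_{w,\d}\>$ for every $w \in \tW$, with equality precisely when $w$ is $\d$-straight. This follows by applying the Iwahori--Matsumoto length formula to $(w\d)^n = t^\xi \d^n$ for large $n$: the length of a translation $t^\xi$ equals $\<2\rho, \bar\xi\>$, while subadditivity of the length gives $\ell((w\d)^n) \le n\ell(w)$, so $\ell(w) \ge \<2\rho, \bar\nu_{w,\d}\>$, with equality equivalent to $\ell((w\d)^n) = n\ell(w)$ for all $n$, i.e.\ to $w$ being $\d$-straight. In particular, $\ell(\CO) \ge \<2\rho, \bar\nu_\CO\>$. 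To realize a given pair $(\bar\kappa, \bar\nu) \in \Im(f)$ by a straight class, I would fix a representative $\t \in \Omega$ of $\bar\kappa$ and choose $v \in W_0$ and $\mu \in X_*$ so that the element $w = t^\mu v \t$ satisfies $\d$-orbit average equal to $\bar\nu$; a direct computation using the length formula then shows $\ell(w) = \<2\rho, \bar\nu\>$, so $w$ is $\d$-straight. The existence of such a presentation is guaranteed by the very definition of $\Im(f)$.

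For (i), suppose $w, w'$ are $\d$-straight elements with $\k(w) = \k(w')$ and $\bar\nu_{w,\d} = \bar\nu_{w',\d} = \bar\nu$. Let $J = \{\a \in \Pi; \<\a, \bar\nu\> = 0\}$ and consider the parabolic subgroup $\tW_J$. The idea is to use the partial conjugation machinery of Theorem \ref{w-dec} and Theorem \ref{par-main} (applied to $J$) to bring both $w$ and $w'$ by length-preserving $\d$-conjugation into an element of the form $u x$ with $x \in {}^J \tW$ satisfying $\Ad(x)\d(J) = J$ and $u$ in a Levi-type subgroup with \emph{central} Newton point. Since $\bar\nu$ is central on the Levi, the element $u$ is \emph{basic} there, and straight basic elements in an extended affine Weyl group are determined by their image in $\Omega_\d$ up to $\d$-conjugacy: this is because a basic straight element has length zero in its Levi (its Newton point is trivial on the Levi), hence lies in the length-zero subgroup, where $\d$-conjugacy is detected exactly by the Kottwitz map.

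The main obstacle is the injectivity step, more specifically verifying that the reduction into $\tW_J$ respects both arithmetic invariants simultaneously and that the length-preserving moves of Theorem \ref{par-main} do not mix distinct straight classes. Concretely, one must check that $\k$ and $\bar\nu$ are indeed constant along the $\approx_\d$-moves produced by \textbf{Red-Min}, and that the minimal $J$ extracted from Theorem \ref{w-dec} is the same for $w$ and $w'$ once their Newton points agree. Once this bookkeeping is done, the problem reduces to the basic case, where the Kottwitz invariant alone distinguishes $\d$-conjugacy classes, completing the argument.
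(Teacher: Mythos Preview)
The paper does not prove this theorem here; it merely cites \cite[Theorem 3.3]{HN14}. So there is no in-paper argument to compare against, but your proposal still has genuine gaps that would need to be filled before it constitutes a proof.

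\medskip

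\textbf{Surjectivity.} Your argument for (ii) asserts the existence of $v \in W_0$ and $\mu \in X_*$ such that $w = t^\mu v \t$ has the prescribed Newton point \emph{and} satisfies $\ell(w) = \<2\rho,\bar\nu\>$, but you do not construct such $v,\mu$. The phrase ``a direct computation using the length formula then shows'' hides the entire content: most choices of $v,\mu$ with the correct Newton point will \emph{not} be straight. The actual construction in \cite{HN14} is explicit and nontrivial; appealing to ``the very definition of $\Im(f)$'' gives you only an arbitrary element with those invariants, not a straight one.

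\medskip

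\textbf{Injectivity.} There is a conflation of two different reductions. Theorems~\ref{w-dec} and~\ref{par-main} concern the partial conjugation action of a \emph{finite} subgroup $W_J \subset W_0$ on $\tW$: they put an element into the form $ux$ with $x \in {}^J\tW$ and $u \in W_{I(J,x,\d)}$, where $W_{I(J,x,\d)}$ is a \emph{finite} Coxeter group. This $u$ is not an element of the Levi affine Weyl group $\tW_J = X_* \rtimes W_J$, and it makes no sense to speak of its ``Newton point being central'' or of it being ``basic.'' What you actually want is the passage from $\tW$ to $\tW_J$ described later in \S\ref{st-qua} (conjugation by $z \in {}^J W_0$ sending a straight element to a length-zero element of $\tW_J$), but that is a different mechanism and its justification is precisely the content of \cite[\S3]{HN14}.

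Even if one grants the correct reduction to $\Omega_J$, the final step is not automatic: you need that the two resulting length-zero elements have the same image in $(\Omega_J)_{\d'}$ for the twisted action $\d' = \Ad(x)\circ\d$, not merely in $\Omega_\d$. Passing from equality of $(\k,\bar\nu)$ in $\tW$ to equality of the refined Kottwitz invariant in the Levi is exactly the substantive step, and calling it ``bookkeeping'' understates it. In \cite{HN14} this is handled by an alcove-geometric argument showing that any two straight elements with the same Newton point are $\approx_\d$-equivalent; your outline does not supply an alternative.
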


Note that in a straight $\d$-twisted conjugacy class $\CO$, the set of minimal length elements are exactly the set of $\d$-straight element in it. Moreover, if $\CO'$ is another $\d$-twisted conjugacy classes with $f(\CO')=f(\CO)$, then $\ell(w')> \<2 \rho, \nu_\CO\>$ for any $w \in \CO'$. 

\subsubsection{Minimal length elements} We have seen in \S \ref{par-i} that the map from non-distinguished $\d$-twisted $W_J$-conjugacy classes to distinguished $\d$-twisted $W_J$-conjugacy classes is ``compatible'' with the length function. There is a similar relation between non-straight conjugacy classes and straight conjugacy classes of $\tW$. It is proved in \cite[Proposition 2.7]{HN14}. 

\begin{theorem}\label{a-red}
Let $\CO \in \tW/(\tW)_\d$ and $\CO'$ be a straight $\d$-twisted conjugacy class with $f(\CO)=f(\CO')$. Then for any $w \in \CO$, there exists a triple $(x, K, u)$ with $w \to_\d u x$, where $x$ is a $\d$-straight element in $\CO'$, $K$ is a subset of $\tilde \BS$ such that $W_K$ finite, $x \in {}^K \tW$ and $\Ad(x) \d(K)=K$, and $u$ is an element in $W_K$. 
\end{theorem}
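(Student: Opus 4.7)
My plan is to combine the affine \textbf{Red-Min} property (Theorem \ref{par-min}), the partial-conjugation decomposition (Theorems \ref{w-dec} and \ref{par-main}), the length characterization $\ell(w)=\langle 2\rho,\bar\nu_{w,\d}\rangle$ of $\d$-straightness, and the invariant classification of straight classes (Theorem \ref{str-bw}).

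First, I would reduce $w$ via a sequence of conjugations by simple reflections in $\tilde\BS$, weakly decreasing length, to a minimal length element $y$ of its $W_a$-partial $\d$-conjugacy class inside $\tW$. This follows from Theorem \ref{par-min} after decomposing $\tW=\sqcup_{\tau\in\Omega} W_a\tau$ and observing that on each piece the $W_a$-partial $\d$-conjugation is equivalent to the $W_a$-partial $\Ad(\tau)\d$-conjugation on the Coxeter group $W_a$. Then, recursive use of Theorem \ref{w-dec} (starting with $W_J=W_a$ and descending through finite parahorics with appropriate twists), combined with Theorem \ref{par-main}(1) to preserve the $\to_\d$-relation at each step, arranges $w \to_\d ux$ with $x\in{}^K\tW$, $\Ad(x)\d(K)=K$, and $u\in W_K$. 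Minimality of $\ell(y)$ forces $W_K$ to be finite: otherwise, an affine simple reflection inside $K$ would provide a further strictly length-decreasing conjugation of $y$, contradicting minimality.

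Next, I would verify that $x$ is $\d$-straight. Since $\Ad(x\d)$ stabilizes $W_K$, iterated expansion yields $(ux\d)^n = u_n (x\d)^n$ with $u_n\in W_K$ of uniformly bounded length. Combined with the minimality of $\ell(ux)$ under simple-reflection conjugation, and working with the extended length on $\tW\rtimes\langle\d\rangle$, this forces $\ell((x\d)^n)=n\ell(x)$ for all $n$, i.e.\ $\ell(x)=\langle 2\rho,\bar\nu_{x,\d}\rangle$. The invariants then match: $\kappa(x)=\kappa(ux)=\kappa(\CO)$ since $u\in W_a=\ker\kappa$, and $\bar\nu_{x,\d}=\bar\nu_{ux,\d}=\nu_\CO$ since the bounded $u_n$-factor drops out of the Newton limit. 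By Theorem \ref{str-bw}, the $\d$-straight element $x$ belongs to the unique straight class with these invariants, namely $\CO'$.

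The main obstacle is the $\d$-straightness step: ruling out length-reductions that only appear after passing to large powers $(x\d)^n$, rather than from single simple-reflection conjugations. This requires a careful interplay between the Iwahori--Matsumoto length formula on $\tW$, the $\Ad(x\d)$-stability of $W_K$, and the finiteness of $W_K$, in order to convert the local (simple-reflection) minimality of $\ell(ux)$ into the multiplicative identity $\ell((x\d)^n)=n\ell(x)$.
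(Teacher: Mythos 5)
There is a genuine gap, in fact two. First, your opening step is circular within this paper's logical structure: you invoke Theorem \ref{par-min} (and implicitly the \textbf{Red-Min} property) for the affine Weyl group, but the affine case of Theorem \ref{par-min} is obtained from Theorem \ref{a-min}, and Theorem \ref{a-min} is itself deduced by combining Theorem \ref{a-red} --- the very statement you are proving --- with Theorem \ref{f-min}. Moreover, the recursion you sketch through Theorem \ref{w-dec} does not get off the ground: applied with $J=\tilde \BS$ (i.e.\ $W_J=W_a$) one has ${}^J\tW=\Omega$ and $I(\tilde\BS,\t,\d)=\tilde\BS$, so the decomposition is just the coset decomposition $\tW=\sqcup_\t W_a\t$ and gives no descent to finite parahorics; deciding \emph{which} finite $K$ to descend to is precisely the content of the theorem, not something Theorem \ref{w-dec} supplies.

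Second, the straightness step does not work as stated. The properties you list ($x\in{}^K\tW$, $\Ad(x)\d(K)=K$, $W_K$ finite, $u\in W_K$, and minimality of $\ell(ux)$ under simple-reflection conjugation) do \emph{not} force $\ell((x\d)^n)=n\ell(x)$: take $\tW$ of type $\tilde A_1$ with $\d=\mathrm{id}$, $w=s_1$, and the admissible-looking triple $K=\emptyset$, $u=1$, $x=s_1$; here $s_1$ is of minimal length in its class, all structural conditions hold, yet $x$ is not straight (the correct triple is $x=1$, $K=\{s_1\}$, $u=s_1$). So straightness must come from how $x$ and $K$ are \emph{constructed}, which your plan leaves unspecified. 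The actual proof (\cite[Proposition 2.7]{HN14}) is a direct geometric argument on the apartment: one shows $w$ can be $\to_\d$-reduced to an element $w'$ whose fixed-point affine subspace $V_{w'}=\{v\in V;\,w'\d(v)=v+\nu_{w'}\}$ meets the closure of the base alcove, then takes a point $p\in V_{w'}\cap\bar\fka$, lets $K$ be the set of simple affine reflections fixing $p$, and writes $w'=ux$ with $u\in W_K$ and $x\in{}^K\tW$; straightness of $x$ follows because the alcoves $(x\d)^n\fka$ move in the direction of $\nu$ without recrossing affine hyperplanes, so exactly $\<2\rho,\bar\nu_{x,\d}\>$ walls are crossed per iteration. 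This Newton-point/alcove argument is the missing key idea; the invariant-matching and the appeal to Theorem \ref{str-bw} at the end of your proposal are fine once straightness is in hand.
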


The following result is obtained in \cite[Theorem 2.9]{HN14} by combining Theorem \ref{a-red} with Theorem \ref{f-min}.

\begin{theorem}\label{a-min}
Let $W$ be an affine group and $\d: W \to W$ is a group automorphism with $\d(\BS)=\BS$. Let $\CO \in W/W_\d$. Then 

\begin{enumerate}
\item \textbf{Red-Min} property holds for $\CO$. 

\item $\CO_{\min}$ is a single $\sim_\d$-equivalence class. 
\end{enumerate}
\end{theorem}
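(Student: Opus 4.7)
The plan is to lift Theorem \ref{f-min} from the finite Coxeter setting to the affine setting by means of the reduction step provided by Theorem \ref{a-red}. Fix a $\delta$-twisted conjugacy class $\CO$ of $W$ and, using Theorem \ref{str-bw}, let $\CO'$ be the unique straight $\delta$-conjugacy class with $f(\CO) = f(\CO')$. For any $w \in \CO$, Theorem \ref{a-red} produces a reduction $w \to_\delta u x$ with $x \in {}^{K} W$ a $\delta$-straight element of $\CO'$, $W_K$ finite, $\Ad(x)\delta(K) = K$, and $u \in W_K$; in particular $\ell(ux) = \ell(u) + \ell(x)$. This is the starting point, and is already halfway to a minimal length element of $\CO$.

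The key technical step I would record first is a transfer identity. For any $v \in W_K$, a direct computation using $\Ad(x)\delta(W_K) = W_K$ gives
\[
v \cdot_\delta (u x) \;=\; vux\delta(v)^{-1} \;=\; vu\,\Ad(x)\delta(v)^{-1}\, x \;=\; \bigl(v \cdot_{\Ad(x)\circ\delta} u\bigr)\, x,
\]
and the prefix stays in $W_K$, so $\ell\bigl((v\cdot_{\Ad(x)\circ\delta} u)\, x\bigr) = \ell(v\cdot_{\Ad(x)\circ\delta} u) + \ell(x)$. Consequently the $\delta$-conjugation action of $W_K$ on $W_K x$ is faithfully mirrored, length and all, by the $\Ad(x)\circ\delta$-twisted conjugation action of $W_K$ on itself, and all three of $\to$, $\approx$ and $\sim$ transfer intact between the two settings.

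With this in hand, part~(1) is immediate. Since $(W_K, K)$ is a finite Coxeter system and $\Ad(x)\circ\delta$ is a length-preserving automorphism of it, Theorem \ref{f-min}(1) reduces $u$ to a minimal length element $u_0$ of its $\Ad(x)\circ\delta$-twisted conjugacy class in $W_K$; the transfer gives $ux \to_\delta u_0 x$, and concatenating yields $w \to_\delta u_0 x$ with $\ell(u_0 x) = \ell(u_0) + \langle 2\rho, \nu_\CO \rangle$. A brief length comparison, using that any minimal length element of $\CO$ must itself arise from such a reduction, identifies $u_0 x$ as a minimal length representative.

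For part~(2), start from two minimal length elements $w_1, w_2 \in \CO_{\min}$. The reduction above applied to each $w_i$ is forced to be length-preserving, yielding $w_i \approx_\delta u_{0,i} x_i$ with $x_i \in \CO'$ straight and $u_{0,i}$ minimal in its $\Ad(x_i)\circ\delta$-twisted conjugacy class in $W_{K_i}$. The hard part will be the global step of connecting two different straight representatives $x_1, x_2$ of $\CO'$: one first has to show that the set of $\delta$-straight elements of $\CO'$ (which by the remark after Theorem \ref{str-bw} coincides with $\CO'_{\min}$) forms a single $\sim_\delta$-equivalence class, using the explicit description of straight elements in terms of their Newton and Kottwitz data behind Theorem \ref{str-bw}. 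Tracking how such a $\sim_\delta$-chain from $x_2$ to $x_1$ conjugates the pair $(K_2, u_{0,2})$ into the frame of $x_1$ reduces matters to a $\sim_{K_1, \Ad(x_1)\circ\delta}$-equivalence inside the finite group $W_{K_1}$, which is furnished by Theorem \ref{f-min}(2) and then lifted to a $\sim_\delta$-equivalence in $W$ via the transfer identity. Once this global step is pinned down, the combination of the two ingredients delivers both assertions of the theorem.
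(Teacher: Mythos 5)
Your skeleton matches the route the paper indicates (it proves Theorem \ref{a-min} by citing \cite[Theorem 2.9]{HN14} as ``combining Theorem \ref{a-red} with Theorem \ref{f-min}''), and your transfer identity $v\cdot_\delta(ux)=(v\cdot_{\Ad(x)\circ\delta}u)\,x$, with lengths adding because $x\in{}^K\tW$ and $\Ad(x)\delta(K)=K$, is correct. But the proposal stops exactly where the real work begins. In part (1), the sentence ``a brief length comparison \dots identifies $u_0x$ as a minimal length representative'' is not a proof. What you actually know is that every element of $\CO_{\min}$ is $\approx_\delta$-equivalent to some element $u'x'$ of the same shape, with a possibly different straight representative $x'$ and a different finite subset $K'$; to conclude that your particular $u_0x$ is of minimal length in $\CO$ you must show that $\ell(u_0)$ is independent of the frame $(x,K)$, i.e.\ compare the finite parts across different triples. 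Without that, \textbf{Red-Min} is not established: the reduction applied to $u_0x$ may just keep returning elements of the same length, and you have no way to continue strictly downward. A comparison of this kind is precisely the nontrivial content (it is what the standard-quadruple analysis in the proof of Theorem \ref{1-19} does, using \cite[Proposition 2.4.1]{CH} and \cite[Theorem 7.5]{HeMin}), not a routine remark.

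For part (2) you explicitly defer ``the hard part'' to the claim that the $\delta$-straight elements of $\CO'$ form a single $\sim_\delta$-class, to be extracted from ``the Newton and Kottwitz data behind Theorem \ref{str-bw}''. Theorem \ref{str-bw} only classifies straight classes by their invariants and gives no connectivity; the connectivity statement is essentially the theorem stated immediately after this one in the paper, \cite[Theorem 3.8]{HN14}, which in \cite{HN14} is proved after (and independently of) the present result and is the hardest part of that paper, so invoking it here both inverts the logical order and leaves the main difficulty unproved. Moreover, even granting it, your ``tracking'' step is unjustified: the conjugating elements in a $\sim_\delta$-chain from $x_2$ to $x_1$ need not carry $W_{K_2}$ into $W_{K_1}$, and even when $x_1=x_2$ the finite sets $K_1,K_2$ can differ, so one cannot directly land in a single $\sim_{K_1,\Ad(x_1)\circ\delta}$-equivalence inside $W_{K_1}$ and quote Theorem \ref{f-min}(2); one needs an argument of the ``elliptic classes never fuse'' type (Theorem \ref{neverfuse} and its affine analogue) or the quadruple-equivalence analysis. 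So the proposal reproduces the easy reduction-and-transfer layer of the paper's cited proof, but the minimality claim in (1) and the cross-frame global step in (2) are genuine gaps.
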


In each distinguished $\d$-twisted $W_J$-conjugacy classes, we have a canonical minimal length element, i.e., the element in that conjugacy class that is also of minimal length with respect to the left multiplication of $W_J$. The straight $\d$-twisted conjugacy classes, in general, does not have such a canonical choice of minimal length element. However, all the minimal length elements in a given straight $\d$-twisted conjugacy class still form a single $\approx$-equivalence class. This is proved in \cite[Theorem 3.8]{HN14}. 

\begin{theorem}
Let $\CO$ be a straight $\d$-twisted conjugacy class of $\tW$. Then $\CO_{\min}$ is a single $\approx_\d$-equivalence class. 
\end{theorem}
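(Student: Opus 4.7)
The plan is to bootstrap from the $\sim_\d$-equivalence provided by Theorem~\ref{a-min}(2) to the stronger $\approx_\d$-equivalence, exploiting the rigid structure of a straight class. Set $\nu = \nu_\CO$ and fix $w, w' \in \CO_{\min}$; by the remark following Theorem~\ref{str-bw}, both are $\d$-straight with $\ell(w) = \ell(w') = \langle 2\rho, \nu\rangle$, and by Theorem~\ref{a-min}(2) we may write $w' = x_k \cdots x_1 \cdot_\d w$ via a chain that is length-preserving at each step, with each $x_i$ satisfying the one-sided length-additivity condition built into the definition of $\sim_\d$. The task is to replace this $\sim_\d$-chain by a genuine $\approx_\d$-chain, i.e.\ a sequence of conjugations by simple reflections combined with $\Omega$-twists.

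First I would reduce to a \emph{basic} (central-Newton-point) situation. After a $W_0$-conjugation, which can be realized by length-preserving simple-reflection conjugations on $\d$-straight elements once $\nu$ is made dominant, let $J = \{\a \in \Pi : \langle \a, \nu\rangle = 0\}$ and consider the parabolic $\tW_J = X_* \rtimes W_J$. A standard feature of straight elements (cf.\ \S\ref{str}) is that each $\d$-straight element of $\CO$ is $\approx_\d$-equivalent to one lying in $\tW_J$, and inside $\tW_J$ the induced $\d$-twisted class is basic, meaning its Newton point is central in the Levi root system $R_J$. This reduces the problem to the basic case inside $\tW_J$, an affine Weyl datum of strictly smaller semisimple rank.

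Next I would settle the basic case directly. When $\nu$ is central in $R_J$, Theorem~\ref{str-bw} applied inside $\tW_J$, together with the identity $\ell(w) = \langle 2\rho_J, \nu\rangle$, shows that the $\d$-straight elements of the reduced class are precisely the translates of a length-zero element $\tau \in \Omega_J$ by length-preserving $W_J$-conjugation. Two such elements with the same Kottwitz invariant differ by a $\d$-twisted conjugation by some $y \in W_J$ combined with an $\Omega_J$-twist; since each simple reflection appearing in a reduced expression of $y$ acts on the running element without changing its length (the element being $\d$-straight and hence of minimal length in its class), this conjugation is a genuine $\approx_\d$-chain. Composing with the first reduction yields $w \approx_\d w'$ in $\tW$.

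The main obstacle is replacing the ambient $\sim_\d$-chain with an $\approx_\d$-chain, that is, showing each $x_i$ can be broken into simple reflections whose successive partial products also act length-preservingly on the running element. One cannot apply this naively, since the $\sim_\d$-definition only requires the \emph{final} length $\ell(w) + \ell(x_i)$ to be additive on one side, not that every intermediate step is length-preserving. Overcoming this will require an inductive argument that uses the $\d$-straightness of every intermediate element --- forced by minimality of $w$ within $\CO$ --- to guarantee that any simple reflection appearing in a reduced expression of $x_i$, when inserted at the correct position, performs a length-preserving conjugation on the running $\d$-straight element. Once this replacement lemma is in place, the upgrade from $\sim_\d$ to $\approx_\d$ is immediate, and combined with the reduction to the basic case completes the proof.
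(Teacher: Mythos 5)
Your outline names the right landmarks (make the Newton point dominant, pass to the parabolic $\tW_J$ with $J=\{\a\in\Pi;\langle\a,\nu\rangle=0\}$, treat the basic case, then upgrade $\sim_\d$ to $\approx_\d$), but the step that carries the entire weight of the theorem is left as a ``replacement lemma'' that you do not prove, and the justification you sketch for it is not correct. Minimality of length in $\CO$ only gives $\ell(s\, u\, \d(s))\ge \ell(u)$ for a straight element $u$ and a simple reflection $s$; it does not rule out $\ell(s\,u\,\d(s))=\ell(u)+2$, in which case the intermediate element is no longer of minimal length and hence no longer straight. So the assertion that ``the $\d$-straightness of every intermediate element is forced by minimality'' is exactly what fails, and with it the claim that an arbitrary reduced expression of the conjugating element $y\in W_J$ (or of the $x_i$ in the $\sim_\d$-chain) can be inserted so that every partial conjugation is length-preserving. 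Choosing the right element and the right order of simple reflections is the actual content of the theorem; the same objection applies to your first reduction, where you assert without argument that every $\d$-straight element is $\approx_\d$-equivalent to one lying in $\tW_J$ with dominant $\nu$ --- a statement of essentially the same depth as the theorem itself. There is also a length-function slip in the basic case: for $w$ straight one has $\ell(w)=\langle 2\rho,\nu\rangle$, not $\langle 2\rho_J,\nu\rangle$, and the relation $\approx_\d$ is defined via the length $\ell$ of $\tW$, which does not restrict to $\ell_J$ on $\tW_J$; so ``$\ell_J$-length-zero elements of $\Omega_J$'' are not short in $\tW$, and controlling conjugation inside $\tW_J$ by $\ell_J$ does not control it for $\approx_\d$.

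For comparison, the paper does not prove this statement at all but quotes \cite[Theorem 3.8]{HN14}. The proof there does not proceed by upgrading the $\sim_\d$-chain coming from Theorem \ref{a-min}; it is a separate and more delicate induction which exploits the action of $w\d$ on the apartment (the translation by $\nu_{w,\d}$ on the affine space, the hyperplanes separating the relevant alcoves) to produce, step by step, simple reflections whose conjugation is genuinely length-preserving, together with the $\Omega$-twists built into the definition of $\approx_\d$ for $\tW$. If you want a self-contained argument, you need to supply that inductive mechanism (or an equivalent one); as written, your proposal assumes it.
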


\subsubsection{Standard quadruples}\label{st-qua}
In this subsection, we give standard representative for the straight $\d$-twisted conjugacy classes and give a parametrization of the $\d$-twisted conjugacy classes. 

For $J \subset \Pi$, set $$\fka_J=\{x \in V; -1< \<\a, x\><0 \text{ for all } \a \in R_{J, +}\}.$$ Let $\tilde J$ be the set of simple reflections of the $\tW_J$, in other words, $\tilde J$ consists of the reflections $s_\a$, where the hyperplanes $H_\a$ is a wall of $\fka_J$. We denote by $\ell_J$ the length function on the quasi-Coxeter group $\tW_J$ and by $\Omega_J \subset \tW_J$ the stabilizer of $\fka_J$. Note that $\ell_J$ is different from the restriction to $\tW_J$ of the length function $\ell$ on $\tW$. 

We say that $(J, x, K, C)$ is a {\it standard quadruple} if 
\begin{enumerate}

\item $J \subset \Pi$ with $\d_0(J)=J$, where $\d_0 \in GL(V)$ is the linear part of $\d \in \text{Aff}(V)$;

\item $\Ad(x) \circ \d$ normalizes $\tW_J$ and induces a permutation on the sef of affine simple reflections of $\tW_J$ and $\<\a, \nu_x\>>0$ for all $\a \in \Pi-J$;

\item $K \subset \tilde J$ with $W_K$ finite and $\Ad(x) \d(K)=K$.

\item $C$ is an elliptic $\Ad(x) \circ \d$-twisted conjugacy class of $W_K$. 
\end{enumerate}

Note that the condition (2) implies that $J$ is the subset of $\Pi$ consisting of simple roots $\a$ with $\<\a, \nu_{x, \d}\>=0$. In other words, $J$ is determined by the element $x$. Similarly, $C$ is contained in the affine Weyl group $W_{J, a}$ of the root datum $\sR_J$ and $K$ is determined by $C$ by taking the support of any element in $C$. It is also worth mentioning that $\nu_{x, \d}$ is dominant and $\nu_{u x, \d}=\nu_{x, \d}$ for any $u \in C$. 

Let $\tilde \G_\d$ be the set of standard quadruples. We say that the standard quadruples $(J, x, K, C)$ and $(J', x', K', C')$ are {\it equivalent} in $\tW$ if $J=J'$, there exists $\t \in \Omega_J$ with $x'=\t x \d(\t) \i$ and there exists $w \in \tW_J$ with $x' \d(w) (x') \i=w$ and $C'=w \t C (w \t) \i$. 

\begin{theorem}\label{1-19}
Let $\tW$ be an extended affine Weyl group. The map $$f: \tilde \G_\d \to \tW/\tW_\d, \qquad (J, x, K, C) \mapsto \tW \cdot_\d C x$$ 
induces a bijection between the equivalence classes of standard quadruples and the set of $\d$-twisted conjugacy classes of $\tW$. 

Moreover, a $\d$-twisted conjugacy class of $\tW$ is straight if and only if the standard quadruples corresponding to it are of the form $(J, x, \emptyset, \{1\})$. 
\end{theorem}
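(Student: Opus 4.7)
The plan is to invert the map $f$ by running the reduction of Theorem \ref{a-red} on each conjugacy class, and then to verify that the ambiguity in this reduction is captured exactly by the stated equivalence relation on quadruples. I would start with an arbitrary $\d$-twisted conjugacy class $\CO$ of $\tW$, pick any $w \in \CO$, and apply Theorem \ref{a-red} to obtain a triple $(x, K_0, u)$ with $w \to_\d ux$, where $x$ is $\d$-straight, $W_{K_0}$ is finite, $x \in {}^{K_0}\tW$ satisfies $\Ad(x)\d(K_0)=K_0$, and $u \in W_{K_0}$. Set $J \subset \Pi$ to be the ($\d_0$-stable) set of simple roots $\a$ with $\<\a,\nu_{x,\d}\>=0$. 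Then $x \in \tW_J$, and the $\d$-straightness of $x$ combined with $\<\a,\nu_{x,\d}\>>0$ for $\a \in \Pi \setminus J$ forces $\Ad(x)\circ \d$ to normalize $\tW_J$ and to permute its affine simple reflections; this is conditions (1) and (2) of a standard quadruple.

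Next I would show that $u$ can be conjugated into $W_K$ for some $K \subset \tilde J$ with $W_K$ finite, with $C$ elliptic. Since Newton points are conjugacy invariants and $\nu_{ux,\d}=\nu_{x,\d}$, the element $u=(ux)x\i$ must lie in $\tW_J$; combined with $u \in W_{K_0}$, this places $u$ in a finite parahoric of $\tW_J$. Reapplying the reduction inside $\tW_J$, and absorbing redundant generators into the support of the conjugate of $u$, produces $K \subset \tilde J$ and an elliptic $\Ad(x)\circ \d$-twisted conjugacy class $C$ of $W_K$ containing (a conjugate of) $u$. This yields a standard quadruple $(J,x,K,C)$ with $ux \in f(J,x,K,C) \cap \CO$, giving surjectivity; well-definedness on equivalence classes is a direct check.

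For injectivity, suppose $(J,x,K,C)$ and $(J',x',K',C')$ map to the same class. Kottwitz and Newton invariants are conjugacy invariants, and $\nu_{ux,\d}=\nu_{x,\d}$ together with $\d$-straightness of $x,x'$ gives $J=J'$. The elements $x$ and $x'$ are then both $\d$-straight representatives of the same straight $\d$-twisted class determined by $(\k(\CO),\nu_\CO)$ via Theorem \ref{str-bw}, and so differ by $\d$-conjugation inside $\tW_J$; since $\Omega_J$ is the length-zero subgroup, a length argument separates this $\d$-conjugation into an $\Omega_J$-twist $x'=\t x \d(\t)\i$ followed by a $\tW_J$-twist, which is precisely the first half of the stated equivalence. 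After reducing to $x=x'$, the pairs $(K,C)$ and $(K',C')$ parametrize the same $\Ad(x)\circ\d$-twisted conjugacy class in a common finite parahoric subgroup of $\tW_J$; Theorem \ref{neverfuse} and Proposition \ref{1-10}, applied inside that finite Coxeter group, identify $(K,C)$ and $(K',C')$ up to the residual equivalence.

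For the final assertion, a quadruple $(J,x,\emptyset,\{1\})$ maps to the $\d$-conjugacy class of $x$, which is straight by definition since $x$ is $\d$-straight; conversely, a straight class contains a $\d$-straight representative $x$ itself, and the construction with $u=1$ produces exactly such a quadruple. The \emph{main obstacle} is the injectivity step, specifically the verification that the ambiguity in $x$ (up to $\Omega_J$-conjugation combined with $\tW_J$-conjugation) and the ambiguity in $(K,C)$ (up to the centralizer of $x\d$ in $\tW_J$) match the prescribed equivalence exactly. This requires carefully isolating the length-zero twist by $\Omega_J$ from ordinary $\tW_J$-conjugation and then invoking the finite parametrization of Proposition \ref{1-10} for the residual Coxeter data, using Theorem \ref{neverfuse} to prevent fusion of elliptic classes when enlarging $K$.
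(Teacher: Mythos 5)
Your overall strategy (Theorem \ref{a-red} for surjectivity, invariants plus the decomposition $w=w_1\t$ with $\t\in\Omega_J$ for injectivity) is the same as the paper's, but the endgame of your injectivity argument has a genuine gap. After arranging $x=x'$, you propose to compare $(K,C)$ and $(K',C')$ ``in a common finite parahoric subgroup of $\tW_J$'' via Theorem \ref{neverfuse} and Proposition \ref{1-10}. There is no such common finite parahoric in general: for $SL_3$ (Table \ref{table2}) the basic class is hit by quadruples with $K=\{1,2\}$ and $K'=\{0,1\}$, and $W_{\{1,2\}}$ together with $W_{\{0,1\}}$ generates the infinite affine Weyl group. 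Moreover Theorem \ref{neverfuse} and Proposition \ref{1-10} are statements about a \emph{finite} Coxeter group with conjugation by that group itself, whereas here the conjugation identifying $Cx$ with $C'x$ takes place in the extended affine group $\tW_J$ under $\d'=\Ad(x)\circ\d$-twist. What is actually needed is an affine-specific argument, and this is exactly how the paper closes the step: one compares the fixed subspaces $V_J^{u\d'}$ (spanned by the fundamental coweights outside $K$, resp.\ $K'$) to produce a $\d'$-fixed element $w_2\in\tW_J$ with $\Ad(w_2)(K)=K'$, and only then works inside the single finite group $W_{K'}$, identifying $C'$ with $w_2Cw_2\i$ by \cite[Theorem 7.5]{HeMin} (two elliptic twisted classes with the same characteristic polynomials and the same minimal length coincide). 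Without a substitute for this, injectivity is not established.

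Two further points. First, in your surjectivity step the straight element $x$ furnished by Theorem \ref{a-red} need not have dominant Newton point, so the asserted positivity $\<\a,\nu_{x,\d}\>>0$ for $\a\in\Pi\setminus J$, and the claim $x\in\tW_J$, are unjustified as written; the paper first conjugates by $z\in{}^JW_0$ with $z(\nu_{x,\d})=\bar\nu_{x,\d}$, replacing the triple by $(zx\d(z)\i,\,z(K),\,zuz\i)$, and uses the nontrivial fact (cited from \cite[\S 3.3]{HNx}) that $z(K)\subset\tilde J$; the ellipticity of $C$ then comes from taking $K$ minimal $\Ad(x)\circ\d$-stable containing $\supp(u)$, which your ``absorbing redundant generators'' gestures at but does not pin down. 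Second, in injectivity you invoke ``$\d$-straightness of $x,x'$'': by the Remark following the theorem, the element $x$ of a standard quadruple need not be $\d$-straight; what holds, and what the appeal to Theorem \ref{str-bw} actually requires, is that $x$ lies in a straight $\d$-twisted class, which must be deduced from condition (2) (that $x$ is $\ell_J$-length-zero with $\nu_{x,\d}$ regular outside $J$) rather than assumed.
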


\begin{remark} 
(1) Let $(J, x, K, C)$ be a standard quadruples. Note that for $w, w' \in C$, $w x$ and $w' x$ are $\d$-conjugate by an element in $W_K \subset \tW$. Thus $\tW \cdot_\d C x$ is a single $\d$-twisted conjugacy class of $\tW$. 

(2) A different parametrization is given in \cite[Proposition 5.8]{HN15}. It is more convenient to use this parameterization in the study of cocenter and representations of affine Hecke algebras. 

(3) For any standard quadruple $(J, x, K, C)$, $x$ is contained in a straight $\d$-twisted conjugacy class of $\tW$, but $x$ may not be a $\d$-straight element. 
\end{remark}

\begin{proof}
We first show that every $\d$-twisted conjugacy class $\CO$ of $\tW$ comes from a standard quadruple. By Theorem \ref{a-red}, there exists a triple $(x, K, u)$ with $u x \in \CO_{\min}$, where $x$ is a $\d$-straight element, $K$ is a subset of $\tilde \BS$ such that $W_K$ finite, $x \in {}^K \tW$ and $\Ad(x) \d(K)=K$, and $u$ is an element in $W_K$. We may assume furthermore that $K$ is the smallest subset of $\tilde \BS$ that is $\Ad(x) \circ \d$-stable and contains $\supp(u)$. 

Let $J=\{\a \in \Pi; \<\a, \bar \nu_{x, \d}\>=0\}$ and $z \in {}^J W_0$ with $z(\nu_{x, \d})=\bar \nu_{x, \d}$. Let $x'=z x \d(z) \i$, $K'=z(K)$ and $C$ be the $\Ad(x) \circ \d$-twisted conjugacy class of $W_{K'}$ that contains $z u z \i$. Since $x$ is a $\d$-straight element, $x'$ is a length-zero element in $\tW_J$. As explained in \cite[\S 3.3]{HNx}, $z(K) \subset \tilde J$. By the smallest assumption on $K$, $C$ is an elliptic $\Ad(x) \circ \d$-twisted conjugacy class of $W_K$. Therefore $(J, x', K', C)$ is a standard quadruple associated to $\CO$. 

Now let $(J_1, x_1, K_1, C_1), (J_2, x_2, K_2, C_2)$ be standard quadruples associated to the same $\d$-twisted conjugacy class $\CO$ of $\tW$. Then $\nu_{x_1, \d}=\nu_{x_2, \d}=\nu_\CO$ and $k(x_1)=k(x_1)=k(\CO)$. Thus $J_1=J_2$ and by Theorem \ref{str-bw}, $x_1$ and $x_2$ are in the same straight $\d$-twisted conjugacy class. 

Set $J=J_1$. Let $u_i \in C_i$. Let $w \in \tW$ such that $u_2 x_2=w u_1 x_1 \d(w) \i$. Since $w(\nu_{x_1, \d})=\nu_{x_2, \d}=\nu_{x_1, \d}$, we have $w \in \tW_J$. We write $w$ as $w=w_1 \t$, where $w_1 \in W_{J, a}$ and $\t$ is a length-zero element of $\tW_J$. Then we have $x_2=\t x_1 \d(\t) \i$ as an element in $\Omega_J$. Set $K'_2=\Ad(\t)(K_1)$ and $C'_2=\Ad(\t)(C_1)$. Then $(J, x_2, K'_2, C'_2)$ is a standard quadruple and $(J, x_2, K'_2, C'_2)$ is equivalent to $(J, x_1, K_1, C_1)$. 

We have that $C_2 x_2$ and $C'_2 x_2$ are in the same $\d$-conjugacy class of $\tW_J$. In other words, $C_2$ and $C'_2$ are in the same $\d'$-twisted conjugacy class of $\tW_J$, where $\d'=\Ad(x_2) \circ \d$. Set $V_J=R_J^\vee \otimes_\BZ \BR$. For any $u_2 \in C_2$,  $V_J^{u_2 \d'}$ is spanned by the coweights $\o^\vee_j$ for $j \notin K_2$. For any $u'_2 \in C'_2$,  $V_J^{u'_2 \d'}$ is spanned by the coweights $\o^\vee_j$ for $j \notin K'_2$. Since $C_2$ and $C'_2$ are in the same $\d'$-conjugacy class, by \cite[Proposition 2.4.1]{CH} there exists $w_2 \in \tW_J$ with $\d'(w_2)=w_2$ and $\Ad(w_2)(K_2)=K'_2$. Then $C'_2$ and $w_2 C_2 w_2 \i$ are both elliptic $\d'$-twisted conjugacy classes of $W_{K'_2}$ with the same characteristic polynomials and the same minimal length. By \cite[Theorem 7.5]{HeMin}, $C'_2=w_2 C_2 w_2 \i$. 

The ``moreover'' part is proved in \cite[Proposition 3.2]{HN14}. 
\end{proof}

\subsubsection{Examples} 

In this subsection, we give two examples, the extended affine Weyl groups associated to $GL_3$ and $SL_3$, and their conjugacy classes. 

\begin{example}\label{gl3}
Let us consider the based root datum of $GL_3$. Then $X^*=\BZ e_1 \oplus \BZ e_2 \oplus \BZ e_3 \cong \BZ^3$ and $X^*=\BZ e^\vee_1 \oplus \BZ e^\vee_2 \oplus\BZ e^\vee_3 \cong \BZ^3$, where $\<e_i, e_j^\vee\>=\d_{i j}$. The simple roots are $\a_1=e_1-e_2$ and $\a_2=e_2-e_3$. 

Let $\tW$ be the extended affine Weyl group associated to this root datum and $\d$ be the identity map on $\tW$. Let $\CO$ be a conjugacy class of $\tW$. The Newton point of $\CO$ is an element $(a_1, a_2, a_3) \in \BQ^3$ with $a_1 \ge a_2 \ge a_3$ that satisfies the integrality condition:
$$\text{For any }c \in \BQ, c \, \sharp \{i; a_i=c\} \in \BZ.$$
The image of $\CO$ under the Kottwitz map is determined by the Newton point $$\k(\CO)=a_1+a_2+a_3 \in \BZ \cong \Omega.$$ 

In Table \ref{table1}, we list the number of conjugacy classes with given Newton point $(a_1, a_2, a_3)$, and the associated standard quadruples. 
\begin{table}[H]
\caption{Conjugacy classes of extended affine Weyl group of $GL_3$}
\label{table1}
\begin{tabular}{|c|c|c|c|}
\hline
\multicolumn{2}{|c|}{Conditions on $(a_1, a_2, a_3)$} & Number & Standard quadruples
\\ \hline
\multicolumn{2}{|c|}{$a_1>a_2>a_3$} & $1$ & $(\emptyset, t^{(a_1, a_2, a_3)}, \emptyset, \{1\})$ \\ \hline
\multirow{3}{*}{$a_1=a_2>a_3$} & \multirow{2}{*}{$a_1 \in \BZ$} & \multirow{2}{*}{2} & $(\{1\}, t^{(a_1, a_2, a_3)}, \emptyset, \{1\})$ \\ \cline{4-4} & & & $(\{1\}, t^{(a_1, a_2, a_3)}, \{1\}, \{s_1\})$ \\ \cline{2-4} & $a_1=n+\frac{1}{2}$ & $1$ & $(\{1\}, t^{(n+1, n, a_3)} s_1, \emptyset, \{1\})$  \\ \hline
\multirow{3}{*}{$a_1>a_2=a_3$} & \multirow{2}{*}{$a_2 \in \BZ$} & \multirow{2}{*}{2} & $(\{2\}, t^{(a_1, a_2, a_3)}, \emptyset, \{1\})$ \\ \cline{4-4} & & & $(\{2\}, t^{(a_1, a_2, a_3)}, \{2\}, \{s_2\})$  \\ \cline{2-4} & $a_2=n+\frac{1}{2}$ & $1$ & $(\{2\}, t^{(a_1, n+1, n)} s_2, \emptyset, \{1\})$  \\ \hline
\multirow{5}{*}{$a_1=a_2=a_3$} & \multirow{3}{*}{$a_1 \in \BZ$} & \multirow{3}{*}{3} & $(\{1, 2\}, t^{(a_1, a_1, a_1)}, \emptyset, \{1\})$  \\ \cline{4-4} & & & $(\{1, 2\}, t^{(a_1, a_1, a_1)}, \{i\}, \{s_i\})$ for $0 \le i \le 2$ \\ \cline{4-4} & & & $(\{1, 2\}, t^{(a_1, a_1, a_1)}, \{i, j\}, \{s_i s_j, s_j s_i\})$ for $0\le i<j \le 2$  \\ \cline{2-4} & $a_1=n+\frac{1}{3}$ & $1$ &  $(\{1, 2\}, t^{(n+1, n, n)} s_1 s_2, \emptyset, \{1\})$   \\ \cline{2-4} & $a_1=n+\frac{2}{3}$ & $1$ & $(\{1, 2\}, t^{(n+1, n+1, n)} s_2 s_1, \emptyset, \{1\})$  \\ \hline
\end{tabular}
\end{table}
\end{example}

\begin{example}\label{sl3}
Let us consider the based root datum of $SL_3$. Here $$V=\{a e_1^\vee+b e_2^\vee+c e_3^\vee; a+b+c=0\}.$$ Let $W_a$ be the associated affine Weyl group and $\d$ be the identity map. Let $\CO$ be a conjugacy class of $W_a$. The Newton point of $\CO$ is an element $(a_1, a_2, a_3) \in \BQ^3$ with $a_1 \ge a_2 \ge a_3$ and $a_1+a_2+a_3=0$. 

In Table \ref{table2}, we list some information on the conjugacy classes of $\tW$. Here the difference from Table \ref{table1} comes from the fact that the element $s_0 s_1, s_0 s_2$ and $s_1 s_2$ are conjugate in $\tW$, but not in $W_a$. 
\begin{table}[H]
\caption{Conjugacy classes of the affine Weyl group of $SL_3$}
\label{table2}
\begin{tabular}{|c|c|c|c|}
\hline
\multicolumn{2}{|c|}{Conditions on $(a_1, a_2, a_3)$} & Number & Standard quadruples
\\ \hline
\multicolumn{2}{|c|}{$a_1>a_2>a_3$} & $1$ & $(\emptyset, t^{(a_1, a_2, a_3)}, \emptyset, \{1\})$ \\ \hline
\multirow{3}{*}{$a_1=a_2>a_3$} & \multirow{2}{*}{$a_1 \in \BZ$} & \multirow{2}{*}{2} & $(\{1\}, t^{(a_1, a_2, a_3)}, \emptyset, \{1\})$ \\ \cline{4-4} & & & $(\{1\}, t^{(a_1, a_2, a_3)}, \{1\}, \{s_1\})$ \\ \cline{2-4} & $a_1=n+\frac{1}{2}$ & $1$ & $(\{1\}, t^{(n+1, n, a_3)} s_1, \emptyset, \{1\})$  \\ \hline
\multirow{3}{*}{$a_1>a_2=a_3$} & \multirow{2}{*}{$a_2 \in \BZ$} & \multirow{2}{*}{2} & $(\{2\}, t^{(a_1, a_2, a_3)}, \emptyset, \{1\})$ \\ \cline{4-4} & & & $(\{2\}, t^{(a_1, a_2, a_3)}, \{2\}, \{s_2\})$  \\ \cline{2-4} & $a_2=n+\frac{1}{2}$ & $1$ & $(\{2\}, t^{(a_1, n+1, n)} s_2, \emptyset, \{1\})$  \\ \hline
\multicolumn{2}{|c|}{\multirow{5}{*}{{$a_1=a_2=a_3=0$}}} & \multirow{5}{*}{5} & $(\{1, 2\}, 1, \emptyset, \{1\})$ \\ \cline{4-4} \multicolumn{2}{|l|}{}   & & $(\{1, 2\}, 1, \{i\}, \{s_i\})$ for $0 \le i \le 2$  \\ \cline{4-4} \multicolumn{2}{|l|}{}  & & $(\{1, 2\}, 1, \{1, 2\}, \{s_1 s_2, s_2 s_1\})$  \\ \cline{4-4} \multicolumn{2}{|l|}{}  & & $(\{1, 2\}, 1, \{0, 1\}, \{s_1 s_0, s_0 s_1\})$  \\ \cline{4-4} \multicolumn{2}{|l|}{}  & & $(\{1, 2\}, 1, \{0, 2\}, \{s_0 s_2, s_2 s_0\})$  \\\hline
\end{tabular}
\end{table}

\end{example}

\subsection{Weakly length-increasing sequences}\label{increasing}

Instead of considering the weakly length-decreasing sequences as in \S \ref{decreasing}, one may consider the weakly length-increasing sequences in a given twisted conjugacy class. 

Let $W$ be a finite Coxeter group and $w_0$ be the longest element of $W$. Then the map $$W \to W, \qquad w \mapsto w w_0$$ is an order-reversing map and it sends $\d$-twisted conjugacy classes to $\Ad(w_0) \circ \d$-twisted conjugacy classes. Thus as a variation of Theorem \ref{f-min} (1), we have 

\begin{theorem}
Let $W$ be a finite Coxeter group and $\CO$ be a $\d$-twisted conjugacy class of $W$. Let $\CO_{\max}$ be the set of maximal length elements in $\CO$. Then for any $w \in \CO$, there exists $w' \in \CO_{\max}$ with $w' \to_\d w$. 
\end{theorem}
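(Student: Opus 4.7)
The plan is to exploit the order-reversing bijection $\phi: W \to W$, $w \mapsto w w_0$, mentioned just before the statement, and to reduce the claim to the already-established Red-Min property (Theorem \ref{f-min}(1)) for a modified twisting.

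First, I would verify the basic properties of $\phi$. The length identity $\ell(w w_0) = \ell(w_0) - \ell(w)$ shows $\phi$ reverses length, so $\phi$ sends $\CO_{\max}$ bijectively onto $\CO'_{\min}$, where $\CO'$ is the image of $\CO$ under $\phi$. A direct computation shows that if $w'=xw\d(x)\i$, then $w' w_0 = x(w w_0)\bigl(w_0\d(x)\i w_0\bigr) = x (w w_0)\, \d'(x)\i$ where $\d' := \Ad(w_0)\circ\d$; hence $\phi$ sends $\d$-twisted conjugacy classes to $\d'$-twisted conjugacy classes, and in particular $\CO'$ is a single $\d'$-conjugacy class. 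Since $W$ is finite, $\Ad(w_0)$ is the opposition automorphism preserving $\BS$, so $\d'(\BS)=\BS$ and Theorem \ref{f-min}(1) applies to $\d'$-twisted conjugation on $W$.

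Next, I would check that a single step of $\d'$-conjugation by a simple reflection corresponds, via $\phi$, to a single step of $\d$-conjugation by the same simple reflection. Namely, for $s \in \BS$ and $v \in W$, $s \cdot_{\d'} v = s v \d'(s)\i = sv\, w_0 \d(s)\i w_0\i$, so $(s \cdot_{\d'} v)\, w_0 = sv w_0 \d(s)\i = s \cdot_\d (v w_0)$. Therefore any sequence of simple reflections that weakly decreases length under $\d'$-conjugation starting from $\phi(w)$ corresponds, under $\phi\i$, to a sequence of simple reflections that weakly increases length under $\d$-conjugation starting from $w$.

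Finally, I would apply Theorem \ref{f-min}(1) to $\CO'$ and $\phi(w) \in \CO'$: there exist $s_1,\dots,s_k \in \BS$ with $v_i := s_i \cdots s_1 \cdot_{\d'} \phi(w)$ satisfying $\ell(v_i)\le\ell(v_{i-1})$ and $v_k \in \CO'_{\min}$. Setting $w_i := v_i w_0$, the previous paragraph gives $w_i = s_i \cdot_\d w_{i-1}$ with $\ell(w_i) = \ell(w_0)-\ell(v_i) \ge \ell(w_{i-1})$, and $w_k \in \CO_{\max}$. Running this sequence backward, using that each $s_i$ is an involution so $w_{i-1} = s_i \cdot_\d w_i$, produces $w' := w_k \to_\d w_{k-1} \to_\d \cdots \to_\d w_0 = w$, each step weakly decreasing length. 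This is exactly the relation $w' \to_\d w$ with $w' \in \CO_{\max}$, as required. There is no serious obstacle here beyond the bookkeeping check that reflections and lengths behave correctly under the involution $\phi$; the content is entirely carried by Theorem \ref{f-min}(1) on the dual side.
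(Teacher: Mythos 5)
Your proposal is correct and follows essentially the same route the paper indicates: transport the statement through the length-reversing involution $w \mapsto w w_0$, which intertwines $\d$-twisted conjugation with $\d'=\Ad(w_0)\circ\d$-twisted conjugation, and then invoke the \textbf{Red-Min} property of Theorem \ref{f-min}(1) on the dual side. The paper merely sketches this reduction; your write-up supplies the same argument with the intertwining and length bookkeeping made explicit.
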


A similar statement holds for affine Weyl groups. 

\begin{theorem}
Let $\tW$ be an extended affine group and $\CO$ be a $\d$-twisted conjugacy class of $\tW$. Let $w \in \CO$. If $w$ is not of maximal length in $\CO$, then there exists $w' \in \CO$ with $\ell(w')>\ell(w)$ and $w' \to_\d w$. 
\end{theorem}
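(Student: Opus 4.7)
The plan is to mimic the proof of the preceding finite-Coxeter theorem by combining the standard quadruple decomposition of $\CO$ (Theorem \ref{1-19}) with the finite-Coxeter statement itself. The role of the longest element $w_0$, which does not exist in $\tW$, is taken over by the longest element of the finite Coxeter subgroup $W_K$ arising from the standard quadruple attached to $\CO$.

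First, fix a standard quadruple $(J, x, K, C)$ associated to $\CO$. Since $x \in {}^K \tW$, the length formula $\ell(u' x) = \ell(u') + \ell(x)$ identifies lengths in the coset $Cx \subset \CO$ with the $\Ad(x) \circ \d$-twisted lengths in the finite Coxeter group $W_K$. By Theorem \ref{a-red}, produce $u \in C$ and a weakly length-decreasing chain $w \to_\d u x$. Now apply the preceding finite-Coxeter theorem to the elliptic class $C$ in $W_K$ under the $\Ad(x) \circ \d$-twisted conjugation: either $u$ is of maximal length in $C$, or there exists $u^+ \in C$ with $\ell(u^+) > \ell(u)$ and a weakly length-decreasing chain $u^+ \to_{\Ad(x) \circ \d} u$ in $W_K$, which lifts verbatim to $u^+ x \to_\d u x$ in $\tW$ with $\ell(u^+ x) > \ell(u x)$.

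To conclude, I would first handle the case $w = u x$: the hypothesis that $w$ is not of maximal length in $\CO$ forces $u$ not to be of maximal length in $C$ (otherwise $u x$, being of maximal length in the coset $Cx$ and hence, by a short compatibility argument using the standard-quadruple data, of maximal length in the entire class $\CO$, would contradict the hypothesis on $w$). Thus a companion $u^+$ exists and one takes $w' := u^+ x$. In the general case $w \ne u x$, propagate the upward step back along the reduction chain $w \to_\d u x$ one step at a time, applying the same finite-Coxeter upward argument at each intermediate stage and swapping to the appropriate standard quadruple as needed.

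The main obstacle is the propagation step. At each intermediate element of the chain $w \to_\d u x$, transporting the length-increasing companion requires checking that the simple reflections producing the upward move in $C$ commute appropriately with the simple reflections used in the reduction chain. This is handled using the structural results Theorems \ref{a-min} and \ref{par-main} to ensure coherence between the various local standard-quadruple pictures along the chain. A further subtlety arises when $\CO$ is infinite (Newton point non-central): there ``maximal length in $\CO$'' is vacuous, and one must be able to produce upward moves of arbitrary magnitude; this is achieved by iterating the finite-Coxeter upward step inside $C$ and combining it with translations by $x$, which strictly increase length by $\ell(x)$ at each iteration, since $x$ is $\d$-straight.
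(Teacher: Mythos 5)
Your reduction goes in the wrong direction and, more importantly, never leaves a finite piece of the class, so it cannot produce the required longer elements. The machinery you invoke (Theorem \ref{a-red} plus the finite maximal-length theorem applied to $C \subset W_K$ with the twist $\Ad(x)\circ\d$) only lets you $\d$-conjugate $u x$ by elements of the finite group $W_K$; all elements reachable this way lie in the finite set $W_K \cdot_\d (C x)$, whereas $\CO$ is in general infinite with unbounded lengths even when the Newton point is central. A concrete counterexample to your key claim (``maximal length in $Cx$ implies maximal length in $\CO$''): for the affine Weyl group of $SL_2$ and $\CO$ the class of $s_1$, the associated data give $x=1$, $K=\{1\}$, $C=\{s_1\}$, so $Cx=\{s_1\}$ and $s_1$ is trivially maximal there, yet $\CO$ contains $s_0s_1s_0$, of length $3$; the needed upward move is conjugation by $s_0\notin K$, which your construction can never perform. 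The patch you suggest for infinite classes, ``combining with translations by $x$'', does not help either, since multiplying by $x$ leaves the conjugacy class. Finally, the propagation step is circular: if $w\to_\d w_1$ with $\ell(w_1)<\ell(w)$, then $w$ itself is already a longer element mapping down to $w_1$, so knowing that some $w_1'$ with $\ell(w_1')>\ell(w_1)$ and $w_1'\to_\d w_1$ exists carries no information about elements above $w$; transporting an upward step from the bottom of a weakly decreasing chain back to the top is exactly the content of the theorem, and neither Theorem \ref{a-min} nor Theorem \ref{par-main} supplies it.

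For comparison, the paper does not argue this way at all: it deduces nothing from the standard quadruples here, but cites Rostami's result for ordinary conjugacy classes and asserts that the twisted case follows by the method of \cite{HN14}. Note also that the finite-group statement preceding this one is proved by the order-reversing map $w\mapsto ww_0$, a trick with no affine analogue; this is precisely why the affine statement is weaker (one length-increasing step rather than a chain to a maximal element) and why a genuinely different argument, producing length-increasing conjugations by reflections outside any fixed finite parahoric subgroup, is needed.
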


For ordinary conjugacy classes, this is proved by Rostami in \cite[Main Theorem 1.1]{Ro}. The general case can be proved using the method in \cite{HN14}.

The maximal length elements and the weakly length-increasing sequences play a role in the study of the center of Hecke algebras. 

\subsection{The quotient space $W \sslash (W_J)_\d$}\label{preceq}

\subsubsection{The quotient $W \sslash (W_J)_\d$}
As we have seen in \S\ref{par-i} and \S\ref{str}, some $\d$-twisted $W_J$-conjugacy classes of $W$ are special and any non-special conjugacy classes can be ``reduced'' to a special conjugacy class. 

\begin{itemize}
\item For partial conjugation action, the special classes are the distinguished ones. 

\item For (non-twisted and twisted) conjugation action of an extended affine Weyl group, the special classes are the straight conjugacy classes. 
\end{itemize}

In either case, we denote by $W \sslash (W_J)_\d$ the set of special $\d$-twisted conjugacy classes in $W/(W_J)_\d$. We then have a natural map $$\pi: W/(W_J)_\d \to W \sslash (W_J)_\d.$$ Here the notation $\sslash$ is borrowed from the geometric invariant theory. 

\smallskip

In the rest of this subsection, we will introduce a partial order on $W\sslash (W_J)_\d$ so that the quotient space becomes a topological space. We will discuss in Part II some applications of these topological spaces. 

\subsubsection{The Bruhat order} We first recall the definition of Bruhat order. Let $(W, \BS)$ be a Coxeter system. The Bruhat order on $W$ is defined as follows. 

Let $w \in W$ and $w=s_1 \cdots s_n$ with $n=\ell(w)$ and $s_i \in \BS$ for all $i$. Such expression is called a {\it reduced expression} of $w$. Let $w'$ be another element in $W$. We say that $w' \le w$ if there exists $1 \le i_1<i_2<\cdots<i_k \le n$ such that $w'=s_{i_1} s_{i_2} \cdots s_{i_k}$. The definition of Bruhat order is in fact independent of the choice of reduced expressions of $w$. In the case where $W$ is a finite (resp. an affine) Weyl group, the Bruhat order describes the closures relations between the Schubert cells in the finite (resp. affine) flag varieties. 

Now we discuss some general facts about the Bruhat order. 

\begin{lemma}\label{par-1}
Let $x, w, w'\in W$ with $x \le w$ and $w' \to_{J, \d} w$. Then there exists $x' \in W$ such that $x \to_{J, \d} x'$ and $x' \le w'$. 
\end{lemma}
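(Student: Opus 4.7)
The plan is to argue by induction on the length of the chain $w' = w_0 \to_{J,\d} w_1 \to \cdots \to w_k = w$, which reduces the problem to the single-step case $w = s \cdot_\d w' = s w' \d(s)^{-1}$ with $s \in J$ and $\ell(w) \le \ell(w')$. Set $y = \d(s)$ and $m = sw' = wy$. The natural candidate is
\[
x' = \begin{cases} x & \text{if } x \le w', \\ s \cdot_\d x = s x y^{-1} & \text{if } x \not\le w'. \end{cases}
\]
In the first case there is nothing to check. In the second, two things must be verified: that $\ell(x') \le \ell(x)$ (so that $x \to_{J,\d} x'$ is a legitimate single-step conjugation) and that $x' \le w'$.

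The main tool is the lifting property of the Bruhat order: for $s \in \BS$ with $su < u$, one has $v \le u \Leftrightarrow sv \le u$, together with the refinement $v \le u \Leftrightarrow v \le su$ when $sv > v$ and $v \le u \Leftrightarrow sv \le su$ when $sv < v$, plus the symmetric right-hand version. The argument splits into three subcases according to the length of $m$. If $\ell(m) = \ell(w) + 1 = \ell(w') - 1$, then the reduced expression of $w'$ passes through $m$ and then through $w$, so $w \le w'$ and $x' = x$ works. If $\ell(m) = \ell(w) - 1 = \ell(w') - 1$ (so $s \in D_L(w')$ and $y \in D_R(w)$), right lifting at $y$ on $w$ gives either $x \le m \le w'$ or $xy \le m$; in the latter subcase the condition $y \in D_R(x)$ makes $\ell(sxy^{-1}) \le \ell(x)$ automatic, and two applications of left lifting at $s$ on $m$ (depending on whether $s \in D_L(xy)$) show $sxy^{-1} \le w'$. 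If $\ell(m) = \ell(w) + 1 = \ell(w') + 1$ (so $s \notin D_L(w')$ and $y \notin D_R(w)$), left lifting at $s$ on $m$ gives either $x \le w'$ (done) or $sx \le w'$ with $s \in D_L(x)$; the length condition $\ell(sxy^{-1}) \le \ell(x)$ follows from $sx < x$, and $sxy \le m$ follows from right lifting at $y$ on $m$ using $sx \le w \le m$.

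The main obstacle lies in the last subcase, specifically when both $sx < x$ and $xy < x$. Single applications of the lifting property there yield only $sxy \le m$ and $xy \le w'$, but not directly $sxy \le w'$, because neither $s$ nor $y$ is a descent of $w'$ on the relevant side. To close this gap I would exploit the identity $sw \cdot y = w'$ together with a subsidiary case split on whether $s \in D_L(w)$: in the branch $s \in D_L(w)$, left lifting at $s$ on $w$ promotes $x \le w$ to $sx \le sw$, after which right lifting at $y$ on $sw$ (using $y \in D_R(sw)$, a consequence of $\ell(w') = \ell(sw) - 1$) gives $sxy \le w'$ since $(sxy)y = sx \le w'$; in the branch $s \notin D_L(w)$, the elements $m$ and $sw$ are two length-$(\ell(w)+1)$ elements covering both $w$ and $w'$, and a diamond-lemma analysis in the Bruhat order forces $m = sw$, which via $m = sw' = wy$ and $sw = w'y^{-1}$ forces $w = w'$, collapsing to the trivial case. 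This diamond step is the delicate point of the argument and is where the hypothesis $\ell(w) \le \ell(w')$ (rather than strict inequality) is used most heavily.
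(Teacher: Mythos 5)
Your route is essentially the paper's: reduce to a single step $w'=s w \d(s)$ with $\ell(w)\le\ell(w')$, and produce $x'\in\{x,\,s x\d(s)\}$ via the lifting property of the Bruhat order (\cite[Corollary 2.5]{Lu-Hecke}); your extra case splits simply make explicit the configurations the paper dismisses by its ``without loss of generality'' normalization $sw<w$, $w\d(s)>w$. Subcases 1 and 2 are correct. In Subcase 3, the branch $s\in D_L(w)$ is in substance the paper's argument, but your written justification has its labels reversed: there $\d(s)$ is a right descent of $w'$, not of $sw$ (indeed $(sw)\d(s)=w'$ with $\ell(sw)=\ell(w')-1$), and the lifting step must deduce $s x\d(s)\le w'$ from $sx\le sw=w'\d(s)$, not from $sx\le w'$; since you do establish $sx\le sw$ first, this is a bookkeeping slip rather than a real obstruction.

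The genuine gap is the branch $s\notin D_L(w)$ of Subcase 3, i.e.\ $\ell(sw)=\ell(w\d(s))=\ell(w)+1$ while $\ell(sw\d(s))=\ell(w)$. The claim that ``a diamond-lemma analysis in the Bruhat order forces $m=sw$'' cannot work as stated: the configuration of two distinct length-$(\ell+1)$ elements each covering two distinct common length-$\ell$ elements does occur in Bruhat order (in $S_3$, both $s_1s_2$ and $s_2s_1$ cover both $s_1$ and $s_2$), so no purely order-theoretic argument forces $m=sw$; the forcing has to come from the multiplicative relations, which you invoke only after assuming $m=sw$. The conclusion you want is true and standard: if $\ell(sw)=\ell(w\d(s))=\ell(w)+1$ and $\ell(sw\d(s))=\ell(w)$, then $sw\d(s)=w$. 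One can see it from the exchange condition: writing $sw=s w_1\cdots w_\ell$ reduced, the descent $(sw)\d(s)<sw$ means $w'=(sw)\d(s)$ is obtained by deleting one letter; deleting any $w_i$ would give $\ell(sw')\le \ell-1$, contradicting $\ell(sw')=\ell(m)=\ell+1$, so the deleted letter is $s$ and $w'=w$, making this branch vacuous. With that lemma inserted in place of the diamond claim, your proof closes and coincides with the paper's up to the reorganization of cases.
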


\begin{proof}
It suffices to prove the case where $w'=s w \s(s)$ for some $s \in J$. 

If $w'>w$, then we may take $x'=x$. Now we assume that $\ell(w)=\ell(w')$. Without loss of generalization, we may assume furthermore that $s w<w$ and $w \d(s)>w$. 

If $s x>x$, then by \cite[Corollary 2.5]{Lu-Hecke}, $x \le s w$ and hence $x<w'$. 

If $s x<x$, then $\ell(s x \d(s)) \le \ell(x)$ and $x \to_{J, \d} s x \d(s)$. By \cite[Corollary 2.5]{Lu-Hecke}, $s x \le s w$ and $s x \d(s) \le s w \d(s)$. 
\end{proof}

\begin{proposition}
Let $\CO \in W/(W_J)_\d$. If \textbf{Red-Min} property holds for $\CO$, then $\CO_{\min}$ is the set of minimal element in $\CO$ with respect to the Bruhat order. 
\end{proposition}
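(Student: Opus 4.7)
The plan is to establish both inclusions between $\CO_{\min}$ and the set of Bruhat-minimal elements of $\CO$. The forward inclusion is immediate: if $w \in \CO_{\min}$ and $x \in \CO$ with $x \leq w$, then $\ell(x) \leq \ell(w) = \ell(\CO)$, forcing $\ell(x) = \ell(w)$ and hence $x = w$ (since a strict Bruhat inequality strictly decreases length). So each element of $\CO_{\min}$ is Bruhat-minimal in $\CO$.

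For the reverse inclusion I would argue by contradiction. Suppose $w \in \CO$ is Bruhat-minimal but $\ell(w) > \ell(\CO)$. By \textbf{Red-Min}, there is a sequence
$w = w_0 \to_{J,\d} w_1 \to_{J,\d} \cdots \to_{J,\d} w_n$
with weakly decreasing lengths and $w_n \in \CO_{\min}$. Let $i$ be the smallest index with $\ell(w_{i+1}) < \ell(w_i)$, so $\ell(w_0) = \cdots = \ell(w_i)$. The single conjugation step $w_{i+1} = s w_i \d(s)$ for some $s \in J$ then satisfies $\ell(w_{i+1}) = \ell(w_i) - 2$, which forces both $s w_i < w_i$ and $w_i \d(s) < w_i$ in Bruhat order. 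Hence $w_{i+1} = (s w_i)\d(s) < s w_i < w_i$.

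Now I would iteratively transport this strict Bruhat inequality back along the length-preserving initial segment $w_0 \to w_1 \to \cdots \to w_i$ of the chain using Lemma \ref{par-1}. Set $y_0 := w_{i+1} \leq w_i$. Applied to $y_j \leq w_{i-j}$ together with the step $w_{i-j-1} \to_{J,\d} w_{i-j}$, Lemma \ref{par-1} supplies $y_{j+1}$ with $y_j \to_{J,\d} y_{j+1}$ and $y_{j+1} \leq w_{i-j-1}$. Iterating $i$ times produces $y_i \in \CO$ with $y_i \leq w_0 = w$, and since $\ell(y_i) \leq \ell(y_0) = \ell(w_{i+1}) < \ell(w)$, this inequality is strict. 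This contradicts the Bruhat-minimality of $w$ in $\CO$.

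The substantive input is Lemma \ref{par-1}, which propagates Bruhat-comparisons across twisted-conjugation steps; everything else is bookkeeping on the reducing chain provided by \textbf{Red-Min}. The only point that requires a brief verification is the production of a Bruhat descent at the first strict length drop, which falls out of the fact that conjugation by a simple reflection changes length by $-2, 0,$ or $+2$, so a drop means both $sw < w$ and $w\d(s) < w$. I do not anticipate any further obstacle.
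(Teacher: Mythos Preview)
Your argument is correct and rests on the same key input as the paper's proof, namely Lemma~\ref{par-1}. The paper, however, gets there in a single stroke: given $w \notin \CO_{\min}$ and $w \to_{J,\d} w'$ with $w' \in \CO_{\min}$, it applies Lemma~\ref{par-1} once to the entire chain, with $x = w'$ (trivially $w' \le w'$), to produce $w_1$ with $w' \to_{J,\d} w_1$ and $w_1 \le w$; since $w'$ already has minimal length this forces $w' \approx_{J,\d} w_1$ and hence $w_1 < w$ strictly. Your version locates the first strict length drop and then walks Lemma~\ref{par-1} back through the length-preserving prefix step by step, which is correct but unnecessary: Lemma~\ref{par-1} as stated already handles the whole chain $w \to_{J,\d} w'$ at once.
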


\begin{proof}
Let $w, w' \in \CO$. If $w' \le w$, then $\ell(w') \le \ell(w)$. Thus if $w \in \CO_{\min}$, then $w$ is a minimal element in $\CO$ with respect to the Bruhat order. 

On the other hand, let $w \in \CO$. Suppose that $w \notin \CO_{\min}$, then by \textbf{Red-Min} property, there exists $w' \in \CO_{\min}$ with $w \to_{J, \d} w'$. By Lemma \ref{par-1}, there exists $w_1 \approx_{J, \d} w'$ with $w_1<w$. 
\end{proof}

\subsubsection{The partial order $\preceq$}

\begin{proposition}\label{par}
Suppose that $W_J$ is a finite Coxeter group or is an affine Weyl group. Let $\CO, \CO' \in W \sslash (W_J)_\d$. Then the following conditions are equivalent:

(1) For some $w' \in \CO'_{\min}$, there exists $w \in \CO_{\min}$ with $w \le w'$. 

(2) For any $w' \in \CO'_{\min}$, there exists $w \in \CO_{\min}$ with $w \le w'$. 
\end{proposition}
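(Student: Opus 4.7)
The direction $(2) \Rightarrow (1)$ is immediate since $\CO'_{\min}$ is nonempty. I focus on $(1) \Rightarrow (2)$: fix $w \in \CO_{\min}$ with $w \le w'$ as given by $(1)$, and let $w'' \in \CO'_{\min}$ be arbitrary; the task is to produce $w_1 \in \CO_{\min}$ with $w_1 \le w''$.

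The crucial ingredient is that $\CO'_{\min}$ is a single $\approx_{J,\d}$-equivalence class. In the partial-conjugation case, ``special'' in $W \sslash (W_J)_\d$ means distinguished, so this is part $(3)$ of Theorem \ref{par-min}; in the affine case, ``special'' means straight, and this is the theorem stated immediately after Theorem \ref{a-min}. So $w' \approx_{J,\d} w''$, and unpacking the definition furnishes a chain $w'' = z_0 \to_{J,\d} z_1 \to_{J,\d} \cdots \to_{J,\d} z_k = w'$ of single simple conjugations. Since both endpoints have minimal length in $\CO'$ and $\to_{J,\d}$ is weakly length-decreasing, every $z_i$ lies in $\CO'_{\min}$ and each step is in fact length-preserving.

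The plan is to iterate Lemma \ref{par-1} backwards through this chain. Set $u_k := w$, so $u_k \le z_k$. At step $i$, the pair ($z_{i-1} \to_{J,\d} z_i$, $u_i \le z_i$) feeds into the lemma, yielding $u_{i-1} \in W$ with $u_i \to_{J,\d} u_{i-1}$ and $u_{i-1} \le z_{i-1}$. The element $u_i$ lies in $\CO$ (by induction, as a $W_J$-$\d$-conjugate of $w$) and satisfies $\ell(u_i) \le \ell(w) = \ell(\CO_{\min})$, hence $u_i \in \CO_{\min}$. After $k$ steps, $w_1 := u_0 \in \CO_{\min}$ satisfies $w_1 \le z_0 = w''$, as required.

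The affine setting needs one small adjustment: there, $w' \approx_\d w''$ means there is $\t \in \Omega$ with $w' \to_\d \t w'' \d(\t)^{-1}$ and $\t w'' \d(\t)^{-1} \to_\d w'$. Running the above argument along the chain from $\t w'' \d(\t)^{-1}$ to $w'$ produces $v \in \CO_{\min}$ with $v \le \t w'' \d(\t)^{-1}$; then, since $\t$ and $\d(\t)$ are length-zero and induce Dynkin-diagram automorphisms respecting the Bruhat order, left multiplication by $\t^{-1}$ and right multiplication by $\d(\t)$ preserve $\le$, giving $\t^{-1} v \d(\t) \le w''$. Finally $\t^{-1} v \d(\t) = \t^{-1} \cdot_\d v$ is in $\CO$ and has the same length as $v$, so lies in $\CO_{\min}$. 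The only delicate part of the argument is correctly tracking the direction of $\to_{J,\d}$ when invoking Lemma \ref{par-1} (traversing the chain from $z_k$ back to $z_0$) and absorbing the $\Omega$-twist in the affine case.
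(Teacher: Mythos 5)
Your argument is correct and is essentially the paper's intended proof: the paper's own justification is precisely "combine Lemma \ref{par-1} with the minimal-length-element theorems," i.e.\ use that $\CO'_{\min}$ is a single $\approx_{J,\d}$-class for special (distinguished, resp.\ straight) classes and push the Bruhat inequality $w\le w'$ along the resulting chain of weakly length-decreasing simple conjugations, with minimality of $\ell(w)$ in $\CO$ forcing the resulting element into $\CO_{\min}$. Your extra care with the $\Omega$-twist in the definition of $\approx_\d$ for extended affine Weyl groups matches the paper's setting and is handled correctly.
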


The case where $W_J$ is a finite Coxeter group is obtained in \cite[Corollary 4.6]{HeMin}, by combining Lemma \ref{par-1} with Theorem \ref{f-min}. The case where $W_J$ is an affine Weyl group is obtained in the same way by using Theorem \ref{a-min} instead of Theorem \ref{f-min}. 

\smallskip

Now we introduce a partial order on $W \sslash (W_J)_\d$. Let $\CO, \CO' \in W \sslash (W_J)_\d$. We write $\CO \preceq \CO'$ if the conditions in Proposition \ref{par} are satisfied. By the equivalence of the two conditions in Proposition \ref{par}, $$\CO \preceq \CO' \text{ and } \CO' \preceq \CO'' \Longrightarrow \CO \preceq \CO''.$$
Thus $\preceq$ gives a partial order on $W \sslash (W_J)_\d$. 

It is worth mentioning that although we state the results for twisted partial conjugacy classes for a Coxeter group, the definition of $W\sslash(W_J)_\d$ and the partial order still works if we replace $W$ by a quasi-Coxeter group. In particular, the definition and results here still holds for extended affine Weyl groups as well. 

The partial order $\preceq$, for finite Weyl group $W$ and $J \subsetneqq \BS$, is first introduced in \cite{He-G} to describe the closure relations between Lusztig's $G$-stable pieces \cite{Lu-par1} and \cite{Lu-par2}. The partial order $\preceq$, for affine Weyl group $W$, is used to describe the closure relations of several interesting stratifications of loop groups, as we will discuss in Part II, \S \ref{closure-ii}.

\subsubsection{The partial order on ${}^J W$}
If $J \subsetneqq \BS$, then there is a natural bijection between $W\sslash (W_J)_\d$ and ${}^J W$. This partial order on ${}^J W$ can be described in an explicit way as follows:

Let $w, w' \in {}^J W$, then $W_J \cdot_\d w \preceq W_J \cdot_\d w'$ if and only if there exists $u \in W_J$ such that $u w \d(u) \i \le w'$.

In this case, we also write $w \preceq_{J, \d} w'$.

Note that the partial order $\preceq_{J, \d}$ on ${}^J W$ is different from the restriction to ${}^J W$ of the Bruhat order on $W$. The latter one implies the first one, but not vice verse in general. The difference can be seen in the following example.

\begin{example}
Let $W=S_4$ and $J=\{3\}$. The simple reflections of $W$ are $s_1, s_2, s_3$. We simply write $s_{a b c \cdots}$ instead of $s_a s_b s_c \cdots$. Let $\d$ be the nontrivial diagram automorphism, i.e., $\d(s_1)=s_3$, $\d(s_2)=s_2$ and $\d(s_3)=s_1$. In Figure \ref{fig1.1}, we draw the Hasse diagram of ${}^J W$, with respect to the usual Bruhat order, the partial order $\preceq_{J, id}$ (the extra relation is in dotted line) and the partial order $\preceq_{J, \d}$ (the extra relation is in double dotted line). 
\begin{figure}
  \caption{Hasse diagram for the partial orders on ${}^J W$}
\[
\begin{xy}
0;<0.7cm,0pt>:
(0, 8)*{s_{12132}}="12132";
(-1,6)*{s_{2132}}="2132";
(1,6)*{s_{1213}}="1213";
(-2,4)*{s_{123}}="123";
(0,4)*{s_{121}}="121";
(2,4)*{s_{213}}="213";
(-2,2)*{s_{12}}="12";
(0,2)*{s_{21}}="21";
(2,2)*{s_{23}}="23";
(-1,0)*{s_1}="1";
(1,0)*{s_2}="2";
(0,-2)*{1}="0";
"1" **@{-};
"12" **@{-};
"123" **@{-};
"2132" **@{.};
"12132" **@{-};
"1213" **@{-};
"121" **@{-};
"12" **@{-};
"2" **@{-};
"0" **@{-};
"1";
"21" **@{-};
"121" **@{-};
"2132" **@{-};
"213" **@{-};
"1213" **@{-};
"121" **@{-};
"2";
"21" **@{-};
"213" **@{-};
"23" **@{-};
"2" **@{-};
"123";
"1213" **@{-};
"1";
"23" **@{:};
\end{xy}
\]
\label{fig1.1}
\end{figure}
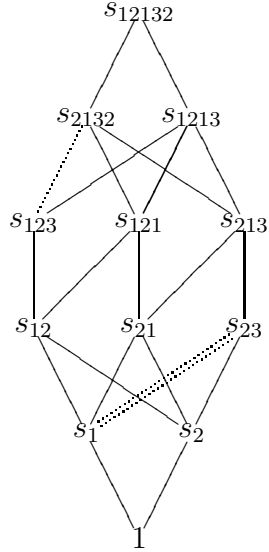
\end{example}

\subsubsection{The partial order on the set of straight conjugacy classes}
Let $\tW$ be an extended affine Weyl group. Proposition \ref{par} gives a partial order $\preceq$ on the set of straight $\d$-twisted conjugacy classes of $\tW$. On the other hand, we have an injective map $$f: \tW/(\tW)_\d \to \Omega_\d \times \CC, \quad w \mapsto (\k(\CO), \nu_\CO).$$ There is a natural partial order on $\Omega_\d \times \CC$ given as follows. 

For $(v, p), (v', p') \in \Omega_\d \times \CC$, we wrie $(v, p) \le (v', p')$ if $v=v'$ and $p$ is less than or equal to $p'$ in the dominance order, i.e., $p'-p$ is a nonnegative linear combination of positive coroots. It is proved in \cite[Theorem 3.1]{He00} that these two partial orders coincide. 

\begin{theorem}
Let $\CO, \CO' \in \tW/(\tW)_\d$. Then $\CO \preceq \CO'$ if and only if $f(\CO) \le f(\CO')$. 
\end{theorem}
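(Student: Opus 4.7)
The plan is to prove the two implications separately, using Theorem~\ref{str-bw} (injectivity of $f$ on straight classes) and Proposition~\ref{par} (the characterization of $\preceq$ via Bruhat-comparable minimal length representatives) as the two main inputs. For the forward direction $\CO \preceq \CO' \Longrightarrow f(\CO) \le f(\CO')$, I would fix $w \in \CO_{\min}$ and $w' \in \CO'_{\min}$ with $w \le w'$ in the Bruhat order, as afforded by Proposition~\ref{par}. Writing $\tW = W_a \rtimes \Omega$ and recalling that the Bruhat order on $\tW$ is the Bruhat order of $W_a$ on each fixed $\Omega$-coset, $w$ and $w'$ must share the same $\Omega$-component, so $\k(\CO) = \k(\CO')$ in $\Omega_\d$. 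For the Newton points, both $w, w'$ are $\d$-straight, so $\ell(w\d(w)\cdots\d^{n-1}(w)) = n\ell(w)$ and likewise for $w'$: concatenation of reduced expressions remains reduced, and since $\d$ preserves the Bruhat order, iterated subword arguments give
\[
w\,\d(w) \cdots \d^{n-1}(w) \;\le\; w'\,\d(w') \cdots \d^{n-1}(w')
\]
in $\tW$ for every $n$. Choosing $n$ large enough so that each side collapses to a pure translation — $t^{n\nu_\CO}$ and $t^{n\nu_{\CO'}}$ respectively — reduces the question to the classical fact that, for dominant $\mu, \mu'$, one has $t^\mu \le t^{\mu'}$ in the Bruhat order if and only if $\mu' - \mu$ is a non-negative integral sum of positive coroots. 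This yields $\nu_\CO \le \nu_{\CO'}$.

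For the reverse direction, my plan is to induct on the height $\langle 2\rho, \nu_{\CO'} - \nu_\CO \rangle$. The base case $\nu_\CO = \nu_{\CO'}$, together with $\k(\CO) = \k(\CO')$, gives $f(\CO) = f(\CO')$; by the injectivity part of Theorem~\ref{str-bw}, $\CO = \CO'$ and there is nothing to prove. For the inductive step, the aim is to produce an intermediate straight class $\CO''$ with $\k(\CO'') = \k(\CO)$ and $\nu_\CO < \nu_{\CO''} \le \nu_{\CO'}$, together with minimal length representatives realizing $w \le w''$ in the first interval and $w'' \le w'$ in the second; transitivity of $\preceq$ then closes the induction. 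Natural candidates come from the standard-quadruple parametrization in Theorem~\ref{1-19}: each straight class admits a representative $(J, x, \emptyset, \{1\})$ with $J = \{\a \in \Pi : \langle \a, \nu\rangle = 0\}$ and $x \in \tW_J$ of length zero for $\ell_J$.

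The main obstacle is that these canonical standard-quadruple representatives for different straight classes are not a priori Bruhat-comparable in $\tW$. The way around it is to perform an elementary increment: pick a simple coroot direction $\a^\vee$ such that $\nu_\CO + \a^\vee$ still lies in the dominant cone, satisfies $\nu_\CO + \a^\vee \le \nu_{\CO'}$, and keeps the Kottwitz invariant unchanged; then construct $w''$ by inserting a translation component $t^{\a^\vee}$ into a $\d$-straight representative of $\CO$, after a suitable conjugation moving the action into the relevant parabolic. Verifying that the inflated element is again $\d$-straight with Newton point $\nu_\CO + \a^\vee$ and Bruhat-larger than $w$ is where the bookkeeping lives; it rests on the Iwahori--Matsumoto length formula, together with the reduction from $\tW$ to $\tW_J$ provided by Theorem~\ref{par-main} and the $\approx_\d$-uniqueness of minimal length elements in straight classes. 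Once this single-step statement is established, chaining such increments along a path from $\nu_\CO$ to $\nu_{\CO'}$ in the coroot lattice finishes the induction.
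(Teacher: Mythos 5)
The paper does not actually prove this theorem; it simply cites \cite[Theorem 3.1]{He00}, so the only thing to assess is whether your argument stands on its own. Your forward direction is essentially viable: the lemma you implicitly use (if $u\le w$, $v\le w'$ and $\ell(ww')=\ell(w)+\ell(w')$ then $uv\le ww'$) is standard, $w\le w'$ forces equal $\Omega$-components and hence equal Kottwitz invariants, and for $n$ divisible by the order of $\d$ in $\mathrm{Aff}(V)$ the products collapse to translations. But note that they collapse to $t^{n\nu_{w,\d}}$ and $t^{n\nu_{w',\d}}$ with exponents in the $W_0$-orbits of $\nu_\CO,\nu_{\CO'}$ that need \emph{not} be dominant, so the ``classical fact'' you invoke (stated for dominant coweights) does not apply as written; you need the statement that $t^\lambda\le t^\mu$ forces $\bar\lambda\le\bar\mu$ for arbitrary $\lambda,\mu$ (essentially admissibility implies containment in the convex hull of $W_0\mu$, as in Kottwitz--Rapoport). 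That is true but must be cited or proved.

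The reverse direction, which is the substantive half, has a genuine gap. Your induction is built on producing an intermediate straight class with Newton point $\nu_\CO+\a^\vee$ for a simple coroot $\a^\vee$, but such a class need not exist: Newton points of straight classes with a fixed Kottwitz invariant satisfy integrality constraints and differ by \emph{rational}, not integral, combinations of positive coroots, so a walk in the coroot lattice has the wrong granularity. Concretely, for $GL_3$ the point $(\tfrac12,\tfrac12,-1)$ is a Newton point with $\k=0$, but $(\tfrac12,\tfrac12,-1)+\a_1^\vee=(\tfrac32,-\tfrac12,-1)$ violates the integrality condition and is not the Newton point of any class; moreover $\nu_\CO+\a^\vee$ may fail to be $\le\nu_{\CO'}$ even when $\nu_\CO<\nu_{\CO'}$. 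Beyond that, the single increment step you defer to ``bookkeeping'' is precisely the content of the theorem: multiplying a $\d$-straight representative by $t^{\a^\vee}$ (even after conjugating into a parabolic) does not in general preserve straightness, does not shift the Newton point by $\a^\vee$ (Newton points are not additive under such products), and is not shown to produce an element Bruhat-comparable with a minimal length element of the target class. None of the tools you list (the Iwahori--Matsumoto length formula, Theorem \ref{par-main}, the $\approx_\d$-uniqueness in straight classes) supplies this, so the implication $f(\CO)\le f(\CO')\Rightarrow\CO\preceq\CO'$ remains unproved; the argument in \cite{He00} is considerably more involved than this sketch.
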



\section{Part II: Geometry of $G(\breve F)$}

\subsection{Bruhat decomposition on $G(\kk)$}

As a warm-up, we first recall the Bruhat decomposition of a complex reductive group. 

Let $G$ be a connected reductive group over an algebraically closed field $\kk$. Let $B$ be a Borel subgroup and $T \subset B$ be a maximal torus. Let $N$ be the normalizer of $T$. For any element $w$ in the Weyl group $W_0=N(T)/T$, we choose a representative $\dot w$ in $N(T)$. 

The group $G(\kk)$ is a union of Bruhat cells: $$G(\kk)=\sqcup_{w \in W_0} B \dot w B.$$ 

Moreover, the closure of a Bruhat cell $B \dot w B$ is a union of Bruhat cells and the closure relation is given by the Bruhat order of the Weyl group: $$\overline{B \dot w B}=\sqcup_{w' \le w} B \dot w' B.$$

The dimension of a Bruhat cell $B \dot w B$ is given in terms of length function of the Weyl group: $$\dim (B \dot w B)-\dim B=\ell(w).$$

A variation is the decomposition of the flag variety $G/B$ into a union of Schubert cells $$G/B=\sqcup_{w \in W_0} B \dot w B/B.$$

In fact, the subgroups $B$ and $N$ of $G(\kk)$ form a $(B, N)$-pair, i.e., 
\begin{itemize}

\item $G$ is generated by the subgroups $B$ and $N$.

\item $B \cap N$ is a normal subgroup of $N$.

\item The group $W=N/B \cap N$ is generated by a set of elements $s$ of order $2$, for $s$ in some nonempty set $\BS_0$.

\item For any $s \in \BS_0$ and $w \in W$, $\dot s B w \subset B \dot s \dot w B \cup B \dot w B$. 

\item $\dot s \notin N_G(B)$ for all $s \in \BS_0$. 
\end{itemize}

An important consequence of the axioms of $(B, N)$-pairs is the following multiplication formula of Bruhat cells \[B \dot s B \dot w B=\begin{cases} B \dot s \dot w B, & \text{ if } s w>w; \\ B \dot s \dot w B \sqcup B \dot w B, & \text{ if } s w<w.\end{cases}\] \[B \dot w B \dot s B=\begin{cases} B \dot w \dot s B, & \text{ if } w s>w; \\ B \dot w \dot s B \sqcup B \dot w B, & \text{ if } w s<w.\end{cases}\]

\subsection{$G(\breve F)$ and its Iwahori-Weyl groups}

Let $\BF_q$ be a finite field with $q$ elements. Let $F$ be a non-archimedean local field with valuation ring $\CO_F$ and residue field $\BF_q$. We denote by $\e$ its uniformizer. Let $\breve F$ be the completion of the maximal unramified extension of $F$ with valuation ring $\CO_{\breve F}$ and residue field $\kk=\bar \BF_q$. 

Let $G$ be a connected reductive group over $F$.  Let $\s$ be the Frobenius automorphism of $\breve F/F$. We also denote the induced automorphism on $G(\breve F)$ by $\s$.

Let $S \subset G$ be a maximal $\breve F$-split torus defined over $F$ and let $T$ be its centralizer. Since $G$ is quasi-split over $\breve F$, $T$ is a maximal torus of $G$. Let $\breve \CI$ be the Iwahori subgroup fixing an alcove $\fka$ in the apartment $V$ attached to $S$. The {\it Iwahori-Weyl group} associated to $S$ is 
\[
\tW = N(\breve F)/T(\breve F) \cap \breve \CI,
\]
where $N$ denotes the normalizer of $S$ in $G$. The automorphism $\s$ on $G(\breve F)$ induces an automorphism on the Iwahori-Weyl group $\tW$, which we still denote by $\s$. For $w \in \tW$, we choose a representative $\dot w$ in $N(\breve F)$. 

The relative Weyl group is $W_0=N(\breve F)/T(\breve F)$. The Iwahori-Weyl group $\tW$ is a split extension of $W_0$ by the central subgroup $X_*(T)_\G$, where $\G=Gal(\bar F/\breve F)$. The splitting depends on the choice of a special vertex of $\fka$. 

In the split case, i.e., $S$ is a maximal torus of $G$, the Iwahori-Weyl group $\tW$ is the extended affine Weyl group associated to the root datum of $G$ and $\s: \tW \to \tW$ is the identity map. In the non-split case, the situation is more complicated.

\begin{example}
Let $G$ be the special orthogonal group of a quadratic form in $2n+1$ variables. The local Dynkin diagram is of type $B_n$ if $G$ is split, and is of type ${}^2 B_n$ if $G$ is non-split. The Iwahori-Weyl group of $G(\breve F)$ for split and non-split quadratic forms are the same. However, the group automorphism on the Iwahori-Weyl group induced from the Frobenius automorphism $\s$ are different: it is the identity map in the split case and the unique nontrivial diagram automorphism in the non-split case.

\begin{table}[!htb]
\caption{Local Dynkin diagrams for $SO(2n+1)$}
\label{table3}
\begin{tabular}{|c|c|}
\hline
Name & Local Dynkin diagram \\ \hline
$B_n$ & 
\begin{tikzpicture}
\begin{scope}[start chain]
\dnode{1}
\dnode{2}
\dnode{2}
\dydots
\dnode{2}
\dnodenj{2}
\end{scope}
\begin{scope}[start chain=br going above]
\chainin(chain-2);
\dnodebr{1}
\end{scope}
\path (chain-5) -- node{\(\Rightarrow\)} (chain-6);
\end{tikzpicture}
\\ \hline
\multirow{2}{*}{${}^2 B_n$} & 
\begin{tikzpicture}
\begin{scope}[start chain]
\dnode{1}
\dnode{2}
\dnode{2}
\dydots
\dnode{2}
\dnodenj{2}
\end{scope}
\begin{scope}[start chain=br going above]
\chainin(chain-2);
\dnodebr{1}
\end{scope}
\path (chain-5) -- node{\(\Rightarrow\)} (chain-6);
\end{tikzpicture}
\\ 
& \begin{tikzpicture}[start chain]
\dnodenj{1}
\dnodenj{2}
\dydots
\dnode{2}
\dnodenj{1}
\path (chain-1) -- node{\(\Leftarrow\)} (chain-2);
\path (chain-4) -- node{\(\Rightarrow\)} (chain-5);
\end{tikzpicture} \\ \hline
\end{tabular}
\end{table}
\end{example}

\begin{example}
Let $G$ be the special unitary group of a hermitian form in $2n$ variables over a ramified quadratic extension of $F$. If the form $h$ is split, then the local Dynkin diagram is of type $B$-$C_n$. If the form $h$ is non-split, then the local Dynkin diagram is of type ${}^2 B$-$C_n$. The Iwahori-Weyl group of $G(\breve F)$ for split and non-split Hermitian forms are the same. However, the group automorphism on the Iwahori-Weyl group induced from the Frobenius automorphism $\s$ are different: it is the identity map if $h$ is split and the unique nontrivial diagram automorphism if $h$ is not split. 

\begin{table}[!htb]
\caption{Local Dynkin diagrams for ramified $SU(2n)$}
\label{table4}
\begin{tabular}{|c|c|}
\hline
Name & Local Dynkin diagram \\ \hline
$B$-$C_n$ & 
\begin{tikzpicture}
\begin{scope}[start chain]
\dnode{1}
\dnode{2}
\dnode{2}
\dydots
\dnode{2}
\dnodenj{2}
\end{scope}
\begin{scope}[start chain=br going above]
\chainin(chain-2);
\dnodebr{1}
\end{scope}
\path (chain-5) -- node{{\(\Leftarrow\)}} (chain-6);
\end{tikzpicture}
\\ \hline
\multirow{2}{*}{${}^2 B$-$C_n$} & 
\begin{tikzpicture}
\begin{scope}[start chain]
\dnode{1}
\dnode{2}
\dnode{2}
\dydots
\dnode{2}
\dnodenj{2}
\end{scope}
\begin{scope}[start chain=br going above]
\chainin(chain-2);
\dnodebr{1}
\end{scope}
\path (chain-5) -- node{\color{red}{\(\Leftarrow\)}} (chain-6);
\end{tikzpicture}
\\ 
& \begin{tikzpicture}[start chain]
\dnodenj{1}
\dnodenj{2}
\dydots
\dnode{2}
\dnodenj{1}
\path (chain-1) -- node{\(\Leftarrow\)} (chain-2);
\path (chain-4) -- node{{\(\Leftarrow\)}} (chain-5);
\end{tikzpicture} \\ \hline
\end{tabular}
\end{table}


It is worth mentioning that although the local Dynkin diagram in Table \ref{table4} different from Table \ref{table3}. The only difference is the orientation. As the change of orientation does not change the associated affine Weyl groups, the Iwahori-Weyl group $\tW$ together with the automorphism $\s: \tW \to \tW$ of ramified special unitary group $SU(2n)$ of a split (resp. non-split) hermitian form are the same as of split (resp. non-split) special orthogonal group $SO(2n+1)$. 
\end{example}

\smallskip

In general, we may associate to $G \supset T$ a reduced root datum $\sR$ such that the Iwahori-Weyl group of $G_{sc}$ can be identified with the affine Weyl group $W_a$ associated to $\sR$. The set $\tilde \BS$ of simple reflections of $W_a$ consists of the reflections along the walls of the alcove $\fka$ we fixed in the beginning. The group $\tW$ acts on the set of alcoves in the apartment attached to $S$, where $W_a$ acts transitively on the set of alcoves. We also have $$\tW=W_a \rtimes \Omega,$$ where $\Omega$ is the isotropy subgroup of $\fka$. 

The group $X_*(T)_{\G}$ may have a nontrivial torsion part. However, as explained in \cite[\S 8.1]{HH}, the torsion part $X_*(T)_{\G, tor}$ lies in the center of $\tW$ and thus may be ignored when considering the (non-twisted and twisted) conjugation action. In particular, the results we discussed in Part I on the minimal length elements, straight conjugacy classes, etc, apply to the Iwahori-Weyl groups as well. 

The pair $(\tW/X_*(T)_{\G, tor}, \s)$ for any group $G$ comes from the pair $(\tW', \s')$ for a suitable unramified group. This simple observation allows us to reduce the study of some questions on $G$ to the questions on an unramified group. This observation will be used in \S \ref{nonempty} and \S \ref{dim}. 

\subsection{Iwahori-Bruhat decomposition on $G(\breve F)$ and some variations}\label{bruhat}

\subsubsection{Double cosets of the Iwahori subgroup} After the Bruhat decomposition was found in the 1950's, a rather unexpected extension was found by Iwahori and Matsumoto \cite{IM}, for a split $p$-adic groups. Here in the decomposition, one use the (compact) Iwahori subgroup instead of the (non-compact) Borel subgroup and the (infinite) Iwahori-Weyl group instead of the (finite) Weyl group. This decomposition is then extended by Bruhat and Tits \cite{BT72} to arbitrary $p$-adic groups. 

\begin{theorem}
The pair $(\breve \CI, N)$ is a $(B, N)$-pair. In particular,
$$G(\breve F)=\sqcup_{w \in \tW} \breve \CI \dot w \breve \CI.$$ 
\end{theorem}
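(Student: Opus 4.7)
The plan is to verify the (generalized) Tits system axioms listed above directly from the Bruhat--Tits structure of $G(\breve F)$, and then to derive the decomposition by the standard formal $(B,N)$-pair argument. Since $\tW = W_a \rtimes \Omega$ is only quasi-Coxeter, it is convenient to first treat the honest Coxeter group $W_a$ (by restricting to the preimage in $N(\breve F)$ of $W_a$) and then to extend by $\Omega$, using that every $\omega \in \Omega$ stabilizes the alcove $\fka$ as a set and therefore normalizes the pointwise fixer $\breve\CI$. This reduction lets us quote the standard Tits system theorem for the Coxeter part, and then multiply by $\Omega$ on the outside.

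The main geometric input I would use is the Iwahori (affine Bruhat) factorization of $\breve\CI$ in terms of affine root subgroups $U_{\tilde\a}(\breve F)$: for any convex order on the set of affine roots non-negative on $\fka$, the product map of the corresponding $U_{\tilde\a}(\breve F)$, the parahoric torus $T(\breve F)\cap\breve\CI$, and the negative-on-$\fka$ root subgroups, is a bijection onto $\breve\CI$. From this the easy axioms follow: any element of $N(\breve F)\cap\breve\CI$ fixes $\fka$ pointwise and thus acts trivially on the apartment, so $N(\breve F)\cap\breve\CI = T(\breve F)\cap\breve\CI$; this is normal in $N(\breve F)$ because $T$ is normal in $N$; the quotient is $\tW$ by definition and is generated by the affine simple reflections $\tilde\BS$ together with $\Omega$; and $G(\breve F)$ is generated by $\breve\CI$ and $N(\breve F)$ because every affine root subgroup $U_{\tilde\a}(\breve F)$ either lies in $\breve\CI$ or is conjugate by an element of $N(\breve F)$ to one that does.

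The main obstacle is the multiplication axiom $\dot s \breve\CI \dot w \subset \breve\CI \dot s \dot w \breve\CI \cup \breve\CI \dot w \breve\CI$ for $s \in \tilde\BS$, together with its companion $\dot s \notin N_{G(\breve F)}(\breve\CI)$. Both reduce to a rank-one computation inside the subgroup $G_{\tilde\a}(\breve F)$ generated by the affine root subgroups $U_{\pm\tilde\a}$ associated to $s = s_{\tilde\a}$: up to central isogeny this subgroup is of type $\SL_2$ in the unramified/reduced case and of type $\SU_3$ in the ramified case, and inside it one verifies directly the classical identity
\[
U_{-\tilde\a}(\breve F)\setminus\{1\} \;\subset\; \bigl(\breve\CI\cap G_{\tilde\a}(\breve F)\bigr)\,\dot s\,\bigl(\breve\CI\cap G_{\tilde\a}(\breve F)\bigr),
\]
from which both the multiplication formula and the non-normalization follow by transporting via the remaining affine root subgroups (which are normalized by $T$ and permuted by $\dot s$ in a controlled way via the commutator relations). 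Carrying out this rank-one step uniformly, in particular in the ramified $\SU_3$ case, is the technical heart of the argument and is precisely where Bruhat--Tits theory is indispensable.

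Once the Tits axioms are in place, the decomposition $G(\breve F) = \bigsqcup_{w \in \tW} \breve\CI \dot w \breve\CI$ is formal: existence of the union is proved by induction on $\ell(w)$, starting from $\breve\CI \cdot N(\breve F) \cdot \breve\CI = G(\breve F)$ and using the multiplication formula repeatedly, while semi-direct factoring $\tW = W_a \rtimes \Omega$ absorbs the length-zero elements through their normalization of $\breve\CI$; disjointness of the double cosets follows because distinct $w \in \tW$ send $\fka$ to distinct alcoves of the Bruhat--Tits building, so an identification $\breve\CI \dot w \breve\CI = \breve\CI \dot w' \breve\CI$ would force $w = w'$ via the action on the apartment.
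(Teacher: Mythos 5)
The paper itself offers no proof of this theorem: it is quoted from Iwahori--Matsumoto \cite{IM} in the split case and from Bruhat--Tits \cite{BT72} in general, and your outline follows exactly that route (verify the Tits-system axioms for the $W_a$-part via the rank-one subgroups of type $\SL_2$ and ramified $\SU_3$, then extend by $\Omega$, which normalizes $\breve \CI$). As a strategy this is the standard one, and deferring the rank-one computation to Bruhat--Tits theory is defensible in a sketch, though it means the real content of the theorem --- the multiplication axiom, especially in the ramified case, and the generation statement $G(\breve F)=\langle \breve \CI, N(\breve F)\rangle$ --- is quoted rather than proved.

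There is, however, a genuine gap in your disjointness argument. You claim that distinct $w \in \tW$ send $\fka$ to distinct alcoves; this is false whenever $\Omega \neq 1$: by definition every element of $\Omega$ stabilizes $\fka$, and the torsion subgroup $X_*(T)_{\G, \mathrm{tor}} \subset \Omega$ even acts trivially on the whole apartment. So the action on the building can separate the double cosets $\breve \CI \dot w \breve \CI$ at best up to the $\Omega$-component of $w$. For instance, for $G=GL_n$ and $\tau \in \Omega$ nontrivial, both $1$ and $\tau$ fix the base alcove, so your argument yields no contradiction from a hypothetical equality $\breve \CI = \breve \CI \dot \tau \breve \CI$, although these cosets are distinct. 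The missing ingredient is precisely the one your sketch leaves implicit: disjointness for $w, w' \in W_a$ follows formally from the Tits-system axioms (equivalently, from the simply transitive action of $W_a$ on the alcoves of the apartment), while cosets with different $\Omega$-components are separated by the Kottwitz homomorphism $\kappa_G$, using that $\breve \CI$ is the \emph{parahoric} subgroup, hence contained in $\ker \kappa_G$ --- and not merely the stabilizer or pointwise fixer of $\fka$, for which the corresponding statement genuinely fails when $X_*(T)_{\G, \mathrm{tor}} \neq 1$. Concretely, writing $w = u\tau$ and $w' = u'\tau'$ with $u, u' \in W_a$ and $\tau, \tau' \in \Omega$, equality of double cosets forces $\tau = \tau'$ via $\kappa_G$, and then $u = u'$ after multiplying on the right by $\dot\tau^{-1}$ (which normalizes $\breve \CI$) and invoking the $W_a$-case.
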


We also have an explicit formula on the multiplication of Bruhat cells \[\breve \CI \dot s \breve \CI \dot w \breve \CI=\begin{cases} \breve \CI \dot s \dot w \breve \CI, & \text{ if } s w>w; \\ \breve \CI \dot s \dot w \breve \CI \sqcup \breve \CI \dot w \breve \CI, & \text{ if } s w<w.\end{cases}\] \[\breve \CI \dot w \breve \CI \dot s \breve \CI=\begin{cases} \breve \CI \dot w \dot s \breve \CI, & \text{ if } w s>w; \\ \breve \CI \dot w \dot s \breve \CI \sqcup \breve \CI \dot w \breve \CI, & \text{ if } w s<w.\end{cases}\]

\subsubsection{Double cosets of a parahoric subgroup} We have a similar decomposition for parahoric subgroups as well. Since all the Iwahori subgroups are conjugate, we will only consider the parabolic subgroups $\breve \CK$ that contains $\breve \CI$. For such $\breve \CK$, we denote by $K \subset \tilde \BS$ the corresponding set of simple reflections. The group $W_K$, generated by $K$, is the Weyl group of $\breve \CK$. We denote by ${}^K \tW^K \subset \tW$ the set of minimal length representatives in their $W_K$-double cosets. 

\begin{theorem}
Let $\breve \CK$ be a parahoric subgroup of $G(\breve F)$. Then $$G(\breve F)=\sqcup_{w \in {}^K \tW^K} \breve \CK \dot w \breve \CK.$$
\end{theorem}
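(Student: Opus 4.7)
The plan is to deduce the parahoric double coset decomposition from the Iwahori-Bruhat decomposition, using the fact that $\breve \CK$ itself is a union of Iwahori double cosets indexed by the finite subgroup $W_K$ of $\tW$.

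First, I would recall that since $\breve \CK \supset \breve \CI$ and $W_K$ is the Weyl group associated to $\breve \CK$, we have the decomposition
\[
\breve \CK = \bigsqcup_{u \in W_K} \breve \CI \dot u \breve \CI,
\]
which is a finite union because $W_K$ is finite. Thus for any $w \in \tW$,
\[
\breve \CK \dot w \breve \CK = \bigcup_{u,v \in W_K} \breve \CI \dot u \breve \CI \dot w \breve \CI \dot v \breve \CI.
\]
The next step is to use the multiplication rule for Iwahori double cosets recalled just above. By an induction on $\ell(u) + \ell(v)$, iterating the identities $\breve \CI \dot s \breve \CI \dot x \breve \CI \subseteq \breve \CI \dot{sx}\breve \CI \cup \breve \CI \dot x \breve \CI$ and the symmetric version on the right, one sees that $\breve \CI \dot u \breve \CI \dot w \breve \CI \dot v \breve \CI$ is contained in the union of $\breve \CI \dot{w'} \breve \CI$ with $w' \in W_K w W_K$. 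Conversely every such $\breve \CI \dot{w'} \breve \CI$ is manifestly contained in $\breve \CK \dot w \breve \CK$, so
\[
\breve \CK \dot w \breve \CK = \bigcup_{w' \in W_K w W_K} \breve \CI \dot{w'} \breve \CI,
\]
and the union is \emph{disjoint} by the Iwahori-Bruhat decomposition.

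Now I would invoke the general Coxeter-theoretic fact that in the quasi-Coxeter group $\tW = W_a \rtimes \Omega$ with $W_K$ a finite parabolic subgroup of $W_a$, each $W_K$-double coset $W_K w W_K$ contains a unique element of minimal length, namely the element in ${}^K \tW^K$. (This reduces to the standard statement for the Coxeter group $W_a$ by writing $w = w_a \tau$ with $w_a \in W_a$ and $\tau \in \Omega$, and noting that $\tau$ permutes $K$ since $\Omega$ preserves the alcove $\fka$.) Therefore ${}^K \tW^K$ parametrizes $W_K \backslash \tW / W_K$, and combining with the previous step:
\[
G(\breve F) = \bigsqcup_{w \in \tW} \breve \CI \dot w \breve \CI = \bigsqcup_{w \in {}^K \tW^K}\;\bigsqcup_{w' \in W_K w W_K} \breve \CI \dot{w'} \breve \CI = \bigsqcup_{w \in {}^K \tW^K} \breve \CK \dot w \breve \CK,
\]
which is what we wanted.

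The only mildly delicate point is verifying that the union on the right of the displayed equation for $\breve \CK \dot w \breve \CK$ really is disjoint and exhausts $\breve \CK \dot w \breve \CK$; the disjointness is free from the Iwahori-Bruhat decomposition, while the exhaustion requires the inductive use of the $(B,N)$-pair multiplication formulas. This is the step where one has to be careful, but given the explicit multiplication rules already stated, it amounts to a straightforward induction rather than a conceptual obstacle.
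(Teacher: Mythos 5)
Your argument is correct and is exactly the standard $(B,N)$-pair derivation that the paper leaves implicit (the theorem is stated there without proof, with the Iwahori--Matsumoto and Bruhat--Tits references supplying the Tits system): $\breve \CK=\sqcup_{u \in W_K} \breve \CI \dot u \breve \CI$, the multiplication formulas give $\breve \CK \dot w \breve \CK=\sqcup_{w' \in W_K w W_K} \breve \CI \dot w' \breve \CI$ by induction, and the unique minimal-length double coset representatives identify $W_K \backslash \tW / W_K$ with ${}^K \tW^K$. The one imprecision is your parenthetical: an element $\t \in \Omega$ permutes $\tilde \BS$ but need not stabilize $K$, so for $w=w_a\t$ with $w_a \in W_a$ one should write $W_K w W_K=\bigl(W_K w_a W_{\t(K)}\bigr)\t$ with possibly $\t(K) \neq K$; the Coxeter double-coset theory for $W_a$ applies verbatim to $W_K w_a W_{\t(K)}$, so your conclusion is unaffected.
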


Besides the Iwahori subgroup case, another case we are especially interested in is the special maximal parahoric subgroup case. Here we have a split extension of $\tW=X_*(T)_\G \rtimes W_0$ with respect to the vertex corresponding to $\breve \CK$ and $W_K \backslash \tW/W_K$ is in a natural bijection with the set of $W_0$-orbits of $X_*(T)_\G$. We have the Cartan decomposition $$G(\breve F)=\sqcup_{\{\mu\} \in X_*(T)_\G/W_0} \breve \CK \e^\mu \breve \CK.$$

Note that however, the multiplication formula for $\breve \CK$-double cosets, in general, is more complicated than the multiplication formula for $\breve \CI$-double cosets. 

\smallskip

\subsubsection{Lusztig's finer decomposition}
Let $\breve \CK$ be a parahoric subgroup of $G(\breve F)$ and $\breve \CI \subsetneqq \breve \CK$. For $w \in {}^K \tW^K$, we decompose $\breve \CK \dot w \breve \CK$ further into finitely many subsets stable under the action of $\breve K_\sigma$, analogous to the $G$-stable piece decomposition (for reductive groups $G$ over algebraically closed fields) introduced by Lusztig in \cite{Lu-par1}. 

We have the following simple but very useful properties on the conjugation actions on the Bruhat cells: 

\begin{itemize}

\item $\breve \CK \cdot_\s \breve \CI \dot w \breve \CI=\breve \CK \cdot_\s \breve \CI \dot w' \breve \CI$ if $w \approx_{K, \s} w'$; 

\item $\breve \CK \cdot_\s \breve \CI \dot w \breve \CI=\breve \CK \cdot_\s \breve \CI \dot s \dot w \breve \CI \cup \breve \CK \cdot_\s \breve \CI \dot s \dot w \s(\dot s) \breve \CI$ for $s \in K$ with $s w \s(s)<w$. 

\item If $w \in {}^K \tW$ and $u \in W_{I(K, w, \s)}$, then $\breve \CK \cdot_\s \breve \CI \dot x \dot w \breve \CI=\breve \CK \cdot_\s \breve \CI \dot w \breve \CI$. 
\end{itemize}

The following decomposition is essentially contained in \cite[1.4]{Lu-par3} and \cite[Proposition 2.5 \& 2.6]{He11}. The general case is in \cite[Theorem 3.2.1]{GH}. 

\begin{theorem}\label{kwk}
For any $w \in {}^K \tW^K$, $$\breve \CK \dot w \breve \CK=\sqcup_{x \in W_K w W_K \cap {}^K \tW} \breve \CK \cdot_\s \breve \CI \dot x \breve \CI.$$
\end{theorem}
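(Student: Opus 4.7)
The plan is to combine the Iwahori-Bruhat decomposition of $\breve \CK$ with Theorem~\ref{w-dec} applied to the $\s$-twisted $W_K$-action on $\tW$, and to use the three properties of the $\breve \CK_\s$-action listed just above together with the reduction of Theorem~\ref{par-main}(1) to collapse each piece of the resulting refinement to a single $\breve \CK_\s$-orbit set. I would do this in two stages: a covering step and a disjointness step.

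For the covering, start with
\[
\breve \CK \dot w \breve \CK \;=\; \bigsqcup_{y \in W_K w W_K} \breve \CI \dot y \breve \CI
\]
and regroup the indexing set via Theorem~\ref{w-dec}:
\[
W_K w W_K \;=\; \bigsqcup_{x \in {}^K \tW \,\cap\, W_K w W_K} W_K \cdot_\s \bigl(W_{I(K, x, \s)}\, x\bigr).
\]
Fix such an $x$ and any $y$ in its piece. Theorem~\ref{par-main}(1), applied to the $\Ad(x) \circ \s$-twisted conjugacy class of $W_{I(K, x, \s)}$ matching $y$, produces a sequence $y = y_0 \to_{K, \s} y_1 \to_{K, \s} \cdots \to_{K, \s} y_n = u x$ with $u \in W_{I(K, x, \s)}$ and weakly decreasing lengths. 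Walking along this sequence, each equal-length step preserves $\breve \CK \cdot_\s \breve \CI \dot{y_i} \breve \CI$ by property~1, and each strictly decreasing step yields $\breve \CK \cdot_\s \breve \CI \dot{y_{i+1}} \breve \CI \subset \breve \CK \cdot_\s \breve \CI \dot{y_i} \breve \CI$ by property~2. Iterating and then invoking property~3 gives $\breve \CI \dot x \breve \CI \subset \breve \CK \cdot_\s \breve \CI \dot y \breve \CI$. Now the relation ``$\breve \CI \dot a \breve \CI \cap \breve \CK \cdot_\s \breve \CI \dot b \breve \CI \neq \emptyset$'' is symmetric in $a, b$ (replace $k$ by $k^{-1}$ in $g = k h \s(k)^{-1}$), and since each such set is a union of whole Bruhat cells, this flips to $\breve \CI \dot y \breve \CI \subset \breve \CK \cdot_\s \breve \CI \dot x \breve \CI$. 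Taking unions over $y$ and $x$ and combining with $\breve \CK \cdot_\s \breve \CI \dot x \breve \CI \subset \breve \CK \dot x \breve \CK = \breve \CK \dot w \breve \CK$ produces the covering $\breve \CK \dot w \breve \CK = \bigcup_x \breve \CK \cdot_\s \breve \CI \dot x \breve \CI$; a second use of symmetry upgrades the two one-sided containments to the equality of orbit sets $\breve \CK \cdot_\s \breve \CI \dot y \breve \CI = \breve \CK \cdot_\s \breve \CI \dot x \breve \CI$ for every $y$ in the piece of $x$.

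The disjointness of the union reduces, via the equality just noted, to the set-theoretic claim that $\breve \CK \cdot_\s \breve \CI \dot x \breve \CI$ meets $\breve \CI \dot{y'} \breve \CI$ only for $y'$ in the single piece $W_K \cdot_\s (W_{I(K, x, \s)} x)$; once this is established, two distinct $x, x' \in {}^K \tW \cap W_K w W_K$ live in disjoint pieces by Theorem~\ref{w-dec} and therefore give disjoint orbit sets. The $\supseteq$ half of this set-theoretic claim is precisely what the covering step supplies, but the $\subseteq$ half is the main obstacle: one must verify that whenever property~2 splits $\breve \CK \cdot_\s \breve \CI \dot w \breve \CI$ as $\breve \CK \cdot_\s \breve \CI \dot{sw} \breve \CI \cup \breve \CK \cdot_\s \breve \CI \dot{sw\s(s)} \breve \CI$, the Bruhat cell $\breve \CI \dot{y'} \breve \CI$ one actually lands in still corresponds to an element in the original $W_K \cdot_\s$-orbit. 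This compatibility between the $\breve \CK_\s$-structure on $G(\breve F)$ and the $W_K \cdot_\s$-structure on $\tW$ is the delicate bookkeeping carried out in \cite[Proposition 2.5, 2.6]{He11} for the cases treated there and in \cite[Theorem 3.2.1]{GH} in the present generality.
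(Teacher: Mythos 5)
Your first half is used correctly, but it only produces the inclusion $\breve \CK \cdot_\s \breve \CI \dot x \breve \CI \subseteq \breve \CK \cdot_\s \breve \CI \dot y \breve \CI$, and the device you use to reverse it is where the proof breaks. The reversal rests on the claim that $\breve \CK \cdot_\s \breve \CI \dot x \breve \CI$ is a union of whole $\breve \CI$-double cosets, and that claim is false; worse, the statement you want to land on, $\breve \CI \dot y \breve \CI \subseteq \breve \CK \cdot_\s \breve \CI \dot x \breve \CI$ for every $y$ in the combinatorial piece of $x$, is itself false. Already for $G=SL_2$ with $\breve \CK$ hyperspecial, $K=\{s_1\}$, $w=s_0$ and $\s$ acting trivially on $\tW$, one has $I(K,s_0,\s)=\emptyset$ and $s_1s_0s_1 \in W_K\cdot_\s s_0$, so $y=s_1s_0s_1$ lies in the combinatorial piece of $x=s_0$. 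But $\breve \CK \cdot_\s \breve \CI \dot s_0 \breve \CI$ is the image of $\breve \CK \times \breve \CI \dot s_0 \breve \CI$ under $(k,h)\mapsto k h \s(k)^{-1}$, whose fibers are stable under the action $i\cdot(k,h)=(ki^{-1}, i h \s(i)^{-1})$ of $\breve \CI$, so $\dim_{\breve \CI}\breve \CK \cdot_\s \breve \CI \dot s_0 \breve \CI \le \dim_{\breve \CI}\breve \CK+\ell(s_0)=2$ (consistent with $\dim_{\breve \CK}\breve \CK \cdot_\s \breve \CI \dot w \breve \CI=\ell(w)$ in Table \ref{table2-2}), while $\dim_{\breve \CI}\breve \CI \dot s_1 \dot s_0 \dot s_1 \breve \CI=3$. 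Hence the cell of $s_1s_0s_1$ is not contained in the piece of $s_0$, even though $\dot s_1 \dot s_0 \s(\dot s_1)=\dot s_1\cdot_\s \dot s_0$ maps to $s_1s_0s_1$ in $\tW$ and does lie in that piece: the piece meets this cell without containing it, so these $\breve \CK \cdot_\s$-stable pieces are \emph{not} unions of Bruhat cells. This is exactly the splitting phenomenon behind Theorem \ref{DL-red} and Example \ref{gh-example}, and it is why a KR stratum is a union of EKOR strata only at the level of $\breve \CK$-double cosets. Consequently your symmetry-of-intersection step cannot be upgraded to a containment, and the claimed equality $\breve \CK \cdot_\s \breve \CI \dot y \breve \CI=\breve \CK \cdot_\s \breve \CI \dot x \breve \CI$ also fails (by property~2 the left-hand side generally meets several pieces).

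The covering has to be organized the other way, which is what the paper does: one proves, by descending induction on $y \in W_K w W_K$, only the weaker statement $\breve \CI \dot y \breve \CI \subseteq \bigcup_{x}\breve \CK \cdot_\s \breve \CI \dot x \breve \CI$. If $y$ is of minimal length in its $\s$-twisted $W_K$-class, then $y \approx_{K,\s} u x$ with $x \in {}^K\tW \cap W_K w W_K$ and $u \in W_{I(K,x,\s)}$, and properties~1 and~3 give $\breve \CI \dot y \breve \CI \subseteq \breve \CK \cdot_\s \breve \CI \dot y \breve \CI=\breve \CK \cdot_\s \breve \CI \dot x \breve \CI$; otherwise one picks $y'' \approx_{K,\s} y$ and $s \in K$ with $s y'' \s(s)<y''$ and uses property~2 as an equality of $\breve \CK \cdot_\s$-saturations to pass to the two strictly shorter elements $s y''$ and $s y''\s(s)$ of the same double coset. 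No cell is ever claimed to lie in a single piece, so the false step is avoided. Finally, your reduction of disjointness to the assertion that the piece of $x$ meets $\breve \CI \dot{y'} \breve \CI$ only for $y'$ in the combinatorial piece of $x$ is refuted by the same example; quoting \cite{He11} and \cite{GH} for disjointness is reasonable (the paper also only sketches the covering inclusion), but the cell-by-cell reformulation you reduce to is not a correct intermediate statement.
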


Due to its importance, we give a sketch of the proof that 
$$\breve \CK \dot w \breve \CK \subset \cup_{x \in W_K w W_K \cap {}^K \tW} \breve \CK \cdot_\s \breve \CI \dot x \breve \CI.$$ Note that $\breve \CK \dot w \breve \CK=\sqcup_{w' \in W_K w W_K} \breve \CI \dot w' \breve \CI$. We argue by induction on $w'$ that \begin{equation}\label{iwi} \breve \CI \dot w' \breve \CI \subset \cup_{x \in W_K w W_K \cap {}^K \tW} \breve \CK \cdot_\s \breve \CI \dot x \breve \CI.\end{equation}

If $w'$ is a minimal length element in its $\s$-twisted $W_J$-conjugacy class, then by Theorem \ref{a-red}, $w' \approx_{K, \s} u x$ for some $x \in {}^K \tW$ and $u \in W_{I(K, x, \s)}$. In this case, $x \in W_J w' W_{\s(J)}=W_J w W_{\s(J)}$. The inclusion (\ref{iwi}) holds for $w'$. 

If $w'$ is not a minimal length element in its $\s$-twisted $W_J$-conjugacy class, then by Theorem \ref{a-min}, there exists $w'' \approx_{K, \s} w'$ and $s \in K$ with $s w'' \s(s)<w''$. Then $$\breve \CK \cdot_\s \breve \CI \dot w' \breve \CI=\breve \CK \cdot_\s \breve \CI \dot w'' \breve \CI=\breve \CK \cdot_\s \breve \CI \dot s \dot w'' \breve \CI \cup \breve \CK \cdot_\s \breve \CI \dot s \dot w'' \s(\dot s) \breve \CI.$$ The inclusion (\ref{iwi}) for $w'$ follows from inductive hypothesis on $s w''$ and on $s w'' \s(s)$. 

\subsection{The set $B(G)$ of $\s$-conjugacy classes on $G(\breve F)$}\label{BG}

For any $b \in G(\breve F)$, we denote by $[b]=\{g \i b \s(g); g \in G(\breve F)\}$ its $\s$-conjugacy class. Let $B(G)$ be the set of $\s$-conjugacy classes of $G(\breve F)$.\footnote{To distinguish the conjugacy classes in $G(\breve F)$ in $\tW$, we use \textbf{$\s$-conjugacy classes} for $G(\breve F)$ and \textbf{$\s$-twisted conjugacy classes } for $\tW$.} Kottwitz \cite{Ko85} and \cite{Ko97} has given a classification of the set $B(G)$. This classification generalizes the Dieudonn\'e-Manin classification of isocrystals by their Newton polygons. For the purpose of this paper, it is convenient to view $B(G)$ as $\tW \sslash \tW_\s$. Recall that $\Omega_\s$ is the set of $\s$-coinvariants on $\Omega$ and $\CC$ is the dominant Weyl chamber of the reduced root datum $\sR$ associated to $G$. 

\begin{theorem}
The inclusion $N(\breve F) \to G(\breve F)$ induces a map $\Psi: \tW/\tW_\s \to B(G)$. Moreover, 

(1) the map $\Psi$ is surjective. 

(2) We have the following commutative diagram \[\xymatrix{\tW/\tW_\s \ar[rr]^{\Psi} \ar[dr]_f & & B(G) \ar[ld]^-f \\ & \Omega_\s \times \CC &}.\]

(3) The map $f: B(G) \to \Omega_\s \times \CC, b \mapsto (\k(b), \nu_b)$ is injective. 
\end{theorem}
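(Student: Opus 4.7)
The plan is to handle the three assertions in order, with the bulk of the work concentrated on part (3), which is essentially Kottwitz's classification theorem.

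For (1), surjectivity, I would lift the combinatorial reduction of Part I to the level of Iwahori double cosets. Given $b \in G(\breve F)$, the Iwahori--Bruhat decomposition places $b \in \breve \CI \dot w \breve \CI$ for some $w \in \tW$. The multiplication formula $\breve \CI \dot s \breve \CI \cdot \breve \CI \dot w \breve \CI \subseteq \breve \CI \dot{sw} \breve \CI \cup \breve \CI \dot w \breve \CI$ (and its right-handed version) yields the key inclusion
\[
\breve \CI \dot s \breve \CI \cdot_\s \breve \CI \dot w \breve \CI \;\subseteq\; \breve \CI \dot{sw\s(s)} \breve \CI \cup \breve \CI \dot w \breve \CI,
\]
so $\s$-conjugation by $\breve \CI \dot s \breve \CI$ transports $\breve \CI \dot w \breve \CI$ into $\breve \CI \dot{w'} \breve \CI$ whenever $w \to_\s w'$. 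Iterating and applying Theorem \ref{a-min}, I may $\s$-conjugate $b$ into a cell corresponding to a minimal-length representative of its $\s$-twisted conjugacy class. Theorem \ref{a-red} then lets me further $\s$-conjugate $b$ into $\breve \CI \dot u \dot x \breve \CI$ where $x$ is $\s$-straight, $K \subset \tilde \BS$ has $W_K$ finite with $\Ad(x)\s(K)=K$, and $u \in W_K$. After this reduction, $b$ lies in a single coset of a finite parahoric $\breve \CK_K$ times $\dot x$; applying Lang's theorem to the reductive quotient of $\breve \CK_K$ (and the smoothness of the pro-unipotent radical) shows that the $\s$-conjugation orbit of $b$ meets $N(\breve F)$, hitting $\dot{ux}$ (or a twist thereof).

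For (2), it suffices to check the two components separately. The Kottwitz map $\k_G \colon G(\breve F) \to \pi_1(G)_\s$ restricts on $T(\breve F)$ to the natural map $X_*(T) \to \pi_1(G) \to \pi_1(G)_\s$, and on $N(\breve F)$ factors through $\tW \twoheadrightarrow \Omega \twoheadrightarrow \Omega_\s$ under the identification $\pi_1(G)_\s \cong \Omega_\s$, so the $\Omega_\s$-component of $f$ agrees on both sides. For the Newton point, choose $n$ with $(w\s)^n = t^{\xi} \s^n$ in $\tW \rtimes \langle \s \rangle$; then $\dot w \s(\dot w) \cdots \s^{n-1}(\dot w) \equiv \e^{\xi}$ modulo $T(\breve F) \cap \breve \CI$, and taking the $W_0$-dominant representative of $\xi/n$ recovers both $\bar\nu_{w,\s}$ and $\nu_b$ for $b = \dot w$.

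For (3), injectivity of $f$ on $B(G)$, my approach splits into the basic and general cases. First, I would treat the \emph{basic} classes (those with central Newton point), showing that each is represented by $\dot \t$ for some $\t \in \Omega$ and is distinguished within the basic locus by $\k \in \Omega_\s$; this is Kottwitz's classification of basic classes via $H^1(\langle\s\rangle, G)$ and the Kottwitz homomorphism. Second, for a general class $[b]$ with Newton point $\nu$, the centralizer of $\nu$ in $G$ is a $\s$-stable Levi $M$, and I would use a Hodge--Newton-type decomposition to show that $[b]$ contains an element $b' \in M(\breve F)$ whose class in $B(M)$ is basic. Then two classes $[b_1]$ and $[b_2]$ with $f(b_1) = f(b_2)$ share the same $M$, and reduce to comparing basic classes in $B(M)$, where the already-handled basic case gives injectivity.

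The main obstacle is the injectivity in (3): the combinatorial material from Part I handles surjectivity and the compatibility of invariants quite cleanly, but the injectivity of $(\k,\nu)$ is genuinely deeper, requiring the Hodge--Newton reduction to a Levi together with Steinberg-/Lang-type input to classify basic classes by their Kottwitz invariant.
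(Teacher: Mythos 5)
Your proposal is correct and follows essentially the same route as the paper, which does not prove this theorem directly but attributes part (3) to Kottwitz's classification \cite{Ko85}, \cite{Ko97}, part (1) to the reduction method of \cite{GHKR10}, \cite[\S 3]{He99} and \cite{GHN}, and part (2) to the observation that the invariants $(\k, \nu)$ on $\tW$ are the restrictions of those on $G(\breve F)$. Your sketch reconstructs exactly these cited arguments: $\s$-conjugation across Iwahori double cosets following weakly length-decreasing sequences down to straight/minimal-length elements and then a Lang--Steinberg argument on the associated parahoric (the content of Theorem \ref{min-dl}) for surjectivity, a componentwise check for the commutative diagram, and Kottwitz's basic-plus-Levi reduction for injectivity.
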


Here part (3) is Kottwitz's classification of $B(G)$. The two arithmetic invariants on the set $\tW/\tW_\s$ of $\s$-twisted conjugacy classes of $\tW$ we introduced in \S \ref{I-ari} are the restriction of the invariants of $\s$-conjugacy classes of $G(\breve F)$ to the $\s$-twisted conjugacy classes of $\tW$. The surjectivity of $\Psi$ is first proved in \cite{GHKR10} for split groups. The general case is obtained in \cite[\S 3]{He99}. Another proof is given in \cite{GHN}. 

\

Note that we may identify $B(G)$ with $\Im(f)$. On the other hand, there is a natural bijection between $\Im(f)$ and $\tW \sslash \tW_\s$. Therefore, we have the following commutative diagram

\[\xymatrix{\tW \sslash \tW_\s \ar@{<->}[rr]^-{1-1} \ar@{^{(}->}[dr]_f & & B(G) \ar@{^{(}->}[ld]^-f \\ & X_*(T)_\BQ^+ \times \pi_1(G)_{\G_F} &}.\]

For any $\s$-conjugacy class $[b]$ of $G(\breve F)$, we denote by $\CO_{[b]}$ (or just $\CO_b$) the straight $\s$-twisted conjugacy class of $\tW$ that corresponds to $[b]$. 

\subsection{Admissible subsets of $G(\breve F)$}

We have discussed several interesting subsets of $G(\breve F)$: $\breve \CK \dot w \breve \CK$ for $w \in {}^K \tW^K$, $\breve \CK \cdot_\s \breve \CI \dot x \breve \CI$ for $w \in {}^K \tW$ and $[b] \in B(G)$. In the next few subsections, we will investigate some geometric properties (dimension, irreducible components, closure relations, etc.) of these subsets. In order to do so, let us assume furthermore that $F=\BF((\e))$. In this case, the affine flag variety and its deeper level generalization have a natural scheme structure. It is more difficult to do it over $p$-adic fields. We refer to the work of Zhu \cite{Zhx} and the work of Bhatt and Scholze \cite{BS} in this direction.

For any $n \in \BN$, let $\breve \CI_n$ be the $n$-th congruence subgroup of $\breve \CI$, i..e, the $n$-th Moy-Prasad subgroup associated to the barycenter of the base alcove. As in \cite[\S 2.10]{GHKR06}, A subset $V$ of $G(\breve F)$ is called {\it admissible} if for any $w \in \tW$, there exists $n \in \BN$ such that $V \cap \breve \CI \dot w \breve \CI$ is stable under the right action of $\breve \CI_n$. This is equivalent to say that for any $w \in \tW$, there exists $n' \in \BN$ such that $V \cap \overline{\breve \CI \dot w \breve \CI}$ is stable under the right action of $\breve \CI_{n'}$. 

We say that an admissible subset $V$ of $G(\breve F)$ is {\it locally closed} if for any $w \in \tW$, the underline reduced sheme of $(V \cap \overline{\breve \CI \dot w \breve \CI})/\breve \CI_n$ is locally closed in $\overline{\breve \CI \dot w \breve \CI}/\breve \CI_n$ for $n \gg 0$. In this case, we define the closure of $V$ to be $$\overline{V}=\varinjlim_w V_w,$$ where $V_w$ be the inverse image under the projection $G(\breve F) \to G(\breve F)/\breve \CI_n$ of the closure of $(V \cap \overline{\breve \CI \dot w \breve \CI})/\breve \CI_n$ in $G(\breve F)/\breve \CI_n$. 

We say that $V$ is bounded if $V \cap \breve \CI \dot w \breve \CI=\emptyset$ for all but finitely many $w \in W$. 

Let $V$ be a bounded, locally closed admissible subset of $G(\breve F)$. By definition, there exists $n \in \BN$ such that $V$ is stable under the right action of $\breve \CI_n$. We say that $V_1$ is an irreducible component of $V$ if $V_1/\breve \CI_n$ is an irreducible component of $V/\breve \CI_n$ and we denote by $Irr(V)$ the set of irreducible components of $V$. We define $$\dim_{\breve \CI} V_1=\dim (V_1/\breve \CI_n)-\dim (\breve \CI/\breve \CI_n), \qquad \dim_{\breve \CI} V=\max_{V_1 \in Irr(V)} \dim_{\breve \CI} V_1.$$ We also denote by $Irr^{\max}(V)$ the set of irreducible components of $V$ of maximal possible dimension, i.e., of dimension equals $\dim_{\breve \CI}(V)$. 

By convenient, we set $\dim_{\breve \CI}(\emptyset)=-\infty$ and $Irr^{\max}(\emptyset)=\emptyset$. 

For any parahoric subgroup $\breve \CK$, we set $\dim_{\breve \CK}(V)=\dim_{\breve \CI}(V)-\dim_{\breve \CI}(\breve \CK)$. 

In the rest of this subsection, we give some examples of admissible subsets. We start with a trivial example. 

\begin{example}
Let $w \in \tW$. Then $\breve \CI \dot w \breve \CI$ is a bounded, locally closed admissible subset of $G(\breve F)$. The closure of $\breve \CI w \breve \CI$ is $\sqcup_{w' \le w} \breve \CI \dot w' \breve \CI$ and $\dim_{\breve \CI} \breve \CI \dot w \breve \CI=\ell(w)$. 
\end{example}

\smallskip

The following result is easy to prove and we skip the details. 

\begin{lemma}\label{w-n}
Let $w \in \tW$ and $n \ge \ell(w)$. Then for any $g \in \breve \CI w \breve \CI$, $g \breve \CI_n g \i \subset \breve \CI_{n-\ell(w)}$. 
\end{lemma}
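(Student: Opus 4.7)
The plan is to first reduce to a statement about conjugation by the representative $\dot w$ alone, then induct on $\ell(w)$ to reduce to the case of a single affine simple reflection, and finally handle that case by analyzing the action on affine root subgroups.

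First I would use that each $\breve \CI_n$ is normal in $\breve \CI$, since it is the $n$-th Moy--Prasad filtration subgroup at the barycenter of $\fka$ (a congruence subgroup of $\breve \CI$). Writing $g = i_1 \dot w i_2$ with $i_1, i_2 \in \breve \CI$, normality gives
\[ g \breve \CI_n g^{-1} = i_1 \dot w (i_2 \breve \CI_n i_2^{-1}) \dot w^{-1} i_1^{-1} = i_1 \bigl(\dot w \breve \CI_n \dot w^{-1}\bigr) i_1^{-1}. \]
Thus it suffices to establish the reduced statement $\dot w \breve \CI_n \dot w^{-1} \subset \breve \CI_{n-\ell(w)}$, because normality of $\breve \CI_{n-\ell(w)}$ in $\breve \CI$ absorbs the outer conjugation by $i_1$.

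Next I would induct on $\ell(w)$. The base case $\ell(w)=0$ means $w \in \Omega$, the stabilizer of $\fka$; such a $\dot w$ preserves the barycenter and hence normalizes every Moy--Prasad subgroup attached to it, so $\dot w \breve \CI_n \dot w^{-1} = \breve \CI_n$. For $\ell(w) \geq 1$, fix a reduced factorization $w = s w'$ with $s \in \tilde \BS$ and $\ell(w') = \ell(w)-1$. The inductive hypothesis gives $\dot{w'} \breve \CI_n \dot{w'}^{-1} \subset \breve \CI_{n-\ell(w)+1}$, and the problem is reduced to the single-reflection statement $\dot s \breve \CI_m \dot s^{-1} \subset \breve \CI_{m-1}$ for every affine simple reflection $s$ and every $m \geq 1$.

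For the simple-reflection case I would invoke the Bruhat--Tits description of $\breve \CI$ and its Moy--Prasad filtration via affine root subgroups. Namely, $\breve \CI$ is the product (in any fixed ordering) of the torus part and the affine root subgroups $U_{\tilde \beta}$ with $\tilde \beta$ positive on $\fka$, and $\breve \CI_m$ is the subproduct consisting of pieces whose depth at the barycenter $x_0$ exceeds the appropriate normalization of $m$. Conjugation by $\dot s$ sends $U_{\tilde \beta}$ to $U_{s(\tilde \beta)}$; if $\tilde \alpha_s$ denotes the affine root whose wall is the wall of $\fka$ fixed by $s$, then for $\tilde \beta \neq \tilde \alpha_s$ the reflected root $s(\tilde \beta)$ still vanishes on the same side of the wall of $s$ and the depth at $x_0$ stays the same, while for $\tilde \beta = \tilde \alpha_s$ one crosses the wall of $s$, which accounts for the single unit of drop in filtration. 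Summing the contributions yields $\dot s \breve \CI_m \dot s^{-1} \subset \breve \CI_{m-1}$.

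The main obstacle is the last paragraph: writing down $\breve \CI_m$ explicitly in terms of affine root subgroups with their Moy--Prasad depths at the barycenter, and checking that conjugation by $\dot s$ shifts the filtration by exactly one unit. In the quasi-split/non-split setting this bookkeeping is mildly technical because one must work with the relative affine root system and its Galois twist, but the structure theory of Bruhat--Tits makes it routine; the point is simply that a single wall crossing costs exactly one unit of depth, and the induction then accumulates $\ell(w)$ units.
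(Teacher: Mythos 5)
The paper offers no proof of this lemma to compare with (it is explicitly skipped as ``easy''), so your argument has to stand on its own. Your outer reductions are fine: the congruence subgroups are normal in $\breve \CI$, elements of $\Omega$ fix the barycenter $x_0$ of $\fka$ and hence normalize each $\breve \CI_n$, and the induction on a reduced factorization is harmless (the representatives $\dot w$ and $\dot s \dot w'$ differ by an element of $T(\breve F) \cap \breve \CI$, which also fixes $x_0$). The gap is in your justification of the key step $\dot s \breve \CI_m \dot s^{-1} \subset \breve \CI_{m-1}$. It is not true that only $U_{\tilde \alpha_s}$ changes depth at $x_0$: what you are invoking is the fact that $s$ permutes the affine roots positive on $\fka$ other than $\tilde\alpha_s$, which controls positivity, not depth. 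Conjugation by $\dot s$ replaces the Moy--Prasad base point $x_0$ by $s(x_0)$, so the depth of $U_{\tilde\beta}$ shifts by $\tilde\beta(s(x_0))-\tilde\beta(x_0)$, which is nonzero for every affine root whose gradient is not orthogonal to the crossed wall; already for $SL_2$ both off-diagonal affine root groups occurring in $\breve\CI_m$ change depth (one up and one down by $1$), so your bookkeeping, taken literally, would prove the false statement that $\dot s \breve\CI_m \dot s^{-1}$ and $\breve\CI_m$ differ in a single root group. What actually has to be proved --- and this is the entire content of the lemma --- is the bound $|\tilde\beta(x_0)-\tilde\beta(s(x_0))|\le 1$ for every affine root $\tilde\beta$. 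This is true but needs an argument: if no hyperplane parallel to $\tilde\beta$ separates $\fka$ from $s(\fka)$, the two barycenters lie in a single open strip between consecutive such hyperplanes, so the values differ by less than $1$; if the crossed wall is itself parallel to $\tilde\beta$, the two barycenters are mirror images in it, and each lies at distance at most $1/(\text{number of vertices of } \fka)\le 1/2$ from it, since that wall contains all vertices of the alcove but one. With this bound each of the $\ell(w)$ crossings costs at most one unit and the induction closes; ``exactly one unit'' is in fact attained only in rank-one directions.

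A second point you leave implicit but should not: the normalization of $\breve\CI_n$ matters for the truth of the statement. If one reads $\breve\CI_n$ as the $n$-th jump of the Moy--Prasad filtration at $x_0$ (so $\breve\CI_1$ is the pro-unipotent radical of $\breve\CI$), the lemma is already false for $SL_2$ with $w=s$ and $n=1$, since $\dot s \breve\CI_1 \dot s^{-1} \not\subset \breve\CI_0=\breve\CI$. One must take $\breve\CI_n$ to be the depth-$n$ Moy--Prasad group at $x_0$ (integer depth, torus part included); under that reading ``one unit per wall crossing'' is precisely the inequality above, whereas under the jump indexing your single-reflection claim itself breaks down.
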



\begin{example}
Let $\breve \CK$ be a parahoric subgroup of $G(\breve F)$. By Lemma \ref{w-n}, there exists $n \in \BN$ such that for any $g \in \breve \CK$, $g \breve \CI_n g \i \subset \breve \CI$. Hence for any $w \in \tW$, $\breve \CK \cdot_\s \breve \CI \dot w \breve \CI$ is stable under the right action of $\breve \CI_n$ and thus is an admissible subset of $G(\breve F)$.
\end{example}

\begin{example}
It is also known that each $\s$-conjugacy class of $G(\breve F)$ is admissible. See \cite[Theorem A.1]{He00}.
\end{example}

\subsection{Closure relations}\label{closure-ii}

Let $\breve \CK$ be a parahoric subgroup of $G(\breve F)$. We have the decomposition $$G(\breve F)=\sqcup_{w \in {}^K \tW} \breve \CK \cdot_\s \breve \CI \dot w \breve \CI.$$ Recall that the set ${}^K \tW$ is in natural bijection with $\tW\sslash (W_K)_\s$ and there is a well-defined partial order on $\tW\sslash (W_K)_\s \cong {}^K \tW$ (see \S \ref{preceq}). We show that this partial order describes the closure relations of the admissible subsets $\breve \CK \cdot_\s \breve \CI \dot w \breve \CI$. 

\begin{theorem}\label{1-clos}
Let $\breve \CK$ be a parahoric subgroup and $x \in {}^K \tW$. Then $$\overline{\breve \CK \cdot_\s \breve \CI \dot x \breve \CI}=\sqcup_{x' \in {}^K \tW, x' \le_{K, \s} x} \breve \CK \cdot_\s \breve \CI \dot x' \breve \CI.$$
\end{theorem}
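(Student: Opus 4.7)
The plan is to reduce the closure $\overline{\breve \CK \cdot_\s \breve \CI \dot x \breve \CI}$ to a union of Iwahori cells via the Bruhat order, regroup those cells by the piece of the decomposition in Theorem~\ref{kwk} containing each, and match the resulting index set with $\{x' \in {}^K \tW : x' \preceq_{K,\s} x\}$.

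The first step is an auxiliary identification: for any $y \in \tW$, the cell $\breve \CI \dot y \breve \CI$ is contained in the unique piece $\breve \CK \cdot_\s \breve \CI \dot{\pi(y)} \breve \CI$, where $\pi(y) \in {}^K \tW$ is the element characterized by $y \in W_K \cdot_\s (W_{I(K, \pi(y), \s)} \pi(y))$ via Theorem~\ref{w-dec}. To establish this, I would reduce $y \to_{K, \s} u \pi(y)$ with $u \in W_{I(K, \pi(y), \s)}$ by combining Theorem~\ref{par-min}(1) with Theorem~\ref{a-red}; then the first and third ``useful properties'' before Theorem~\ref{kwk} (the latter absorbs the prefix $u$), together with the $\breve \CI$-bi-stability of each piece of the decomposition, place $\breve \CI \dot y \breve \CI$ inside $\breve \CK \cdot_\s \breve \CI \dot{\pi(y)} \breve \CI$.

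The second step is that closure commutes with the $\breve \CK$-twisted conjugation: $\overline{\breve \CK \cdot_\s \breve \CI \dot x \breve \CI} = \breve \CK \cdot_\s \overline{\breve \CI \dot x \breve \CI}$. At a sufficiently deep level $\breve \CI_n$, the $\s$-twisted conjugation map $\breve \CK/\breve \CI_n \times \overline{\breve \CI \dot x \breve \CI}/\breve \CI_n \to G(\breve F)/\breve \CI_n$ is a proper morphism of finite-type schemes, so its image is closed. Combined with $\overline{\breve \CI \dot x \breve \CI} = \sqcup_{y \le x} \breve \CI \dot y \breve \CI$ and the first step,
$$\overline{\breve \CK \cdot_\s \breve \CI \dot x \breve \CI} = \bigcup_{y \le x} \breve \CK \cdot_\s \breve \CI \dot{\pi(y)} \breve \CI.$$
The theorem then reduces to the set equality $\{\pi(y) : y \in \tW,\ y \le x\} = \{x' \in {}^K \tW : x' \preceq_{K,\s} x\}$.

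The inclusion $\supseteq$ is immediate from the definition of $\preceq_{K,\s}$: given $u \in W_K$ with $u x' \s(u)^{-1} \le x$, the element $y := u x' \s(u)^{-1}$ lies in the $\s$-twisted $W_K$-orbit of $x'$, so $\pi(y) = x'$ and $y \le x$. The inclusion $\subseteq$ is the main obstacle, and will be the hardest part of the argument. Given $y \le x$, Theorem~\ref{par-main}(1) applied to the $\s$-twisted $W_K$-orbit of $y$ yields a length-weakly-decreasing chain $y = y_0 \to_{K,\s} y_1 \to \cdots \to y_N = u_0 \pi(y)$ with $u_0$ in an $\Ad(\pi(y)) \circ \s$-twisted conjugacy class $C \subset W_{I(K, \pi(y), \s)}$. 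Iterating Lemma~\ref{par-1} along this chain, starting from the initial relation $y_0 \le x$, produces some $w \in W_K$ such that $w u_0 \pi(y) \s(w)^{-1} \le x$. The technical crux is then to descend from this to an element $u \in W_K$ with $u \pi(y) \s(u)^{-1} \le x$; this uses a second application of Lemma~\ref{par-1} inside the finite parabolic subgroup $W_{I(K, \pi(y), \s)}$, exploiting that $\pi(y)$ is of minimal length in $W_{I(K, \pi(y), \s)} \pi(y)$ (because $\pi(y) \in {}^K \tW$). The overall argument parallels the finite-type case \cite[Corollary 4.6]{HeMin}, with Theorem~\ref{par-min} replacing Theorem~\ref{f-min}.
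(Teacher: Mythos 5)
Your Step 1 is false, and it is exactly where the content of the theorem lies. The partition $G(\breve F)=\sqcup_{x' \in {}^K \tW} \breve \CK \cdot_\s \breve \CI \dot x' \breve \CI$ does \emph{not} refine the Iwahori--Bruhat decomposition: an Iwahori double coset $\breve \CI \dot y \breve \CI$ need not lie in the single piece attached to $\pi(y)$. The reduction chain $y \to_{K,\s} u\,\pi(y)$ you invoke contains steps that strictly decrease the length, and for such a step the only available identity is the second bullet before Theorem \ref{kwk}, namely $\breve \CK \cdot_\s \breve \CI \dot s \dot w \breve \CI \cup \breve \CK \cdot_\s \breve \CI \dot s \dot w \s(\dot s) \breve \CI$ as the saturation of $\breve \CI \dot w \breve \CI$; its two terms in general lie in pieces attached to \emph{different} elements of ${}^K\tW$, and only the length-preserving steps ($\approx_{K,\s}$, the first bullet) leave the saturation unchanged. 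Concretely, let $G$ be split of type $A_1$, so $\s$ acts trivially on $\tW$, $\tilde \BS=\{s_0,s_1\}$, and take $K=\{s_1\}$, $y=s_1s_0s_1$, so that $\pi(y)=s_0$. Then the saturation of $\breve \CI \dot y \breve \CI$ equals the union of the saturations of $\breve \CI \dot s_0 \dot s_1 \breve \CI$ and $\breve \CI \dot s_0 \breve \CI$, i.e.\ it meets the two distinct (hence disjoint) pieces indexed by $s_0s_1\in{}^K\tW$ and $s_0\in{}^K\tW$. Since every piece is stable under $\s$-conjugation by $\breve \CK$, the coset $\breve \CI \dot y \breve \CI$ itself already meets both pieces, so it is not contained in the piece of $\pi(y)$. (The same phenomenon occurs in the finite analogue, e.g.\ $SL_3$ with $J=\{s_1\}$ and $y=w_0$.)

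Consequently your displayed formula $\overline{\breve \CK \cdot_\s \breve \CI \dot x \breve \CI}=\bigcup_{y\le x}\breve \CK \cdot_\s \breve \CI \,\dot{y}'\,\breve \CI$ with $y'=\pi(y)$ is unjustified as you derive it: properness of $\breve \CK/\breve \CI$ only gives $\overline{\breve \CK \cdot_\s \breve \CI \dot x \breve \CI}=\bigcup_{y\le x}\breve \CK \cdot_\s \breve \CI \dot y \breve \CI$, and each $\breve \CK \cdot_\s \breve \CI \dot y \breve \CI$ is in general a union of several pieces, of which the piece of $\pi(y)$ is only one. (Your formula happens to be true, but proving it is essentially proving the theorem.) The way to close the gap is the paper's induction on $y\le x$: if $y$ is of minimal length in its $\s$-twisted $W_K$-class, Theorem \ref{a-red} and the first and third bullets identify its saturation with a single piece, and Lemma \ref{par-1} (applied to $x'\le u x'$ and $y\approx_{K,\s}ux'$) shows that piece is indexed by some $x'\preceq_{K,\s}x$; if not, replace $y$ by $y''\approx_{K,\s}y$ with $s y'' \s(s)<y''$ and keep \emph{both} terms $\breve \CK \cdot_\s \breve \CI \dot s \dot y'' \breve \CI$ and $\breve \CK \cdot_\s \breve \CI \dot s \dot y'' \s(\dot s) \breve \CI$, applying the inductive hypothesis to each and Lemma \ref{par-1} to transport the resulting relations $\preceq_{K,\s}y''$ back to $\preceq_{K,\s}x$. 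Your final combinatorial step (iterating Lemma \ref{par-1} along a reduction chain and descending inside $W_{I(K,\pi(y),\s)}$) is the right idea for the minimal-length case, but it cannot substitute for tracking the second branch at non-minimal elements.
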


It is proved in \cite[Proposition 2.5]{He11} in the case where $G$ is a split group and $\breve \CK$ is the hyperspecial maximal subgroup. The general case is proved in the same way. We sketch the proof for completeness. 

\begin{proof}
The base idea is to prove by induction. However, in order to do so, we do not only need the elements in ${}^K \tW$, but have to to use all the elements in $\tW$ as well. The more general statement we would like to prove is the following:

For any $w \in \tW$, $\overline{\breve \CK \cdot_\s \breve \CI \dot w \breve \CI}=\sqcup_{x' \in {}^K \tW, x' \le_{K, \s} w} \breve \CK \cdot_\s \breve \CI \dot x' \breve \CI.$ Here we say $x' \le_{K, \s} w$ if there exists $x_1 \approx_{K, \s} x'$ with $x_1 \le w$. 

Since $\breve \CK/\breve \CI$ is proper, $$\overline{\breve \CK \cdot_\s \breve \CI \dot w\breve \CI}=\breve \CK \cdot_\s \overline{\breve \CI \dot w \breve \CI}=\cup_{w' \in \tW, w'\le w} \breve \CK \cdot_\s \breve \CI \dot w' \breve \CI.$$

We argue by induction on $w'$ that $\breve \CI \dot w' \breve \CI \subset \sqcup_{x' \in {}^K \tW, x' \le_{K, \s} w} \breve \CK \cdot_\s \breve \CI \dot x' \breve \CI$ for any $w' \le w$. 

If $w'$ is a minimal length element in its $\s$-twisted $W_J$-conjugacy class, then by Theorem \ref{a-red}, $w' \approx_{K, \s} u x'$ for some $x' \in {}^K \tW$ and $u \in W_{I(K, x', \s)}$. So $\breve \CK \cdot_\s \breve \CI \dot w' \breve \CI=\breve \CK \cdot_\s \breve \CI \dot u \dot x' \breve \CI=\breve \CK \cdot_\s \breve \CI \dot x' \breve \CI$. Since $x' \le u x'$ and $w' \approx_{K, \s} u x'$, by Lemma \ref{par-1} there exists $x'' \approx_{K, \s} x'$ with $x'' \le w' \le w$. By definition, $x' \preceq_{K, \s} w$. 

If $w'$ is not a minimal length element in its $\s$-twisted $W_J$-conjugacy class, then there exists $w'' \approx_{K, \s} w'$ and $s \in K$ with $s w'' \s(s)<w''$. Then $$\breve \CK \cdot_\s \breve \CI \dot w' \breve \CI=\breve \CK \cdot_\s \breve \CI \dot w'' \breve \CI=\breve \CK \cdot_\s \breve \CI \dot s \dot w'' \breve \CI \cup \breve \CK \cdot_\s \breve \CI \dot s \dot w'' \s(\dot s) \breve \CI.$$ Note that $s w'' \s(s)<s w''<w''$. Thus for any $x' \in {}^K \tW$, if $x' \preceq_{K, \s} s w'' \s(s)$ or $x' \preceq_{K, \s} s w''$, then $x' \preceq_{K, \s} w''$. By inductive hypothesis on $s w''$ and on $s w'' \s(s)$, we have $$\breve \CK \cdot_\s \breve \CI \dot w' \breve \CI=\breve \CK \cdot_\s \breve \CI \dot s \dot w'' \breve \CI \cup \breve \CK \cdot_\s \breve \CI \dot s \dot w'' \s(\dot s) \breve \CI \subset \sqcup_{x' \in {}^K \tW, x' \preceq_{K, \s} w''} \breve \CK \cdot_\s \breve \CI \dot x' \breve \CI.$$

For any $x' \in {}^K \tW$ with $x' \preceq_{K, \s} w''$, by Lemma \ref{par-1}, there exists $x'' \approx_{K, \s} x'$ with $x'' \le w' \le w$. By definition, $x' \preceq_{K, \s} w$. 
\end{proof} 

Recall that there is a natural bijection between $\tW \sslash \tW_\s$ and $B(G)$. On $\tW\sslash \tW_\s$, there is a natural partial order $\preceq$ (see \S \ref{preceq}). It is proved in \cite[Theorem B]{He00} that this partial order describes the closure relations between the $\s$-conjugacy classes of $G(\breve F)$.  The idea of the proof is similar to the proof of Theorem \ref{1-clos}, but more technical. 

\begin{theorem}\label{tri-par}
The natural bijection $$\tW\sslash \tW_\s \to B(G), \qquad \CO_{[b]} \to [b]$$ is a bijection of partial order sets. Here the partial order on $\tW\sslash \tW_\s$ is the combinatorial order $\preceq$ and the partial order on $B(G)$ is given by the closure relation. 
\end{theorem}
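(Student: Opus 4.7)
The plan is to prove both implications of the order-preserving bijection separately, via different mechanisms. I freely use that under $\CO_{[b]}\leftrightarrow[b]$ the Kottwitz invariant and Newton point match (Theorem \ref{str-bw}), and that the minimal length elements of a straight class are exactly its $\s$-straight elements.

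For the forward direction $\CO_{[b']}\preceq\CO_{[b]}\Rightarrow[b']\subset\overline{[b]}$, I pick $\s$-straight representatives $w\in\CO_{[b]}$ and $w'\in\CO_{[b']}$ with $w'\le w$ in Bruhat order, which is possible by Proposition \ref{par}. The crucial geometric input is that for a $\s$-straight $w$ the entire Bruhat cell $\breve\CI\dot w\breve\CI$ lies in the single $\s$-conjugacy class $[\dot w]=[b]$: any $g\in\breve\CI\dot w\breve\CI$ has the same Kottwitz invariant as $\dot w$, and the defining relation $\ell((w\s)^n)=n\ell(w)$ forces $(g\s)^n\in\breve\CI(w\s)^n\breve\CI$ for all $n$, which pins down the Newton point of $g$ as $\bar\nu_{w,\s}$; Kottwitz's classification then identifies the $\s$-class of $g$ with $[b]$. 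It follows that $\dot{w'}\in\breve\CI\dot{w'}\breve\CI\subset\overline{\breve\CI\dot w\breve\CI}\subset\overline{[b]}$, and since the closure of the admissible $\s$-stable set $[b]$ is itself $G(\breve F)$-$\s$-conjugation stable (each $\s$-conjugation acts as a continuous automorphism on the closure), the whole class $[b']$ lies in $\overline{[b]}$.

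For the reverse direction $[b']\subset\overline{[b]}\Rightarrow\CO_{[b']}\preceq\CO_{[b]}$, I invoke the Kottwitz classification together with semi-continuity. The Kottwitz map $\k$ is locally constant (it factors through a discrete quotient), so $\k(b')=\k(b)$; the Rapoport--Richartz semi-continuity theorem for the Newton stratification gives $\nu_{b'}\le\nu_b$ in the dominance order on $\CC$. Thus $f([b'])\le f([b])$ in $\Omega_\s\times\CC$, and the theorem stated immediately before this one (identifying the combinatorial order $\preceq$ on $\tW\sslash\tW_\s$ with the induced order under $f$) yields $\CO_{[b']}\preceq\CO_{[b]}$. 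The main obstacle of the whole argument is the auxiliary claim used in the forward direction, namely that the Bruhat cell of a $\s$-straight element lies in a single $\s$-conjugacy class; this requires a delicate analysis of the $\s$-conjugation action of the pro-unipotent radical of $\breve\CI$ on $\breve\CI\dot w\breve\CI$, together with the nontrivial observation that straightness prevents the Newton point from being perturbed as one traverses the Bruhat cell. Once this input is granted, the forward direction is formally parallel to the proof of Theorem \ref{1-clos}, and the backward direction is essentially formal.
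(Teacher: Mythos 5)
Your outline is essentially correct, but note that the paper itself does not prove this theorem: it only cites \cite[Theorem B]{He00} and remarks that the argument there is an induction in the spirit of Theorem \ref{1-clos}. Measured against that, your two directions are the standard ones and they assemble exactly the ingredients the survey already provides. For $\CO_{[b']}\preceq\CO_{[b]}\Rightarrow [b']\subset\overline{[b]}$ you do not need to re-derive what you call the main obstacle: the containment $\breve \CI \dot w \breve \CI\subset[\dot w]$ for $w$ of minimal length in its $\s$-twisted class is Theorem \ref{min-dl} (i.e.\ \cite[Theorem 3.7]{He99}), and with it your chain (straight $w\in\CO_{[b]}$, some $w'\in(\CO_{[b']})_{\min}$ with $w'\le w$ via Proposition \ref{par}, $\dot w'\in\overline{\breve \CI \dot w \breve \CI}\subset\overline{[b]}$, then $\s$-conjugation stability of the closure) is precisely the argument the paper itself runs later for $\overline{M_K\cap[b]}$ in Part II; your direct sketch of the containment is fine in outline, but the step ``$(g\s)^n\in\breve\CI(\dot w\s)^n\breve\CI$ pins down $\nu_g$'' tacitly uses that the Newton point is the limit of the dominant Hodge points of the powers, so either say this or simply cite Theorem \ref{min-dl}. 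For the converse, combining constancy of $\k$ on closures (correct: $\k$ is constant on Iwahori double cosets and Bruhat-comparable elements lie in the same $W_a$-coset), semicontinuity of the Newton point, and the combinatorial theorem at the end of \S\ref{preceq} (\cite[Theorem 3.1]{He00}) is legitimate and non-circular; just be aware that \cite[Theorem 3.6]{RR} is formulated for $p$-adic isocrystals, so in the equal-characteristic ind-scheme setting of this paper you should invoke the loop-group version of that semicontinuity rather than RR verbatim. The only real divergence from the cited source is that \cite{He00} keeps the inclusion $\overline{[b]}\subset\bigcup_{[b']}[b']$ inside the reduction-method machinery (hence the paper's ``more technical'' remark), whereas you outsource it to semicontinuity of the invariants; your route is shorter, while the cited argument is more self-contained within the class-polynomial framework.
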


\subsection{Mazur's inequality}\label{Mazur}

In the next few subsections, we study the intersection of the subsets of $G(\breve F)$ in \S \ref{bruhat} and \S \ref{BG}. 

The intersection of $\breve \CK \e^\mu \breve \CK \cap [b]$, for $G=GL_n$ and $\breve \CK$ the maximal hyperspecial subgroup, first appeared in Mazur's work \cite{Ma73}, in the study of the relation between the Hodge slope of a crystal and the Newton slope of the associated isocrystal. 

By definition, an isocrystal is a pair $(N, g)$, where $N$ is a finite dimensional vector space over $\breve F$ and $g: N \to N$ is a semi-linear bijection. A crystal $M$ of an isocrystal $(N, g)$ is a $g$-stable $\CO_{\breve F}$-lattice of $N$. 

By Dieudonn\'e-Manin theory, there is a natural bijection between the isomorphisms classes of isocrystals and the associated Newton slopes. On the other hand, for each crystal, one may associate a Hodge slope. 

\begin{example}
Let $N$ be a $3$-dimensional vector space with standard basis $e_1, e_2, e_3$ and $g: N \to N$ is defined by $g(e_1)=e_2, g(e_2)=e_3, g(e_3)=\e e_1$. Then all the eigenvalues of $g$ are the cubic roots of $\e$ and Newton slope $\nu(N, g)=(\frac{1}{3}, \frac{1}{3}, \frac{1}{3})$. 

Let $M=\oplus_{i=1}^3 \CO_{\breve F} e_i$ be a crystal of $(N, g)$. Then $$\text{coker} g \mid_M \cong \CO_{\breve F} e_1/\e \CO_{\breve F} e_1=\kk^1 e_1 \oplus \kk^0 e_2 \oplus \kk^0 e_3.$$ The Hodge slope $\mu(M)=(1, 0, 0)$.  

The Hodge polygon and the Newton polygon have the same end point and the Hodge polygon lies above the Newton polygon. In other words, $\mu(M) \ge \nu(N, g)$ with respect to the dominance order. 
\end{example}

\smallskip

In general, we have the following result. 

\begin{theorem}
(1) Let $(N, g)$ be an isocrystal and $M$ be a crystal of $(N, g)$. Then $$\mu(M) \ge \nu(N, g).$$ 

(2) Let $(N, g)$ be an isocrystal of dimension $n$. Let $\mu=(a_1, \cdots, a_n) \in \BZ^n$ with $a_1 \ge a_2 \ge \cdots \ge a_n$ and $\mu \ge \nu(N, g)$. Then there exists a crystal $M$ of $(N, g)$ with $\mu(M)=\mu$. 
\end{theorem}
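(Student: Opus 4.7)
The plan is to prove (1) by reducing via exterior powers to a top-slope inequality settled with the Dieudonn\'e--Manin decomposition, and to prove (2) by an explicit construction in that decomposition followed by induction over dominance order.

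For part (1), the endpoint equality $\sum_i a_i = \sum_i \nu_i$ is immediate: both sides compute $\ord(\det g|_M)$, the first by the elementary divisor theorem applied to the $\CO_{\breve F}$-linear map $g : M \to M$ (equivalently, the length of $M/gM$), the second as the sum of valuations of the eigenvalues of $g$ on $N$. It then suffices to prove the partial-sum inequalities $\sum_{i \le k} a_i \ge \sum_{i \le k} \nu_i$ for each $1 \le k < n$. For this, I would pass to the $k$-th exterior powers: $\wedge^k M$ is a crystal inside the isocrystal $(\wedge^k N, \wedge^k g)$, its largest Hodge slope equals $a_1 + \cdots + a_k$, and the largest Newton slope of $\wedge^k N$ equals $\nu_1 + \cdots + \nu_k$. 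This reduces the problem to the case $k = 1$.

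To prove the top-slope inequality $a_1 \ge \nu_1$, I would use the Dieudonn\'e--Manin decomposition to select a nonzero $g$-stable subisocrystal $N_{\nu_1} \subseteq N$ of pure slope $\nu_1 = r/s$, on which $g^s$ acts as a unit scalar times $\e^r$. Pick any nonzero $v \in N_{\nu_1} \cap M$ (possible after rescaling). Since $M/gM$ is killed by $\e^{a_1}$, one has $g^{-1}(M) \subseteq \e^{-a_1} M$, hence $g^{-ns}(M) \subseteq \e^{-ns a_1} M$ for every $n \ge 0$. Applied to $v$, this yields $v \in \e^{n(r - s a_1)} M$ for all $n$; if $a_1 < \nu_1 = r/s$ then $n(r - s a_1) \to \infty$, forcing $v \in \bigcap_n \e^n M = 0$, a contradiction.

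For part (2), I would again invoke the Dieudonn\'e--Manin decomposition $N = \bigoplus_i N_{\lambda_i}$ and construct $M$ explicitly. First build a ``minimal'' crystal $M_0$ as the direct sum of standard lattices, one in each simple isoclinic component (for pure slope $r/s$ the standard simple lattice is $\CO_{\breve F}[g]/(g^s - \e^r)$ with Hodge polygon $(r, 0, \ldots, 0)$). Its Hodge polygon $\mu_0$ is dominance-minimal among integer dominant cocharacters $\ge \nu$ with the prescribed endpoint. Given an arbitrary integer dominant $\mu \ge \nu$, write $\mu - \mu_0$ as a non-negative integer sum of simple positive coroots $e_i - e_{i+1}$ and realize this difference by a sequence of elementary lattice modifications of the form $M \rightsquigarrow M + \CO_{\breve F} \e^{-1} v$ for a carefully chosen $v \in M$, each modification engineered to preserve $g$-stability while raising the $i$-th Hodge slope by one and lowering the $(i+1)$-th by one. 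The main obstacle I anticipate is exactly this inductive step when the prescribed coroot $e_i - e_{i+1}$ straddles two distinct isoclinic components of $N$: one must then choose $v$ inside a two-dimensional $g$-stable subquotient mixing the two slopes in question, and verify, using the slope filtration of $N$ together with Hensel-style approximation of the legal $v$ modulo lower-slope contributions, that such a choice can always be made so that the induction closes.
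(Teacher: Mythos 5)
A preliminary remark: the paper does not prove this theorem at all; it quotes part (1) from Mazur [Ma73] and part (2) from Kottwitz--Rapoport [KR]. So your argument has to stand on its own, and the two parts fare very differently. Your part (1) is essentially correct and is the standard Mazur/Katz argument: endpoint equality from the valuation of the determinant (length of $M/gM$), reduction of the partial-sum inequalities to the top-slope inequality via exterior powers, and the top-slope inequality from the Dieudonn\'e--Manin decomposition. The only blemish is the phrase ``$g^s$ acts as a unit scalar times $\e^r$'': $g^s$ is $\s^s$-semilinear, so it is not literally a scalar. What you actually need (and what is true) is a lattice $L$ in the slope-$\nu_1$ isoclinic part with $g^s(L)=\e^r L$, together with a comparison $\e^{c}(M\cap N_{\nu_1})\subseteq L\subseteq \e^{-c}(M\cap N_{\nu_1})$; your contradiction argument then goes through with a bounded error term that is harmless as $n\to\infty$.

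Part (2) has genuine gaps. First, the base case is wrong: for a simple isocrystal of slope $r/s$ the cyclic lattice $\CO_{\breve F}[g]/(g^s-\e^r)$ has Hodge vector $(r,0,\dots,0)$, which is \emph{not} dominance-minimal once $r\ge 2$. For slope $2/3$ it gives $(2,0,0)$, while the minimal integral dominant vector above $(2/3,2/3,2/3)$ is $(1,1,0)$; since your procedure only moves the Hodge polygon upward from $\mu_0$ by adding positive coroots, a target such as $(1,1,0)$ is unreachable from your base point. Producing a crystal whose Hodge polygon is the true minimal $\mu_0$ (the ``rounding'' of $\nu$) is already a nontrivial piece of the theorem, not something your direct sum of standard lattices supplies. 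Second, and more seriously, the inductive step --- that for each required simple coroot $e_i-e_{i+1}$ one can choose $v$ so that $M+\CO_{\breve F}\e^{-1}v$ is again $g$-stable and its Hodge polygon changes by exactly that coroot, including the case where the two affected slopes sit in different isoclinic components --- is precisely the hard content of the Kottwitz--Rapoport converse to Mazur's inequality. You acknowledge this (``Hensel-style approximation \dots verify that such a choice can always be made''), but nothing is proved there, and no reduction is given that would make it routine. As written, part (2) is a plausible plan rather than a proof; it would need either the minimal-lattice construction plus a genuine proof of the modification step, or a different route (e.g.\ the lattice-theoretic argument of [KR]).
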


Here part (1) is obtained by Mazur in \cite{Ma73} and part (2) is obtained by Kottwitz and Rapoport in \cite{KR}.

Now we reformulate the Mazur's inequality in a group-theoretic way. 

An isocrystal of dimension $n$ corresponds to a $\s$-conjugacy class of $GL_n(\breve F)$ and a crystal corresponds to a $\breve \CK$-double coset in $GL_n(\breve F)$, where $\breve \CK=GL_n(\CO_{\breve F})$. 
The above Theorem may be reformulated as follows:

Let $[b]$ be a $\s$-conjugacy class of $GL_n(\breve F)$ and $\mu$ be a dominant coweight of $GL_n(\breve F)$. Then $[b] \cap \breve \CK \e^\mu \breve \CK \neq \emptyset$ if and only if $\mu \ge \nu_b$ and $\k([b])=\k(\e^\mu)$. 

In the sequel, we will study the intersection $\breve \CK \dot w \breve \CK \cap [b]$ for any parahoric subgroup $\breve \CK$ in a reductive group $G(\breve F)$. Such intersection plays an important role in the study of reduction of Shimura varieties and we will discuss some applications in \S \ref{Shimura}. 

\subsection{The intersection of a Bruhat cell with a $\s$-conjugacy class} 

\subsubsection{Deligne-Lusztig reduction method} Now we discuss a reduction method to study the intersection $\breve \CI \dot w \breve \CI \cap [b]$ after Deligne and Lusztig \cite{DL}. 

We first prove the following results. 

\begin{proposition}
Let $X$ be an admissible subset of $G(\breve F)$. Then the inverse image of $X$ under the multiplication map $m: G(\breve F) \times G(\breve F) \to G(\breve F)$ is an admissible subset of $G(\breve F) \times G(\breve F)$.
\end{proposition}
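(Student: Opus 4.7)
The plan is to verify admissibility of $m^{-1}(X)$ by analyzing each product double coset $\breve \CI \dot w_1 \breve \CI \times \breve \CI \dot w_2 \breve \CI$ separately. Fix such a pair $(w_1, w_2)$. I need to produce an integer $n$ such that $m^{-1}(X) \cap (\breve \CI \dot w_1 \breve \CI \times \breve \CI \dot w_2 \breve \CI)$ is stable under the right action of $\breve \CI_n \times \breve \CI_n$.

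The first step is to control the image under $m$. Iterating the multiplication formula for $(B,N)$-pairs, the product $\breve \CI \dot w_1 \breve \CI \cdot \breve \CI \dot w_2 \breve \CI$ is contained in the finite union $\bigsqcup_{w \in E(w_1,w_2)} \breve \CI \dot w \breve \CI$ where $E(w_1, w_2)$ is a finite subset of $\tW$ with $\ell(w) \le \ell(w_1) + \ell(w_2)$ for every $w \in E(w_1,w_2)$. Since $X$ is admissible, for each such $w$ there exists $n_w \in \BN$ such that $X \cap \breve \CI \dot w \breve \CI$ is stable under the right action of $\breve \CI_{n_w}$. Let $N = \max_{w \in E(w_1, w_2)} n_w$.

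The second step is to rewrite the right action of $(i_1, i_2) \in \breve \CI_n \times \breve \CI_n$ on the product. For $(g_1, g_2) \in \breve \CI \dot w_1 \breve \CI \times \breve \CI \dot w_2 \breve \CI$, we have
\[
m(g_1 i_1, g_2 i_2) = g_1 i_1 g_2 i_2 = (g_1 g_2) \cdot \bigl( (g_2^{-1} i_1 g_2) \, i_2 \bigr).
\]
Since $g_2^{-1} \in \breve \CI \dot w_2^{-1} \breve \CI$ and $\ell(w_2^{-1}) = \ell(w_2)$, Lemma \ref{w-n} gives $g_2^{-1} \breve \CI_n g_2 \subset \breve \CI_{n - \ell(w_2)}$, provided $n \ge \ell(w_2)$. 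Therefore $(g_2^{-1} i_1 g_2) i_2 \in \breve \CI_{n - \ell(w_2)}$, so the element $m(g_1 i_1, g_2 i_2)$ equals $(g_1 g_2) \cdot j$ for some $j \in \breve \CI_{n - \ell(w_2)}$.

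The final step is to choose $n$ so that right multiplication by such $j$ keeps us inside $X$. Set $n = N + \ell(w_2)$ (and further increase it so $n - \ell(w_2) \ge 1$, ensuring $\breve \CI_{n-\ell(w_2)} \subset \breve \CI$, so right multiplication by $j$ preserves each Bruhat cell $\breve \CI \dot w \breve \CI$). If $(g_1, g_2) \in m^{-1}(X)$, then $g_1 g_2 \in X \cap \breve \CI \dot w \breve \CI$ for some $w \in E(w_1, w_2)$, and by the admissibility of $X$ together with $n - \ell(w_2) = N \ge n_w$, we conclude $(g_1 g_2) j \in X$, i.e., $(g_1 i_1, g_2 i_2) \in m^{-1}(X)$. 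The only bookkeeping obstacle, and the essential ingredient, is Lemma \ref{w-n}: it guarantees the conjugation by $g_2$ does not destroy the ``depth'' of $i_1$ too badly, so one can trade a factor of $\ell(w_2)$ in the depth for passage across the second coordinate. Everything else is a finiteness and packaging argument.
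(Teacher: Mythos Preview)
Your proof is correct and follows essentially the same approach as the paper: fix a pair of Bruhat cells, use that the product is bounded to find a single depth $N$ making $X$ stable on the image, then apply Lemma~\ref{w-n} to the conjugation $g_2^{-1} i_1 g_2$ to trade $\ell(w_2)$ units of depth and conclude stability under $\breve \CI_{N+\ell(w_2)} \times \breve \CI_{N+\ell(w_2)}$. The paper's version is slightly terser in that it treats the bounded set $\breve \CI \dot w_1 \breve \CI \dot w_2 \breve \CI$ as a single unit rather than explicitly enumerating the cells $E(w_1,w_2)$ and taking a maximum, but the content is identical.
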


\begin{proof}
Let $w, w' \in \tW$. Then $\breve \CI \dot w \breve \CI \dot w' \breve \CI$ is a bounded subset of $G(\breve F)$. There exists $n \in \BN$ such that $X \cap \breve \CI \dot w \breve \CI \dot w' \breve \CI$ is stable under the right action of $\breve \CI_n$. Suppose that $(g, g') \in \breve \CI \dot w \breve \CI \times \breve \CI \dot w' \breve \CI$ with $g g' \in X$. Then by Lemma \ref{w-n}, $$g \breve \CI_{n+\ell(w')} g' \breve \CI_{n+\ell(w')} \subset g g' \breve \CI_n \subset X \cap \breve \CI \dot w \breve \CI \dot w' \breve \CI.$$ Hence $m \i(X) \cap \breve \CI \dot w \breve \CI \times \breve \CI \dot w' \breve \CI$ is stable under the right action of $\breve \CI_{n+\ell(w')} \times \breve \CI_{n+\ell(w')}$ and $m \i(X)$ is admissible. 
\end{proof}

\smallskip 

Let $w \in \tW$ and $s \in \tilde \BS$ with $s w<w$. Then $\breve \CI \dot s \breve \CI \times \breve \CI \dot s \dot w \breve \CI$ is an admissible subset of $G(\breve F) \times G(\breve F)$ and the multiplication map $G(\breve F) \times G(\breve F) \to G(\breve F)$ induces $$m: \breve \CI \dot s \breve \CI \times \breve \CI \dot s \dot w \breve \CI \to \breve \CI \dot w \breve \CI.$$

Notice that we do not have the notion of isomorphisms between the admissible subsets. However, we may still discuss the dimensions, irreducible components, connected components, etc. And for any admissible subset $X$ of $\breve \CI w \breve \CI$, there is a natural bijection between the irreducible components of $X$ and of $m \i (X)$ and $\dim_{\breve \CI}$ of an irreducible component of $X$ equals $\dim_{\breve \CI \times \breve \CI}$ of its inverse image in $\breve \CI \dot s \breve \CI \times \breve \CI \dot s \dot w \breve \CI$. 

Let $p_{1 2}: G(\breve F) \times G(\breve F) \to G(\breve F)$ be the map defined by $(g_1, g_2) \mapsto (g_2, \s(g_1))$. We consider the following diagram

\begin{equation}\label{g-g} \xymatrix{\breve \CI \dot w \breve \CI \cap [b] \ar[d] & m \i(\breve \CI \dot w \breve \CI \cap [b]) \ar[l]_-m  \ar[r]^-{p_{12}} \ar[d] & Y \ar[r]^-m \ar[d]  & Z \ar[d] \\ 
\breve \CI \dot w \breve \CI & \breve \CI \dot s \breve \CI \times \breve \CI \dot s \dot w \breve \CI \ar[l]_-m \ar[r]^-{p_{12}} & \breve \CI \dot s \dot w \breve \CI \times \breve \CI \s(\dot s) \breve \CI \ar[r]^-m & \breve \CI \dot s \dot w \breve \CI \s(\dot s) \breve \CI
.}\end{equation}

Here $Y=p_{12} m \i (\breve \CI \dot w \breve \CI \cap [b])$ and $Z=m(Y)$. 

Let $(g_1, g_2) \in m \i (\breve \CI \dot w \breve \CI \cap [b])$. Then $g_1 g_2 \in [b]$. Hence $g_2 \s(g_1)=m \circ p_{1 2} (g_1, g_2) \in [b]$. Thus $Z \subset \breve \CI \dot s \dot w \breve \CI \s(\dot s) \breve \CI \cap [b]$. On the other hand, for any $g \in \breve \CI \dot s \dot w \breve \CI \s(\dot s) \breve \CI \cap [b]$, there exists $g_2 \in \breve \CI \dot s \dot w \breve \CI$ and $g_1 \in \breve \CI \dot s \breve \CI$ such that $g_2 \s(g_1)=g \in [b]$. So $g_1 g_2 \in [b]$ and $(g_1, g_2) \in m \i(\breve \CI \dot w \breve \CI \cap [b])$. Therefore $Z=\breve \CI \dot s \dot w \breve \CI \s(\dot s) \breve \CI \cap [b]$. 

\begin{theorem}\label{DL-red}
Let $w \in \tW$ and $s \in \tilde \BS$ with $s w<w$. The diagram \ref{g-g} induces a natural bijection $$Irr^{\max}(\breve \CI \dot w \breve \CI \cap [b]) \leftrightarrow Irr^{\max}(\breve \CI \dot s \dot w \breve \CI \s(\dot s) \breve \CI \cap [b]).$$ We also have that $$\dim (\breve \CI \dot w \breve \CI \cap [b])=\begin{cases} \dim (\breve \CI \dot s \dot w \breve \CI \s(\dot s) \breve \CI \cap [b]), & \text{ if } s w \s(s)>s w; \\ \dim (\breve \CI \dot s \dot w \breve \CI \s(\dot s) \breve \CI \cap [b])+1, & \text{ if } s w \s(s)<s w. \end{cases}$$
\end{theorem}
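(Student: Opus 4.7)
The plan is to analyze the correspondence diagram (\ref{g-g}) arrow by arrow, using the fact that each multiplication map $m \colon \breve \CI \dot u \breve \CI \times \breve \CI \dot v \breve \CI \to G(\breve F)$ behaves like a principal $\breve \CI$-bundle (for the diagonal action $(g_1,g_2) \mapsto (g_1 k^{-1}, k g_2)$) over each Iwahori cell in its image. This structure transports $\dim_{\breve \CI}$ and the set of maximal-dimensional irreducible components cleanly from one admissible set to the next.

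Since $sw<w$, the product satisfies $\breve \CI \dot s \breve \CI \cdot \breve \CI \dot s \dot w \breve \CI = \breve \CI \dot w \breve \CI$, a single Iwahori cell, and the leftmost multiplication $m \colon \breve \CI \dot s \breve \CI \times \breve \CI \dot s \dot w \breve \CI \to \breve \CI \dot w \breve \CI$ is a principal $\breve \CI$-bundle. Writing $X_1 = \breve \CI \dot w \breve \CI \cap [b]$, this makes $m^{-1}(X_1) \to X_1$ an $\breve \CI$-torsor, so $\dim_{\breve \CI \times \breve \CI} m^{-1}(X_1) = \dim_{\breve \CI} X_1$ and passage to $\breve \CI$-orbits gives a canonical bijection $Irr^{\max}(X_1) \leftrightarrow Irr^{\max}(m^{-1}(X_1))$. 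The middle map $p_{12}$ is an isomorphism of admissible subsets (a coordinate swap composed with $\s$), carrying $m^{-1}(X_1)$ bijectively onto $Y \subset \breve \CI \dot s \dot w \breve \CI \times \breve \CI \s(\dot s) \breve \CI$ and preserving both dimension and irreducible components.

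For the rightmost arrow $m \colon Y \to Z = X_2$ I distinguish two cases. In Case (a), $sw\s(s)>sw$, the BN-pair formula gives $\breve \CI \dot s \dot w \breve \CI \cdot \breve \CI \s(\dot s) \breve \CI = \breve \CI \dot s \dot w \s(\dot s) \breve \CI$, a single cell, so $m$ is again a principal $\breve \CI$-bundle; composing with the previous two steps yields $\dim_{\breve \CI} X_1 = \dim_{\breve \CI} X_2$ together with the claimed bijection on $Irr^{\max}$. In Case (b), $sw\s(s)<sw$, the image splits as $\breve \CI \dot s \dot w \breve \CI \sqcup \breve \CI \dot s \dot w \s(\dot s) \breve \CI$, with the former (of length $\ell(w)-1$) as the open stratum. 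Over the open stratum the multiplication is a principal $\breve \CI$-bundle as before. Over the closed stratum, a local analysis shows that the preimage in $\breve \CI \dot s \dot w \breve \CI \times \breve \CI \s(\dot s) \breve \CI$ is cut out in codimension one (by the vanishing of the $\s(s)$-unipotent parameter in the second factor), and the restriction of $m$ to it is still an $\breve \CI$-bundle with fiber dimension $1 + \dim \breve \CI$. Splitting $Y$ accordingly gives $\dim_{\breve \CI \times \breve \CI}(Y \cap m^{-1}(\text{stratum})) = \dim_{\breve \CI}([b] \cap \text{stratum}) + 1$ for each stratum, and taking the maximum over the two strata yields $\dim_{\breve \CI} X_1 = \dim_{\breve \CI} X_2 + 1$; the corresponding bijection on $Irr^{\max}$ is induced by the same stratum-wise decomposition.

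The main obstacle is Case (b): one must verify that the preimage of the closed (smaller) stratum is cut out in codimension one in the source, and that the restriction of $m$ to it remains an $\breve \CI$-bundle with the same fiber dimension as over the open stratum. This reduces, after an $\breve \CI$-equivariant change of coordinates isolating the affine simple root $\s(s)$, to a standard $SL_2$-level computation of the fibers of the convolution product, entirely in the spirit of the classical Deligne--Lusztig reduction. Once the local fiber structure is pinned down, the rest of the proof is a uniform exercise in tracking dimensions and irreducible components through the $\breve \CI$-bundle structures.
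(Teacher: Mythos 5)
Your skeleton is the same as the paper's (track $\dim_{\breve \CI}$ and $Irr^{\max}$ through the three arrows of diagram (\ref{g-g}), splitting the last arrow over the two Bruhat strata when $s w \s(s)<s w$), and your treatment of the first two arrows and of Case (a) is fine. But in Case (b) the fiber analysis of the rightmost convolution is wrong, and the error sits exactly at the step you defer to ``a local analysis''. Over the open stratum $\breve \CI \dot s \dot w \breve \CI$ the map $m\colon \breve \CI \dot s \dot w \breve \CI \times \breve \CI \s(\dot s) \breve \CI \to \breve \CI \dot s \dot w \breve \CI \s(\dot s) \breve \CI$ is \emph{not} a principal $\breve \CI$-bundle: the fiber over a point $g$ of the open stratum is $\{g_2 \in \breve \CI \s(\dot s)\breve \CI : g g_2^{-1} \in \breve \CI \dot s \dot w \breve \CI\}$, a dense open subset of $\breve \CI \s(\dot s)\breve \CI$, hence of dimension $\dim \breve \CI +1$ (up to the $\breve \CI$-factor it is a $\mathbb{G}_m$-type fibration, exactly as in the $SL_2$ computation for $B\dot s B\cdot B\dot s B$), while over the closed stratum the fiber is all of $\breve \CI \s(\dot s)\breve \CI$, again of dimension $\dim\breve \CI+1$ (an $\mathbb{A}^1$-type fibration). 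A quick sanity check: the source $\breve \CI \dot s \dot w \breve \CI \times \breve \CI \s(\dot s)\breve \CI$ is irreducible of normalized dimension $\ell(w)$, and the preimage of the open stratum is a nonempty open, hence dense, subset of it, so it has normalized dimension $\ell(w)$, whereas the open stratum itself has normalized dimension $\ell(w)-1$; a principal $\breve \CI$-bundle over it is therefore impossible. With the bundle structures as you state them the count would give $\max(\dim Z_1, \dim Z_2+1)$ rather than $\dim Z+1$; your displayed conclusion ``$+1$ for each stratum'' is the correct one (and is what the paper proves: $\dim_{\breve \CI} D_i=\dim_{\breve \CI} C_i'+1$ for both $i$), but it contradicts the fiber structure you assert, so as written the argument does not establish it. Relatedly, the closed-stratum preimage is indeed of codimension one in the source, but it is not cut out by a condition on the second factor alone; whether $g_1 g_2$ lands in the smaller cell depends on both factors jointly.

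Secondarily, in Case (b) the passage to $Irr^{\max}$ needs more care than ``induced by the same stratum-wise decomposition'': since $Z_1$ need not be dense in $Z$ (cf.\ Example \ref{gh-example}), there is no bijection between all irreducible components of $\breve \CI \dot w \breve \CI \cap [b]$ and of $Z$. One has to argue as the paper does: components of $\breve \CI \dot w \breve \CI \cap [b]$ correspond to components of $Y$; then $Y=Y_1\sqcup Y_2$ with $Y_i \to Z_i$ shifting dimension by $+1$ on each stratum; and finally one observes that a maximal-dimensional component of $Z_2$ (resp.\ the closure of one of $Z_1$) is automatically an irreducible component of $Z$, which is what salvages the bijection on $Irr^{\max}$ even though the component correspondence fails in general. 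Once the correct fiber dimensions over both strata are in place, this bookkeeping is routine; the essential thing to repair is the fiber computation over the open stratum.
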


\begin{proof}
If $s w \s(s)>s w$, then $\breve \CI \dot s \dot w \breve \CI \s(\dot s) \breve \CI=\breve \CI \dot s \dot w \s(\dot s) \breve \CI$. The above diagram gives a natural bijection $C \leftrightarrow C'$ between the irreducible components of $\breve \CI \dot w \breve \CI \cap [b]$ and of $ \breve \CI \dot s \dot w \breve \CI \s(\dot s) \breve \CI \cap [b]$. Moreover, we have $\dim_{\breve \CI} C=\dim_{\breve \CI} C'$. 

If $s w \s(s)<s w$, then $\breve \CI \dot s \dot w \breve \CI \s(\dot s) \breve \CI=\breve \CI \dot s \dot w \breve \CI \sqcup \breve \CI \dot s \dot w \s(\dot s) \breve \CI$ and $Z=Z_1 \sqcup Z_2$, where $Z_1=\breve \CI \dot s \dot w \breve \CI \cap [b]$ and $Z_2=\breve \CI \dot s \dot w  \s(\dot s) \breve \CI \cap [b].$ Here $Z_1$ is open in $Z$ and $Z_2$ is its closed complement. It is worth mentioning that in general $Z_1$ may not be dense in $Z$ (see Example \ref{gh-example}). Also we do not have a nice correspondence between the irreducible components of $\breve \CI \dot w \breve \CI \cap [b]$ and of $Z$. 

However, we still have a natural bijection $C \leftrightarrow D$ between the irreducible components of $\breve \CI \dot w \breve \CI \cap [b]$ and of $Y$. Under this bijection $\dim_{\breve \CI} C=\dim_{\breve \CI} D$. 

On the other hand, $Y=Y_1 \sqcup Y_2$, where $Y_i=m_Y \i(Z_i)$. For $i=1, 2$, there is a natural bijection $D_i \leftrightarrow C'_i$ between the irreducible components of $Y_i$ and $Z_i$. Moreover, we have $\dim_{\breve \CI} D_i=\dim_{\breve \CI} C'_i+1$. Note that if $C'$ is an irreducible component of $Z_2$, then $C'$ may not be an irreducible component of $Z$. However, if $\dim C'=\dim Z$, then it is an irreducible component of $Z$. 

In particular, the above diagram gives a natural bijection $C \leftrightarrow C'$ between the irreducible components of maximal dimension of $\breve \CI \dot w \breve \CI \cap [b]$ and of $\breve \CI \dot s \dot w \breve \CI \s(\dot s) \breve \CI \cap [b]$. Moreover, we have $\dim_{\breve \CI} C=\dim_{\breve \CI} C'+1$. 
\end{proof}

Using Theorem \ref{DL-red}, one may reduce the study of $\dim_{\breve \CI}$ and $Irr^{\max}$ of $\breve \CI \dot w \breve \CI \cap [b]$ to the case where $w$ is of minimal length in its $\s$-twisted conjugacy class. The latter case is studied in \cite[Theorem 3.7]{He99}.

\begin{theorem}\label{min-dl}
Let $w \in \tW$ be an element of minimal length in its $\s$-twisted conjugacy class. Then $$\breve \CI \dot w \breve \CI \subset [\dot w].$$
\end{theorem}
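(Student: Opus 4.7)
The plan is to combine the combinatorial reduction of minimal-length elements developed in Part I with a direct geometric argument on Bruhat cells. The first step is an auxiliary transfer lemma: if $w \approx_\s w'$, then $\breve \CI \dot w \breve \CI$ and $\breve \CI \dot{w'} \breve \CI$ are $\s$-conjugate as subsets of $G(\breve F)$. For a single length-preserving step $w' = s w \s(s)$ with $s \in \tilde \BS$ and $\ell(w) = \ell(w')$, the Bruhat-cell multiplication formulas of \S\ref{bruhat} show that $\s$-conjugating any $g \in \breve \CI \dot w \breve \CI$ by a suitable element of $\breve \CI \dot s \breve \CI$ lands in $\breve \CI \dot{w'} \breve \CI$; iterating gives the lemma. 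It therefore suffices to prove the theorem for one convenient $\approx_\s$-representative of each minimal-length $\s$-conjugacy class.

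Next, by Theorems~\ref{a-red} and \ref{a-min}, any minimal-length element is $\approx_\s$-equivalent to one of the form $w = u x$ where $x \in {}^K \tW$ is $\s$-straight, $\Ad(x) \s(K) = K$ for some $K \subset \tilde \BS$ with $W_K$ finite, and $u \in W_K$ is minimal-length in its (necessarily elliptic) $\Ad(x) \circ \s$-twisted conjugacy class. This splits the proof into two complementary pieces. For the $\s$-straight factor $\dot x$, the length-additivity $\ell(x \s(x) \cdots \s^{n-1}(x)) = n \ell(x)$ rules out any cancellation upon iteration; combined with the Iwahori factorization of $\breve \CI$ and the Moy-Prasad filtration, this rigidity allows a convergent iteration in the pro-topology of $G(\breve F)$ that $\s$-conjugates every element of $\breve \CI \dot x \breve \CI$ to $\dot x$. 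For the elliptic finite factor $\dot u$, one passes to the reductive quotient $L = \breve \CK / \breve \CK_+$ of the parahoric $\breve \CK \supset \breve \CI$ associated to $K$; the element $\dot x$ induces an $\Ad(\dot x) \circ \s$-twisted action on $L$, and Theorem~\ref{f-min} combined with the classical Lang-Steinberg theorem for the connected reductive group $L$ over $\bar \BF_q$ gives $B_L \dot u B_L \subset [\dot u]_L$ in $L$, where $B_L$ is the image of $\breve \CI$.

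The final step is to assemble these two pieces for $w = u x$. One alternates between absorbing the $u$-contribution (via the reductive quotient argument, working modulo $\breve \CK_+$) and absorbing the $x$-contribution (via the straight-iteration argument along the Moy-Prasad filtration). The main obstacle is the convergence and compatibility of this interleaved iteration: one must verify that the filtration of $\breve \CK_+$ is preserved under $\s$-conjugation by $\dot x$ in a manner compatible with the $W_K$-action on the reductive quotient, so that the two absorption processes do not interfere at successive filtration steps. Granting this technical compatibility, the iteration produces an $h \in G(\breve F)$ with $h^{-1} g \s(h) = \dot u \dot x$, yielding $\breve \CI \dot u \dot x \breve \CI \subset [\dot u \dot x]$ and hence, by the transfer lemma of Step~1, the full statement $\breve \CI \dot w \breve \CI \subset [\dot w]$.
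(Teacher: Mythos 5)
Your overall skeleton (transfer along $\approx_\s$, reduction via Theorems \ref{a-red} and \ref{a-min} to $w=ux$ with $x\in{}^K\tW$ $\s$-straight, $\Ad(x)\s(K)=K$, $W_K$ finite, $u\in W_K$, and a Lang--Steinberg input attached to the parahoric of $K$) is close to the actual argument: the survey gives no proof and cites \cite[Theorem 3.7]{He99}, whose proof is exactly a reduction to $w=ux$ followed by a Lang-type argument on the parahoric $\breve\CK$ corresponding to $K$. But your execution has two real problems. First, the mechanism you propose for the ``straight factor'' is not valid: length-additivity of $x$ plus the Moy--Prasad filtration does not give a convergent iteration $\s$-conjugating $\breve\CI\dot x\breve\CI$ onto $\dot x$. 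The twist $\Ad(\dot x)\circ\s$ is not contracting on all of $\breve\CI$: on the Iwahori part of the torus and on every affine root direction $\tilde\a$ whose finite part satisfies $\<\a,\nu_{x,\s}\>=0$ (precisely the directions generating $\breve\CK$) it acts ``isometrically,'' and there the equation to be solved is a twisted Lang equation over the residue field; its solvability comes from Lang--Steinberg (surjectivity of $h\mapsto h\,F'(h)\i$ on a connected group, using that the differential of $F'$ vanishes), not from any convergence forced by straightness. Second, the assembly step — the interleaved absorption of the $u$- and $x$-contributions — is exactly the step you admit to granting, and it is where the proof as written is incomplete.

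The fix is to not separate the two factors at all. Since $x\in{}^K\tW$ and $\Ad(x)\s(K)=K$, the map $F'=\Ad(\dot u\dot x)\circ\s$ preserves the parahoric $\breve\CK$ and its canonical (Moy--Prasad) filtration $\breve\CK\supset\breve\CK_+\supset\cdots$, because that filtration is stable under the normalizer of $\breve\CK$; your ``main obstacle'' about compatibility of the filtration therefore disappears once you conjugate only by elements of $\breve\CK$ and twist by the whole element $\dot u\dot x$. Concretely, for $g=i_1\dot u\dot x\,i_2\in\breve\CI\dot u\dot x\breve\CI$, conjugating by $h=\s\i(i_2\i)\in\breve\CI$ gives $h\i g\s(h)\in\breve\CI\dot u\dot x\subset\breve\CK\dot u\dot x$; then for $p\in\breve\CK$ the condition $h\i p\dot u\dot x\,\s(h)=\dot u\dot x$ with $h\in\breve\CK$ is the Lang equation $p=h\,F'(h)\i$, which is solved level by level (first on the connected reductive quotient $L=\breve\CK/\breve\CK_+$, then on each congruence quotient of $\breve\CK_+$, all connected groups over $\bar\BF_q$ on which $F'$ has vanishing differential), the successive corrections lying deeper in the filtration so that the product converges in the pro-topology. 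This yields $\breve\CI\dot u\dot x\breve\CI\subset[\dot u\dot x]$ in one stroke — note that neither the ellipticity nor the minimality of $u$, nor even the straightness of $x$ beyond the normalization $\Ad(x)\s(K)=K$, is used — and your Step 1 transfer then gives $\breve\CI\dot w\breve\CI\subset[\dot w]$ for every minimal length $w$.
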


In other words, if $w \in \tW$ be an element of minimal length in its $\s$-twisted conjugacy class, then $$\dim(\breve \CI \dot w \breve \CI \cap [b])=\begin{cases} \ell(w), & \text{ if } \dot w \in [b] \\ -\infty, & \text{ if } \dot w \notin [b] \end{cases} \text{ and } |Irr^{\max}(\breve \CI \dot w \breve \CI \cap [b])|=\begin{cases} 1, & \text{ if } \dot w \in [b] \\ 0, & \text{ if } \dot w \notin [b] \end{cases}.$$ Theorem \ref{DL-red}, together with Theorem \ref{min-dl}, provide a practical way to compute $\dim_{\breve \CI}$ and $Irr^{\max}$ of $\breve \CI \dot w \breve \CI \cap [b]$ for arbitrary $w \in \tW$ and $[b] \in B(G)$. Let us look at an example. 

\begin{example}\label{gh-example}
This example comes from \cite[\S 5]{GH10}. Let $G=SL_4$. The simple reflections in $\tW$ are $s_0, s_1, s_2, s_3$. We simply write $s_{a b c \cdots}$ instead of $s_a s_b s_c \cdots$. We take $w=s_{1201232101}$ and $b=1$. In Figure \ref{gh-example1}, we illustrate how the reduction method is used to study $Y:=\breve \CI \dot w \breve \CI \cap [b]$. Here we skip the step $w_1 \approx w_2$ and only record the step $w_1 \to s w_1 s$ with $s \in \BS_0$ such that $s w_1 s<w_1$. The resulting figure is a binary tree. Here the left branch is the element $s w_1$ (which corresponds to an open part of the intersection) and the right branch is the element $s w_1 s$ (which corresponds to the closed complement of the intersection). Each box contains an element $w_1 \in \tW$ and a number, which equals to $\dim_{\breve \CI} (\breve \CI \dot w_1 \breve \CI \cap [b])$. 

\begin{figure}
\centering

\caption{An example of Deligne-Lusztig reduction.}
{
    \begin{tikzpicture}[->,>=stealth', level/.style={sibling distance = 5cm/#1, level distance = 1.5cm}, scale=1.2,transform shape]
    \node [treenode] {$s_{1201232101}$ \\ 8}
    child
    {
        node [treenode] {$s_{201232101}$ \\ 6} 
        child
        {
            node [treenode] {$s_{21232101}$  \\ $-\infty$} 
        }
        child
       {
          node[treenode]{$s_{2123201}$ \\ 5}
          child
            {
                node [treenode] {$s_{212320}$ \\ $-\infty$ } 
            }
            child
            {
                node [treenode] {$s_{21320}$ \\ 4} 
                child
            {
                node [treenode] {$s_{1320}$ \\ $-\infty$} 
            }
            child
            {
                node [treenode] {$s_{130}$ \\ $3$ } 
            }
            }
       }
      }  
     child
     {
         node[treenode]{$s_{20123210}$  \\ 7}
         child
            {
                node [treenode] {$s_{2123210}$ \\ $-\infty$} 
            }
          child
            {
                node [treenode] {$s_{212321}$ \\ $6$} 
            }
     }
;
\end{tikzpicture}
}
\label{gh-example1}
\end{figure}

From the figure, we see that $Y=Y_1 \sqcup Y_2$, where $Y_1$ is open in $Y$ and $Y_2$ is its closed complement. Moreover, $\dim Y_1=7$ and $\dim Y_2=8$ and they are both irreducible. Thus both $Y_2$ and $\overline{Y_1}$ are the irreducible components of $Y$. In other words, $Y$ has two irreducible components, one is of dimension $8$ and the other one is of dimension $7$. This is an example where the intersection is not equidimensional. 
\end{example}

\subsubsection{``Dimension=degree'' theorem}\label{dim-deg}
In general, such computation is quite difficult to run if the rank of the group is not very small. A better way to do this is to use the class polynomials of affine Hecke algebras, which encode the information of the reduction step.  

We will give a systematic discussion of class polynomials in \S \ref{3.6.3}. In this section, we just give an algorithmic definition of the class polynomials. 

To any $w \in \tW$ and a $\s$-twisted conjugacy class $\CO$ of $\tW$, we associate the class polynomial $F_{w, \CO} \in \BZ[q]$. It is defined inductively on $w$.\footnote{The definition here differs from the definition of class polynomials $f_{w, \CO}$ defined in \cite[\S 2]{He99} as the normalization of Hecke algebras we use here is different from \cite{He99}. The relation is $F_{w, \CO}=v^{\ell(w)-\ell(\CO)} f_{w, \CO} \mid_{q=v^2}$.}

If $w$ is of minimal length in its $\s$-twisted conjugacy class, then $$F_{w, \CO}=\begin{cases} 1, & \text{ if } w \in \CO; \\ 0, & \text{ otherwise}. \end{cases}$$

If $w$ is not of minimal length in its $\s$-twisted conjugacy class, then by Theorem \ref{a-red}, there exists $w' \in \tW$ and $s \in \tilde \BS$ such that $w \approx_\s w'$ and $s w' \s(s)<w'$. We set $$F_{w, \CO}=(q-1) F_{s w', \CO}+q F_{s w' \s(s), \CO}.$$

In particular, if we regard $F_{w, \CO}$ as a polynomial of $q-1$, then all the coefficients are nonnegative integers.

By convention, we define the degree of the polynomial $0$ to be $-\infty$ and the leading coefficient of the polynomial $0$ to be $1$. 

\begin{theorem}\label{deg1}
Let $w \in \tW$ and $[b] \in B(G)$. Let $F_{w, [b]}=\sum_{\CO \in \tW/\tW_\s, f(\CO)=f([b])} q^{\ell(\CO)} F_{w, \CO}$. Then 

(1) $\dim_{\breve \CI} (\breve \CI \dot w \breve \CI \cap [b])=\deg F_{w, [b]}$. 

(2) The cardinality of $Irr^{\max}(\breve \CI \dot w \breve \CI \cap [b])$ equals the leading coefficient of $F_{w, [b]}$. 
\end{theorem}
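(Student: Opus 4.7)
The plan is to prove both parts by induction on $\ell(w)$, mirroring the inductive definition of $F_{w, \CO}$. For the base case, where $w$ is of minimal length in its $\s$-twisted conjugacy class $\CO_w$, I would invoke Theorem \ref{min-dl} to get $\breve \CI \dot w \breve \CI \subset [\dot w]$, so $\breve \CI \dot w \breve \CI \cap [b]$ is either the entire double coset or empty, according as $[\dot w] = [b]$ or not. Kottwitz's injectivity of $f$ on $B(G)$ translates $[\dot w] = [b]$ into $f(\CO_w) = f([b])$. On the algebraic side, $F_{w, \CO} = \delta_{\CO, \CO_w}$ gives $F_{w, [b]} = q^{\ell(w)}$ in the nonempty case and $0$ otherwise; both the degree $\ell(w)$ and the leading coefficient $1$ match.

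For the inductive step, following the definition of $F_{w, \CO}$, I would choose $w' \in \tW$ with $w \approx_\s w'$ and a simple reflection $s \in \tilde\BS$ with $sw'\s(s) < w'$. The passage from $w$ to $w'$ is a chain of length-preserving elementary conjugations $x \mapsto sx\s(s)$; iterating case A of Theorem \ref{DL-red} gives a dimension-preserving bijection of top-dimensional irreducible components of the intersections with $[b]$, and direct inspection shows that the class polynomial is invariant as well. The strict inequality $sw'\s(s) < w'$ forces $sw' < w'$ and $\ell(sw'\s(s)) = \ell(w') - 2$, hence $sw'\s(s) < sw'$, placing us in case B of Theorem \ref{DL-red}:
\begin{equation*}
\dim(\breve \CI \dot w' \breve \CI \cap [b]) = \dim(\breve \CI \dot s \dot w' \breve \CI \s(\dot s) \breve \CI \cap [b]) + 1,
\end{equation*}
with a natural bijection of top-dimensional irreducible components. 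The double coset splitting $\breve \CI \dot s \dot w' \breve \CI \s(\dot s) \breve \CI = \breve \CI \dot s \dot w' \breve \CI \sqcup \breve \CI \dot s \dot w' \s(\dot s) \breve \CI$ identifies the right-hand side with the union of intersections at $sw'$ and $sw'\s(s)$, both of strictly smaller length than $w$.

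Summing the per-class recursion over $\CO$ with $f(\CO) = f([b])$ yields $F_{w, [b]} = (q-1) F_{sw', [b]} + q F_{sw'\s(s), [b]}$. Applying the induction hypothesis to $sw'$ and $sw'\s(s)$, together with the fact (visible from the recursion, since both $q-1$ and $q = (q-1)+1$ have nonnegative coefficients in the variable $q-1$) that $F_{x, \CO}$ expanded in $q-1$ has nonnegative integer coefficients---so its leading coefficient as a polynomial in $q$ is nonnegative---rules out cancellation, giving $\deg F_{w, [b]} = \max(\deg F_{sw', [b]}, \deg F_{sw'\s(s), [b]}) + 1$, with the leading coefficient equal to that of the dominant summand (or the sum when the degrees coincide); both quantities match the geometric picture from Theorem \ref{DL-red}. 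The main obstacle will be the careful treatment of the $\approx_\s$ step, verifying that each length-preserving elementary conjugation preserves the geometric data (dimension and set of maximal-dimensional irreducible components of the intersection) as well as the class polynomial, so that the induction can be safely carried out on a canonical representative within the $\sim_\s$-class of $w$.
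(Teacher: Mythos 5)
Your proposal is correct and is essentially the paper's own argument: the paper proves Theorem \ref{deg1} by the same induction on $w$ (citing the proof of \cite[Theorem 6.1]{He99}), with Theorem \ref{min-dl} handling minimal-length elements and the Deligne--Lusztig reduction of Theorem \ref{DL-red} matching the defining recursion $F_{w,\CO}=(q-1)F_{sw',\CO}+qF_{sw'\s(s),\CO}$, nonnegativity in $q-1$ ruling out cancellation. Your handling of the $\approx_\s$-steps and of the open/closed decomposition $Z=Z_1\sqcup Z_2$ matches the ingredients already built into Theorem \ref{DL-red}, so nothing essential is missing.
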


The proof is similar to the proof of \cite[Theorem 6.1]{He99} and is proved by induction on $w$. 

As a consequence, we have

\begin{corollary}
Let $w \in \tW$ and $[b] \in B(G)$. If $\ell(w)< \<2 \rho, \nu_b\>$, then $\breve \CI \dot w \breve \CI \cap [b]=\emptyset$. 
\end{corollary}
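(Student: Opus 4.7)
The plan is to show that the class polynomial $F_{w,[b]}$ vanishes under the hypothesis, and then invoke Theorem \ref{deg1} (1), which gives $\dim_{\breve\CI}(\breve\CI\dot w\breve\CI\cap[b])=\deg F_{w,[b]}$; since the degree of the zero polynomial is $-\infty$, the intersection must be empty.

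The main step is a length bound for the individual class polynomials, namely: for any $w\in\tW$ and any $\s$-twisted conjugacy class $\CO$ of $\tW$,
\[
F_{w,\CO}\neq 0 \quad\Longrightarrow\quad \ell(\CO)\le \ell(w).
\]
I would prove this by induction on $\ell(w)$, following the recursive definition of $F_{w,\CO}$ given in \S \ref{dim-deg}. In the base case, $w$ is of minimal length in its $\s$-twisted conjugacy class, and $F_{w,\CO}\neq 0$ forces $w\in\CO$, giving $\ell(w)=\ell(\CO)$. In the inductive case, by the definition $F_{w,\CO}=(q-1)F_{sw',\CO}+qF_{sw'\s(s),\CO}$, where $w\approx_\s w'$ and $sw'\s(s)<w'$. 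Thus $\ell(sw')=\ell(w)-1$ and $\ell(sw'\s(s))\le \ell(w)-1$, and $F_{w,\CO}\neq 0$ forces at least one of $F_{sw',\CO}$, $F_{sw'\s(s),\CO}$ to be nonzero. The inductive hypothesis then yields $\ell(\CO)\le \ell(w)-1<\ell(w)$, as required.

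With this length bound in hand, the rest is immediate. By the definition in Theorem \ref{deg1},
\[
F_{w,[b]}=\sum_{\CO\in \tW\sslash\tW_\s,\ f(\CO)=f([b])} q^{\ell(\CO)} F_{w,\CO},
\]
where the sum ranges over \emph{straight} $\s$-twisted conjugacy classes $\CO$ with $f(\CO)=f([b])$. For any such straight $\CO$, the characterization recalled in \S \ref{str} gives $\ell(\CO)=\langle 2\rho,\bar\nu_\CO\rangle=\langle 2\rho,\nu_b\rangle$. Under the assumption $\ell(w)<\langle 2\rho,\nu_b\rangle$, the length bound forces $F_{w,\CO}=0$ for every $\CO$ appearing in the sum, so $F_{w,[b]}=0$ and the conclusion follows from Theorem \ref{deg1} (1).

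The only delicate point is the inductive step for the length bound, and even there the subtlety is minor: one just has to check that $\ell(sw'\s(s))\le \ell(w)-1$ in all cases (including when $\s(s)$ raises the length of $sw'$, in which case $sw'\s(s)<w'$ together with $\ell(w')=\ell(w)$ gives $\ell(sw'\s(s))=\ell(w)-1$; when $\s(s)$ lowers it, one gets $\ell(sw'\s(s))=\ell(w)-2$). This is exactly the same reduction that drives the Deligne--Lusztig reduction of Theorem \ref{DL-red}, so no new combinatorial input beyond the recursion defining $F_{w,\CO}$ is needed.
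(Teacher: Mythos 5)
Your argument is correct and is essentially the paper's own: the paper justifies this corollary precisely by the observation you formalize, namely that the recursion of \S\ref{dim-deg} defining the class polynomials strictly decreases length, so $F_{w,\CO}\neq 0$ forces $\ell(\CO)\le\ell(w)$, and then Theorem \ref{deg1}(1) gives $\deg F_{w,[b]}=-\infty$, i.e.\ emptiness. One small correction: in Theorem \ref{deg1} the sum defining $F_{w,[b]}$ runs over \emph{all} $\s$-twisted conjugacy classes $\CO$ with $f(\CO)=f([b])$, not only the straight one; this is harmless, since by the remark in \S\ref{str} any non-straight such $\CO$ satisfies $\ell(\CO)>\langle 2\rho,\nu_b\rangle$, so your length bound kills those terms as well. (Also, in your final parenthesis the case where $\s(s)$ raises the length of $s w'$ cannot occur once $s w'\s(s)<w'$ is imposed: comparability in the Bruhat order forces $\ell(s w')=\ell(w)-1$ and $\ell(s w'\s(s))=\ell(w)-2$, so the induction is even cleaner than you state.)
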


The reason is that if the reduction method always decreases the length of the elements involved. In particular, if $\ell(w)< \<2 \rho, \nu_b\>$, then after apply the reduction, one can only get $\s$-twisted conjugacy classes of $\tW$ different from $\CO_{[b]}$. 

\smallskip

We have similar ``dimension=degree'' theorem for parahoric subgroups. 

\begin{theorem}\label{deg2}
Let $\breve \CK$ be a $\s$-stable parahoric subgroup of $G(\breve F)$ and $w \in {}^K \tW^K$. Set $F_{w, K, [b]}=\sum_{w' \in W_K w W_K} F_{x, [b]}$. Then 

(1) $\dim_{\breve \CK} (\breve \CK \dot w \breve \CK \cap [b])=\deg F_{w, K, [b]}-\dim_{\breve \CI} \breve \CK$. 

(2) The cardinality of $Irr^{\max}(\breve \CK \dot w \breve \CK \cap [b])$ equals the leading coefficient of $F_{w, K, [b]}$.  
\end{theorem}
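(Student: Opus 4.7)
The plan is to reduce to the Iwahori case (Theorem \ref{deg1}) by stratifying $\breve \CK \dot w \breve \CK$ by Iwahori double cosets. Recall that
\[
\breve \CK \dot w \breve \CK = \bigsqcup_{w' \in W_K w W_K} \breve \CI \dot w' \breve \CI,
\]
so intersecting with $[b]$ gives the decomposition
\[
\breve \CK \dot w \breve \CK \cap [b] = \bigsqcup_{w' \in W_K w W_K} (\breve \CI \dot w' \breve \CI \cap [b])
\]
into finitely many admissible locally closed pieces. Each piece sits inside the $\breve \CI$-double coset $\breve \CI \dot w' \breve \CI$, whose closure in $\breve \CK \dot w \breve \CK$ is $\bigsqcup_{w'' \le w',\, w'' \in W_K w W_K} \breve \CI \dot w'' \breve \CI$.

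For (1), I would first observe that for any bounded locally closed admissible subset $Z$,
\[
\dim_{\breve \CK} Z = \dim_{\breve \CI} Z - \dim_{\breve \CI} \breve \CK
\qquad\text{and}\qquad
\dim_{\breve \CI} Z = \max_{w'} \dim_{\breve \CI}(Z \cap \breve \CI \dot w' \breve \CI).
\]
Applied to $Z = \breve \CK \dot w \breve \CK \cap [b]$ and combined with Theorem \ref{deg1}(1), this yields
\[
\dim_{\breve \CK}(\breve \CK \dot w \breve \CK \cap [b]) = \max_{w' \in W_K w W_K} \deg F_{w', [b]} - \dim_{\breve \CI} \breve \CK .
\]
To finish (1) I would argue that $\max_{w'} \deg F_{w',[b]} = \deg \bigl(\sum_{w'} F_{w',[b]}\bigr) = \deg F_{w,K,[b]}$. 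The key input is the nonnegativity property recorded right after the algorithmic definition in \S \ref{dim-deg}: each $F_{w',\CO}$, written in powers of $q-1$, has nonnegative coefficients. Since $q^{\ell(\CO)} = ((q-1)+1)^{\ell(\CO)}$ also has nonnegative coefficients in $q-1$, the same holds for each $F_{w',[b]}$, and hence the leading coefficients of the $F_{w',[b]}$ achieving the maximum degree add without cancellation.

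For (2), let $d = \deg F_{w,K,[b]}$ and let $W^* \subset W_K w W_K$ be the set of $w'$ for which $\deg F_{w',[b]} = d$. I would set up a bijection
\[
\operatorname{Irr}^{\max}(\breve \CK \dot w \breve \CK \cap [b]) \;\longleftrightarrow\; \bigsqcup_{w' \in W^*} \operatorname{Irr}^{\max}(\breve \CI \dot w' \breve \CI \cap [b]).
\]
Given $C' \in \operatorname{Irr}^{\max}(\breve \CI \dot w' \breve \CI \cap [b])$ with $w' \in W^*$, its closure inside $\breve \CK \dot w \breve \CK \cap [b]$ is irreducible of dimension $d$, hence a component of maximal dimension. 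Conversely, given a maximal-dimensional irreducible component $C$, since $C$ is partitioned by finitely many locally closed subsets $C \cap \breve \CI \dot w' \breve \CI$, exactly one of these is dense in $C$; for that $w'$ we have $\dim(C \cap \breve \CI \dot w' \breve \CI) = d$ and this intersection is closed and irreducible in $\breve \CI \dot w' \breve \CI \cap [b]$ of maximal possible dimension, hence a component. Summing the counts using Theorem \ref{deg1}(2) then gives
\[
|\operatorname{Irr}^{\max}(\breve \CK \dot w \breve \CK \cap [b])| = \sum_{w' \in W^*} \text{lead}(F_{w',[b]}) = \text{lead}(F_{w,K,[b]}),
\]
where the last equality again uses the nonnegativity of coefficients of the class polynomials in $q-1$.

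The only delicate point is the bijection of components in (2): one must verify that the closure (in $\breve \CK \dot w \breve \CK \cap [b]$) of a maximal-dimension component $C'$ of a single Bruhat stratum really does not acquire extra top-dimensional pieces from lower strata, and conversely that the dense-stratum trick picks out a genuine irreducible component of $\breve \CI \dot w' \breve \CI \cap [b]$ rather than a proper irreducible closed subset. Both points reduce to the observation that the strata are locally closed of dimension at most $d$, with equality only for $w' \in W^*$, so the closure operation is compatible with taking the top stratum.
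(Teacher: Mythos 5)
Your proposal is correct and follows what is essentially the paper's (implicit) route: the paper states this result as a direct consequence of Theorem \ref{deg1} via the decomposition $\breve \CK \dot w \breve \CK=\sqcup_{w' \in W_K w W_K} \breve \CI \dot w' \breve \CI$, exactly as you do, and the identity $\max_{w'}\deg F_{w',[b]}=\deg F_{w,K,[b]}$ is the same one used in the proof of Theorem \ref{ii-adlv}. Your added details (positivity of the class polynomials in $q-1$ to rule out cancellation, and the stratum-by-stratum matching of top-dimensional irreducible components) are precisely the points the paper leaves to the reader, and they are handled correctly.
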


\begin{theorem}\label{2.22}
Let $\breve \CK$ be a $\s$-stable parahoric subgroup of $G(\breve F)$ and $x \in {}^K \tW$. Then 

(1) $\dim_{\breve \CK} (\breve \CK \cdot_\s \breve \CI \dot x \breve \CI \cap [b])=\deg F_{x, [b]}$. 

(2) The cardinality of $Irr^{\max}(\breve \CK \cdot_\s \breve \CI \dot x \breve \CI \cap [b])$ equals the leading coefficient of $F_{x, [b]}$.  
\end{theorem}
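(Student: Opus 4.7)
The plan is to reduce Theorem \ref{2.22} to Theorem \ref{deg1} by combining the disjoint stratification of Theorem \ref{kwk} with a combinatorial identity about class polynomials. First, by Theorem \ref{kwk}, for any $y \in {}^K\tW^K$ with $x \in W_K y W_K$, the decomposition $\breve\CK \dot y \breve\CK = \bigsqcup_{x' \in W_K y W_K \cap {}^K\tW} \breve\CK \cdot_\s \breve\CI\dot{x'}\breve\CI$ is disjoint. Since $\breve\CK \dot y \breve\CK$ is also the disjoint union $\bigsqcup_{w' \in W_K y W_K} \breve\CI\dot{w'}\breve\CI$, each Iwahori double coset lies in exactly one stratum, defining a map $x(\cdot): W_K y W_K \to W_K y W_K \cap {}^K\tW$. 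Hence for fixed $x \in {}^K\tW$:
\[
\breve\CK \cdot_\s \breve\CI \dot x \breve\CI \cap [b] \;=\; \bigsqcup_{w' :\ x(w')=x} \bigl(\breve\CI \dot{w'} \breve\CI \cap [b]\bigr).
\]
By Theorem \ref{deg1} applied termwise, $\dim_{\breve\CI}(\breve\CK \cdot_\s \breve\CI \dot x \breve\CI \cap [b]) = \max_{w' : x(w')=x} \deg F_{w',[b]}$, and $|Irr^{\max}(\breve\CK \cdot_\s \breve\CI \dot x \breve\CI \cap [b])|$ equals the sum, over those $w'$ achieving this maximum, of the leading coefficients of $F_{w',[b]}$.

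Thus the theorem reduces to proving the combinatorial identity
\[
\max_{w' :\ x(w') = x}\deg F_{w',[b]} \;=\; \deg F_{x,[b]} + \ell(w_0^K),
\]
together with the corresponding matching of leading coefficients. I would prove this by induction on $\ell(x)$, in parallel with the class polynomial recursion $F_{w,\CO} = (q-1) F_{sw,\CO} + q F_{sw\s(s),\CO}$, using the three observations preceding Theorem \ref{kwk}, which describe precisely how the fiber map $x(\cdot)$ interacts with the reduction step by $s \in K$, the $\approx_{K,\s}$ equivalence, and the Levi subgroup $W_{I(K,x,\s)}$. In the base case, $x$ is of minimal length in its $\s$-twisted conjugacy class $\CO$: by Theorem \ref{min-dl}, $\breve\CI\dot x\breve\CI \subset [\dot x]$, so $F_{x,[b]} = q^{\ell(x)}$ if $f(\CO)=f([b])$ and $0$ otherwise. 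Observation~3 together with Theorem \ref{par-main} identifies the fiber $\{w' : x(w')=x\}$ via the $W_K \cdot_\s$-orbit structure, with length-maximal member of length $\ell(x) + \ell(w_0^K)$; its unique contribution in that dimension gives leading coefficient $1$, matching $F_{x,[b]}$.

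The main obstacle is the precise analysis in the base case: one must identify the length-maximal element of the fiber $\{w' : x(w')=x\}$ for $x \in {}^K\tW$ of minimal length in its $\s$-twisted conjugacy class. While Theorem \ref{par-main} describes the minimal length elements in the $W_K \cdot_\s$-orbit and identifies the Levi subgroup $W_{I(K,x,\s)}$, the length-maximal member requires \emph{reversing} the reduction using observation~2, and the bookkeeping of the $\s$-twisted action must be carried out carefully to get the exact length $\ell(x) + \ell(w_0^K) = \ell(x) + \dim_{\breve\CI}\breve\CK$. Once the base case is settled, the inductive step proceeds routinely by matching the recursion $F_{x,[b]} = (q-1) F_{sx,[b]} + q F_{sx\s(s),[b]}$ with the corresponding decomposition $\breve\CK \cdot_\s \breve\CI\dot x \breve\CI = \breve\CK \cdot_\s \breve\CI\dot{sx}\breve\CI \cup \breve\CK \cdot_\s \breve\CI\dot{sx\s(s)}\breve\CI$ coming from observation~2.
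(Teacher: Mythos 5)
Your first step is where the argument breaks: the strata $\breve \CK \cdot_\s \breve \CI \dot x' \breve \CI$ of Theorem \ref{kwk} are unions of $\breve \CK$-$\s$-conjugation orbits, but they are \emph{not} unions of Iwahori double cosets, so there is no map $x(\cdot): W_K y W_K \to W_K y W_K \cap {}^K\tW$ and no decomposition $\breve\CK \cdot_\s \breve\CI \dot x \breve\CI \cap [b] = \bigsqcup_{w'} \bigl(\breve\CI \dot w' \breve\CI \cap [b]\bigr)$. Having two decompositions of $\breve\CK \dot y \breve\CK$ does not mean one refines the other. Concretely, take $G$ of rank one (split $SL_2$, so $\s$ acts trivially on $\tW$), $K=\{s_1\}$, $y=s_0$, and $w'=s_1 s_0 s_1$. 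The second bullet before Theorem \ref{kwk}, applied with $s=s_1$, gives $\breve\CK \cdot_\s \breve\CI \dot s_1 \dot s_0 \dot s_1 \breve\CI = \breve\CK \cdot_\s \breve\CI \dot s_0 \dot s_1 \breve\CI \cup \breve\CK \cdot_\s \breve\CI \dot s_0 \breve\CI$, and $s_0, s_0 s_1$ are two \emph{distinct} elements of $W_K y W_K \cap {}^K\tW$, indexing two disjoint strata. If $\breve\CI \dot s_1 \dot s_0 \dot s_1 \breve\CI$ were contained in a single stratum, its $\breve\CK\cdot_\s$-saturation would be too (each stratum being a union of orbits), contradicting the displayed equality; so this Iwahori cell meets both strata. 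Consequently the fiber over which you take the maximum of $\deg F_{w',[b]}$ and sum leading coefficients does not exist, and your key identity $\max_{w'} \deg F_{w',[b]} = \deg F_{x,[b]} + \dim_{\breve\CI}\breve\CK$ is not even well posed stratum by stratum; only its aggregate over the whole double coset is meaningful, and that is exactly the content of Theorem \ref{deg2} (note the \emph{sum} $F_{w,K,[b]}=\sum_{w' \in W_K w W_K} F_{w',[b]}$ over the full double coset, with no partition by elements of ${}^K\tW$) combined with Theorem \ref{kwk}.

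The intended proof is of a different nature and does not pass through a slicing of the stratum into Iwahori cells. One route is to run the Deligne--Lusztig reduction of Theorem \ref{DL-red} together with Theorem \ref{min-dl} directly on the $\breve\CK\cdot_\s$-stable sets, by induction on $x$ exactly as for Theorem \ref{deg1}; the three bullets before Theorem \ref{kwk} (and the argument in the proof of Theorem \ref{1-clos}) record precisely how $\breve\CK \cdot_\s \breve\CI \dot w \breve\CI$ transforms under the reduction steps, and the recursion reproduces $F_{x,[b]}$ on the nose. Equivalently, since $[b]$ is stable under $\s$-conjugation, $\breve\CK \cdot_\s \breve\CI \dot x \breve\CI \cap [b] = \breve\CK \cdot_\s \bigl(\breve\CI \dot x \breve\CI \cap [b]\bigr)$, and the real work is to show that for $x \in {}^K\tW$ the $\breve\CK$-saturation raises $\dim_{\breve\CI}$ by exactly $\dim_{\breve\CI}\breve\CK$ and preserves the number of top-dimensional irreducible components (this is where $x \in {}^K\tW$, via $W_{I(K,x,\s)}$ and the third bullet, is genuinely used); then both parts of the theorem follow from Theorem \ref{deg1} with $F_{x,[b]}$ alone. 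Your proposal never engages with this saturation step, and the combinatorial surrogate you propose for it is built on the false refinement above.
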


\smallskip

Note that $F_{w, K, [b]}$ only depends on the Iwahori-Weyl group $\tW$ together with $\s: \tW \to \tW$, the element $w \in \tW$, the subset $K$ of $\tilde \BS$ and straight $\s$-twisted conjugacy classes $\CO_{[b]}$ of $\tW$. Thus one may relate the questions on the emptiness/non-emptiness pattern, dimension, the number of irreducible components of maximal dimension, etc., of $\breve \CI \dot w \breve \CI \cap [b]$ in a ramified group to the questions in an unramified group. This allows us to translate some results in the unramified groups (proved by other methods) to results in the ramified group (where the methods in the unramified groups might not apply). 

\subsubsection{Relation with affine Deligne-Lusztig varieties} Now we discuss some relations between the intersection $\breve \CK \dot w \breve  \CK \cap [b]$ and the affine Deligne-Lusztig varieties. 

Let $\breve \CK$ be a $\s$-stable parahoric subgroup of $G(\breve F)$ and $w \in {}^K \tW^K$. Let $b \in G(\breve F)$. The affine Deligne-Lusztig variety is introduced by Rapoport in \cite{R:guide} $$X_{K, w}(b)=\{g \breve \CK \in G(\breve F)/\breve \CK; g \i b \s(g) \in \breve \CK \dot w \breve \CK\}.$$

If $\breve \CK$ is the Iwahori subgroup $\breve \CI$, we simply write $X_w(b)$ instead of $X_{\breve \CI, w}(b)$. If $\breve \CK$ is a special maximal parahoric subgroup, then we may write $$X_{K, \mu}(b)=\{g \breve \CK \in G(\breve F)/\breve \CK; g \i b \s(g) \in \breve \CK \e^\mu \breve \CK\}$$ for the corresponding affine Deligne-Lusztig variety. We have 

\begin{theorem}\label{ii-adlv}
Let $\breve \CK$ be a $\s$-stable parahoric subgroup of $G(\breve F)$ and $w \in {}^K \tW^K$. Let $b \in G(\breve F)$. Then $$\dim_{\breve \CK}(\breve \CK \dot w \breve \CK \cap [b])-\dim X_{\breve \CK, w}(b)=\<2 \rho, \nu_b\>.$$
\end{theorem}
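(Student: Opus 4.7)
My plan is to compare the two sides via a parallel Deligne--Lusztig induction; the strategy mirrors the proof of Theorem~\ref{deg2}. First I reduce to the Iwahori case $\breve \CK = \breve \CI$. The natural projection $G(\breve F)/\breve \CI \to G(\breve F)/\breve \CK$, combined with $\breve \CK \dot w \breve \CK = \bigsqcup_{w' \in W_K w W_K} \breve \CI \dot{w'} \breve \CI$, restricts to a surjection $\bigsqcup_{w' \in W_K w W_K} X_{w'}(b) \twoheadrightarrow X_{\breve \CK, w}(b)$ whose fibers are $\breve \CK/\breve \CI$-torsors of dimension $\dim_{\breve \CI} \breve \CK$. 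Hence
$$\dim X_{\breve \CK, w}(b) = \max_{w' \in W_K w W_K} \dim X_{w'}(b) - \dim_{\breve \CI} \breve \CK.$$
Combined with the analogous identity for the intersection side implicit in Theorems~\ref{deg1} and \ref{deg2}, the statement reduces to the case $\breve \CK = \breve \CI$.

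For $\breve \CK = \breve \CI$ I argue by induction on $\ell(w)$. In the base case, $w$ has minimal length in its $\s$-twisted conjugacy class: Theorem~\ref{min-dl} places the entire cell $\breve \CI \dot w \breve \CI$ inside $[\dot w]$, so $\breve \CI \dot w \breve \CI \cap [b]$ is empty unless $[b] = [\dot w]$, in which case it has dimension $\ell(w)$; similarly $X_w(b)$ is empty unless $[b] = [\dot w]$. Writing $w = u x$ with $x$ $\s$-straight in $\CO_{[b]}$ as in Theorem~\ref{a-red} (so $\ell(x) = \<2\rho, \nu_b\>$ and $\ell(u) = \ell(w) - \<2\rho, \nu_b\>$), the standard structure result for affine Deligne--Lusztig varieties at minimal-length elements (see \cite{He99}) exhibits $X_w(\dot w)$ as a disjoint union of translates of a classical Deligne--Lusztig variety of dimension $\ell(u)$, so $\dim X_w(b) = \ell(w) - \<2\rho, \nu_b\>$ and the difference is $\<2\rho, \nu_b\>$ as required. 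For the inductive step, choose $w_1 \approx_\s w$ and $s \in \tilde\BS$ with $sw_1\s(s) < w_1$. Theorem~\ref{DL-red} records the effect on the intersection. The same diagram~\eqref{g-g}, after passing to the quotient by $\breve \CI$, produces morphisms among $X_w(b)$, $X_{sw_1}(b)$, and $X_{sw_1\s(s)}(b)$ that shift dimension by exactly $0$ when $sw_1\s(s) > sw_1$ and by $+1$ when $sw_1\s(s) < sw_1$; this is the affine Deligne--Lusztig reduction of \cite[\S 3]{He99}. Thus the difference $\dim_{\breve \CI}(\breve \CI \dot w \breve \CI \cap [b]) - \dim X_w(b)$ is invariant under the reduction, and the induction closes.

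The main obstacle is matching the two Deligne--Lusztig reductions precisely: one must follow the open/closed decomposition in the $sw_1\s(s) < sw_1$ case, and since Example~\ref{gh-example} shows the intersection need not be equidimensional, the bookkeeping of maximal-dimensional irreducible components has to be tracked on both sides at once. The compatibility between the reductions is governed uniformly by diagram~\eqref{g-g}: its admissibility and its behaviour under passage to $G(\breve F)/\breve \CI$ are the technical content underlying both Theorem~\ref{DL-red} and the dimension formula for $X_w(b)$ in the minimal-length base case.
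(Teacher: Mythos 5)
Your argument is correct and is essentially the paper's own proof: reduce to the Iwahori case via the projection $G(\breve F)/\breve \CI \to G(\breve F)/\breve \CK$ with fibers $\breve \CK/\breve \CI$, then run the Deligne--Lusztig reduction in parallel on $\breve \CI \dot w \breve \CI \cap [b]$ and $X_w(b)$, with the base case at minimal-length elements where the two dimensions are $\ell(w)$ and $\ell(w)-\<2\rho,\nu_b\>$ respectively. The only cosmetic difference is that the paper packages the identical reduction steps through the class polynomials (the ``dimension=degree'' theorems), whereas you carry the induction on the two dimensions directly; the content is the same.
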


\begin{proof}
We first consider the case where $\breve \CK=\breve \CI$. In this case, the Deligne-Lusztig reduction method still works for $\dim X_w(b)$ (see \cite[Proof of Theorem 6.1]{He99}). The reduction steps are the same. The only difference lies in the basic step. When $w$ is a minimal length element in its $\s$-twisted conjugacy class, then $$\dim X_w(b)=\begin{cases} \ell(w)-\ell(\CO_b), & \text{ if } \dot w \in [b]; \\-\infty, & \text{ if } \dot w \notin [b].\end{cases}$$ Thus $\dim X_w(b)$ equals the degree of $\sum_{\CO' \in \tW/\tW_\s, f(\CO')=f(\CO_b)} q^{\ell(\CO')-\ell(\CO_b)} F_{w, \CO'}$ and $$\dim_{\breve \CI}(\breve \CI \dot w \breve \CI \cap [b])-\dim X_w(b)=\ell(\CO_b)=\<2 \rho, \nu_b\>.$$

In the general case, $X_{\breve \CK, w}(b)$ equals the image of $\sqcup_{w' \in W_K w W_K}X_{w'}(b)$ under the natural projection map $\pi: G(\breve F)/\breve I \to G(\breve F)/\breve \CK$. Note that each fiber of $$\sqcup_{w' \in W_K w W_K}X_{w'}(b) \to X_{\breve \CK, w}(b)$$ is isomorphic to $\breve \CK/\breve \CI$. Thus 
\begin{align*}
\dim X_{\breve \CK, w}(b) &=\max_{w' \in W_K w W_K} \dim X_{w'}(b)-\dim \breve \CK/\breve \CI \\ &=\max_{w' \in W_K w W_K}  \dim_{\breve \CI} (\breve \CI \dot w'\breve \CI \cap [b])-\dim \breve \CK/\breve \CI-\<2 \rho, \nu_b\> \\ &=\deg F_{K, w, [b]}-\dim \breve \CK/\breve \CI-\<2 \rho, \nu_b\> \\ &=\dim_{\breve \CK}(\breve \CK \dot w \breve \CK \cap [b])-\<2 \rho, \nu_b\>.\qedhere
\end{align*}
\end{proof}

\subsection{Non-emptiness pattern}\label{nonempty} 

In this subsection, we discuss several cases that explicit non-emptiness patterns are known. 

\subsubsection{Special maximal parahoric} We first consider the intersection $\breve \CK \e^\mu \breve \CK \cap [b]$, where $\breve \CK$ is a special maximal parahoric subgroup. If $G=GL_n$, then the intersection is nonempty if and only if the Mazur's inequality is satisfied (see \S \ref{Mazur}). Kottwitz and Rapoport \cite{KR} conjecture that it is still the case for any reductive group $G$ (and special maximal parahoric subgroups $\breve \CK$). 

\begin{theorem}\label{2-24}
Let $\breve \CK$ be a special maximal parahoric subgroup of $G(\breve F)$. Let $\mu$ be a dominant coweight and $[b] \in B(G)$. Then $[b] \cap \breve \CK \e^\mu \breve \CK \neq \emptyset$ if and only if $\mu \ge \nu_b$ and $\k([b])=\k(\e^\mu)$. 
\end{theorem}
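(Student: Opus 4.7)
My plan is to treat the two directions separately, handling the forward (easy) direction through the standard arithmetic invariants and the reverse (harder) direction through the class-polynomial reduction method established earlier in the paper.

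The necessity of the two conditions is essentially formal. The Kottwitz map $\k$ is invariant under $\s$-conjugation (\S \ref{BG}), so any point in the intersection immediately yields $\k([b]) = \k(\e^\mu)$. The inequality $\nu_b \le \mu$ is Mazur's inequality in the general reductive setting: for any $g = k_1 \e^\mu k_2 \in \breve \CK \e^\mu \breve \CK$, iterating $\s$-conjugation and analyzing where the products $g \s(g) \cdots \s^{n-1}(g)$ land in the Cartan decomposition shows that $\bar{\nu}_g \le \mu$ in the dominance order. This direction is essentially the group-theoretic Mazur inequality of Rapoport-Richartz, and does not require any of the new machinery of the paper.

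For sufficiency, the strategy is to apply Theorem \ref{deg2} to $w = t^\mu$: the intersection $\breve \CK \e^\mu \breve \CK \cap [b]$ is nonempty if and only if the class polynomial $F_{t^\mu, K, [b]}$ is nonzero. As emphasized in \S \ref{dim-deg}, this polynomial depends only on the combinatorial data $(\tW, \tilde \BS, \s, K, t^\mu, \CO_{[b]})$. I would therefore pass to an auxiliary unramified reductive group $G'$ over a suitable local field $F'$ whose Iwahori-Weyl datum (modulo the central torsion subgroup $X_*(T)_{\G, \textrm{tor}}$) is identified with that of $G$, in such a way that the chosen special parahoric $\breve \CK$ corresponds to a special parahoric $\breve \CK'$ of $G'(\breve{F'})$. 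Since $G'$ is unramified, every special maximal parahoric of $G'(\breve{F'})$ is hyperspecial. The question thus reduces to the classical Kottwitz-Rapoport conjecture for hyperspecial parahorics in unramified reductive groups, established by Lucarelli for classical groups and by Gashi in full generality. Transporting the resulting non-vanishing $F_{t^\mu, K', [b']} \neq 0$ back along the combinatorial identification completes the proof.

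The main obstacle is making the reduction to $G'$ genuinely precise. One must (i) exhibit such a $G'$ with matching Iwahori-Weyl datum modulo central torsion, (ii) verify that the fixed special vertex of the local Dynkin diagram of $G$ transports to a hyperspecial vertex in the local Dynkin diagram of $G'$, and (iii) ensure that the bijection of Theorem \ref{tri-par} between $\tW \sslash \tW_\s$ and $B(G)$ is compatible with its analogue for $G'$, so that the hypotheses $\mu \ge \nu_b$ and $\k([b]) = \k(\e^\mu)$ translate correctly on both sides. The potential issue that the Kottwitz map targets all of $\Omega_\s$ (including possible torsion) rather than a torsion-free quotient deserves explicit care, although since $X_*(T)_{\G, \textrm{tor}}$ is central in $\tW$ it should not cause any substantive difficulty.
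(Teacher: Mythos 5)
Your overall strategy is the same as the paper's: necessity is the group-theoretic Mazur inequality of Rapoport--Richartz \cite{RR}, and sufficiency is obtained, exactly as in \cite[Theorem 7.1]{He99}, by combining the class-polynomial criterion of Theorem \ref{deg2} (which shows that non-emptiness depends only on the combinatorial datum $(\tW,\s,K,t^\mu,\CO_{[b]})$), the matching of $(\tW/X_*(T)_{\G, tor},\s)$ with the Iwahori--Weyl datum of an unramified group, and Gashi's theorem \cite{Ga}.

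One step is wrong as written: it is not true that for an unramified group every special maximal parahoric is hyperspecial. Already the quasi-split unramified group $SU_3$ has two $\s$-stable special vertices, only one of which is hyperspecial; more generally unramified unitary groups produce special non-hyperspecial vertices. This is not a side issue --- covering such vertices is precisely the content that distinguishes the theorem here from the hyperspecial statements in the literature --- so after transporting the datum to $G'$ you cannot automatically invoke Gashi at the transported vertex. The repair is the flexibility you implicitly have (and partly flag in your obstacle (ii)): only the pair $(\tW/X_*(T)_{\G, tor},\s)$ together with the subset $K\subset\tilde\BS$ must be matched, so one should choose the auxiliary unramified group so that $K$ corresponds to a hyperspecial vertex (for instance, the datum of unramified $SU_3$ at either special vertex is also realized by split $SL_2$, where both vertices are hyperspecial), checking along the way that $\k$ and the Newton points correspond as in Theorem \ref{str-bw}. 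With that adjustment, your argument reproduces the paper's proof.
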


The ``only if'' side is proved by Rapoport and Richartz in \cite{RR}. The ``if'' side is proved for type A and C in \cite{KR}. It is then proved by Lucarelli \cite{Luc} for classical split groups and then by Gashi \cite{Ga} for unramified cases. The general case is obtained in \cite[Theorem 7.1]{He99}, combining the relation between the class polynomials and the non-emptiness pattern of such intersection, the relation between Iwahori-Weyl groups for ramified and unramified groups, and Gashi's result in the unramified case. 

\subsubsection{Reduction to quasi-split, adjoint groups}\label{red-qs}
Now we consider the intersection $\breve \CI \dot w \breve \CI \cap [b]$. We first discuss the reduction from arbitrary group to a quasi-split, adjoint group. 

As explained in \cite[\S 2.2]{GHN}, it suffices to consider the case where $G$ is adjoint. Let $H$ be the quasi-split inner form of $G$. Then $G(\breve F)=H(\breve F)$ and $\breve I$ is the Iwahori subgroup of $H(\breve F)$ as well. We identify the Iwahori-Weyl group of $H$ with $\tW$. The difference is the group automorphisms on $\tW$ induced from the Frobenius morphisms. We have $\s_G=\Ad(\g) \circ \s_H \in \text{Aut}(\tW)$ for some $\g \in \Omega$. The map $g \mapsto g \g$ gives a bijection from the $\s_G$-conjugacy class $[g]_G$ to the $\s_H$-conjugacy class $[g \g]_H$. We have $$\dim_{\breve \CI} (\breve \CI \dot w \breve \CI \cap [g]_G)=\dim_{\breve \CI} (\breve \CI \dot w \dot \g \breve \CI \cap [g \g]_H).$$

In the sequel, we denote by $\varsigma$ the group automorphism $\s_H$ on $\tW$. 

\subsubsection{$P$-alcove elements}
For quasi-split groups, an important notion is the $P$-alcove element, first introduced by G\"ortz, Haines, Kottwitz, and Reuman \cite{GHKR10} for split group and then generalized in \cite{GHN} to quasi-split groups. We recall the definition. 

Assume that $G$ is quasi-split. Let $P=M N$ be a standard $\s$-stable parabolic subgroup of $G$ and $x \in W_0$. We say that $w \in \tW$ is a $(P, x)$-alcove element if 
\begin{itemize}
\item $x \i w \s(x) \in \tW_M$;

\item $\dot x N(\breve F) \dot x \i \cap \dot w \breve \CI \dot w \i\subset \breve \CI$. 
\end{itemize}

The reason to put the above two requirement on $w$ is that we would like to reduce the study of $\breve \CI \dot w \breve \CI$ to the study of a Iwahori double coset in a Levi subgroup of $G$. In particular, we need that $w$ is $\s$-conjugate to an element in the Iwahori-Weyl group of a standard Levi subgroup. This is where the first condition comes from. The second condition is more involved. Roughly speaking, we have the decomposition $$\breve \CI=(\breve \CI \cap N(\breve F)) (\breve \CI \cap M(\breve F)) (\breve \CI \cap N^-(\breve F)),$$ where $N^-$ is the unipotent radical of the opposite parabolic subgroup. And we need this decomposition to be ``compatible'' with $\breve \CI \dot w \breve \CI$ so that we are able to eventually get rid of $\breve \CI \cap N(\breve F)$ after $\s$-conjugation by a suitable element. 

The upshot is the following result, first obtained for split groups by G\"ortz, Haines, Kottwitz, and Reuman \cite{GHKR10} and then generalized to quasi-split groups in \cite{GHN}. 

\begin{theorem}\label{P-al}
Let $G$ be a quasi-split group. Let $w \in \tW$ be a $(P, x)$-alcove element. Then any element in $\breve \CI \dot w \breve \CI$ is $\s$-conjugate by $\breve \CI$ to an element in $$(\breve \CI \cap \dot x M(\breve F) \dot x \i) w \s (\breve \CI \cap \dot x M(\breve F) \dot x \i) \subset \dot x M(\breve F) \s(\dot x \i).$$ 
\end{theorem}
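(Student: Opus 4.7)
The plan is to use the iterative $\s$-conjugation argument of G\"ortz--Haines--Kottwitz--Reuman \cite{GHKR10} (adapted to quasi-split $G$ as in \cite{GHN}). Write $\breve \CI_Y := \breve \CI \cap \dot x Y(\breve F) \dot x \i$ for $Y \in \{N, M, N^-\}$ and $\breve \CI_Y^\s := \s(\breve \CI_Y) = \breve \CI \cap \s(\dot x) Y(\breve F) \s(\dot x) \i$. Since $G$ is quasi-split and $\breve \CI$ is attached to $\fka$, the Iwahori factorization $\breve \CI = \breve \CI_{N^-} \cdot \breve \CI_M \cdot \breve \CI_N$ (and its $\s$-twisted analogue) holds uniquely, in either order, with $\breve \CI_M$ normalizing $\breve \CI_N$ and $\breve \CI_{N^-}$. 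The first alcove condition gives $\dot w \s(\dot x) = \dot x \dot m_0$ for some lift $\dot m_0 \in M(\breve F)$ of $x \i w \s(x) \in \tW_M$; since $M$ normalizes $N$, this yields $\dot w \bigl(\s(\dot x) N(\breve F) \s(\dot x) \i\bigr) \dot w \i = \dot x N(\breve F) \dot x \i$. Combining with the second alcove condition $\dot x N(\breve F) \dot x \i \cap \dot w \breve \CI \dot w \i \subseteq \breve \CI$ produces the crucial contraction
$$\dot w \, \breve \CI_N^\s \, \dot w \i \; \subseteq \; \breve \CI_N.$$

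Given $g \in \breve \CI \dot w \breve \CI$, write $g = \iota_1 \dot w \iota_2$ and decompose $\iota_1 = \bar n_1 m_1 n_1$ and $\iota_2 = n_2 m_2 \bar n_2$ using the two Iwahori factorizations. First, $\s$-conjugate by $\bar n_1 \i \in \breve \CI$; this strips the leading $\bar n_1$ and absorbs $\s(\bar n_1)$ into $\breve \CI$ on the right. Next, use the displayed contraction to push $n_2$ through $\dot w$ into $\breve \CI_N$, and amalgamate with $n_1$. The outcome is a normal form
$$g \; \sim \; m \cdot n \cdot \dot w \cdot m' \cdot \bar n,$$
with $m \in \breve \CI_M$, $m' \in \breve \CI_M^\s$, $n \in \breve \CI_N$ and $\bar n \in \breve \CI_{N^-}^\s$, where $\sim$ denotes $\s$-conjugacy by an element of $\breve \CI$.

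To conclude, I would seek $u \in \breve \CI_N$ and $\bar v \in \breve \CI_{N^-}$ whose product $\s$-conjugates the normal form into $\breve \CI_M \cdot \dot w \cdot \breve \CI_M^\s$. Using that $\breve \CI_M$ and $\breve \CI_M^\s$ normalize the unipotent subgroups in question, the cancellation requirement becomes a coupled system of fixed-point equations on $(u, \bar v)$: the $\breve \CI_N$-equation takes the shape $u = n \cdot \dot w \s(u) \dot w \i \cdot (\text{Levi corrections})$, and is strictly contracting on the Moy--Prasad filtration of $\breve \CI_N$ thanks to the displayed inclusion; a parallel (dual) contraction governs $\bar v$. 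Successive approximation in the complete pro-$p$ groups $\breve \CI_N$ and $\breve \CI_{N^-}$ converges to a joint solution. The main obstacle is establishing the dual contraction on the $N^-$-side, because the $(P,x)$-alcove condition is not visibly symmetric in $N$ and $N^-$: one must either derive a dual inclusion $\dot w \i \breve \CI_{N^-} \dot w \subseteq \breve \CI_{N^-}^\s$ by an affine-root bookkeeping argument (tracking which roots of $N_x^-$ change sign between $\fka$ and $\dot w \fka$), or exploit the fact that the cross-commutators $[\breve \CI_N, \breve \CI_{N^-}]$ arising from sliding factors past one another land in strictly deeper Moy--Prasad layers of $\breve \CI_M$, so that the two iterations decouple in the limit. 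Once this quantitative contraction is in hand, the fixed-point solution $(u, \bar v)$ yields $(u \bar v) g \s(u \bar v) \i \in \breve \CI_M \cdot \dot w \cdot \breve \CI_M^\s$, as required.
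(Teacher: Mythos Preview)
Your approach is the one actually used in \cite{GHKR10} and \cite{GHN}, which is all the paper invokes here (there is no independent argument in the paper itself). So the overall architecture---Iwahori factorization relative to $xPx^{-1}$, the contraction $\dot w\,\breve\CI_N^\s\,\dot w^{-1}\subseteq\breve\CI_N$, and a convergent successive-approximation in the pro-$p$ groups $\breve\CI_N$ and $\breve\CI_{N^-}$---is exactly right.

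The one place you hedge is the dual inclusion on the $N^-$-side, and you should not: it is \emph{equivalent} to the stated $(P,x)$-alcove condition, not an extra hypothesis. Read the condition root by root. For each finite root $\alpha$ of $\dot x N\dot x^{-1}$, writing $U_\alpha(\breve F)\cap\breve\CI=U_{\alpha,f(\alpha)}$ and $U_\alpha(\breve F)\cap{}^{\dot w}\breve\CI=U_{\alpha,f_w(\alpha)}$ for the appropriate filtration jumps, the condition says $f_w(\alpha)\ge f(\alpha)$. Since $\fka$ is an open alcove one has $f(-\alpha)=1-f(\alpha)$ and likewise $f_w(-\alpha)=1-f_w(\alpha)$; hence $f_w(\alpha)\ge f(\alpha)$ is the same inequality as $f(-\alpha)\ge f_w(-\alpha)$, which is precisely $U_{-\alpha}(\breve F)\cap\breve\CI\subseteq U_{-\alpha}(\breve F)\cap{}^{\dot w}\breve\CI$. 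Assembling over $-\alpha$ ranging through the roots of $\dot x N^-\dot x^{-1}$ and using $\dot w\,\s(\dot x)N^-(\breve F)\s(\dot x)^{-1}\dot w^{-1}=\dot x N^-(\breve F)\dot x^{-1}$ gives $\breve\CI_{N^-}\subseteq\dot w\,\breve\CI_{N^-}^\s\,\dot w^{-1}$, i.e.\ $\dot w^{-1}\breve\CI_{N^-}\dot w\subseteq\breve\CI_{N^-}^\s$. So your ``affine-root bookkeeping'' option is a two-line computation, and the second option (controlling cross-commutators) is unnecessary.

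With both inclusions in hand, the iteration in \cite{GHKR10} is slightly more streamlined than your coupled fixed-point formulation: one alternately $\s$-conjugates by an element of $\breve\CI_N$ (to absorb the current $N$-factor on the left, at the cost of a $\s(\breve\CI_N)=\breve\CI_N^\s$-factor on the right which $\dot w$ then contracts back into a strictly deeper layer of $\breve\CI_N$) and by an element of $\breve\CI_{N^-}$ (dually). The two processes do interact, but each pass strictly increases the Moy--Prasad depth of the residual unipotent factors, so the sequence of $\s$-conjugating elements converges in $\breve\CI$. Minor point: with the paper's convention $[b]=\{g^{-1}b\,\s(g)\}$, your ``$\s$-conjugate by $\bar n_1^{-1}$'' should read ``$\s$-conjugate by $\bar n_1$''; the intent is clear either way.
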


\subsubsection{Basic $\s$-conjugacy classes}
We say a $\s$-conjugacy class $[b]$ of $G(\breve F)$ is {\it basic} if the associated Newton point is central, i.e., $\<2 \rho, \nu_b\>=0$. Use the notion of $P$-alcove elements, we have a complete description of the non-emptiness pattern of $\breve \CI \dot w \breve \CI \cap [b]$ for basic $\s$-conjugacy class $[b]$. 

In \cite{GHN}, we introduced the ``no Levi obstruction'' (NLO) condition. We say that there is {\it no Levi obstruction} if for any pair $(P, x)$ such that $w$ is a $(P, x)$-alcove element, there exists an element $b_J \in [b] \cap M(\breve F)$ such that $\k_M (x \i w \s(x))=\k_M(b_J)$. By Theorem \ref{P-al}, ``no Levi obstruction'' is a necessary condition for $\breve \CI \dot w \breve \CI \cap [b] \neq \emptyset$. In \cite{GHN}, it is proved that the notion of $P$-alcove elements is compatible with the Deligne-Lusztig reduction, and as a consequence, ``no Levi obstruction'' is also a sufficient condition. The following result is \cite[Theorem A]{GHN}. 

\begin{theorem}
Let $G$ be a quasi-split group. Let $w \in \tW$ and $[b] \in B(G)$ be a basic $\s$-conjugacy class. Then $\breve \CI \dot w \breve \CI \cap [b] \neq \emptyset$ if and only if there is no Levi obstruction. 
\end{theorem}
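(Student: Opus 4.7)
The plan is to establish necessity directly from Theorem \ref{P-al} and to prove sufficiency by induction on $\ell(w)$, using the Deligne-Lusztig reduction of Theorem \ref{DL-red} together with the base-case control afforded by Theorem \ref{min-dl} and Kottwitz's classification of basic $\s$-conjugacy classes.

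For necessity, take any $g \in \breve\CI \dot w \breve\CI \cap [b]$ and any pair $(P = MN, x)$ for which $w$ is a $(P,x)$-alcove element. Theorem \ref{P-al} lets us $\s$-conjugate $g$ by an element of $\breve\CI$ into $(\breve\CI \cap \dot x M(\breve F) \dot x\i)\, \dot w\, \s(\breve\CI \cap \dot x M(\breve F) \dot x\i)$. Conjugating by $\dot x\i$ produces an element $b_J \in [b] \cap M(\breve F)$ whose $\k_M$-image equals $\k_M(x\i w \s(x))$, since the two Iwahori factors lie in the kernel of $\k_M$. This is exactly the NLO condition.

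For sufficiency, assume NLO and induct on $\ell(w)$. In the base case $w$ is of minimal length in its $\s$-twisted conjugacy class, so by Theorem \ref{min-dl} we have $\breve\CI \dot w \breve\CI \subset [\dot w]$, and it suffices to show $[\dot w] = [b]$. Applying NLO to the trivial pair $(G,1)$ already yields $\k(w) = \k([b])$. To pin down the Newton invariant, we apply NLO to a $(P,x)$-alcove pair in which $M$ is as small as possible; such a pair exists for minimal-length $w$ by the standard-quadruple description of \S \ref{st-qua} combined with Theorem \ref{a-red}, which matches $w$ (up to $\approx_\s$) with a straight element sitting inside the Levi singled out by its Newton point. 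If $[\dot w]$ had non-central Newton point, this minimal $M$ would be proper and NLO would force $b_J \in [b] \cap M(\breve F)$ to be basic in $M$ with non-central image in $G$, contradicting the basic-ness of $[b]$. Hence $\bar\nu_{w,\s}$ is central; together with $\k(w) = \k([b])$, Kottwitz's classification gives $[\dot w] = [b]$.

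For the inductive step, suppose $w$ is not of minimal length. By Theorem \ref{a-min} we may replace $w$ by some $w' \approx_\s w$ admitting $s \in \tilde\BS$ with $sw'\s(s) < w'$; then Theorem \ref{DL-red} reduces the problem to nonemptiness of $\breve\CI \dot s \dot{w'} \breve\CI \cap [b]$ or $\breve\CI \dot s \dot{w'} \s(\dot s) \breve\CI \cap [b]$, both of strictly smaller length. It therefore suffices to show that if $w$ satisfies NLO, then so does at least one of $sw'$ or $sw'\s(s)$. This is the main obstacle of the proof. One must verify that $(P,x)$-alcove structure and NLO are both preserved under $\approx_\s$, and that any pair $(P,x)$ witnessing a failure of NLO for both $sw'$ and $sw'\s(s)$ can be lifted to a pair witnessing failure of NLO for $w$ itself. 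This transport argument is carried out by tracking the Iwahori factorization $\breve\CI = (\breve\CI \cap N(\breve F))(\breve\CI \cap M(\breve F))(\breve\CI \cap N^-(\breve F))$ used in Theorem \ref{P-al} through the length-reducing reflection, analyzing case-by-case how $s$ interacts with the roots of $N$ relative to the chosen $x$; the net outcome is a controlled correspondence between $(P,x)$-alcove pairs for $w$ and those for $sw'$ and $sw'\s(s)$ which preserves the Kottwitz data on $M$, closing the induction.
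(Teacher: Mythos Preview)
Your approach---necessity via Theorem~\ref{P-al}, sufficiency via induction on $\ell(w)$ using Deligne--Lusztig reduction together with compatibility of the $P$-alcove notion under that reduction---is exactly the strategy the paper attributes to \cite{GHN}, and you correctly flag this compatibility as the crux. Two points need tightening.

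In the base case, the claim that a minimal-length $w$ with non-central Newton point is a $(P,x)$-alcove element for some proper $P$ is correct but does not follow from Theorem~\ref{a-red} and the standard-quadruple description alone: those give the conjugation $x^{-1}w\s(x)\in\tW_M$, but the second defining condition $\dot x N(\breve F)\dot x^{-1}\cap \dot w\,\breve\CI\,\dot w^{-1}\subset\breve\CI$ is a separate geometric input, supplied in \cite{GHN} (building on \cite{GHKR10}).

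In the inductive step, your contrapositive presumes that a \emph{single} pair $(P,x)$ simultaneously witnesses failure of NLO for both $sw'$ and $sw'\s(s)$; nothing guarantees this, and the element $sw'$ (which is not $\s$-conjugate to $w'$) need not share any $P$-alcove structure with $w'$. The actual compatibility in \cite{GHN} relates the $P$-alcove pairs for $w'$ to those for $sw'\s(s)$, preserving the relevant $\k_M$-data, so that the NLO condition transfers directly between $w'$ and $sw'\s(s)$; the induction then closes through $sw'\s(s)$ alone, and $sw'$ plays no role in the argument.
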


Let us make the ``no Levi obstruction'' condition more explicit.  

A {\it critical strip} of the apartment $V$ by definition is the subset $\{v; -1<\<\a, v\><0\}$ for a given positive finite root in the reduced root datum $\sR$. We remove all the critical strips from $V$ and call each connected component of the remaining subset of $V$ {\it a Shrunken Weyl chamber}. 

Let $w \in \tW$. Let $x$ by the unique element in $W_0$ such that $w (\fka) \subset x \CC^-$. Let $\eta_\s(w)$ be the unique element in $W_0$ such that $x \i w \s(x) \s(p) \in \eta_\s(w) \CC$ for any sufficiently regular element $p \in \CC^-$. In the quasi-split case, $\s$ preserves the antidominant chamber $\CC^-$. The element $w$ can be written as $x t^\l y$ with $x \in W_0$ and $t^\l y(\fka) \subset \CC^-$, then $x \i w \s(x) \s(\CC^-)=t^\l y \s(x) \CC^-$. Thus $\eta_\s(w)=y \s(x)$. In general, we have $\s=\Ad(\g) \circ \varsigma$ for some $\g \in \Omega$ and $x \i w \s(x) \s(p)=x \i w \g \varsigma(x) \g \i \g(\varsigma(p))=x \i w \g \varsigma(x) \varsigma(p).$ Therefore \begin{equation}\label{eta-eta} \eta_\s(w)=\eta_{\varsigma}(w \g).\end{equation} We have the following explicit description of non-emptiness pattern, first conjectured by G\"ortz, Haines, Kottwitz and Reuman in \cite[Conjecture 1.1.1]{GHKR10} for split groups. The quasi-split case is proved in \cite[Theorem B]{GHN} and the general case follows from \S\ref{red-qs}, the equality (\ref{eta-eta}) and the result for the quasi-split case. 

\begin{theorem}\label{basic-emp}
Assume that $G$ is simple. Let $w \in \tW$such that $w \s(\fka)$ is contained in a Shrunken Weyl chamber. Then for any basic $\s$-conjugacy class $[b]$, $\breve \CI \dot w \breve \CI \cap [b] \neq \emptyset$ if and only if $\k(w)=\k(b)$ and that $\eta_\s(w)$ is not contained in the relative Weyl group $W_M$ for any proper standard $\varsigma$-stable Levi subgroup $M$ of quasi-split inner form of $G$.  
\end{theorem}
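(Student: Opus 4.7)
My plan is to prove this in two stages: first, reduce to the quasi-split adjoint case via \S\ref{red-qs}; second, in the quasi-split case, identify the stated criterion with the ``no Levi obstruction'' condition from the $P$-alcove element machinery of Theorem~\ref{P-al}. For the reduction step, I would write $\s_G=\Ad(\g)\circ\varsigma$ with $\g\in\Omega$, where $\varsigma$ is the Frobenius of the quasi-split inner form $H$; right multiplication by $\g$ bijects $\breve\CI\dot w\breve\CI\cap[b]_G$ with $\breve\CI(\dot w\dot\g)\breve\CI\cap[b\g]_H$, the identity \eqref{eta-eta} transports the Levi condition, and basicness together with the Kottwitz match are preserved because $\g$ has length zero.

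The crux of the argument in the quasi-split case is the following dictionary, which I would establish directly: under the Shrunken Weyl chamber hypothesis, the condition that $\eta_\s(w)\in W_M$ for some proper standard $\varsigma$-stable Levi $M$ is equivalent to $w$ being a $(P,x)$-alcove element for $P=MN$, where $x\in W_0$ is determined by $w\s(\fka)\subset x\CC^-$. Writing $w=xt^\l y$ with $t^\l y(\fka)\subset\CC^-$, one has $\eta_\s(w)=y\s(x)$ and $x^{-1}w\s(x)=t^\l y\s(x)\in\tW_M$, so the first defining condition of a $(P,x)$-alcove element translates exactly into $\eta_\s(w)\in W_M$. The second defining condition $\dot x N(\breve F)\dot x^{-1}\cap\dot w\breve\CI\dot w^{-1}\subset\breve\CI$ is automatic from the shrunken hypothesis, because the affine roots of $\dot x N(\breve F)\dot x^{-1}$ are all strictly positive on $\dot w\fka$ precisely when $w\s(\fka)$ avoids every critical strip.

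With this dictionary, the ``only if'' direction follows from Theorem~\ref{P-al}: if $\eta_\s(w)\in W_M$ for a proper $M$, then every element of $\breve\CI\dot w\breve\CI$ is $\s$-conjugate to an element of $\dot x M(\breve F)\s(\dot x)^{-1}$, and comparing the $M$-Kottwitz invariant of $x^{-1}w\s(x)$ with the unique invariant available to $M$-representatives of the basic class $[b]$ produces the obstruction (the basic class in $G$ has a single well-determined $M$-Kottwitz invariant at each $M$-representative, and generically this fails to match $\k_M(x^{-1}w\s(x))$). The main obstacle is the ``if'' direction: to show that the absence of Levi obstructions is sufficient. My plan is an induction on $\ell(w)$ using the Deligne--Lusztig reduction (Theorem~\ref{DL-red}); the delicate step is verifying that both the shrunken Weyl chamber and the no-Levi-obstruction hypotheses propagate through each reduction step $w\rightsquigarrow sw\s(s)$ (possibly after reindexing by a different pair $(P',x')$), so that the induction ultimately terminates either at a minimal length element of $[b]$, where Theorem~\ref{min-dl} combined with $\k(w)=\k(b)$ gives non-emptiness immediately, or inside a proper Levi, where one induces on the semisimple rank after applying Theorem~\ref{P-al} in reverse. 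This compatibility between $(P,x)$-alcove structures and Deligne--Lusztig reduction is the technical heart and is carried out in \cite{GHN}.
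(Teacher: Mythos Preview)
Your reduction to the quasi-split adjoint case via \S\ref{red-qs} and equation~\eqref{eta-eta} is correct and is exactly what the paper does; the paper then simply cites \cite[Theorem~B]{GHN} for the quasi-split case, whereas you attempt to sketch that proof. Your identification of the $(P,x)$-alcove machinery and the Deligne--Lusztig reduction as the two pillars is right.

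There is, however, a genuine gap in your ``only if'' direction. You correctly show that $\eta_\s(w)\in W_M$ forces the first $(P,x)$-alcove condition $x^{-1}w\s(x)\in\tW_M$, and the second condition does indeed follow from the shrunken hypothesis. But being a $(P,x)$-alcove element is \emph{not} the same as having a Levi obstruction: the NLO condition can still hold. Your parenthetical justification is where the argument breaks down. The claim that ``the basic class in $G$ has a single well-determined $M$-Kottwitz invariant at each $M$-representative'' is false---a basic $[b]$ typically meets $M(\breve F)$ in several $\s$-conjugacy classes of $M$ with distinct $\k_M$-values---and ``generically this fails to match'' is an assertion, not an argument. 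What actually needs to be shown (and is shown in \cite{GHN}) is that for $w$ in the shrunken chamber, the specific $\k_M(x^{-1}w\s(x))$ you produce is \emph{never} realized by an $M$-representative of the basic class with $\k_G(b)=\k_G(w)$; this uses the shrunken hypothesis in an essential way beyond the second $(P,x)$-alcove condition.

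A smaller point on the ``if'' direction: you should not try to propagate the shrunken hypothesis through the reduction (it does not survive in general). The correct logic is to use the shrunken hypothesis once, at the outset, to translate the $\eta_\s(w)$-condition into the NLO condition; then one proves NLO $\Rightarrow$ nonempty for \emph{arbitrary} $w$ (this is \cite[Theorem~A]{GHN}) by showing NLO is preserved under Deligne--Lusztig reduction. Your sketch conflates the roles of Theorems~A and~B of \cite{GHN}.
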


\subsubsection{Translation elements}
We have similar results for some nonbasic $\s$-conjugacy classes as well. 

Let $G$ be a residually split group and $\mu$ be a dominant coweight. For any $x \in W_0$, we denote by $\CC^\sharp_x$ the Shrunken Weyl chamber inside $x(\CC^-)$. We set $$\CC^{\doublesharp}_{\mu, x}=t^{x(\mu)} (\CC^\sharp_x)=\CC^\sharp_x-x(\mu) \subset x(\CC^-)$$ and call it a {\it very Shrunken Weyl chamber} with respect to $\mu$. Then 

\begin{theorem}\label{nonbasic-emp}
Assume that $G$ is simple. Let $G$ be a residually split group and $\mu$ be a dominant coweight. Let $w \in \tW$ such that $w(\fka)$ is contained in a very Shrunken Weyl chamber with respect to $\mu$. If $\breve \CI \dot w \breve \CI \cap [\e^\mu] \neq \emptyset$, then $k(w)=k(\e^\mu)$ and $\eta_\s(w)$ is not contained in the relative Weyl group $W_M$ for any proper standard Levi subgroup $M$ of $G$.  
\end{theorem}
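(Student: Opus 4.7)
The Kottwitz invariant condition $\kappa(w)=\kappa(\e^\mu)$ is immediate: the Kottwitz map is constant on $\breve\CI\dot w\breve\CI$ with value $\kappa(w)$, and constant on $[\e^\mu]$ with value $\kappa(\e^\mu)$, so a nonempty intersection forces equality.

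For the Levi condition, I argue by contrapositive. Suppose $\eta_\sigma(w)\in W_M$ for some proper standard Levi $M\subset G$, with associated standard parabolic $P=MN$. The first goal is to leverage the very Shrunken hypothesis to reduce to the Shrunken case already handled in \cite{GHN}. Write $w'=t^{-x(\mu)}w$, so that $w(\fka)\subset\CC^{\doublesharp}_{\mu,x}=\CC^\sharp_x-x(\mu)$ translates into $w'(\fka)\subset\CC^\sharp_x$; hence $w'$ lies in the ordinary Shrunken Weyl chamber. Writing $w'=xt^{\lambda'}y'$ with $t^{\lambda'}y'(\fka)\subset\CC^-$, one has $w=xt^{\mu+\lambda'}y'$ and $\eta_\sigma(w)=y'\sigma(x)=\eta_\sigma(w')$, so $\eta_\sigma(w')\in W_M$ as well. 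By the Shrunken-case analysis of \cite{GHN}, $w'$ is a $(P,x)$-alcove element.

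Next I promote the $(P,x)$-alcove property from $w'$ to $w$. The first alcove condition $x^{-1}w\sigma(x)\in\tW_M$ is equivalent to $\eta_\sigma(w)\in W_M$ and hence holds. For the second alcove condition $\dot xN(\breve F)\dot x^{-1}\cap\dot w\breve\CI\dot w^{-1}\subset\breve\CI$, I compare the affine root-subgroup content of $\dot w\breve\CI\dot w^{-1}$ and $\dot w'\breve\CI\dot w'^{-1}$: since $w=t^{x(\mu)}w'$, they differ by conjugation by $t^{x(\mu)}$, which shifts the integer parts of affine roots. For $\beta\in x(R^+\setminus R_M^+)$, the shift is by $-\langle\beta,x(\mu)\rangle=-\langle x^{-1}\beta,\mu\rangle$, and because $\mu$ is $G$-dominant this shift preserves (or strengthens) the required inequalities controlling which $U_{\tilde\beta}$ enter the Iwahori of $w(\fka)$. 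Thus $w$ is also a $(P,x)$-alcove element. Applying Theorem~\ref{P-al}, any element of $\breve\CI\dot w\breve\CI\cap[\e^\mu]$ is $\sigma$-conjugate to one in $(\breve\CI\cap\dot xM\dot x^{-1})\dot w\sigma(\breve\CI\cap\dot xM\dot x^{-1})$; conjugating by $\dot x^{-1}$ and using that $G$ is residually split (so $\sigma$ acts trivially on $\tW$), we obtain an element $b\in M(\breve F)$ in the Iwahori double coset $\breve\CI_M\cdot w_M\cdot\breve\CI_M$, where $w_M:=x^{-1}w\sigma(x)=t^{x^{-1}(\mu+\lambda')}(y'\sigma(x))\in\tW_M$, and $b$ is $\sigma$-conjugate in $G(\breve F)$ to $\e^\mu$.

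The contradiction comes from a Newton-point comparison inside $M$. Since $b\in[\e^\mu]_G$, the $G$-dominant representative of its $M$-Newton point $\nu_{b,M}$ equals $\mu$. On the other hand, applying the Mazur-type inequality of \cite{RR} within $M$ to the double coset $\breve\CI_M\dot w_M\breve\CI_M$ bounds $\nu_{b,M}$ in the $M$-dominance order by the natural ``translation average'' of $w_M$. One then checks that the $G$-dominant representative of this upper bound is strictly less than $\mu$: the Shrunken condition on $w'$ (i.e., its alcove being away from all walls of $x(\CC^-)$) forces $\lambda'$ to have strictly positive pairing with at least one coroot direction outside $R_M^\vee$, so the contribution of $\lambda'$ cannot be absorbed into a $W_M$-translate of $\mu$. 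This yields $\breve\CI\dot w\breve\CI\cap[\e^\mu]=\emptyset$, as desired. The main obstacle is this last numerical step: the bookkeeping of the $\mu$-shift on top of the Shrunken geometry requires careful tracking of how $x^{-1}(\mu+\lambda')$ projects into the $W_M$-orbit of $x^{-1}(\mu)$ and showing that $\lambda'$ always perturbs this strictly away from any $W_0$-translate of $\mu$ that is $M$-dominant.
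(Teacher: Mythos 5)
Your overall scaffolding is the same as the paper's intended route (the paper only says the proof is ``similar to the proof of the `only if' part of Theorem \ref{basic-emp}'' and skips details): argue by contrapositive, show that $\eta_\s(w)\in W_M$ forces $w$ to be a $(P,x)$-alcove element, apply Theorem \ref{P-al} to push any element of $\breve \CI \dot w \breve \CI \cap [\e^\mu]$ into $\dot x M(\breve F)\s(\dot x)^{-1}$, and then derive a Levi obstruction. Two remarks on the first half: your ``promotion'' from $w'=t^{-x(\mu)}w$ back to $w$ is correct in substance (for $\beta$ a root of $\dot x N\dot x^{-1}$ one has $\langle x^{-1}\beta,\mu\rangle\ge 0$, and the alcove inequality only improves), but it is also unnecessary: since $\mu$ is dominant, $\CC^{\doublesharp}_{\mu,x}=\CC^\sharp_x-x(\mu)\subset\CC^\sharp_x$, so $w$ itself already lies in the Shrunken chamber and the lemma of \cite{GHN} applies to $w$ directly.

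The genuine gap is the final ``numerical'' step, which is both misstated and left unproven (as you yourself flag). Writing $w=xt^\l y$ with $t^\l y(\fka)\subset\CC^-$, the element produced by Theorem \ref{P-al} lies in $\breve\CI_M \dot w_M \breve\CI_M$ with $w_M=t^\l\eta_\s(w)\in\tW_M$, so Mazur's inequality in $M$ bounds its $M$-Newton point by the $M$-dominant representative of $\l$, whose $G$-dominant representative is $\l$ itself --- and $\l$ is \emph{large} (the very Shrunken condition says exactly that $\l-\mu$ is strictly dominant), not ``strictly less than $\mu$'' as you assert; so the contradiction cannot take the form you describe. The obstruction lives instead in the central directions of $M$: since $b_M\in\breve\CI_M\dot w_M\breve\CI_M$ one has $\k_M(b_M)=\k_M(w_M)=[\l]$, while $b_M\in[\e^\mu]_G$ forces its $M$-Newton point to be a $W_0$-conjugate $u\mu$ of $\mu$, and the compatibility of $\k_M$ with Newton points (equivalently, projecting the relation $u\mu\le_M \l$ to $X_*(Z(M))_\BQ$, which kills $R_M^\vee$) yields $\langle\o_\a,\l\rangle=\langle\o_\a,u\mu\rangle\le\langle\o_\a,\mu\rangle$ for every simple root $\a$ outside $M$. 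This contradicts $\langle\o_\a,\l\rangle=\langle\o_\a,\mu\rangle+\langle\o_\a,\l-\mu\rangle>\langle\o_\a,\mu\rangle$, where the strict inequality uses that $\l-\mu$ is strictly dominant and that $G$ is simple (so each fundamental weight is a strictly positive combination of simple roots). Your closing sentence gestures at something like this, but as written the argument stops exactly where the very Shrunken hypothesis has to do its work, so the proof is incomplete until this comparison is carried out.
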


Here the proof is similar to the proof of the ``only if'' part of Theorem \ref{basic-emp}. We skip the details. We will also see in the next subsection that the converse is true and we have an explicit dimension formula. 

\subsection{Dimension formula}\label{dim}

In this subsection, we discuss several cases that an explicit dimension formula is known. 

\subsubsection{Defect} Let us first recall the notion of {\it defect} introduced by Kottwitz in \cite{Ko06}. Let $b \in G(\breve F)$. By definition, the defect $\text{def}(b)$ of $b$ is the difference between the $F$-rank of $G$ and the $F$-rank of $J_b$, where $J_b=\{g \in G(\breve F); g \i b \s(g)=b\}$ is the $\s$-centralizer of $b$. It can be reformulated in another way. 

Recall that $V$ is the apartment associated to the maximal $\breve F$-split torus $S$ defined over $F$. Then $\dim V$ equals the $F$-rank of $G$. Let $w \in \CO_b$ and $V_w=\{v \in V; w \s(v)=v+\nu_w\}$. By \cite[\S 1.9]{Ko06}, $\dim V_w$ equals the $F$-rank of $J_b$. Thus $$\text{def}(b)=\dim V-\dim V_w.$$ Note that the right hand side only depend on the Iwahori-Weyl group $\tW$, the group automorphism $\s: \tW \to \tW$ and a straight $\s$-twisted conjugacy class $\CO_b$. This formula will be used in our reduction from ramified groups to unramified groups. 

\subsubsection{Special maximal parahoric} We start with the special maximal parahoric subgroup case. 

\begin{theorem}\label{kmuk}
Let $\breve \CK$ be a special maximal parahoric subgroup of $G(\breve F)$.  Let $\mu$ be a dominant coweight and $[b] \in B(G)$. If $\nu_b \le \mu$, then \begin{gather*} \dim_{\breve \CK} (\breve \CK \e^\mu \breve \CK \cap [b])=\<\rho, \mu+\nu_b\>-\frac{1}{2}\text{def}_G(b), \\ \dim X_{K, \mu}(b)=\<\rho, \mu-\nu_b\>-\frac{1}{2}\text{def}_G(b). \end{gather*}
\end{theorem}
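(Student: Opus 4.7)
The plan is to prove the formula for $\dim_{\breve\CK}(\breve\CK\e^\mu\breve\CK\cap[b])$; the formula for $\dim X_{K,\mu}(b)$ then follows immediately from Theorem \ref{ii-adlv}, which yields
$$\dim X_{K,\mu}(b) = \dim_{\breve\CK}(\breve\CK\e^\mu\breve\CK\cap[b]) - \<2\rho,\nu_b\>.$$
By Theorem \ref{deg2},
$$\dim_{\breve\CK}(\breve\CK\e^\mu\breve\CK\cap[b]) = \deg F_{\e^\mu,K,[b]} - \dim_{\breve\CI}\breve\CK,$$
so the dimension is encoded entirely in a class polynomial of the associated affine Hecke algebra.

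The crucial observation, noted already in \S\ref{dim-deg}, is that the class polynomial $F_{\e^\mu,K,[b]}$ is determined by the combinatorial datum $(\tW,\s,K,\mu,\CO_{[b]})$, independently of whether $G$ is split, unramified, or ramified. Likewise, the defect $\text{def}_G(b) = \dim V - \dim V_w$ (for any $w \in \CO_b$), and the quantities $\<\rho,\mu\pm\nu_b\>$ and $\dim_{\breve\CI}\breve\CK$, depend only on this same combinatorial package. Hence the target identity is purely combinatorial in $(\tW,\s,K,\mu,\CO_{[b]})$. I would then invoke the reduction principle from \S\ref{BG}: modulo the finite central torsion $X_*(T)_{\G,\text{tor}}$, which plays no role in $\s$-conjugation, the pair $(\tW,\s)$ arises from an unramified group $G'$; by choosing $G'$ together with an appropriate vertex of its base alcove, one arranges that $K$ specifies a hyperspecial parahoric $\breve\CK' \subset G'(\breve F)$. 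This reduces the claim to the case where $G$ is unramified and $\breve\CK$ is hyperspecial.

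In the unramified hyperspecial case the dimension formula is known: the upper bound is due to Rapoport-Richartz, and the matching lower bound was proved by Viehmann in the split case and by Hamacher, Hamacher-Viehmann, and Zhu for general unramified $G$. Transporting this result back along the combinatorial reduction --- and then deducing the formula for $\dim_{\breve\CK}(\breve\CK\e^\mu\breve\CK\cap[b])$ via Theorem \ref{ii-adlv} --- yields both desired identities. The main obstacle, and the point where the present argument goes beyond the literature, is the verification in the previous paragraph: one must check that a special (not necessarily hyperspecial) maximal parahoric $\breve\CK$ in the ramified group $G$ always corresponds, under the combinatorial reduction, to a hyperspecial parahoric in the unramified group $G'$. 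This rests on the characterization of a special vertex as one for which $W_K$ surjects onto the relative finite Weyl group, a condition preserved by the combinatorial reduction; once this matching is confirmed, the class polynomial identity and the defect formula transport cleanly from the unramified hyperspecial setting to the general ramified special setting.
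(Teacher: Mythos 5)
Your proposal is correct and follows essentially the same route as the paper: Theorem \ref{deg2} (dimension $=$ degree of class polynomials) plus Theorem \ref{ii-adlv}, the observation that $F_{\e^\mu,K,[b]}$, $\mathrm{def}_G(b)=\dim V-\dim V_w$ and $\<\rho,\mu\pm\nu_b\>$ depend only on $(\tW,\s,K,\mu,\CO_{[b]})$, the passage from $(\tW/X_*(T)_{\G,\mathrm{tor}},\s)$ to a suitable unramified group, and the known formulas of Viehmann, Zhu and Hamacher in the unramified hyperspecial case. The matching you flag as the remaining verification does go through: being special is the combinatorial condition $W_K\cong W_0$ over $\breve F$, and since the comparison group is split over $\breve F$, a $\s$-stable vertex that is special over $\breve F$ is automatically hyperspecial, which is exactly the point implicit in the paper's reduction.
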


The dimension formula of $X_{K, \mu}(b)$ is conjectured by Rapoport in \cite[Conjecture 5.10]{R:guide}. For split groups, the conjectural formula is obtained by G\"ortz, Haines, Kottwitz and Reuman \cite{GHKR06} and Viehmann \cite{Vi06}. The conjectural formula for quasi-split unramified groups is obtained independently by Zhu \cite{Zhx} and Hamacher \cite{Ha1}. The general case can be obtained by combining Theorem \ref{deg2} (which reduce the study of dimension formula to the study of degree of class polynomials), the relation between Iwahori-Weyl groups for ramified and unramified groups, and the results of Zhu and Hamacher in the unramified case. 

\subsubsection{Virtual dimension} Now we consider the case where $\breve \CK=\breve \CI$ is the Iwahori subgroup and we give an upper bound of the dimension. 

\begin{theorem}\label{vir}
For any $w \in \tW$ and $[b] \in B(G)$, define the {\it virtual dimension} $$d_w([b])=\frac{1}{2} (\ell(w)+\ell(\eta_\s(w))-\text{def}_G(b))+\<\rho, \nu_b\>.$$ Then $$\dim_{\breve \CI} (\breve \CI \dot w \breve \CI \cap [b]) \le d_w([b]).$$
\end{theorem}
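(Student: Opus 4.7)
The plan is to prove the bound by induction on $\ell(w)$, using the Deligne--Lusztig reduction (Theorem \ref{DL-red}) combined with Theorem \ref{a-min}(1) to arrange a length-decreasing reduction whenever $w$ is not of minimal length in its $\s$-twisted conjugacy class $\CO$. At each reduction step one tracks the parallel change in $\dim_{\breve \CI}(\breve \CI \dot w \breve \CI \cap [b])$ and in $d_w([b])$; all subtlety is concentrated in Coxeter-theoretic lemmas controlling $\ell(\eta_\s(\cdot))$.

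For the base case, suppose $w$ is of minimal length in $\CO$. By Theorem \ref{min-dl}, $\breve \CI \dot w \breve \CI \subset [\dot w]$, so $\breve \CI \dot w \breve \CI \cap [b]$ is either empty (nothing to prove) or equals the full cell, with $[b] = [\dot w]$ and dimension $\ell(w)$. The inequality $\ell(w) \le d_w([b])$ rearranges to $\ell(w) - \langle 2\rho, \nu_b\rangle \le \ell(\eta_\s(w)) - \mathrm{def}_G(b)$. Using Theorem \ref{a-red} (together with Theorem \ref{a-min}(2), which makes $\CO_{\min}$ a single $\sim_\s$-class and permits passage to a convenient representative), we reduce to $w = ux$ with $x$ $\s$-straight in ${}^K \tW$, $\mathrm{Ad}(x)\s(K) = K$, and $u \in W_K$. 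Then $\ell(w) - \langle 2\rho, \nu_b\rangle = \ell(u)$ (since $\ell(x) = \langle 2\rho, \nu_b\rangle$) and $\mathrm{def}_G(b) = \dim V - \dim V_x$ by the description in \S\ref{dim}. The base case reduces to the explicit Coxeter inequality $\ell(u) \le \ell(\eta_\s(ux)) - \mathrm{def}_G(b)$, verified by computing $\eta_\s(ux)$ from its defining condition in terms of $u$ and the dominant conjugate of $\nu_{x,\s}$.

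For the inductive step, assume $w$ is not of minimal length in $\CO$. After replacing $w$ by a $\approx_\s$-equivalent element (which leaves both sides of the target inequality invariant; this itself requires a short Coxeter-theoretic verification for $\ell(\eta_\s(\cdot))$), we may assume by Theorem \ref{a-min} that there exists $s \in \tilde \BS$ with $sw < w$. Applying Theorem \ref{DL-red} yields two subcases. If $sw\s(s) > sw$, then $\ell(sw\s(s)) = \ell(w)$ and the dimension is preserved under $w \mapsto sw\s(s)$, so the inductive step reduces to $d_{sw\s(s)}([b]) \le d_w([b])$, i.e.\ to $\ell(\eta_\s(sw\s(s))) \le \ell(\eta_\s(w))$. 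If $sw\s(s) < sw$, then the dimension drops by exactly one, the image $\breve \CI \dot s \dot w \breve \CI \s(\dot s) \breve \CI$ splits as $\breve \CI \dot{sw} \breve \CI \sqcup \breve \CI \dot{sw\s(s)} \breve \CI$, and by the inductive hypothesis one must bound the contribution of each piece: the bound $d_{sw\s(s)}([b]) + 1 \le d_w([b])$ reduces again to $\ell(\eta_\s(sw\s(s))) \le \ell(\eta_\s(w))$, while the bound $\dim_{\breve \CI}(\breve \CI \dot{sw} \breve \CI \cap [b]) + 1 \le d_w([b])$ requires either $d_{sw}([b]) + 1 \le d_w([b])$ (equivalently $\ell(\eta_\s(sw)) \le \ell(\eta_\s(w)) - 1$) or a direct emptiness verification of $\breve \CI \dot{sw} \breve \CI \cap [b]$.

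The principal obstacle is this last dichotomy: the combinatorial inequality $\ell(\eta_\s(sw)) \le \ell(\eta_\s(w)) - 1$ can genuinely fail, in which case the proof proceeds by showing directly that $\breve \CI \dot{sw} \breve \CI \cap [b]$ is empty. This vanishing is detected by a Newton-point or Kottwitz-invariant obstruction relating $sw$ to $[b]$. The remaining cases of the combinatorial inequality are handled, in the quasi-split case, by writing $w = x t^\lambda y$ with $t^\lambda y(\fka) \subset \CC^-$ so that $\eta_\s(w) = y\s(x)$, and performing a case analysis on whether left multiplication by $s$ acts as a finite reflection, an affine reflection, or is absorbed into $\Omega$, with the corresponding updates to $x$, $\lambda$, and $y$. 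The general case is reduced to the quasi-split one via \eqref{eta-eta} and \S\ref{red-qs}.
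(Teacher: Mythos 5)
Your strategy --- induction on $\ell(w)$ through the Deligne--Lusztig reduction of Theorem \ref{DL-red} --- is not the route the paper takes, and as written it has a genuine gap at exactly the point you flag. To close the induction you need $d_{sw\s(s)}([b])\le d_w([b])$ in the length-preserving case and $d_{sw}([b])+1\le d_w([b])$, $d_{sw\s(s)}([b])+1\le d_w([b])$ in the length-dropping case, i.e.\ precise control of $\ell(\eta_\s(\cdot))$ under one-step conjugation. These inequalities can fail, and your fallback --- that whenever they fail the piece $\breve\CI \dot s\dot w\breve\CI\cap[b]$ must be empty for a Newton-point or Kottwitz-invariant reason --- is false: since $s\in W_a$ one always has $\k(sw)=\k(w)$, so there is never a Kottwitz obstruction, and no Newton-point obstruction (such as $\ell(sw)<\<2\rho,\nu_b\>$) is forced either; the intersection can perfectly well be nonempty when the combinatorial inequality fails. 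The same problem infects two auxiliary claims: the invariance of $d_w([b])$ under $\approx_\s$ (i.e.\ of $\ell(\eta_\s(\cdot))$ under length-preserving conjugation) is itself a nontrivial assertion, not a ``short verification''; and your base case, the inequality $\ell(u)\le\ell(\eta_\s(ux))-\mathrm{def}_G(b)$ for a minimal length element $ux$ with $x$ $\s$-straight, is essentially the theorem itself for minimal length elements and cannot be dismissed as ``computing $\eta_\s(ux)$''. A reduction-tree proof of the virtual dimension bound can in fact be made to work, but only after one proves that the virtual dimension decreases with the correct defect along every one-step reduction, which is a substantial piece of combinatorics that your proposal does not supply.

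The paper's proof avoids all of this and is much shorter: after reducing to the quasi-split adjoint case as in \S\ref{red-qs} (using $\eta_{\s_G}(w)=\eta_{\s_H}(w\g)$ so that $d_w([b]_G)=d_{w\g}([b\g]_H)$), it translates the left-hand side into affine Deligne--Lusztig varieties via Theorem \ref{ii-adlv}, invokes the inequality $\dim X_{w}(b)\le \dim X_\mu(b)+\tfrac12\bigl(\ell(w)+\ell(\eta_\s(w))\bigr)-\<\rho,\mu\>$ for $w\in W_0t^\mu W_0$ from \cite[Theorem 10.3]{He99} --- proved there by comparing the Iwahori level with the special maximal parahoric level through the projection $G(\breve F)/\breve\CI\to G(\breve F)/\breve\CK$ and estimating the fibres, not by Deligne--Lusztig reduction --- and then substitutes the dimension formula of Theorem \ref{kmuk} for $X_{K,\mu}(b)$. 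If you wish to keep your inductive framework, the missing ingredient is precisely the monotonicity of $d_{(\cdot)}([b])$ along reduction steps; otherwise you should argue via the comparison with the $\breve\CK$-level as the paper does.
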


\begin{proof}
As explained in \S \ref{red-qs}, it suffices to consider the case where $G$ is a inner form of a quasi-split, adjoint group $H$. We have $\s_G=\Ad(\g) \circ \s_H \in \text{Aut}(\tW)$ for some $\g \in \Omega$. We have $\dim_{\breve \CI} (\breve \CI \dot w \breve \CI \cap [b]_G)=\dim_{\breve \CI} (\breve \CI \dot w \g \breve \CI \cap [b \g]_H)$. We also have $d_w([b]_G)=d_{w \g}([b \g]_H)$ since $\eta_{\s_G}(w)=\eta_{\s_H}(w \g)$.  

Suppose that $w \g \in W_0 t^\mu W_0$, where $\mu$ is a dominant coweight. By Theorem \ref{ii-adlv}, \cite[Theorem 10.3]{He99}, and Theorem \ref{kmuk}, we have
\begin{align*}
& \dim_{\breve \CI} (\breve \CI \dot w \g \breve \CI \cap [b \g]_H)=\dim X^H_{w \g}(b \g)+\<2 \rho, \nu_{b \g}^H\> \\ & \le \dim X^H_\mu(b \g)+\frac{1}{2}(\ell(w \g)+\ell(\eta_{\s_H}(w)))-\<\rho, \mu\>+\<2\rho, \nu_{b \g}^H\> \\ &=\<\rho, \mu-\nu_{b \g}^H\>-\frac{1}{2}\text{def}_H(b \g)+\frac{1}{2}(\ell(w \g)+\ell(\eta_{\s_H}(w)))-\<\rho, \mu\>+\<2\rho, \nu_{b \g}^H\> \\ &=d_{w \g}([b \g]_H).\qedhere
\end{align*}
\end{proof}

In several cases, we are able to get an equality $\dim X_w(b)=d_w([b])$. 

\subsubsection{Basic $\s$-conjugacy classes} Let $[b]$ be a basic $\s$-conjugacy class. Recall that in Theorem \ref{basic-emp}, we give a criterion on $\breve \CI \dot w \breve \CI \cap [b] \neq \emptyset$ for $w \in \tW$ such that $w \s(\fka)$ is contained in the Shrunken Weyl chamber. Combining Theorem \ref{ii-adlv}, \cite[Theorem 12.1]{He99} and Theorem \ref{vir}, we have

\begin{theorem}\label{bb-dim}
Assume that $G$ is simple. Let $w \in \tW$ such that $w \s(\fka)$ is contained in the Shrunken Weyl chamber and that $(\eta_\s(w))$ is not contained in the relative Weyl group $W_M$ for any proper standard $\varsigma$-stable Levi subgroup $M$ of $G$. Then for any basic $\s$-conjugacy class of $G(\breve F)$ with $\k([b])=\k(w)$, we have $$\dim_{\breve \CI} (\breve \CI \dot w \breve \CI \cap [b])=d_w([b]).$$
\end{theorem}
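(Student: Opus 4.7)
The plan is to match the upper bound already provided by Theorem~\ref{vir} with a corresponding lower bound obtained by reducing to the residually split case, where \cite[Theorem~12.1]{He99} applies directly. By Theorem~\ref{vir} we have $\dim_{\breve \CI}(\breve \CI \dot w \breve \CI \cap [b]) \le d_w([b])$, so only the reverse inequality requires work. Because $[b]$ is basic, the Newton point $\nu_b$ is central, so $\<2\rho,\nu_b\>=0$, and Theorem~\ref{ii-adlv} converts the desired inequality into the statement that $\dim X_w(b) \ge d_w([b])$ --- a purely affine Deligne--Lusztig variety estimate.

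For this lower bound I would invoke the ``dimension equals degree'' principle of Theorem~\ref{deg1}: the intersection dimension coincides with the degree of the class polynomial $F_{w,[b]}$, which depends only on the combinatorial datum $(\tW, \s\colon \tW\to\tW, w, \CO_{[b]})$ and not on the specific form of $G$. As explained in \S 2.2, modulo the central torsion $X_*(T)_{\G,\mathrm{tor}}$, any such pair $(\tW, \s)$ arises from the Iwahori-Weyl group of a suitable unramified --- in fact, after a further unramified base change, residually split --- group $G'$. The three hypotheses (basicness of $[b]$, $w\s(\fka)$ lying in the Shrunken Weyl chamber, and $\eta_\s(w)$ not contained in any proper $\varsigma$-stable relative Levi Weyl subgroup) are all combinatorial conditions on $(\tW,\s,w,[b])$ and so transport under this identification. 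Once reduced to the residually split setting, the desired inequality is precisely \cite[Theorem~12.1]{He99}, which combined with Theorem~\ref{vir} yields the asserted equality.

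The main obstacle will be the bookkeeping required for the reduction. One must verify that the defect $\mathrm{def}_G(b)$, the element $\eta_\s(w)$ together with its length, and the collection of proper $\varsigma$-stable standard Levi subgroups all correspond correctly under the passage $G \leadsto H \leadsto G'$, where $H$ is the quasi-split inner form of $G$ (as in \S\ref{red-qs}). The identity $\eta_\s(w) = \eta_\varsigma(w\g)$ recorded in \eqref{eta-eta} and the reformulation $\mathrm{def}(b) = \dim V - \dim V_w$ depending only on $(\tW,\s,\CO_b)$ indicate that the invariants transfer cleanly, but the correspondence between ``$\s$-stable'' and ``$\varsigma$-stable'' standard Levi subgroups is sensitive to the choice of length-zero twist $\g \in \Omega$ and must be tracked explicitly, especially to confirm that the Levi non-containment hypothesis on $\eta_\s(w)$ on the $G$-side translates precisely to the corresponding non-containment hypothesis on the $G'$-side to which Theorem~12.1 of \cite{He99} is applied.
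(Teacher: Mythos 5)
Your overall skeleton coincides with the paper's: the upper bound is Theorem \ref{vir}, Theorem \ref{ii-adlv} (with $\<2\rho,\nu_b\>=0$ for basic $[b]$) converts the remaining lower bound into a statement about $\dim X_w(b)$, and the substantive input for that lower bound is \cite[\S 12]{He99}; indeed the paper derives the theorem precisely by combining Theorem \ref{ii-adlv}, \cite[Theorem 12.1]{He99} and Theorem \ref{vir}, after the inner-twist reduction of \S\ref{red-qs}, and it performs no transfer to residually split groups.

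The genuine gap is in your bridging step $H \leadsto G'$. You invoke Theorem \ref{deg1} to say the quantity to be bounded depends only on $(\tW,\s,w,\CO_{[b]})$ (correct), and then claim that this datum is realized, ``after a further unramified base change,'' by a residually split group to which \cite[Theorem 12.1]{He99} applies. This fails for two reasons. First, an unramified base change replaces the Frobenius $\s$ by a power of itself, so it changes the twisted conjugation problem: $[b]$, $F_{w,[b]}$, $\eta_\s(w)$, $\mathrm{def}(b)$ and $d_w([b])$ are all defined relative to $\s$, and there is no comparison between the $\s$-problem over $F$ and the $\s^n$-problem over the extension. Second, and more fundamentally, a residually split group is exactly one for which the Frobenius acts trivially on the local Dynkin diagram, hence trivially on $(\tilde \BS, \tW)$; the class-polynomial transfer preserves the pair $(\tW,\s)$, so a quasi-split $H$ whose $\varsigma$ is a nontrivial diagram automorphism (unramified non-split groups such as unramified unitary groups, the non-split $SO(2n+1)$ of Table 3, the ramified non-split $SU(2n)$ of Table 4, and all inner forms of these) can never be matched with a residually split $G'$. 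Consequently your argument only reaches the cases with $\varsigma$ trivial, i.e.\ essentially the residually split situation already covered by \cite[\S 12]{He99}, and misses exactly the additional generality the theorem claims. To repair it you must either obtain the lower bound $\deg F_{w,[b]}\ge d_w([b])$ for quasi-split pairs with nontrivial $\varsigma$ directly (the role played by the appeal to \cite[Theorem 12.1]{He99} in the paper, whose one-line proof is admittedly terse on this point), or find a substitute for the residually-split transfer; the combinatorial reduction of Theorem \ref{deg1} by itself cannot remove a nontrivial $\varsigma$.
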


The dimension formula for the affine Deligne-Lusztig varieties $X_w(b)$ with $w$ and $b$ as above is conjectured by G\"ortz, Haines, Kottwitz, and Reuman in \cite[Conjecture 1.1.3]{GHKR10} and is previously verified in \cite[\S 12]{He99} for residually split group. 

\subsubsection{Translation elements} Now we consider the translation elements. We assume furthermore that $G$ is residually split. We first relate the intersection $\breve \CI \dot w \breve \CI \cap [\e^\mu]$ with $\breve \CI \dot w' \breve \CI \cap [1]$ for some $w'$. 

Let $H$ be a closed subgroup of $G$. A bounded subset of $V$ of $H(\breve F)$ is called admissible if it is stable under the right action of $H(\breve F) \cap \breve \CI_n$ for some $n \in \BN$. In this case, we say that $V$ is locally closed in $H(\breve F)$ if $V/(H(\breve F) \cap \breve \CI_n)$ is locally closed in $H(\breve F)/(H(\breve F) \cap \breve \CI_n)$. We define $$\dim_{H(\breve F) \cap \breve \CI} V=\dim V/(H(\breve F) \cap \breve \CI_n)-\dim (H(\breve F) \cap \breve \CI)/(H(\breve F) \cap \breve \CI_n).$$

We also need the following result of G\"ortz, Haines, Kottwitz and Reuman \cite[Theorem 6.3.1]{GHKR06}.

\begin{theorem}\label{superset}
Let $G$ be a split group. For any $w \in \tW$ and $\mu \in X_*(T)$, there is an equality $$\dim X_w(\e^\mu)=\max_{x \in W_0} \dim_{\dot x U(\breve F) \dot x\i \cap \breve \CI} (\breve \CI \dot w \breve \CI \e^{-x(\mu)} \cap \dot x U(\breve F) \dot x \i)+\<\rho, \mu-\bar \mu\>,$$ where $U$ is the subgroup of $G$ generated by the (finite) positive root subgroups. 
\end{theorem}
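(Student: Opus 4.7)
The plan is to establish this dimension formula by stratifying $X_w(\e^\mu)$ via the Iwasawa decomposition indexed by $W_0$ and relating each stratum to the indicated intersection. For each $x \in W_0$, set $Y_x := \{g\breve \CI \in X_w(\e^\mu);\ g \in \dot x\, U(\breve F)\, \breve \CI\}$; these give a locally closed stratification $X_w(\e^\mu) = \bigsqcup_{x \in W_0} Y_x$ coming from the Bruhat decomposition of the flag variety $G(\breve F)/\breve B(\breve F)$. Next, for a point $g\breve \CI \in Y_x$ I would write $g = \dot x u$ with $u \in U(\breve F)$ (unique modulo $U(\breve F) \cap \breve \CI$). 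Since $G$ is split, $\s$ acts trivially on $\tW$ and fixes the chosen lifts $\dot x$, so $\dot x\i \e^\mu \dot x = \e^{x\i(\mu)}$. The defining condition $g\i \e^\mu \s(g) \in \breve \CI \dot w \breve \CI$ therefore reads $u\i \e^{x\i(\mu)} \s(u) \in \breve \CI \dot w \breve \CI$, and after conjugating by $\dot x$ and setting $v := \dot x u \dot x\i \in \dot x U(\breve F) \dot x\i$, is equivalent (via an algebraic rearrangement that absorbs the Frobenius twist into a right translation) to $v \in \breve \CI \dot w \breve \CI \e^{-x(\mu)} \cap \dot x U(\breve F) \dot x\i$.

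Under this identification, the map $g\breve \CI \mapsto v$ realizes $Y_x$ as an admissible fibration over the intersection $\breve \CI \dot w \breve \CI \e^{-x(\mu)} \cap \dot x U(\breve F) \dot x\i$, with the fiber direction a quotient by right-translation of $\dot x U(\breve F) \dot x\i \cap \breve \CI$, plus a constant shift $c$ arising from the discrepancy between the ambient Iwasawa-stratum dimension and the normalized dimension appearing on the right-hand side. Expressing both dimensions via the Iwahori--Matsumoto length formula, one checks that $c$ is independent of $x$ and equals $\<\rho, \mu - \bar \mu\>$: intuitively, this shift reflects the difference in ``affine depth'' between $\e^\mu$ and its dominant conjugate $\e^{\bar \mu}$, and is absent precisely when $\mu$ is already dominant. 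Taking the maximum $\dim X_w(\e^\mu) = \max_x \dim Y_x$ then yields the stated formula.

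The main obstacle is the third step: identifying the shift $c$ with $\<\rho, \mu - \bar\mu\>$ and showing it is independent of $x$, within the infinite-dimensional admissible framework of \S\ref{bruhat}. One selects a uniform level $n$ via Lemma~\ref{w-n} so that all approximations $V/\breve \CI_n$ become honest finite-dimensional schemes of known dimension, and then performs careful root-theoretic bookkeeping: although $\dot x U(\breve F) \dot x\i \cap \breve \CI$ varies with $x$, the substitution $g \mapsto v$ introduces a compensating correction, and the residual shift is entirely determined by $\mu$ relative to the dominant chamber. Handling the case where $\breve \CI \dot w \breve \CI \e^{-x(\mu)} \cap \dot x U(\breve F) \dot x\i$ is empty for some (or all) strata also requires care, to ensure the formula extends correctly to boundary cases via the convention $\dim \emptyset = -\infty$.
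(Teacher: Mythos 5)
First, a point of reference: the survey does not prove this statement at all — it is quoted verbatim from G\"ortz--Haines--Kottwitz--Reuman \cite[Theorem 6.3.1]{GHKR06} — so your proposal has to be measured against that argument, which is built on the semi-infinite (Iwasawa) orbit decompositions of the affine flag variety for the conjugates $\dot x U \dot x\i$ of $U$, a Lang-type surjectivity for $\s$-conjugation by $\dot x U(\breve F)\dot x\i$ on $\dot x U(\breve F)\dot x\i\,\e^{x(\mu)}$, and a careful dimension bookkeeping at Iwahori level.

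Measured that way, your very first step fails. The sets $Y_x=\{g\breve \CI;\ g\in \dot x U(\breve F)\breve \CI\}$, $x\in W_0$, do not cover $X_w(\e^\mu)$: the Iwasawa decomposition with respect to $U$ is $G(\breve F)=\bigsqcup_{v\in\tW}U(\breve F)\dot v\breve \CI$, indexed by the \emph{whole} extended affine Weyl group, and $\dot x U(\breve F)\breve \CI\subset \dot xU(\breve F)\dot x\i\cdot \dot x\breve \CI$ is a single cell of the Iwasawa decomposition relative to $\dot xU\dot x\i$. (Concretely, for $G=SL_2$ the point $\e^\lambda\breve \CI$ with $\lambda\neq 0$ lies in $X_{t^\mu}(\e^\mu)$ but in neither $U(\breve F)\breve \CI$ nor $\dot sU(\breve F)\breve \CI$.) The finite Bruhat decomposition of $G(\breve F)/B(\breve F)$ that you invoke has cells $U(\breve F)T(\breve F)\dot xU(\breve F)$, which are not of your form and, because of the $T(\breve F)$-factor, are not admissible pieces one can do dimension counting with. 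So the maximum over $x\in W_0$ in the formula does not arise from a decomposition of $X_w(\e^\mu)$ into $|W_0|$ pieces; in \cite{GHKR06} it arises from comparing, for each of the finitely many Borels containing $T$, the intersections of $X_w(\e^\mu)$ with the infinitely many cells $\dot xU(\breve F)\dot x\i\,\e^\lambda\dot x\breve \CI/\breve \CI$ and absorbing the translation parts $\e^\lambda$. Relatedly, the two places you treat as routine are exactly where the content lies: the passage from the condition $u\i\e^{x\i(\mu)}\s(u)\in\breve \CI\dot w\breve \CI$ to the intersection $\breve \CI\dot w\breve \CI\e^{-x(\mu)}\cap \dot xU(\breve F)\dot x\i$ is not an ``algebraic rearrangement'' but a Lang-type fibration statement (the map $u\mapsto u\i b\s(u)$ from $\dot xU(\breve F)\dot x\i$ onto $b\,\dot xU(\breve F)\dot x\i$ is surjective with fibers whose Iwahori-level dimensions depend on $\mu$ and $x$), and the correction $\<\rho,\mu-\bar\mu\>$ is the outcome of that bookkeeping over the translation directions, not an a priori normalization constant whose $x$-independence can be asserted. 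Your sketch is pointed in the right general direction (semi-infinite orbits for the conjugates of $U$), but as written it omits the covering statement, the Lang-type transitivity, and the computation producing $\<\rho,\mu-\bar\mu\>$, so it does not amount to a proof.
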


\smallskip

Now we are able to compare the dimension of affine Deligne-Lusztig varieties for various $b$ in the torus $S$ for a residually split group. 

\begin{theorem}\label{torus}
Let $G$ be a residually split group. Let $w \in \tW$ and $\mu \in X_*(T)_\G$ such that $\ell(w)=\ell(w t^{-\mu})+\ell(t^{\mu})$. Then $$\dim X_w (t^\mu) \ge \dim X_{w t^{-\mu}}(1).$$ 
\end{theorem}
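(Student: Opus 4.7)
The strategy is to express both sides of the inequality via Theorem \ref{superset} as dimensions of intersections of Iwahori--Bruhat cells with conjugates of the positive unipotent subgroup $U$, and then to exploit the length additivity $\ell(w) = \ell(w') + \ell(t^\mu)$ with $w' = wt^{-\mu}$ to construct the extra fibers accounting for the difference.

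First, since $G$ is residually split, the Frobenius $\s$ acts trivially on $\tW$ and fixes $\e^\mu$, so Theorem \ref{superset} applies to $G$. Writing $U_x := \dot x U \dot x^{-1}$, this yields
\begin{align*}
\dim X_{w'}(1) &= \max_{x \in W_0} \dim_{U_x \cap \breve \CI}\bigl(\breve \CI \dot{w'} \breve \CI \cap U_x\bigr), \\
\dim X_w(\e^\mu) &= \max_{x \in W_0} \dim_{U_x \cap \breve \CI}\bigl(\breve \CI \dot w \breve \CI \, \e^{-x(\mu)} \cap U_x\bigr) + \langle \rho, \mu - \bar\mu\rangle.
\end{align*}
The length additivity gives the BN-pair product decomposition $\breve \CI \dot w \breve \CI = \breve \CI \dot{w'} \breve \CI \cdot \breve \CI \e^\mu \breve \CI$, so
$$\breve \CI \dot w \breve \CI \, \e^{-x(\mu)} \;=\; \breve \CI \dot{w'} \breve \CI \cdot \bigl(\breve \CI \e^\mu \breve \CI \, \e^{-x(\mu)}\bigr).$$

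Next I would choose $x_0 \in W_0$ realising the maximum for $\dim X_{w'}(1)$, and use the same $x_0$ in the formula for $\dim X_w(\e^\mu)$. The goal is to build from each element $u \in \breve \CI \dot{w'} \breve \CI \cap U_{x_0}$ a family of elements $u \cdot v \in \breve \CI \dot w \breve \CI \, \e^{-x_0(\mu)} \cap U_{x_0}$, where $v$ ranges over a piece of $\breve \CI \e^\mu \breve \CI \, \e^{-x_0(\mu)}$ cut out by the requirement that $uv$ remain in $U_{x_0}$. The affine-root decomposition of $\breve \CI \e^\mu \breve \CI \, \e^{-x_0(\mu)}$ and its intersection with $U_{x_0}$ should provide an affine space parametrising $v$ of dimension exactly $\langle \rho, \bar\mu - \mu\rangle$, yielding
$$\dim_{U_{x_0} \cap \breve \CI}\bigl(\breve \CI \dot w \breve \CI \, \e^{-x_0(\mu)} \cap U_{x_0}\bigr) \;\geq\; \dim X_{w'}(1) + \langle \rho, \bar\mu - \mu\rangle,$$
which combined with the $+\langle \rho, \mu - \bar\mu\rangle$ correction in the formula for $\dim X_w(\e^\mu)$ gives the desired inequality. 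One may need to use the freedom in the choice of $x_0$ among maximisers, or replace it by a coset representative so that $x_0(\mu) = \bar\mu$.

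The main obstacle will be this dimensional bookkeeping: showing that right-multiplication by $\breve \CI \e^\mu \breve \CI \, \e^{-x_0(\mu)}$ produces precisely $\langle \rho, \bar\mu - \mu\rangle$ worth of new parameters inside $U_{x_0}$ without collapsing fibers or leaving the unipotent. This reduces to a combinatorial count over the finite positive roots $\alpha$ on which $x_0(\mu)$ is non-dominant, each contributing a factor of an affine space of dimension $|\langle \alpha, x_0(\mu) - \bar\mu\rangle|$; summed and halved, these yield the required $\langle \rho, \bar\mu - \mu\rangle$. Carrying this out rigorously requires the Iwahori decomposition refined by the parabolic associated to $x_0$ and a careful tracking of how conjugation by $\e^\mu$ expands or contracts each root subgroup.
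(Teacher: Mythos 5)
Your overall route is the same as the paper's (apply Theorem \ref{superset} to both sides and use the length additivity $\ell(w)=\ell(wt^{-\mu})+\ell(t^{\mu})$ to produce extra fiber directions), but two steps do not go through as written. First, Theorem \ref{superset} is a statement about \emph{split} groups; residual splitness (triviality of $\s$ on $\tW$) does not by itself allow you to quote it for $G$. The paper first reduces to the split case using the ``dimension=degree'' results (Theorem \ref{deg1} and the remark after Theorem \ref{2.22}), since the class polynomials, and hence the dimensions in question, depend only on the data $(\tW,\s,w,\CO_b)$; some reduction of this kind is needed before Theorem \ref{superset} can be used.

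Second, and more seriously, your fiber construction is built on the cell $\breve \CI \dot w \breve \CI \e^{-x_0(\mu)}$, whereas length additivity factors $\breve \CI \dot w \breve \CI$ as $\breve \CI \dot w \e^{-\mu}\breve \CI \cdot \breve \CI \e^{\mu}\breve \CI$, i.e.\ with the translation $t^{\mu}$ itself. For general $x_0$ the set $\breve \CI \e^{\mu}\breve \CI \e^{-x_0(\mu)}$ does not contain $1$ and is not a twisted product of groups, so ``$v$ ranging over a piece of it with $uv\in U_{x_0}$'' is not an affine bundle over the base, and your proposed repair --- choosing $x_0$ with $x_0(\mu)=\bar\mu$ --- does not help: such an $x_0$ need not be a maximiser (in which case you only get the wrong-direction inequality for $\dim X_{wt^{-\mu}}(1)$), and even then the mismatch between $\e^{-\bar\mu}$ and the factorization by $t^{\mu}$ persists. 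The paper's resolution is exactly where residual splitness enters: $\e^{\mu}$ and $\e^{x\i(\mu)}$ are $\s$-conjugate, so one computes $\dim X_w(\e^{x\i(\mu)})$ and applies Theorem \ref{superset} at the \emph{same} $x$ that maximises the formula for $X_{wt^{-\mu}}(1)$; the relevant cell then becomes $\breve \CI \dot w \breve \CI \e^{-\mu}$, and the multiplication isomorphism $\breve \CI \dot w \e^{-\mu}\breve \CI \times_{\breve \CI \cap \e^{\mu}\breve \CI \e^{-\mu}} \e^{\mu}\breve \CI \e^{-\mu} \cong \breve \CI \dot w \breve \CI \e^{-\mu}$, restricted to $\dot x U(\breve F)\dot x\i$ and combined with \cite[Lemma 2.13.1]{GHKR06}, produces a fiber of dimension $\sum_{\a\in R_+}\max\{0,-\<x\a,\mu\>\}=\<\rho,\bar\mu-x\i(\mu)\>$, which cancels exactly the correction term $\<\rho,x\i(\mu)-\bar\mu\>$ coming from Theorem \ref{superset}. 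Relatedly, your final count (``$|\<\a,x_0(\mu)-\bar\mu\>|$ summed and halved equals $\<\rho,\bar\mu-\mu\>$'') is not correct in general; the $x$-dependent quantity $\<\rho,\bar\mu-x\i(\mu)\>$ is the one that appears and is what makes the cancellation work for the chosen maximiser.
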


\begin{proof}
By Theorem \ref{deg1} and the remark after Theorem \ref{2.22}, it suffices to prove the case where $G$ is split. 

By Theorem \ref{superset}, there exists $x \in W_0$ such that 
$$\dim X_{w t^{-\mu}}(1)=\dim_{\dot x U(\breve F) \dot x\i \cap \breve \CI} (\breve \CI \dot w \breve \CI \cap \dot x U(\breve F) \dot x \i).$$

Again by Theorem \ref{superset}, $$\dim X_w(\e^\mu)=\dim X_w(\e^{x \i(\mu)}) \ge \dim_{\dot x U(\breve F) \dot x\i \cap \breve \CI} (\breve \CI \dot w \breve \CI \e^{-\mu} \cap \dot x U(\breve F) \dot x \i)+\<\rho, x \i(\mu)-\bar \mu\>.$$ Here the first equality follows from the fact that $\e^\mu$ and $\e^{x \i(\mu)}$ are in the same $\s$-conjugacy class.\footnote{This is where the assumption that $G$ is residually split is used. At present, I do not know how to modify the argument so that it works for quasi-split groups as well.}

Since $\ell(w)=\ell(w t^{-\mu})+\ell(t^{\mu})$, we have the following commutative diagram
\[\xymatrix{
\breve \CI \dot w \e^{-\mu} \breve \CI \times_{\breve \CI} \breve \CI \e^\mu \breve \CI \e^{-\mu} \ar[r]^-m_-{\cong} \ar[d]^-{\cong} & \breve \CI \dot w \breve \CI \e^{-\mu} \ar@{=}[d] \\
\breve \CI \dot w \e^{-\mu} \breve \CI \times_{\breve \CI \cap \e^{\mu} \breve \CI \e^{-\mu}} \e^\mu \breve \CI \e^{-\mu} \ar[r]^-m_-{\cong} & \breve \CI \dot w \breve \CI \e^{-\mu}.
}
\]
Here $m$ is the multiplication map. 

The restriction to $\dot x U(\breve F) \dot x \i$ gives an injective map $$(\breve \CI \dot w \e^{-\mu} \breve \CI \cap \dot x U(\breve F) \dot x \i) \times_{\breve \CI \cap \e^{\mu} \breve \CI \e^{-\mu} \cap \dot x U(\breve F) \dot x \i} (\e^\mu \breve \CI \e^{-\mu} \cap \dot x U(\breve F) \dot x \i) \to \breve \CI \dot w \breve \CI \e^{-\mu} \cap \dot x U(\breve F) \dot x \i.$$ 
By \cite[Lemma 2.13.1]{GHKR06}, \begin{align*} & \dim_{\dot x U(\breve F) \dot x \i \cap \breve \CI} (\breve \CI \dot w \breve \CI \e^{-\mu} \cap \dot x U(\breve F) \dot x \i) \\ &\ge \dim_{\dot x U(\breve F) \dot x \i \cap \breve \CI}(\breve \CI \dot w \e^{-\mu} \breve \CI \cap \dot x U(\breve F) \dot x \i)+\dim((\e^\mu \breve \CI \e^{-\mu} \cap \dot x U(\breve F) \dot x \i)/(\breve \CI \cap \e^\mu \breve \CI \e^{-\mu} \cap \dot x U(\breve F) \dot x \i)) \\ &=\dim_{\dot x U(\breve F) \dot x \i \cap \breve \CI}(\breve \CI \dot w \e^{-\mu} \breve \CI \cap \dot x U(\breve F) \dot x \i)+\sum_{\a \in R_+} \min\{{\<x \a, \mu\>, 0}\} \\ &=\dim_{\dot x U(\breve F) \dot x \i \cap \breve \CI}(\breve \CI \dot w \e^{-\mu} \breve \CI \cap \dot x U(\breve F) \dot x \i)+\<\rho, x \i(\mu)-\bar \mu\>=\dim X_{w t^{-\mu}}(1).\qedhere\end{align*}
\end{proof}

We have the following result on the non-emptiness pattern and dimension formula for the intersection $\breve \CI \dot w \breve \CI \cap [\e^\mu]$. For split groups, it is conjectured by G\"ortz, Haines, Kottwitz, and Reuman in \cite[Conjecture 9.5.1 (b)]{GHKR10}. 

\begin{theorem}\label{torus2}
Let $G$ be a residually split, simple group. Let $\mu$ be a dominant coweight. Let $w \in \tW$ such that $\k(w)=\k(\e^\mu)$ and that $w (\fka)$ is contained in a very Shrunken Weyl chamber with respect to $\mu$. If $\eta_\s(w)$ is not contained in the relative Weyl group $W_M$ for any proper standard Levi subgroup $M$ of $G$. Then $$\dim_{\breve \CI} (\breve \CI \dot w \breve \CI \cap [\e^\mu])=d_w([\e^\mu]).$$ In particular, $\breve \CI \dot w \breve \CI \cap [\e^\mu] \neq \emptyset$. 
\end{theorem}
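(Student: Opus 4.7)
The proof has two halves: the upper bound $\dim_{\breve \CI}(\breve \CI \dot w \breve \CI \cap [\e^\mu]) \le d_w([\e^\mu])$ is immediate from Theorem~\ref{vir}, so the content lies in the matching lower bound, which I would obtain by reducing to the basic case $[1]$ via Theorem~\ref{torus} and then applying Theorem~\ref{bb-dim}.

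Let $x \in W_0$ be the unique element with $w(\fka) \subset \CC^{\doublesharp}_{\mu, x}$, and set $u := t^{-x(\mu)}\, w \in \tW$. Since $\CC^{\doublesharp}_{\mu, x} = \CC^\sharp_x - x(\mu)$, the translation gives $u(\fka) \subset \CC^\sharp_x$, so $u$ lies in the Shrunken Weyl chamber indexed by the same $x$. Writing $w = x t^\lambda y$ with $t^\lambda y(\fka) \subset \CC^-$, one checks $u = x t^{\lambda - \mu} y$; hence $u$ and $w$ share the linear part $xy$, and (since $\s$ acts trivially on $W_0$ for residually split $G$) $\eta_\s(u) = \eta_\s(w)$. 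The Levi hypothesis therefore transfers to $u$, and $\k(u) = \k(w) - \k(\e^{x(\mu)}) = 0 = \k(1)$ (using $x(\mu) \equiv \mu \pmod{\BZ R^\vee}$). So $u$ and $[1]$ satisfy the hypotheses of Theorem~\ref{bb-dim}, yielding
\[
\dim_{\breve \CI}(\breve \CI \dot u \breve \CI \cap [1]) = d_u([1]) = \tfrac{1}{2}\bigl(\ell(u) + \ell(\eta_\s(w))\bigr),
\]
where $\text{def}_G(1) = 0$ for residually split $G$; by Theorem~\ref{ii-adlv} this equals $\dim X_u(1)$.

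To transfer back to $w$, I would apply Theorem~\ref{torus} with $\mu' := y^{-1}(\mu) \in X_*(T)_\G$. Since $z y^{-1} = x$ for the linear part $z = xy$ of $w$, one checks $w t^{-\mu'} = u$; since $\mu'$ lies in the $W_0$-orbit of the dominant coweight $\mu$, the Iwahori--Matsumoto formula gives $\ell(t^{\mu'}) = \<2\rho, \mu\>$, and residually split implies $[\e^{\mu'}] = [\e^\mu]$ (same Newton point and Kottwitz image). The required length identity $\ell(w) = \ell(u) + \<2\rho, \mu\>$ is the combinatorial heart of the very Shrunken Weyl chamber assumption: applying the standard inversion formula
\[
\ell(u \cdot t^{\mu'}) = \ell(u) + \ell(t^{\mu'}) - 2\bigl|\{\beta \in \mathrm{Inv}(t^{\mu'}) : u(\beta) < 0\}\bigr|,
\]
the last set is empty because $u(\fka) \subset \CC^\sharp_x$ lies deep inside $x(\CC^-)$, so every affine root flipped by the translation $t^{\mu'}$ remains positive under $u$. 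Granting this, Theorem~\ref{torus} yields $\dim X_w(\e^\mu) = \dim X_w(\e^{\mu'}) \ge \dim X_u(1)$.

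Finally, a direct arithmetic check using the length identity together with $\eta_\s(u) = \eta_\s(w)$ and $\text{def}_G(\e^\mu) = 0$ (the $\s$-centralizer $J_{\e^\mu}$ contains the maximal $F$-split torus, forcing $\text{rk}_F J_{\e^\mu} = \text{rk}_F G$) gives $d_u([1]) + \<2\rho, \mu\> = d_w([\e^\mu])$. Invoking Theorem~\ref{ii-adlv} once more,
\[
\dim_{\breve \CI}(\breve \CI \dot w \breve \CI \cap [\e^\mu]) = \dim X_w(\e^\mu) + \<2\rho, \mu\> \ge d_w([\e^\mu]),
\]
which matches the upper bound; non-emptiness follows since $d_w([\e^\mu])$ is a nonnegative integer. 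The main obstacle is the combinatorial verification of the length identity, which is precisely the role played by the translate $\CC^\sharp_x - x(\mu)$ in the definition of the very Shrunken Weyl chamber.
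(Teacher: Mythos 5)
Your argument is correct and is essentially the paper's own proof: you use the same auxiliary element $u = w t^{-\mu'} = t^{-x(\mu)}w$, the same length identity $\ell(w)=\ell(u)+\ell(t^{\mu})$ extracted from the very Shrunken hypothesis, the same transfer $[\e^{\mu'}]=[\e^{\mu}]$ and $\eta_\s(u)=\eta_\s(w)$, and the same chain of Theorem \ref{vir}, Theorem \ref{bb-dim}, Theorem \ref{torus} and Theorem \ref{ii-adlv}. One cosmetic remark: in your inversion-formula sketch the correct condition for additivity is that $u$ keeps $t^{\mu'}\beta$ negative for every inversion $\beta$ of $t^{\mu'}$ (equivalently, no wall crossed between $\fka$ and $t^{x(\mu)}\fka$ is re-crossed on the way to $w(\fka)$), not that $u(\beta)$ itself stays positive; the paper reads this identity directly off the definition of the very Shrunken Weyl chamber.
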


\begin{proof}
By Theorem \ref{vir}, $\dim_{\breve \CI} (\breve \CI w \breve \CI \cap [\e^\mu] \le d_w([\e^\mu])$. 

Let $x \in W_0$ such that $w (\fka) \subset x(\CC^-)$. Let $\mu'$ be the coweight with $w\i t^{x(\mu)} w=t^{\mu'}$. By definition, $\ell(w t^{-\mu'})=\ell(t^{-x(\mu)} w)=\ell(w)-\ell(t^\mu)$ and $w t^{-\mu'}(\fka) \subset x(\CC^-)$ is in the Shrunken Weyl chamber. Thus $\eta_\s(w)=\eta_\s(w t^{-w\i x(\mu)})$. Note that $[\e^\mu]=[\e^{\mu'}]$. By Theorem \ref{ii-adlv}, Theorem \ref{torus} and Theorem \ref{bb-dim}, \begin{align*} \dim_{\breve \CI} (\breve \CI \dot w \breve \CI \cap [\e^{\mu'}]) & =\dim X_w (\e^{\mu'})+\<2 \rho, \mu\> \ge \dim X_{w t^{-\mu'}}(1)+\<2 \rho, \mu\> \\ &=d_{w t^{-\mu'}}([1])+\<2 \rho, \mu\>=\frac{1}{2}(\ell(w t^{-\mu'})+\ell(\eta_\s(w t^{-\mu'})))+\<2 \rho, \mu\> \\ &=\frac{1}{2}(\ell(w)+\ell(\eta_\s(w))+\<\rho, \bar \mu\>=d_w([\e^\mu]).\qedhere\end{align*}
\end{proof}

\subsection{The Kottwitz-Rapoport conjecture}

\subsubsection{Admissible set $\Adm(\mu)$}
In fact, we are not only interested in the intersection of a $\s$-conjugacy class with a single $\breve \CI$-double coset, but also interested in the intersection of a $\s$-conjugacy class with a certain union of $\breve \CI$-double cosets. Such union of $\breve \CI$-double cosets arises in the study of Shimura varieties. Let us first look at the following example \cite[\S 4.4]{Hai}. 

\begin{example}
Let $R$ be a $\BZ_p$-algebra. We consider the pairs $(\CF_0, \CF_1)$ of locally free rank $1$ submodules of $R^2$ such that the diagram commutes
\[
\xymatrix{
R \oplus R \ar[r]^{\begin{tiny} \begin{bmatrix} p & 0 \\ 0 & 1 \end{bmatrix} \end{tiny}} & R \oplus R \ar[r]^{\begin{tiny} \begin{bmatrix} 1 & 0 \\ 0 & p \end{bmatrix} \end{tiny}} & R \oplus R \\
\CF_0 \ar[u] \ar[r] & \CF_1 \ar[u] \ar[r] & \CF_0 \ar[u] .
}
\]

This functor represents a closed subscheme of $\BP^1_{\BZ_p} \times \BP^1_{\BZ_p}$, which we denote by $M^{loc}$. This is the ``local model'' of Shimura variety associated to the fake unitary group $U(1, 1)$. 

In the generic fiber, $M^{loc}_{\BQ_p} \cong \BP^1_{\BQ_p}$. 

In the special fiber, $M^{loc}_{\BF_p}$ is the union of two $\BP^1_{\BF_p}$ meeting in a point and we may regard $M^{loc}_{\BF_p}$ as the union of the three Schubert cells in the affine flag variety of $GL_2(\BF_p((\e)))$. These two open Schubert cells correspond to the elements $t^{[1, 0]}$ and $t^{[0, 1]}$ in the Iwahori-Weyl group of $GL_2$ and the closed Schubert cell correspond to the element $t^{[1, 0]} (1 2)$, which is the unique length-zero element in $t^{[1, 0]} W_a=t^{[0, 1]} W_a$. 
\end{example}

In general, for any coweight $\mu$, we define the admissible set $$\Adm(\mu)=\{w \in \tW; w \le t^{x(\mu)} \text{ for some } x\in W_0\}.$$ Similarly, for any parahoric subgroup $K$ of $G$, we set $\Adm(\mu)^K=W_K \Adm(\mu) W_K$ and denote by $\Adm(\mu)_K$ the image of $\Adm(\mu)^K$ in $W_K \backslash \tW/W_K$. The admissible set is an interesting combinatorial object. We refer to \cite{HH} and \cite{Hax} for some nice properties on $\Adm(\mu)$. 

The set $\Adm(\mu)_K$ is expected to parametrize a natural stratification of the special fiber of the local model of a Shimura variety with level $K$ structure. This has been established for Shimura varieties of PEL type at places of tame, parahoric reduction by Pappas and Zhu \cite{PZ}. 

\subsubsection{The union $M_K$ of $\breve \CK$-double cosets}
We consider the following set $$M_K=\sqcup_{w \in \Adm(\mu)_K} \breve \CK \dot w \breve \CK.$$ We discuss the relation of $M_K$ with the admissible subsets of $G(\breve F)$ in \S \ref{bruhat} and \S \ref{BG}. 

As a $\breve \CK \times \breve \CK$-double cosets, we have
\begin{theorem}
Let $\breve \CK$ be a parahoric subgroup of $G(\breve F)$. Then  $$\breve \CK \backslash M_K/\breve \CK=\Adm(\mu)_K.$$
\end{theorem}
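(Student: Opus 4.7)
The plan is to deduce this essentially as a formal consequence of the generalized Bruhat decomposition $G(\breve F) = \sqcup_{w \in {}^K \tilde{W}^K} \breve{\mathcal{K}} \dot w \breve{\mathcal{K}}$ recalled earlier in the paper. First, I would verify that for $w \in \tilde{W}$, the set $\breve{\mathcal{K}} \dot w \breve{\mathcal{K}}$ depends only on the image of $w$ in the $W_K$-double coset space $W_K \backslash \tilde{W} / W_K$. This is immediate because, for $u \in W_K$, a representative $\dot u \in N(\breve F)$ can be chosen to lie in $\breve{\mathcal{K}}$ (by definition of the parahoric), so multiplying $\dot w$ on either side by such $\dot u$ does not alter $\breve{\mathcal{K}} \dot w \breve{\mathcal{K}}$. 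Hence the notation $\breve{\mathcal{K}} \dot w \breve{\mathcal{K}}$ for $w \in \mathrm{Adm}(\mu)_K \subset W_K \backslash \tilde{W} / W_K$ is well-defined.

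Next, the generalized Bruhat decomposition, combined with the standard bijection between ${}^K \tilde{W}^K$ and $W_K \backslash \tilde{W} / W_K$ (sending a double coset to its unique minimal length representative), shows that distinct $W_K$-double cosets give rise to distinct $\breve{\mathcal{K}}$-double cosets in $G(\breve F)$. Applied to the defining union
\[
M_K = \bigcup_{w \in \mathrm{Adm}(\mu)_K} \breve{\mathcal{K}} \dot w \breve{\mathcal{K}},
\]
this shows the union is in fact disjoint, and that the assignment $w \mapsto \breve{\mathcal{K}} \dot w \breve{\mathcal{K}}$ is a bijection from $\mathrm{Adm}(\mu)_K$ onto $\breve{\mathcal{K}} \backslash M_K / \breve{\mathcal{K}}$, which is exactly what we want.

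Given the nature of the statement, there is no serious mathematical obstacle: the content is really bookkeeping. The only subtle point worth spelling out carefully is the well-definedness in the first paragraph, since elements of $\mathrm{Adm}(\mu)_K$ are $W_K$-double cosets rather than individual elements of $\tilde{W}$; once this is noted, the conclusion follows from the $\breve{\mathcal{K}}$-Bruhat decomposition. No facts beyond those already established in the excerpt are needed.
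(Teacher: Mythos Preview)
Your proposal is correct and matches the paper's approach: the paper simply remarks that the statement is ``just a reformulation of definition,'' and your argument spells out exactly that reformulation via the generalized Bruhat decomposition. Your extra care about well-definedness of $\breve\CK\dot w\breve\CK$ for $w\in\Adm(\mu)_K$ is a reasonable elaboration but not a different route.
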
 
This is just a reformulation of definition. 

Next, as a subset of $G(\breve F)$ stable under the $\s$-twisted conjugation action of $\breve \CK$,

\begin{theorem}
Let $\breve \CK$ be a parahoric subgroup of $G(\breve F)$. Then $$M_K=\sqcup_{w \in \Adm(\mu) \cap {}^K \tW} \breve \CK \cdot_\s \breve \CI \dot w \breve \CI.$$
\end{theorem}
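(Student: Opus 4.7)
The plan is to apply Theorem~\ref{kwk} to each parahoric double coset in $M_K$ and identify the resulting index set.

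Write each $\breve\CK$-double coset $\breve\CK \dot w \breve\CK$ in $M_K$ with $w$ taken as the minimal length representative in ${}^K\tW^K$ of its $W_K$-double coset. For such $w$, Theorem~\ref{kwk} gives
\[
\breve\CK \dot w \breve\CK = \sqcup_{x \in W_K w W_K \cap {}^K\tW} \breve\CK \cdot_\s \breve\CI \dot x \breve\CI.
\]
As $w$ ranges over (representatives of) $\Adm(\mu)_K$, the double cosets $W_K w W_K$ exhaust $\Adm(\mu)^K = W_K \Adm(\mu) W_K$ disjointly. Assembling these decompositions therefore yields
\[
M_K = \sqcup_{x \in \Adm(\mu)^K \cap {}^K\tW} \breve\CK \cdot_\s \breve\CI \dot x \breve\CI,
\]
with disjointness inherited from the disjointness of distinct parahoric double cosets and the disjointness within each, as ensured by Theorem~\ref{kwk}.

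It then suffices to prove the combinatorial identity $\Adm(\mu)^K \cap {}^K\tW = \Adm(\mu) \cap {}^K\tW$. The inclusion $\supseteq$ is trivial. For $\subseteq$, given $x \in {}^K\tW \cap W_K w W_K$ with $w \in \Adm(\mu)$, I would replace $w$ by the minimal length representative $w_0 \in {}^K\tW^K$ of its double coset; since $w_0 \le w$ and $\Adm(\mu)$ is closed downward in the Bruhat order, $w_0 \in \Adm(\mu)$. The standard length-additivity of minimal double-coset representatives in Coxeter groups then forces every element of $W_K w_0 W_K \cap {}^K\tW$ to admit a presentation $w_0 u$ with $u \in W_K$ and $\ell(w_0 u) = \ell(w_0) + \ell(u)$, so $x = w_0 u$ for some such $u$.

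The main obstacle is the final step: establishing that $w_0 u \in \Adm(\mu)$ whenever $w_0 \in \Adm(\mu) \cap {}^K\tW^K$, $u \in W_K$, and $w_0 u \in {}^K\tW$. I plan to argue by induction on $\ell(u)$, using Bruhat lifting against the translations $t^{y\mu}$ that dominate $w_0$ and exploiting the fact that the set $\{t^{y\mu} : y \in W_0\}$ is stable, after possibly passing to a different $y$, under the right $0$-Hecke action of the simple reflections in $W_K$. This closure property of the admissible set under controlled right $W_K$-translation is a combinatorial feature of $\Adm(\mu)$ that appears, sometimes implicitly, in Kottwitz--Rapoport theory and the local-model literature on Shimura varieties.
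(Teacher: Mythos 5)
Your first step is exactly the paper's: apply Theorem \ref{kwk} to each $\breve \CK \dot w \breve \CK$ with $w \in \Adm(\mu)_K$ and assemble, so that everything reduces to the combinatorial identity $\Adm(\mu)^K \cap {}^K \tW=\Adm(\mu) \cap {}^K \tW$; your reduction of that identity (downward closure of $\Adm(\mu)$, passage to the minimal representative $w_0 \in {}^K\tW^K$, and writing $x=w_0u$ with $\ell(x)=\ell(w_0)+\ell(u)$, $u\in W_K$) is also fine. The difference is that the paper does not prove this identity: it is cited as a nontrivial theorem (\cite[Theorem 6.1]{He00}, with another proof in \cite{HH}), whereas you attempt to prove it on the spot, and this is where your argument has a genuine gap.

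The step you yourself flag as the main obstacle is not a technicality; it is the entire content of the cited result, and the mechanism you propose does not work as stated. If $v=w_0u'\le t^{y'(\mu)}$ and $vs>v$ for $s\in K$, the lifting property only gives $vs\le t^{y'(\mu)}$ when $t^{y'(\mu)}s<t^{y'(\mu)}$; in the other case it gives $vs\le t^{y'(\mu)}s$, and no element $t^{y''(\mu)}$ can dominate $t^{y'(\mu)}s$, since $\ell(t^{y'(\mu)}s)=\ell(t^\mu)+1>\ell(t^{y''(\mu)})$ for every $y''\in W_0$. So the set $\{t^{y(\mu)}\}$ is not stable, in any useful sense, under the length-increasing right $0$-Hecke action of $W_K$, and indeed $\Adm(\mu)$ itself is not closed under length-increasing right multiplication by elements of $W_K$ (take $w=t^{y(\mu)}$ with $t^{y(\mu)}s>t^{y(\mu)}$: the product leaves $\Adm(\mu)$ on length grounds). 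Consequently your induction on $\ell(u)$ cannot run on the hypothesis ``$w_0u'\in\Adm(\mu)$'' alone; the hypothesis $x=w_0u\in {}^K\tW$ must enter in an essential way, and it is not preserved by the intermediate elements $w_0u'$ in any evident manner. To make your route work you would essentially have to reprove \cite[Theorem 6.1]{He00} (or the vertexwise-admissibility criterion of \cite{HH}); short of that, the correct fix is to do what the paper does and invoke that result directly.
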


This follows from Theorem \ref{kwk} and the fact that $\Adm(\mu)^K \cap {}^K \tW=\Adm(\mu) \cap {}^K \tW$ proved in \cite[Theorem 6.1]{He00} (see \cite{HH} for another proof). 

As a consequence, we may compare the set $M_K$ for different parahoric subgroups. 

\begin{corollary}
Let $\breve \CK' \subset \breve \CK$ be parahoric subgroups. Then $$M_K=\breve \CK \cdot_\s M_{K'}.$$
\end{corollary}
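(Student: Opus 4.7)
The plan is to invoke the second theorem, which gives the $\breve \CK$-$\s$-stable decomposition $M_K=\sqcup_{w\in \Adm(\mu)\cap {}^K\tW}\breve \CK\cdot_\s \breve \CI\dot w\breve \CI$ (and similarly for $K'$), and then compare the two decompositions using the hypothesis $\breve \CK'\subset \breve \CK$. First I would record two elementary facts that drive everything. Fact~A: since $(k\cdot_\s)(k'\cdot_\s x)=(kk')\cdot_\s x$ and $\breve \CK\breve \CK'=\breve \CK$, for any subset $X\subset G(\breve F)$ we have
\[
\breve \CK\cdot_\s(\breve \CK'\cdot_\s X)=(\breve \CK\breve \CK')\cdot_\s X=\breve \CK\cdot_\s X.
\]
Fact~B: since $W_{K'}\subset W_K$, minimal length representatives in $W_K$-cosets are a fortiori minimal length in the smaller $W_{K'}$-cosets, so ${}^K\tW\subset {}^{K'}\tW$, and correspondingly $\Adm(\mu)^{K'}\subset \Adm(\mu)^K$.

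For the inclusion $M_K\subset \breve \CK\cdot_\s M_{K'}$: take any $w\in \Adm(\mu)\cap {}^K\tW$. By Fact~B, $w$ also lies in $\Adm(\mu)\cap {}^{K'}\tW$, so the second theorem (applied to $K'$) gives $\breve \CK'\cdot_\s \breve \CI\dot w\breve \CI\subset M_{K'}$. Applying $\breve \CK\cdot_\s$ and invoking Fact~A,
\[
\breve \CK\cdot_\s \breve \CI\dot w\breve \CI \;=\;\breve \CK\cdot_\s(\breve \CK'\cdot_\s \breve \CI\dot w\breve \CI)\;\subset\;\breve \CK\cdot_\s M_{K'}.
\]
Taking the union over $w\in \Adm(\mu)\cap {}^K\tW$ yields the desired inclusion.

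For the reverse inclusion $\breve \CK\cdot_\s M_{K'}\subset M_K$: since $\breve \CK$ is $\s$-stable, each double coset satisfies $\breve \CK\cdot_\s(\breve \CK\dot w\breve \CK)=\breve \CK\breve \CK\dot w\breve \CK\s(\breve \CK)^{-1}=\breve \CK\dot w\breve \CK$, so $M_K$ is $\breve \CK$-$\s$-stable, i.e.\ $\breve \CK\cdot_\s M_K=M_K$. It therefore suffices to check $M_{K'}\subset M_K$. But for any $w\in \Adm(\mu)_{K'}$ (viewed as a representative in $\Adm(\mu)^{K'}$), Fact~B gives $w\in \Adm(\mu)^K$, and thus $\breve \CK'\dot w\breve \CK'\subset \breve \CK\dot w\breve \CK\subset M_K$. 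Combining both inclusions proves $M_K=\breve \CK\cdot_\s M_{K'}$.

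There is essentially no obstacle beyond bookkeeping: the previous theorem already encodes the compatibility of $\Adm(\mu)$ with passage to ${}^K\tW$ (via the identity $\Adm(\mu)^K\cap {}^K\tW=\Adm(\mu)\cap {}^K\tW$), and Facts A and B are formal. The mildly subtle point is recognizing that one should pass through the $\breve \CI$-level decomposition rather than try to manipulate $\breve \CK$-double cosets directly, because a single $\breve \CK\dot w\breve \CK$ is not of the form $\breve \CK\cdot_\s(\breve \CK'\dot w\breve \CK')$ in general; the $\breve \CI\dot w\breve \CI$-pieces are what make Fact~A applicable coset-by-coset.
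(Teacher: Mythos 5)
Your proof is correct and takes essentially the route the paper intends: the corollary is stated as an immediate consequence of the decomposition $M_K=\sqcup_{w \in \Adm(\mu) \cap {}^K \tW} \breve \CK \cdot_\s \breve \CI \dot w \breve \CI$, and your argument is exactly that, combining this theorem for $K$ and $K'$ with the inclusion ${}^K \tW \subset {}^{K'} \tW$ and the absorption identity $\breve \CK \cdot_\s (\breve \CK' \cdot_\s X)=\breve \CK \cdot_\s X$. The $\s$-stability of $\breve \CK$ you invoke for the reverse inclusion is the standing assumption on the parahoric subgroups in this section, so there is no gap.
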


\subsubsection{$\Adm(\mu)$ and $B(G, \mu)$}
Now we discuss the relation of $M_K$ with the $\s$-conjugacy classes of $G(\breve F)$. 

We denote by $B(G, \mu)$ the subset of $B(G)$ consisting of $\s$-conjugacy classes $[b]$ of $G(\breve F)$ such that $\k(b)=\k(\mu)$ and the Newton point $\nu_b$ is less than or equal to the $\varsigma$-average of $\mu$ in the dominance order. The relation between $M_K$ and $B(G)$ is as follows. 

\begin{theorem}
Let $\breve \CK$ be a parahoric subgroup of $G(\breve F)$. Let $[b] \in B(G)$. Then $[b] \cap M_K \neq \emptyset$ if and only if $[b] \in B(G, \mu)$. 
\end{theorem}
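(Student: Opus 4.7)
My plan is to first reduce the problem to the Iwahori case. By the corollary $M_K=\breve\CK\cdot_\s M_\emptyset$ and the $\s$-conjugation invariance of $[b]$, one has $[b]\cap M_K\ne\emptyset\iff [b]\cap M_\emptyset\ne\emptyset$, so it suffices to prove the statement for $\breve\CK=\breve\CI$. For the necessity, suppose $[b]$ meets some $\breve\CI\dot w\breve\CI$ with $w\le t^{x(\mu)}$ for some $x\in W_0$. Fixing a special maximal parahoric $\breve\CK_0\supset\breve\CI$ and using $W_0\subset\breve\CK_0$, we have $t^{x(\mu)}\in\breve\CK_0\e^\mu\breve\CK_0$, hence $\breve\CI\dot w\breve\CI\subset\overline{\breve\CK_0\e^\mu\breve\CK_0}=\bigsqcup_{\lambda\le\mu}\breve\CK_0\e^\lambda\breve\CK_0$. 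Theorem \ref{2-24} then forces $\k(b)=\k(\mu)$ and $\nu_b\le\lambda\le\mu$; averaging $\mu-\nu_b$ over the $\varsigma$-orbit (permissible because $\nu_b$ is $\varsigma$-invariant and dominant) upgrades this to $\nu_b\le\bar\mu$, so $[b]\in B(G,\mu)$.

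For the sufficiency, assume $[b]\in B(G,\mu)$. By Kottwitz's classification this means $[b]\le[\e^\mu]$ in $B(G)$, and Theorem \ref{tri-par} translates it to $\CO_b\preceq\CO_{[\e^\mu]}$ in the combinatorial partial order on $\tW\sslash\tW_\s$. I apply Theorem \ref{a-red} to $\e^\mu$: it produces a reduction sequence $\e^\mu\to_\s u\tilde x$ in which $\tilde x$ is a $\s$-straight element of $\CO_{[\e^\mu]}$ lying in ${}^K\tW$, $\Ad(\tilde x)\s(K)=K$, and $u\in W_K$, so $\ell(u\tilde x)=\ell(u)+\ell(\tilde x)$ and $\tilde x\le u\tilde x$. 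Since $\tilde x$ is a minimal length representative of $\CO_{[\e^\mu]}$, Proposition \ref{par} supplies a minimal length (hence $\s$-straight) $w_b\in\CO_b$ with $w_b\le\tilde x\le u\tilde x$.

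The crux is to transport $w_b$ into $\Adm(\mu)$ via the Bruhat comparison, and this is where the main obstacle lies. Writing the reduction as a sequence of single simple-reflection conjugations of weakly decreasing length, $\e^\mu=w_0\to w_1\to\cdots\to w_n=u\tilde x$, I will iterate Lemma \ref{par-1} backwards. Starting from $w_b\le w_n$, at each step the pair $(w_{n-i}\to w_{n-i+1},\ w_b^{(i-1)}\le w_{n-i+1})$ produces $w_b^{(i)}$ with $w_b^{(i-1)}\to_\s w_b^{(i)}$ and $w_b^{(i)}\le w_{n-i}$; after $n$ iterations I obtain $w_b':=w_b^{(n)}$ with $w_b\to_\s w_b'$ and $w_b'\le\e^\mu=t^\mu$, i.e., $w_b'\in\Adm(\mu)$. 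The bound $\ell(w_b')\le\ell(w_b)=\ell(\CO_b)$ combined with $w_b'\in\CO_b$ forces $\ell(w_b')=\ell(\CO_b)$, so $w_b'$ is $\s$-straight; then Theorem \ref{min-dl} yields $\breve\CI\dot w_b'\breve\CI\subset[\dot w_b']=[b]$, whence $[b]\cap M_\emptyset\ne\emptyset$, as desired. The delicate point throughout is that Lemma \ref{par-1} is a pure existence statement, so one must verify at every backward step that the newly produced element stays inside $\CO_b$ and that length is preserved; these constraints are forced automatically by the minimality of $\ell(w_b)$ and by the careful choice of $\tilde x$ extracted from Theorem \ref{a-red}.
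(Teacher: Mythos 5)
First, note that the survey does not actually prove this theorem: it is stated with attributions and quoted from \cite[Theorem A]{He00} (with the earlier special cases of Rapoport--Richartz, Kottwitz, Wintenberger), so your argument has to stand on its own, using only the results stated in Parts I--II. Measured against that standard, much of your outline is reasonable. The reduction to the Iwahori case via $M_K=\breve \CK \cdot_\s M_{K'}$ is correct. The ``only if'' direction is plausible, though you silently use two facts not stated in the survey: the closure relation $\overline{\breve \CK_0 \e^\mu \breve \CK_0}=\sqcup_{\lambda \le \mu} \breve \CK_0 \e^\lambda \breve \CK_0$ for a special (not necessarily hyperspecial) maximal parahoric, and the combinatorial fact that any $w \le t^{x(\mu)}$ lies in $W_{K_0} t^\lambda W_{K_0}$ with $\lambda$ dominant and $\lambda \le \mu$. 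The skeleton of your ``if'' direction --- take a minimal length element $w_b$ of $\CO_b$ below a straight element of $\CO_{[\e^\mu]}$, transport it backwards along the reduction path $t^\mu \to_\s u \tilde x$ by iterating Lemma \ref{par-1}, observe that minimality of $\ell(w_b)$ forces the output to be a $\s$-straight element of $\CO_b$ lying in $\Adm(\mu)$, and conclude with Theorem \ref{min-dl} --- is sound as far as it goes; just be aware that the depth is hidden in the quoted inputs (the equivalence in Proposition \ref{par} for straight classes and the identification of $\preceq$ with the dominance order are themselves main results of \cite{HN14} and \cite{He00}).

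The genuine gap is the very first step of your sufficiency argument: ``$[b]\in B(G,\mu)$ means $[b]\le[\e^\mu]$ by Kottwitz's classification.'' This is not what the classification gives. By definition $B(G,\mu)$ requires $\k(b)=\k(\mu)$ and $\nu_b \le \bar\mu$, where $\bar\mu$ is the $\varsigma$-average ($\varsigma=\s_H$ for the quasi-split inner form), whereas your argument needs $\nu_b \le \nu_{[\e^\mu]}$ in order to invoke Theorem \ref{tri-par} (equivalently the last theorem of Part I) and get $\CO_b \preceq \CO_{[\e^\mu]}$. The Newton point of $[\e^\mu]$ is computed with the actual $\s=\Ad(\g)\circ\varsigma$, and when $G$ is not quasi-split one only has $\nu_{[\e^\mu]} \le \bar\mu$, in general strictly: for instance, for the inner form of $GL_4$ with $\g=\t^2$ (so $\s$ acts on coweights by $(13)(24)$) and $\mu=(1,0,0,0)$, one has $\bar\mu=\mu$ but $\nu_{[\e^\mu]}=(\tfrac12,\tfrac12,0,0)$. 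Hence $[b]\in B(G,\mu)$ does not formally imply $\CO_b\preceq\CO_{[\e^\mu]}$. What rescues the statement is that the integrality constraints on Newton points of the $\s$-twisted group force every $[b]\in B(G,\mu)$ to satisfy $\nu_b\le\nu_{[\e^\mu]}$, i.e.\ $[\e^\mu]$ is the unique maximal element of $B(G,\mu)$; but this is a nontrivial assertion that must be proved separately (it is part of what \cite{He00} establishes), not a consequence of the definitions. For quasi-split $G$, where $\s=\varsigma$ preserves the dominant chamber and $\nu_{[\e^\mu]}=\bar\mu$, your argument is essentially complete; in the general case it proves the theorem only with $B(G,\mu)$ replaced by $\{[b]: \k(b)=\k(\mu),\ \nu_b\le\nu_{[\e^\mu]}\}$, and the missing comparison of these two sets is exactly where additional work is required.
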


This result is conjectured by Kottwitz and Rapoport in \cite{KR} and \cite{R:guide}. The ``only if'' part is a group-theoretic version of Mazur's inequality. The case where $G$ is an unramified group and $\breve \CK$ is a hyperspecial maximal subgroup, is proved by Rapoport and Richartz in \cite[Theorem 4.2]{RR}. Another proof is given by Kottwitz in \cite{Ko3}. The case where $G$ is an unramified group and $\breve \CK$ is an Iwahori subgroup, is proved in \cite[Notes added June 2003, (7)]{R:guide}. The ``if'' part is the ``converse to Mazur's inequality'' and is proved by Wintenberger in \cite{Wi} in case $G$ is quasi-split. The general case of both directions is proved in \cite[Theorem A]{He00}. 

\subsubsection{The closure relation on $B(G, \mu)$}
In \S \ref{closure-ii}, we have discussed the closure relation between the $\s$-conjugacy classes of $G(\breve F)$. For $[b], [b'] \in B(G, \mu)$, $[b'] \subset \overline{[b]}$ if and only if $\nu_{b'} \le \nu_b$ in the dominance order (here $\k(b')=\k(b)$ is automatically satisfied). 

Now we show that $M_K=\sqcup_{[b] \in B(G, \mu)} M_K \cap [b]$ is a weak stratification. This is the group-theoretic analogy of the Grothendieck conjecture on the Newton strata of Shimura varieties. 

\begin{theorem}
For $[b], [b'] \in B(G, \mu)$, $\overline{M_K \cap [b]} \cap [b'] \neq \emptyset$ if and only if $\nu_{b'} \le \nu_b$. 
\end{theorem}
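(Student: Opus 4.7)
The plan is to establish each direction of the biconditional separately.

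\emph{Necessity.} Since $\Adm(\mu)$ is downward closed under the Bruhat order, $M_K$ is a closed subset of $G(\breve F)$, so $\overline{M_K\cap[b]}\subset\overline{[b]}$. If the closure meets $[b']$, then $[b']\cap\overline{[b]}\neq\emptyset$, and Theorem \ref{tri-par} decomposes $\overline{[b]}=\bigsqcup_{[b'']\preceq[b]}[b'']$; hence $[b']\preceq[b]$, which, given that both have Kottwitz invariant $\k(\mu)$, amounts to $\nu_{b'}\le\nu_b$.

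\emph{Sufficiency.} Assume $\nu_{b'}\le\nu_b$, so $\CO_{[b']}\preceq\CO_{[b]}$ in $\tW\sslash\tW_\s$. The key is to locate a $\s$-straight element $x\in\CO_{[b]}\cap\Adm(\mu)$. Once such $x$ is in hand, Theorem \ref{min-dl} yields $\breve\CI\dot x\breve\CI\subset[b]$, and together with $x\in\Adm(\mu)$ this gives $\breve\CI\dot x\breve\CI\subset M_K\cap[b]$, whence the Bruhat closure satisfies
\[
\overline{\breve\CI\dot x\breve\CI}=\bigsqcup_{y\le x}\breve\CI\dot y\breve\CI\subset\overline{M_K\cap[b]}.
\]
Now apply Proposition \ref{par} (in its extension to straight classes in $\tW\sslash\tW_\s$) to $\CO_{[b']}\preceq\CO_{[b]}$ at the minimal-length representative $x\in\CO_{[b]}$: it produces a minimal-length, hence $\s$-straight, element $x'\in\CO_{[b']}$ with $x'\le x$. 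By Theorem \ref{min-dl}, $\dot x'\in[b']$, while $\breve\CI\dot x'\breve\CI\subset\overline{\breve\CI\dot x\breve\CI}\subset\overline{M_K\cap[b]}$, supplying the required point of $\overline{M_K\cap[b]}\cap[b']$.

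\emph{Producing $x$.} The preceding theorem (Kottwitz--Rapoport) gives some $w\in\Adm(\mu)$ with $\breve\CI\dot w\breve\CI\cap[b]\neq\emptyset$. I iterate Theorem \ref{DL-red}: each reduction step replaces $w$ by $sw$ or $sw\s(s)$, both strictly Bruhat-below $w$, and so still in the downward-closed set $\Adm(\mu)$. Theorem \ref{a-red} guarantees the process terminates at a minimal-length element of the form $ux$, where $x\in\CO_{[b]}$ is $\s$-straight, $u\in W_{I(\tilde\BS,x,\s)}$, and $\ell(ux)=\ell(u)+\ell(x)$. Since $x\le ux\in\Adm(\mu)$, it follows that $x\in\Adm(\mu)$, completing the construction.

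\emph{Main obstacle.} The heart of the argument lies in the extraction of the straight representative $x$: one must combine the nonemptiness supplied by Kottwitz--Rapoport with the Bruhat-downward-closedness of $\Adm(\mu)$ under each single Deligne--Lusztig reduction step, and appeal to the structure (Theorem \ref{a-red}) that forces termination at a minimal-length element of the shape $ux$ with $x$ straight in $\CO_{[b]}$ and $\ell(ux)=\ell(u)+\ell(x)$. Once $x$ is secured, the closure of the Iwahori cell $\breve\CI\dot x\breve\CI$ together with Proposition \ref{par} (the compatibility of $\preceq$ with Bruhat order on minimal-length representatives) delivers the desired point of $[b']$ in $\overline{M_K\cap[b]}$ mechanically.
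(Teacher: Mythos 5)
Your necessity direction and the second half of your sufficiency argument (from the moment a $\s$-straight element $x\in\Adm(\mu)\cap\CO_{[b]}$ is in hand: Theorem \ref{min-dl}, the closure of $\breve\CI\dot x\breve\CI$, and Proposition \ref{par} giving $x'\in(\CO_{[b']})_{\min}$ with $x'\le x$ and $\dot x'\in[b']$) coincide with the paper's proof. The genuine gap is in ``Producing $x$''. The paper does not obtain the existence of a $\s$-straight element of $\CO_{[b]}$ inside $\Adm(\mu)$ from the reduction method; it invokes it as a separate, substantive theorem, namely \cite[Theorem 3.3]{He00}, which is the combinatorial core of the Kottwitz--Rapoport conjecture paper. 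Your derivation breaks at the claim that ``each reduction step replaces $w$ by $sw$ or $sw\s(s)$, both strictly Bruhat-below $w$''. That is only true in the length-decreasing case $sw\s(s)<sw$. To reach a minimal length element of a $\s$-twisted conjugacy class one must in general pass through length-preserving conjugations: in Theorem \ref{DL-red} the case $sw\s(s)>sw$ replaces $w$ by $sw\s(s)$ with $\ell(sw\s(s))=\ell(w)$, and the relations $\approx_\s$ and $\to_\s$ underlying Theorems \ref{a-min} and \ref{a-red} are built from such steps; in particular the passage in Theorem \ref{a-red} from a minimal length element to the shape $ux$ with $x$ straight consists entirely of length-preserving steps. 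An element $sw\s(s)$ of the same length as $w$ is not $\le w$, so the downward closedness of $\Adm(\mu)$ under the Bruhat order gives no control, and $\Adm(\mu)$ is not (in any obvious way) stable under length-preserving $\s$-conjugation by simple reflections. Your process therefore produces a straight $x\in\CO_{[b]}$, but with no guarantee that $x$, or any of the intermediate elements, lies in $\Adm(\mu)$ --- which is exactly the point at issue.

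A secondary remark: deducing the straight-element statement from the preceding non-emptiness theorem $[b]\cap M_K\neq\emptyset$ also inverts the logical order of the literature, since in \cite{He00} that non-emptiness is itself proved via the existence of a straight element of $\CO_{[b]}$ in $\Adm(\mu)$. Within this survey the non-emptiness theorem is quoted as a black box, so your use of it is not formally circular, but the missing content is precisely \cite[Theorem 3.3]{He00}, which must be cited (or reproved by a genuinely different argument) rather than extracted for free from Theorem \ref{DL-red} and Bruhat downward closedness.
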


\begin{remark}
The proof is similar to the proof of \cite[Theorem 5.6]{HR2} on Shimura varieties. The statement in loc.cit relies on several axioms on the Shimura varieties. The statement here, on the other hand, is valid unconditionally.  
\end{remark}

\begin{proof}
If $\overline{M_K \cap [b]} \cap [b'] \neq \emptyset$, then $\overline{[b]} \cap [b'] \neq \emptyset$ and thus $\nu_{b'} \le \nu_b$. 

Now suppose that $\nu_{b'} \le \nu_b$. Since $\k(b')=\k(b)$, by Theorem \ref{tri-par}, $\CO_{[b']} \preceq \CO_{[b]}$. By \cite[Theorem 3.3]{He00}, there exists a $\s$-straight element $w \in \Adm(\mu) \cap \CO_{[b]}$. By Theorem \ref{min-dl}, $\breve \CI \dot w \breve \CI \subset M_K \cap [b]$. Thus $\overline{\breve \CI \dot w \breve \CI} \subset \overline{M_K \cap [b]}$. By the definition of $\preceq$, there exists an element $w' \in \CO_{[b']}$ with $w' \le w$. Thus $\dot w' \in [b']$ and $\dot w' \in \overline{\breve \CI \dot w \breve \CI} \subset \overline{M_K \cap [b]}$. In other words, $\overline{M_K \cap [b]} \cap [b'] \neq \emptyset$.
\end{proof}

\subsection{Application}\label{Shimura}

The study of the admissible subsets of $G(\breve F)$ and their intersections we discussed above has found important applications in the study of Shimura varieties. They serve as the group-theoretic model of the some characteristic subsets (Newton strata, Kottwitz-Rapoport strata, Ekedahl-Oort strata, etc) in the reduction modulo $p$ of a Shimura variety with parahoric level structure. These subsets have been studied intensively in the last two decades. We refer to the survey articles by Rapoport \cite{R:guide}, by Haines \cite{Hai}, and to the new preprint \cite{HR2} on a group-theoretic approach to study these characteristic subsets. 

In Table \ref{table2-2}, we give a very rough comparison between the admissible subsets of $G(\breve F)$ we discussed above and the characteristic subsets of Shimura varieties. 

Let $(\bG, \{h\})$ be a Shimura datum. Let $\bK=K^p K_p$ be an open compact subgroup of $\bG(\BA_f)$, where $K=K_p$ is a parahoric subgroup of $\bG(\bQ_p)$. Let $G=\bG \otimes_{\BQ} \BQ_p$. Let $\mu$ be the dominant coweight corresponding to $\{h\}$. We regard $M_K$ as the group-theoretic model for the special fiber $Sh_K$ of the Shimura variety $Sh_\bK$. 

The stratifications $$M_K=\sqcup_{w \in \Adm(\mu)_K} \breve \CK \dot w \breve \CK, \qquad M_K=\sqcup_{[b] \in B(G, \mu)} (M_K \cap [b])$$ are the group-theoretic analogy of the Kottwitz-Rapoport stratification and the Newton stratification of the corresponding Shimura varieties. The stratification $$M_K=\sqcup_{w \in \Adm(\mu) \cap {}^K \tW} \breve \CK \cdot_\s \breve \CI \dot w \breve \CI$$ is the group-theoretic analogy of the Ekedahl-Kottwitz-Oort-Rapoport stratification, a stratification for Shimura varieties with parahoric level structure which interpolates between the Kottwitz-Rapoport stratification of Shimura varieties with Iwahori level structure and the Ekedahl-Oort stratification \cite{Oo}, \cite{Vi2} of Shimura varieties with hyperspecial level structure. 

\begin{table}
\caption{Group-theoretic analogy of Shimura varieties}
\begin{tabular}{|l | l |}
\hline
Group-theoretic side & Shimura variety side  \\
\hline
$M_K$ & $Sh_K$ \\
\hline
$\breve \CK \dot w \breve \CK$ for $w \in W_K \backslash \tW/W_K$ & Kottwitz-Rapoport stratum $KR_{K, w}$  \\
\hline
$M_K \cap [b]$ for $[b] \in B(G)$ & Newton stratum $\mathit S_{K, [b]}$ \\
\hline
$\breve \CK \cdot_\s \breve \CI \dot w \breve \CI$ for $w \in {}^K \tW$ & EKOR stratum $EKOR_{K, w}$\\
\hline
$M_K=\sqcup_{w \in \Adm(\mu)_K} \breve \CK \dot w \breve \CK$ & Non-emptiness of KR strata (in the natural range) 
\\ \hline
$M_K=\sqcup_{[b] \in B(G, \{\mu\})} (M_K \cap [b])$ & Non-emptiness of Newton strata
\\ \hline
$M_K=\sqcup_{w \in \Adm(\mu) \cap {}^K \tW} \breve \CK \cdot_\s \breve \CI \dot w \breve \CI$ & Non-emptiness of EKOR strata
\\ \hline
$\breve \CK \dot w \breve \CK=\sqcup_{x \in {}^K \tW \cap W_K w W_K} \breve \CK \cdot_\s \breve \CI \dot w \breve \CI$ & Each KR stratum is a union of EKOR strata
\\ \hline
$\overline{\breve \CK \dot w \breve \CK}=\sqcup_{w' \le w} \breve \CK \dot w' \breve \CK$ & Closure relation between KR strata
\\ \hline
$\overline{M_K \cap [b]} \cap [b'] \neq \emptyset \iff \nu_{b'} \le \nu_b$ & Closure relation between Newton strata
\\ \hline
$\overline{\breve \CK \cdot_\s \breve \CI \dot w \breve \CI}=\sqcup_{w' \preceq_{K, \s}} \breve \CK \cdot_\s \breve \CI \dot w' \breve \CI$ & Closure relation between EKOR strata
\\ \hline
$\dim_{\breve \CK} \breve \CK \dot w \breve \CK=\max\{\ell(x); x \in W_K w W_K \cap {}^K \tW\}$ & Conjectural dimension for KR strata
\\ \hline
$\dim_{\breve \CK} \breve \CK \cdot_\s \breve \CI \dot w \breve \CI=\ell(w)$ & Conjectural dimension for EKOR strata
\\ \hline
For $\s$-straight $w$, $\breve \CK \cdot_\s \breve \CI \dot w \breve \CI \subset [\dot w]$ & Special EKOR stratum in a single Newton stratum
\\ \hline
For $\breve \CK' \subset \breve \CK$, $M_K=\breve \CK \cdot_\s M_{K'}$ & Change of parahoric
\\ \hline
\end{tabular}
\label{table2-2}
\end{table}

We do not have a simple dimension formula for Newton strata. However, we have the following diagram
\[\xymatrix{
& \{g \in G(\breve F); g \i b \s(g) \in M_K\} \ar[dl]_-{p_1} \ar[dr]^-{p_2} & \\
X(\mu, b)_K & & M_K \cap [b]
}
\]

Here the map $p_1$ is the projection map and the map $p_2$ is a $J_b$-torsor. Note that $J_b$ is a discrete group. This suggests that there might be a way to ``locally'' identify $M_K \cap [b]$ with $\{g \in G(\breve F); g \i b \s(g) \in M_K\}$. Notice that $$\dim_{\breve \CK} (M_K \cap [b])-\dim X(\mu, b)_K=\<2 \rho, \nu_b\>,$$ where $X(\mu, b)_K=\{g \breve \CK\in G(\breve F)/\breve \CK; g \i b \s(g) \in M_K\}$ is a union of affine Deligne-Lusztig varieties. The number $\<2 \rho, \nu_b\>$ is exactly the conjectural dimension of central leaves.

\section{Part III: Cocenters of affine Hecke algebras}

\subsection{Motivation: group algebras} To explain some basic idea of the cocenter-representation duality, we start with a naive example. 

Let $G$ be a finite group and $V$ be a finite dimensional complex representation of $G$. We define the character of $V$ by $\chi_V(g)=Tr(g, V)$ for $g \in G$. Then $\chi_V$ is a class function, i.e., $\chi_V(g)=\chi_V(g')$ if $g$ and $g'$ are conjugate in $G$. 

Let $V_1, \cdots, V_k$ be the irreducible representations of $G$ (up to isomorphism) and $\CO_1, \cdots, \CO_l$ be the conjugacy classes of $G$. It is well known that $k=l$ and the matrix $(\chi_{V_i}(g_j))_{1 \le i, j \le k}$ is invertible, where $g_j$ is a representative of $\CO_j$. This matrix is called the {\it character table} of $G$. 

We reformulate it in a different way. 

Let $H_1=\BC[G]$ be the group algebra of $G$. Let $[H_1, H_1]$ be the commutator of $H_1$, the subspace of $H_1$ spanned by $[h, h']:=h h'-h' h$ for all $h, h' \in H_1$. We call the quotient space $\bar H_1=H_1/[H_1, H_1]$ the {\it cocenter} of $H_1$.  

It is easy to see that

(1) For any $g, g'$ in a given conjugacy class $\CO$ of $G$, the image of $g$ and $g'$ in $\bar H_1$ are the same. We denote it by $[\CO]$. 

(2) $\{[\CO_1], \cdots, [\CO_k]\}$ is a basis of $\bar H_1$. 

Let $R(H_1)=R(G)$ be the Grothendieck group of finite dimensional complex representations of $H_1$. Then $\{V_1, \cdots, V_k\}$ is a basis of $R(H_1)$. The trace map $$Tr: H_1 \to R(H_1)^*, \qquad g \mapsto (V \mapsto Tr(g, V))$$ factors through $Tr: \bar H_1 \to R(H_1)^*$. Moreover,
\[\text{ $Tr: \bar H_1 \to R(H_1)^*$ is an isomorphism of vector spaces. }\]
We call it the {\it cocenter-representation duality} of the group $G$. 

\subsection{Hecke algebras}\label{2.2} Let $(W, \BS)$ be a Coxeter system. Fix a set of indeterminates $\mathbf c=\{c(s); s\in \BS\}$
such that $c(s)=c(t)$ if $s$ and $t$ are conjugate in $W$, and let
$\Lambda=\BZ[c(s); s\in \BS].$

The generic Hecke algebra
  $\CH=\CH(W,\mathbf c)$ is the $\Lambda$-algebra generated by $\{T_w;
  w\in \tW\}$ subject to the relations:
\begin{enumerate}
\item $T_w\cdot T_{w'}=T_{ww'}$, if $\ell(ww')=\ell(w)+\ell(w')$;
\item $(T_s+1)(T_s-c(s))=0,$ $s\in \BS$.
\end{enumerate}

Note that the definition of Hecke algebra works for extended affine Weyl groups as well. 

Let $\kk$ be a field. If we assign an element $q_s \in \kk$ to $c(s)$ for $s \in \BS$, then we can regard $\kk$ as a $\L$-module and $H_{\fbq}=\CH \otimes_\L \kk$ is the specialization of $\CH$. In particular, if $q_s=1$ for all $s \in \BS$, then we obtain the group algebra $H_1=\kk[W]$. 

We are interested in the cocenter of $\CH$, the representations of $\CH$ and their relation via the trace map. 

One difficulty is that given two elements $w$ and $w'$ in the same conjugacy class of $W$, the image of $T_w$ and $T_{w'}$ in $\bar \CH$ may not be the same. In fact, we should look at the ``strongly conjugate'' elements or the elements in the same $\approx$-equivalence class. 

\begin{proposition}\label{ww'}
Let $w, w' \in W$. 

(1) If $w \approx w'$, then the image of $T_w$ and $T_{w'}$ in $\bar \CH$ are the same. 

(2) If $w \sim w'$ and $q_s \neq 0$ for all $s \in \BS$, then the image of $T_w$ and $T_{w'}$ in $\bar H_{\fbq}$ are the same.
\end{proposition}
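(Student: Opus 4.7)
My plan for both parts is to reduce to a single elementary conjugation by a simple reflection and verify the required congruence on such a step; the statements then follow by iterating through the defining sequence. Since $w \approx w'$ requires both $w \to w'$ and $w' \to w$ with weakly decreasing lengths, one has $\ell(w) = \ell(w')$ and moreover every intermediate element in the connecting sequence shares this common length. For part~(1) it therefore suffices to prove the elementary step: if $s \in \BS$ and $\ell(sws) = \ell(w)$, then $T_w \equiv T_{sws} \pmod{[\CH,\CH]}$.

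For this I will invoke the standard length trichotomy: when $\ell(sws) = \ell(w)$, one of the following holds: (a) $\ell(ws) = \ell(w) - 1$, or (b) $\ell(sw) = \ell(w) - 1$, or (c) $sws = w$. Case (c) is trivial. In case (a), setting $u = ws$ gives $w = us$ reduced, and the elementary identity $sws = s(us)s = su$ (using $s^2 = 1$) together with $\ell(su) = \ell(sws) = \ell(u) + 1$ shows $su$ is reduced as well. The Iwahori--Matsumoto relations then yield $T_w = T_u T_s$ and $T_{sws} = T_s T_u$, so $T_w - T_{sws} = [T_u, T_s] \in [\CH,\CH]$. Case (b) is perfectly symmetric, giving $T_w - T_{sws} = -[T_u, T_s]$. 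Iterating through the sequence connecting $w$ to $w'$ finishes part~(1).

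For part~(2), the $\sim$-equivalence is built from single conjugations $w_{i-1} \mapsto w_i = x_i w_{i-1} x_i^{-1}$ in which either $x_i \cdot w_{i-1}$ or $w_{i-1} \cdot x_i^{-1}$ is reduced of length $\ell(w) + \ell(x_i)$. In the first alternative, reducedness together with $\ell(w_i) = \ell(w)$ forces $w_i \cdot x_i = x_i \cdot w_{i-1}$ to also be reduced, whence $T_{x_i} T_{w_{i-1}} = T_{x_i w_{i-1}} = T_{w_i x_i} = T_{w_i} T_{x_i}$. Under the hypothesis $q_s \neq 0$ for every $s \in \BS$, the quadratic relation $T_s(T_s - (q_s - 1)) = q_s$ shows each $T_s$, and hence each $T_{x_i}$, is invertible in $H_{\fbq}$; the displayed identity then rearranges to $T_{w_i} = T_{x_i} T_{w_{i-1}} T_{x_i}^{-1} \equiv T_{w_{i-1}} \pmod{[H_{\fbq}, H_{\fbq}]}$. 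The second alternative is handled by the mirror identity $T_{w_{i-1}} T_{x_i^{-1}} = T_{x_i^{-1}} T_{w_i}$. The main obstacle is the combinatorial trichotomy in part~(1); once it is in hand, the Hecke-algebra computations collapse to a single commutator bracket, and part~(2) becomes essentially formal through conjugation by invertible $T_{x_i}$.
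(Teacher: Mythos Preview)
Your proof is correct and follows essentially the same approach as the paper's own argument: both parts reduce to a single elementary conjugation, use the Iwahori--Matsumoto relation $T_{uv}=T_uT_v$ when $\ell(uv)=\ell(u)+\ell(v)$ to factor $T_w$ and $T_{w'}$ as products of the same two elements in opposite orders, and conclude via a commutator (part~(1)) or via conjugation by the invertible $T_x$ (part~(2)). Your write-up is slightly more detailed than the paper's in that you spell out the length trichotomy in part~(1) and the mirror alternative in part~(2), whereas the paper simply says ``without loss of generality $sw<w$'' and treats only one of the two $\sim$-alternatives; but the underlying idea is identical.
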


\begin{proof}
(1) It suffices to prove the case where $w \xrightarrow{s} w'$ and $\ell(w)=\ell(w')$. Without loss of generality, we may assume furthermore that $s w<w$. Then $T_w=T_s T_{s w}$ and $T_{w'}=T_{s w} T_{s}$. Hence the image of $T_w$ and $T_{w'}$ are the same. 

(2) It suffices to prove the case where $w'=x w x \i$ and $\ell(x w)=\ell(w)+\ell(x)=\ell(w')+\ell(x)$. Then we have $T_x T_w=T_{x w}=T_{w' x}=T_{w'} T_x$. Since $q_s \neq 0$ for all $s \in \BS$, $T_x$ is invertible in $H_{\fbq}$. Thus $T_w=T_x \i T_{w'} T_x \equiv T_{w'} \mod [H_{\fbq}, H_{\fbq}]$. 
\end{proof}

\begin{proposition}\label{3-2}
We denote by $W_{\min}$ the set of elements of $W$ minimal length in their conjugacy class. Suppose that the {\bf Red-Min} Property holds for every conjugacy class of $W$. Then for any $w \in W$, the image of $T_w$ in $\bar \CH$ is a linear combination of the image of $T_x$ for $x \in W_{\min}$.
\end{proposition}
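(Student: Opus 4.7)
The plan is to argue by strong induction on $\ell(w)$, with the base case being $w \in W_{\min}$ (where the statement is trivial). For the inductive step, I would invoke the \textbf{Red-Min} property on the conjugacy class of $w$ to obtain a sequence $s_1,\ldots,s_k \in \BS$ of conjugating simple reflections with weakly decreasing length, ending at a minimal-length representative. Since $w \notin W_{\min}$, let $j$ be the first index at which the length strictly drops, and set $w' = s_{j-1}\cdots s_1 \cdot w \cdot s_1 \cdots s_{j-1}$ and $s = s_j$. Then $\ell(w') = \ell(w)$ with $w \approx w'$, so Proposition \ref{ww'}(1) gives $T_w \equiv T_{w'} \pmod{[\CH,\CH]}$, and $\ell(sw's) < \ell(w')$.

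Next, I would carry out the key local computation in $\CH$. A short case check shows that if conjugation by a single simple reflection strictly decreases length, the drop must be exactly $2$: both $\ell(sw') = \ell(w')-1$ and $\ell(w's) = \ell(w')-1$, and $\ell(sw's) = \ell(w')-2$. Hence $T_{w'} = T_s T_{sw'} = T_s T_{sw's} T_s$, and modulo $[\CH,\CH]$ this equals $T_{sw's} T_s^2$. Using $T_s^2 = (c(s)-1)T_s + c(s)$ and $T_{sw's} T_s = T_{sw'}$, I obtain the congruence
\[
T_w \;\equiv\; T_{w'} \;\equiv\; (c(s)-1)\, T_{sw'} + c(s)\, T_{sw's} \pmod{[\CH,\CH]}.
\]

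Finally, since $\ell(sw') = \ell(w)-1$ and $\ell(sw's) = \ell(w)-2$ are both strictly smaller than $\ell(w)$, the inductive hypothesis (applied to the conjugacy classes of $sw'$ and $sw's$, for which \textbf{Red-Min} is assumed to hold by the blanket hypothesis) expresses $T_{sw'}$ and $T_{sw's}$ as $\L$-linear combinations of $T_x$ for $x \in W_{\min}$. Substituting back gives the same conclusion for $T_w$.

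I do not foresee a serious obstacle: the scheme is essentially the same as the one used by Geck–Pfeiffer in the finite case, and the only ingredients are \textbf{Red-Min} (for the conjugacy class of $w$, to produce the step down in length) and the quadratic relation in $\CH$ (to rewrite $T_s^2$). The one subtlety worth writing out carefully is the elementary length-parity argument showing that a strict length decrease under $s$-conjugation must be by exactly $2$, with simultaneous descents on both sides, since this is what guarantees $T_s T_{sw's} T_s$ is a valid decomposition and that $sw'$, $sw's$ genuinely have length $< \ell(w)$ for the induction to close.
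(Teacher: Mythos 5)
Your argument is correct and is essentially the paper's own proof: use \textbf{Red-Min} to reach, via length-preserving conjugations, an element $w'\approx w$ with $sw's<w'$, invoke Proposition \ref{ww'}(1) to replace $T_w$ by $T_{w'}$, write $T_{w'}=T_sT_{sw's}T_s\equiv T_{sw's}T_s^2$ modulo $[\CH,\CH]$, apply the quadratic relation to get $(c(s)-1)T_{sw'}+c(s)T_{sw's}$, and conclude by induction on length. The only difference is that you spell out two points the paper leaves implicit, namely the extraction of the first strict drop from the \textbf{Red-Min} sequence and the parity argument showing the drop is exactly $2$ with descents on both sides; both are correct and worth recording.
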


\begin{proof}
We argue by induction on $w$. 

If $w \in W_{\min}$, then the statement automatically holds for $w$. 

If $w \notin W_{\min}$, then by the {\bf Red-Min} property, there exists $w' \in W$ and $s \in \BS$ such that $w' \approx w$ and $s w' s<w'$. By Proposition \ref{ww'} (1), $$T_w \equiv T_{w'} \mod [\CH, \CH].$$ Since $s w' s<w$, \begin{align*} T_{w'} &=T_s T_{s w' s} T_s \equiv T_{w'} T_s^2=(c(s)-1) T_{s w' s} T_s+c(s) T_{s w' s} \\ &=(c(s)-1) T_{s w'}+c(s) T_{s w' s} \mod [\CH, \CH].\end{align*} Now the statement for $w$ follows from inductive hypothesis on $s w'$ and on $s w' s$. 
\end{proof}

\begin{theorem}\label{3-3}
Let $W$ be a finite Coxeter group or an extended affine Weyl group. Suppose that $q_s \neq 0$ for all $s \in \BS$. Then 

(1) For any conjugacy class $\CO$ of $W$, the image of $T_w$ in $\bar H_{\fbq}$ is independent of the choice of $w \in \CO_{\min}$. We denote the image by $T_\CO$. 

(2) The set $\{T_\CO; \CO \in W/W_\D\}$ spans $\bar H_{\fbq}$. 
\end{theorem}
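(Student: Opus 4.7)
The plan is to reduce both parts to the structural results on minimal length elements established in Part I. For part (1), the key input is that in any conjugacy class $\CO$ of $W$, the set $\CO_{\min}$ forms a single $\sim$-equivalence class: this is Theorem \ref{f-min}(2) for finite Coxeter groups and Theorem \ref{a-min}(2) for affine Weyl groups, extended to the quasi-Coxeter (extended affine) setting as described in Part I. Once this is in hand, Proposition \ref{ww'}(2) (whose proof genuinely uses $q_s \neq 0$ in order to invert the intermediate $T_x$) gives at once that $T_w \equiv T_{w'} \pmod{[H_{\fbq},H_{\fbq}]}$ for any $w, w' \in \CO_{\min}$. Hence the element $T_\CO \in \bar H_{\fbq}$ is well-defined.

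For part (2), I would apply Proposition \ref{3-2} directly. That proposition shows, by induction on $\ell(w)$ using the braid-type identity
\[
T_{w'} \equiv (c(s)-1)\, T_{sw'} + c(s)\, T_{sw's} \pmod{[\CH,\CH]}
\]
whenever $sw's < w' \approx w$, that every $T_w$ can be written as a $\Lambda$-linear combination of the $T_x$ with $x \in W_{\min}$. The hypothesis of Proposition \ref{3-2} is the \textbf{Red-Min} property, which is exactly Theorem \ref{f-min}(1) in the finite case and Theorem \ref{a-min}(1) in the affine case. Partitioning $W_{\min} = \bigsqcup_\CO \CO_{\min}$ and combining with part (1), the image of each $T_x$ with $x \in W_{\min}$ equals some $T_\CO$, so $\{T_\CO\}_{\CO \in W/W_\D}$ spans $\bar H_{\fbq}$.

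The only point requiring extra care is the passage from (affine) Coxeter groups to extended affine Weyl groups, where $\approx$ is enlarged to permit conjugation by length-zero elements $\tau \in \Omega$. This is harmless for the Hecke algebra: since $\ell(\tau) = 0$ we have $T_\tau T_{\tau^{-1}} = T_1 = 1$ with no hypothesis on the parameters, so $T_\tau$ is a unit and $T_{\tau w \tau^{-1}} = T_\tau T_w T_{\tau^{-1}} \equiv T_w \pmod{[\CH,\CH]}$. Consequently the proofs of Propositions \ref{ww'} and \ref{3-2} go through verbatim in the extended setting. The main bookkeeping issue (rather than a real obstacle) is to track which step requires $q_s \neq 0$: only part (1), which invokes $\sim$-equivalence, needs invertibility of the $T_s$, whereas the spanning statement in part (2) relies only on $\approx$-equivalence and holds over the generic ring $\Lambda$ itself.
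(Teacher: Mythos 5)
Your proposal is correct and is essentially the paper's intended argument: Theorem \ref{3-3} is stated there without a separate proof precisely because it is the assembly of Proposition \ref{ww'}(2) with Theorem \ref{f-min}(2)/Theorem \ref{a-min}(2) for part (1), and of Proposition \ref{3-2} with the \textbf{Red-Min} property from Theorem \ref{f-min}(1)/Theorem \ref{a-min}(1) for part (2). Your remark that $\Omega$-conjugation is harmless because $T_\t$ is a unit independently of the parameters, and that the spanning statement already holds over $\Lambda$ while only part (1) uses $q_s\neq 0$, matches the paper's setup exactly.
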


\subsection{Finite Hecke algebras}\label{finite-ct}
In this section, we assume that $W$ is a finite Coxeter group and $H_{\fbq}$ is a finite Hecke algebra with nonzero parameters. 

By Tits' deformation theorem \cite[Theorem 7.4.6 \& 7.4.7]{GP00}, for generic parameter $\mathbf c=\{c(s); s \in \BS\}$, the Hecke algebra $H_{\mathbf c}$ is isomorphic to the group algebra $H_1$, and hence the number of irreducible representations of $H_{\mathbf c}$ equals the number of irreducible representations of $H_1$, and hence equals the number of conjugacy classes of $W$. 

By comparing the number of irreducible representations of $H$ and the number of conjugacy classes of $W$, one deduces that $\{T_\CO\}$ is in fact a basis of $\bar H$. Therefore, 

\begin{theorem}
Let $W$ be a finite Coxeter group and $H_{\mathbf c}$ be the Hecke algebra of $W$ with generic parameter $\mathbf c$. Then $Tr: \bar H_{\mathbf c} \to R(H_{\mathbf c})^*$ is an isomorphism. 
\end{theorem}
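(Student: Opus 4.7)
The plan is to combine Theorem~\ref{3-3} (a spanning set of $\bar H_{\mathbf c}$ indexed by conjugacy classes) with Tits' deformation theorem (under generic parameters, $H_{\mathbf c}$ is split semisimple with the same number of simple modules as $\kk[W]$), and then verify non-degeneracy of the character pairing via specialization at $\mathbf c = 1$.

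First, I would set $n := |W/W_\Delta|$, the number of conjugacy classes of $W$. By Theorem~\ref{3-3}(2), the family $\{T_\CO\}_{\CO \in W/W_\Delta}$ spans $\bar H_{\mathbf c}$, so $\dim \bar H_{\mathbf c} \le n$. On the other side, Tits' deformation theorem identifies the set of irreducible $H_{\mathbf c}$-modules (after possibly enlarging the field) with the irreducible representations of $H_1 = \kk[W]$, of which there are exactly $n$. Hence $\dim R(H_{\mathbf c})^* = n$, so it is enough to show that $Tr$ is either injective or surjective.

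Next, I would consider the $n \times n$ character matrix $M_{\mathbf c} := (\chi_V(T_\CO))_{V,\CO}$, where $V$ ranges over irreducible $H_{\mathbf c}$-modules. Working with the generic Hecke algebra $\CH$ over $\Lambda$ and passing to a suitable splitting field, the entries of $M_{\mathbf c}$ are (Laurent) polynomial expressions in the parameters $\mathbf c$, so $\det M_{\mathbf c}$ is a single such polynomial. Under the specialization $\mathbf c \to 1$, the algebra $H_{\mathbf c}$ becomes $\kk[W]$, the basis element $T_\CO \in \bar H_{\mathbf c}$ becomes the image of any $w \in \CO$ in $\bar{\kk[W]}$, and the pairing $\chi_V(T_\CO)$ becomes the usual value $\chi_V(w)$ of an irreducible character of $W$ on $\CO$. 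Thus $M_{\mathbf c}|_{\mathbf c=1}$ is the character table of $W$, which is invertible by classical finite group representation theory. Hence $\det M_{\mathbf c}$ is a nonzero polynomial, and in particular $M_{\mathbf c}$ is invertible for any generic choice of parameters.

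Finally, invertibility of $M_{\mathbf c}$ shows that the linear functionals $Tr(T_\CO) \in R(H_{\mathbf c})^*$ are linearly independent, so $Tr$ is injective on the spanning set $\{T_\CO\}$. Combined with the upper bound $\dim \bar H_{\mathbf c} \le n = \dim R(H_{\mathbf c})^*$, this simultaneously forces $\{T_\CO\}$ to be a basis of $\bar H_{\mathbf c}$ and $Tr$ to be an isomorphism. The main subtle point is keeping track of what ``generic'' means and verifying that the specialization argument is legitimate, i.e.\ that the character values really are polynomial in $\mathbf c$ and that Tits' deformation produces the right count of simples; this is where one invokes the standard machinery (see \cite[\S 7.4]{GP00}).
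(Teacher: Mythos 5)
Your argument is correct, and it shares the paper's skeleton (the spanning set $\{T_\CO\}$ from Theorem \ref{3-3} plus Tits' deformation theorem to count irreducible modules), but it closes the argument by a different mechanism. The paper deduces everything from the algebra isomorphism $H_{\mathbf c} \cong \kk[W]$ furnished by Tits' theorem: a split semisimple algebra with $n$ simple modules has cocenter of dimension exactly $n$ on which the irreducible characters are linearly independent (this is the group-algebra duality of \S 3.1 transported through the isomorphism), so the $n$-element spanning set is automatically a basis and $Tr$ is automatically an isomorphism. You instead prove non-degeneracy of the explicit pairing matrix $M_{\mathbf c}=(\chi_V(T_\CO))$ by specializing at $\mathbf c=1$ to the ordinary character table of $W$ and invoking genericity; this is also valid and has the merit of producing the invertible ``character table'' of $H_{\mathbf c}$ in the $T_\CO$-basis directly, which is exactly the object discussed after the theorem, rather than obtaining invertibility a posteriori. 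The price is the extra bookkeeping you acknowledge: the character values live only in the integral closure of $\Lambda$ in a splitting field (Laurent polynomials in roots $v_s$ of the parameters, not in the $c(s)$ themselves), and the conclusion ``$\det M_{\mathbf c}\neq 0$ at the given specialization'' requires that ``generic'' be interpreted so as to avoid the vanishing locus of this determinant (equivalently, so that the specialized algebra is split semisimple, as in \cite[\S 7.4]{GP00}); with that reading, the compatibility of Tits' bijection with specialization of character values is exactly \cite[Theorem 7.4.6 \& 7.4.7]{GP00} and your specialization step is legitimate. In short: same inputs, but the paper gets non-degeneracy structurally from semisimplicity, while you get it computationally from the invertibility of the character table of $W$.
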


This leads to the definition and study of ``character table'' of finite Hecke algebras in \cite{GP93}. 

\begin{example}
Let $W=S_3$ be the Weyl group of type $A_2$. There are three conjugacy classes of $W$: $\{1\}, \{s_1, s_2\}, \{s_1 s_2, s_2 s_1\}$. For generic parameter $q \in \kk^\times$, there are three irreducible representations of $H_{\fbq}$: trivial representation, Steinberg representation and a $2$-dimensional irreducible representation $\pi$. The character table is given as follows: 
\[\begin{tabular}{|c | c | c| c|}
\hline
$A_2$ &  $triv$ & $St$ & $\pi$ \\ \hline
$T_{s_1 s_2}$ &    $q^2$      &  $1$ & $-q$ \\ \hline
$T_{s_1}$       &   $q$       & $-1$ & $q-1$ \\ \hline
$1$       &    $1$      &  $1$ & $2$ \\
\hline
\end{tabular}\]
\end{example}

\begin{example}
Let $W$ be the Weyl group of type $C_2$. There are three conjugacy classes of $W$: $\{1\}, \{s_1, s_2 s_1 s_2\}, \{s_2, s_1 s_2 s_1\}, \{s_1 s_2, s_2 s_1\}, \{s_1 s_2 s_1 s_2\}$. For generic parameter $\fbq=(q_1, q_2) \in \kk^\times \times \kk^\times$, there are five irreducible representations of $H_{\fbq}$. We label these five modules by the bipartitions which parameterized the corresponding representations of the finite Weyl group: $2 \times 0$, $11 \times 0$, $0 \times 2$, $0 \times 11$, and $1 \times 1$. The character table is given as follows: 
\[
\begin{tabular}{|c|c|c|c|c|c|}
\hline
$C_2$&$2\times 0$ &$11\times 0$ &$0\times 2$ &$0\times 11$ &$1\times 1$ \\
\hline
$T_{s_1 s_2} $ &$q_1q_2$ &$-q_2$ &$-q_1$ &$1$ &$0$  \\
\hline
$T_{s_1 s_2 s_1 s_2}$ &$(q_1q_2)^2$ &$q_2^2$ &$q_1^2$ &$1$ &$-2q_1q_2$ \\
\hline
$T_{s_1}$ &$q_1$ &$-1$ &$q_1$ &$-1$ &$q_1-1$ \\
\hline
$T_{s_2}$ &$q_2$ &$q_2$ &$-1$ &$-1$ &$q_2-1$ \\
\hline
$1$ &$1$ &$1$ &$1$ &$1$ &$2$ \\
\hline
\end{tabular}
\]
\end{example}

\subsection{Representations of affine Hecke algebras and of $p$-adic groups}
In this subsection, we recall the relations between the representations of affine Hecke algebras and of $p$-adic groups. The main reference of this subsection is Vign\'eras' talk at ICM2002 \cite{Vig02}. This serves as one of the main motivation to our study of cocenter-representation duality of affine Hecke algebras. 

Let $G$ be a connected reductive group over a non-archimedean local field $F$ with residual field $\BF_q$ and let $\CI$ be an Iwahori subgroup of $G(F)$. Let $\kk$ be an algebraically closed field of characteristic $l \neq p$. Let $H_\kk(G, \CI)=\End_{\kk[G]} \kk[\CI \backslash G(F)]$ be the Iwahori Hecke $\kk$-algebra of $G(F)$. It is isomorphic to the extended affine Hecke algebra associated to $G$ with parameter $q \in \kk$. 

For any $\kk$-representation $V$ of $G(F)$, the space $V^\CI$ of $\CI$-fixed points is a right $H_\kk(G, \CI)$-module. Moreover, 

\begin{theorem}
The map $V \mapsto V^\CI$ gives a bijection between the irreducible $\kk$-representations of $G$ with $V^\CI \neq 0$ and the simple right $H_\kk(G, \CI)$-modules. 
\end{theorem}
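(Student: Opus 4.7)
The plan is to deduce the theorem from an equivalence of categories, following the Borel--Casselman strategy in the modular form due to Vignéras. Let $\mathrm{Rep}_\kk(G)$ denote the category of smooth $\kk$-representations of $G(F)$, and let $\mathrm{Rep}_\CI(G) \subset \mathrm{Rep}_\kk(G)$ be the full subcategory consisting of $V$ generated as a $\kk[G(F)]$-module by $V^\CI$. I would introduce the adjoint pair
\[
\mathcal{F}(V) = V^\CI = \Hom_{\kk[G(F)]}(P, V), \qquad
\mathcal{G}(M) = P \otimes_{H_\kk(G,\CI)} M,
\]
where $P := \kk[\CI \backslash G(F)] \cong \mathrm{ind}_\CI^{G(F)} \mathbf{1}$, so that $H_\kk(G,\CI) = \End_{\kk[G(F)]}(P)$ acts on $P$ from the right and $P^\CI = H_\kk(G,\CI)$ as a right module over itself.

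The first key step will be to establish that $P$ is projective in $\mathrm{Rep}_\kk(G)$, which is where the hypothesis $l \neq p$ enters. Since $\CI$ is a pro-$p$ group, every finite quotient of $\CI$ has order invertible in $\kk$, so the trivial character of $\CI$ is projective in $\mathrm{Rep}_\kk(\CI)$; by Frobenius reciprocity $P = \mathrm{ind}_\CI^{G(F)} \mathbf{1}$ is then projective in $\mathrm{Rep}_\kk(G)$. In particular, $\mathcal{F}$ is exact and commutes with arbitrary direct sums.

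Second, I would show that the unit $\eta_M : M \to \mathcal{F}\mathcal{G}(M)$ is an isomorphism for every right $H_\kk(G,\CI)$-module $M$. Using $P^\CI = H_\kk(G,\CI)$ and the exactness plus colimit-preservation of $\mathcal{F}$ from step one, a free presentation of $M$ reduces the claim to $M$ free, where it is tautological. Then, for $V \in \mathrm{Rep}_\CI(G)$, the counit $\epsilon_V : \mathcal{G}\mathcal{F}(V) \to V$ is surjective by definition of $\mathrm{Rep}_\CI(G)$, and injective because applying $\mathcal{F}$ to $\ker \epsilon_V$ gives zero (by the isomorphism on units), while any nonzero subobject of $\mathcal{G}\mathcal{F}(V)$ meets $\mathcal{F}\mathcal{G}\mathcal{F}(V) = \mathcal{F}(V)$ since $\mathcal{G}\mathcal{F}(V)$ is generated by its $\CI$-fixed vectors. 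Thus $\mathcal{F}$ and $\mathcal{G}$ are mutually inverse equivalences between $\mathrm{Rep}_\CI(G)$ and the category of right $H_\kk(G,\CI)$-modules, and simple objects correspond to simple objects under any equivalence of abelian categories.

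Finally, I would transfer this to the stated bijection on irreducibles: any irreducible $V$ with $V^\CI \neq 0$ is automatically generated by $V^\CI$, hence lies in $\mathrm{Rep}_\CI(G)$, so $V^\CI$ is simple; conversely, any simple $H_\kk(G,\CI)$-module $M$ corresponds via $\mathcal{G}$ to an irreducible $V \in \mathrm{Rep}_\CI(G)$ with $\mathcal{F}(V) = M$. The main obstacle is the projectivity step in positive residue characteristic $l$: over $\mathbb{C}$ one can invoke the Bernstein centre and harmonic-analytic arguments, which are not available in characteristic $l$. The substitute, which is Vignéras' key input, is to extract projectivity of $P$ directly from the pro-$p$ structure of $\CI$ combined with $l \neq p$; once projectivity is in hand, the categorical formalism above runs essentially as in the complex case, with the caveat that one must genuinely restrict to $\mathrm{Rep}_\CI(G)$ rather than all smooth representations having nonzero $\CI$-invariants, a restriction that is harmless for the irreducibility statement.
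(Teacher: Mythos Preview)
The paper does not give its own proof of this theorem; it simply records it as background and attributes the modular case to Vign\'eras. So the comparison is with the standard literature, and there your proposal has a genuine gap.

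Your projectivity step rests on the assertion that $\CI$ is a pro-$p$ group. This is false. The Iwahori subgroup sits in an extension
\[
1 \longrightarrow \CI(1) \longrightarrow \CI \longrightarrow T(\BF_q) \longrightarrow 1,
\]
where $\CI(1)$ is the pro-$p$ Iwahori (the pro-$p$ radical of $\CI$) and $T(\BF_q)$ is the group of $\BF_q$-points of the torus in the reductive quotient of the special fibre. The finite group $T(\BF_q)$ has order prime to $p$, but its order is typically divisible by various primes $l$; whenever $l \mid |T(\BF_q)|$ the trivial $\kk$-representation of $\CI$ is \emph{not} projective in $\mathrm{Rep}_\kk(\CI)$, hence $P = \mathrm{ind}_\CI^{G(F)} \mathbf 1$ is not projective in $\mathrm{Rep}_\kk(G)$, and $\mathcal F = (-)^\CI$ is not exact. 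Your second step then collapses: the reduction of $\eta_M$ to the free case via a free presentation of $M$ requires exactness of $\mathcal F$, and the argument that $\ker \epsilon_V$ has no $\CI$-invariants likewise uses that $\mathcal F$ kills no nonzero subobject of a representation generated by its $\CI$-invariants, which is again an exactness statement.

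This is exactly the obstacle that separates the modular case from Borel's complex argument, and it is why Vign\'eras' proof is more delicate. The standard repair is to run your projectivity argument with the pro-$p$ Iwahori $\CI(1)$ in place of $\CI$: there your claim is correct, $(-)^{\CI(1)}$ is exact, and one obtains the bijection between irreducibles with $V^{\CI(1)} \neq 0$ and simple modules over the pro-$p$ Iwahori Hecke algebra $H_\kk(G, \CI(1))$. One then descends to the Iwahori level by analysing how the finite abelian group $T(\BF_q)$ (equivalently, the subalgebra $\kk[T(\BF_q)] \subset H_\kk(G,\CI(1))$) acts, which is where the genuine work in the modular case lies. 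Your outline is the right shape for the complex case but does not, as written, survive the passage to characteristic $l$ dividing $q-1$.
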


For complex representations, the equivalence of category is well-known. For modular representations, it is due to Vign\'eras \cite{Vig98}. 

The case $l=p$ is very different. However, there is still a conjectural relation between the representations of $G$, and of $H_\kk(G, \CI)$ (with zero parameter) and of the pro-$p$-Iwahori Hecke algebra $H_\kk(G, \CI_p)$. For $GL_2(\BQ_p)$, it is proved by Barthel and Livn\'e in \cite{BL1} and \cite{BL2}. 

\subsection{Affine Hecke algebras}

We recall the definition of affine Hecke algebras. Let $\sR=(X^*, X_*, R, R^\vee, \Pi)$ be a based root datum and $\tW=X_* \rtimes W_0$ be the extended affine Weyl group associated to $\sR$ (see \S \ref{1.7}). Let $\mathbf c=\{c(s); s\in \tilde \BS\}$ such that $c(s)=c(t)$ if $s$ and $t$ are conjugate in $\tW$, and let $\Lambda=\BZ[c(s); s\in \tilde \BS].$ We define the extended affine Hecke algebra $\tilde \CH$ and its specialization $\tilde H_{\fbq}$ in a similar way as in \S\ref{2.2}. We fix a set of nonzero parameters $\fbq$ in an algebraically closed field $\kk$ and we may simply write $\tilde H$ instead of $\tilde H_{\fbq}$. The basis $\{T_w\}_{w \in \tW}$ gives the Iwahori-Matsumoto presentation of $\tilde H$. It is related to the quasi-Coxeter structure of $\tW$. 

Another presentation we need for $\tilde H$ is the Bernstein-Lusztig presentation:
$$\tilde H=\oplus_{x \in X_*, w \in W_0} \kk\, \th_x T_w.$$
We refer to \cite{L1} for the definition of $\th_x$. This presentation is related to the semi-product $\tW=X_* \rtimes W_0$. It plays an important role in the study of representations of affine Hecke algebras, especially the (parabolic) induction and restriction functors. 

For any $J \subset \Pi$, let $\tilde H_J$ be the parabolic subalgebra of $\tilde H$ generated by $\{\theta_x T_w; x \in X, w \in W_J\}$. This is the extended affine Hecke algebra for the parabolic subgroup $\tilde W_J=X_* \rtimes W_J$, for some parameters. 

We may then define the (parabolic) induction and restriction functors between the Grothendieck groups of the finite dimension $\kk$-representations of $\tilde H$ and $\tilde H_J$. 
$$i_J: R(\tilde H_J) \to R(\tilde H), \qquad r_J: R(\tilde H) \to R(\tilde H_J).$$

We have the central character $\chi_t$ of $\tilde H_J$, here $t$ runs over a complex torus $T^J$ associated to $J$. If $\s \in R(\tilde H_J)$, then $\s \chi_t \in R(\tilde H_J)$.  Following Bernstein, Deligne and Kazhdan, we say that a form $f \in R(\tilde H)^*$ is {\it good} if for any $J \subset \Pi$ and $\s \in R(\tilde H_J)$, the function $t \mapsto f(i_J(\s \chi_t))$ is a regular function on $T^J$. 

The following result is obtained by Bernstein, Deligne and Kazhdan \cite{BDK} and \cite{Kaz}. 

\begin{theorem}
Let $\BH=\BC^\infty_c(G(F))$ be the (complex) Hecke algebra of a $p$-adic group $G(F)$. Then 

(1) The image of $Tr: \bar \BH \to R(\BH)^*$ is the set of good forms. 

(2) The map $Tr: \bar \BH \to R(\BH)^*$ is injective. 
\end{theorem}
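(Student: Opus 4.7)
The plan is to reduce to finite-level subalgebras via idempotents and then exploit Bernstein's decomposition of the category of smooth representations. Fix a compact open subgroup $K \subset G(F)$ and let $e_K \in \BH$ be the corresponding idempotent; then $\BH=\varinjlim_K e_K \BH e_K$ and $R(\BH)=\varinjlim_K R(e_K \BH e_K)$, with both the trace map and the notion of ``good form'' passing to the limit (goodness is preserved under parabolic induction because the induction functors respect $K$-level in a suitably uniform way). This reduces both statements to assertions about the finite-type subalgebras $\BH_K := e_K \BH e_K$, which are Noetherian and whose module categories decompose via Bernstein's center into blocks indexed by inertial equivalence classes $[M,\s]$ of cuspidal data.

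For the containment $\Im(\Tr) \subseteq \{\text{good forms}\}$, one reads off directly from the Bernstein-Lusztig-type presentation that for any $h \in \BH$ and any $\s \in R(\tilde H_J)$, the assignment $t \mapsto \Tr(h, i_J(\s \chi_t))$ is a matrix coefficient of the algebraic family $\{i_J(\s \chi_t)\}_t$, hence regular on the torus $T^J$. For the reverse inclusion one works block by block: on the block $R(\BH)_{[M,\s]}$ the good forms become regular sections of a coherent sheaf on $\Spec \CZ_{[M,\s]}$, and Bernstein's second adjointness together with the description of $\CZ_{[M,\s]}$ as invariants of a torus action allows one to exhibit every such section as the trace of an explicitly constructed element in $\bar \BH_K$. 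Summing over blocks and taking the limit in $K$ yields the desired preimage.

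For the injectivity statement, the essential input is Harish-Chandra's theorem that the distribution character $\Theta_\pi$ of any irreducible admissible representation $\pi$ is represented by a locally constant function on the regular semisimple locus, so that
\[
\Tr(h, \pi) = \int_{G(F)} h(g)\, \Theta_\pi(g)\, dg.
\]
If $\Tr(h, \pi) = 0$ for every irreducible $\pi$, then $h$, viewed as an invariant distribution modulo $[\BH,\BH]$, pairs trivially with every irreducible character. Combining this with Howe's finiteness conjecture (that on each compact-mod-center subset the space of invariant distributions is finite-dimensional over the Bernstein center) and with the fact that the regular orbital integrals span invariant distributions modulo the characters, one deduces $h \in [\BH, \BH]$.

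The main obstacle is the injectivity (density) half: Harish-Chandra's local integrability theorem and Howe's conjecture are both deep harmonic-analytic results, and Kazhdan's original density argument in fact bypasses the purely local route by invoking the Arthur-Selberg trace formula to separate characters globally. A fully local proof requires the spectral decomposition of invariant distributions in terms of parabolically induced characters and a careful control of the growth of orbital integrals near the walls, which is where the technical heart of the theorem lies.
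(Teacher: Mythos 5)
The paper itself offers no proof of this statement: it is quoted from \cite{BDK} and \cite{Kaz}, with the explicit remark that the proofs rely on $p$-adic analysis, and it is then used as motivation for the affine Hecke algebra results of Part III. So the only question is whether your sketch stands on its own as a proof, and it does not. For part (1), the containment of the image in the good forms is indeed the easy direction and your argument for it is fine, but the reverse inclusion is where all the content lies, and the sentence asserting that Bernstein's second adjointness and the description of the Bernstein center ``allow one to exhibit every such section as the trace of an explicitly constructed element'' is a restatement of the theorem, not an argument: no element is constructed, and the actual proof in \cite{BDK} requires an induction on the rank of the Levi, the finiteness of discrete pairs $(M,\sigma)$ up to unramified twist, and a delicate analysis of the induction/restriction maps on the Grothendieck group, none of which is reproduced or replaced here.

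For part (2) the gap is even more serious. Your chain of reductions --- from $Tr(h,\pi)=0$ for all irreducible $\pi$, via Harish-Chandra's local integrability of characters and Howe's conjecture, plus the claim that ``regular orbital integrals span invariant distributions modulo the characters,'' to $h\in[\BH,\BH]$ --- is essentially a list of the deep inputs rather than a deduction. In particular, the statement that vanishing of all regular semisimple orbital integrals of $h$ forces $h\in[\BH,\BH]$, and the spectral-to-geometric passage from vanishing traces to vanishing orbital integrals, are themselves the hard theorems of Kazhdan in this circle of ideas; quoting them (or the trace formula) is not proving the density theorem, and you concede explicitly that the ``technical heart'' is omitted. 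As written, the proposal is an annotated plan pointing at the literature --- consistent with how the survey treats the result, but not a proof of it.
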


Part (1) is referred to as trace Paley-Wiener Theorem and Part (2) is referred to as the density Theorem. The proofs rely on p-adic analysis. 


Our goal is to understand for which parameters, the trace Paley-Wiener Theorem and the density Theorem holds for affine Hecke algebras and to have an explicit basis of the cocenter. 

We have seen in Theorem \ref{3-3} that the set $\{T_\CO; \CO \in \tW/\tW_\D\}$ forms a basis of $\overline{\tilde H}$. We also have the Bernstein-Lusztig presentation of these elements. It is obtained  in \cite[Theorem B]{HN15}. 

\begin{theorem}
Let $i_J: \tilde H_J \to \tilde H$ be the inclusion and $\bar i_J: \overline{\tilde H_J} \to \overline{\tilde H}$ the induced map. Let $(J, x, K, C) \in \tilde \G$ and $\CO$ be the associated $\d$-conjugacy class of $\tW$, then $$T_\CO=\bar i_J(T^J_{C x}),$$ where $T^J_{C x}$ is the image of $T^J_{u x}$ in $\overline{\tilde H_J}$ for any minimal length element $u \in C$. 
\end{theorem}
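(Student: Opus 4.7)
My plan is to prove the equality $T_\CO = \bar i_J(T^J_{Cx})$ by choosing a specific minimal-length representative of $\CO$ and verifying the identity on it. The two ingredients will be (a) identification of a canonical element $ux \in \CO_{\min}$ coming from the standard quadruple, and (b) a comparison of Iwahori--Matsumoto basis elements of $\tilde H_J$ and $\tilde H$ modulo commutators.

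First, I would verify that for any $u \in C_{\min}$ (minimal with respect to the quasi-Coxeter length function $\ell_J$ on $\tW_J$), the element $ux$ lies in $\CO_{\min}$ with respect to $\ell$. This follows from Theorem \ref{a-red} combined with the proof of Theorem \ref{1-19}: any element of $\CO$ can be reduced by $\to_\d$ to a product $u'x'$ with $x'$ a $\d$-straight element lying in the same straight conjugacy class as $x$, and $u' \in W_{I(K',x',\d)}$; after conjugation by an element of $W^\d$ we may assume $(x',K') = (x,K)$, so $\CO_{\min}$ is exhausted by elements of the form $ux$ with $u \in C_{\min}$. The length identity $\ell(ux) = \ell_J(u) + \<2\rho,\bar\nu_{x,\d}\>$ can be read off from straightness of $x$ and the fact that $x \in {}^K\tW$ (so that length is additive for $W_K \cdot x$).

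Next, I would show that inside $\overline{\tilde H}$ one has $i_J(T^J_{ux}) \equiv T_{ux}$. Via the Bernstein--Lusztig presentation, $\tilde H_J$ embeds in $\tilde H$ as the subalgebra generated by $\{\theta_\lambda T_w : \lambda \in X_*, w \in W_J\}$, and the Iwahori--Matsumoto basis of $\tilde H_J$ is determined by the length function $\ell_J$. Writing $x = t^{\lambda_x} y$ with $y \in W_J$ and $u \in W_K$, both $T^J_{ux}$ and $T_{ux}$ expand in terms of products $T_u^{\ast} \theta_{\lambda_x} T_y^{\ast}$ against the two Iwahori--Matsumoto presentations. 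The discrepancy between the two bases can be tracked: in $\overline{\tilde H}$, Proposition \ref{ww'}(1) allows elementary conjugations by simple reflections to be removed, reducing to a comparison where both sides coincide up to commutators.

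Finally, combining these two steps: on the one hand, $T_\CO$ is by definition (Theorem \ref{3-3}) the image of $T_{ux}$ since $ux \in \CO_{\min}$; on the other hand, $\bar i_J(T^J_{Cx})$ is by definition the image of $T^J_{ux}$ in $\overline{\tilde H}$. The compatibility of Step 2 then yields the desired equality. The main obstacle I foresee is Step 2: converting between Iwahori--Matsumoto bases of $\tilde H_J$ and $\tilde H$ requires a careful reduction using the Bernstein--Lusztig presentation together with the structural information that $x$ has $\ell_J$-length zero and $u \in W_K$ with $K \subset \tilde J$ finite, since the two length functions $\ell_J$ and $\ell$ agree only up to corrections that must be absorbed into commutators.
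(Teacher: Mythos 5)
There is a genuine gap — in fact two, and they are precisely the two difficulties the remark following the theorem points out (the comparison of the two length functions, and the comparison of the Iwahori--Matsumoto and Bernstein--Lusztig presentations). Your Step 1 is not correct as stated: for a standard quadruple $(J,x,K,C)$ the element $x$ is a length-zero element of $\tW_J$ but in general is \emph{not} a $\d$-straight element of $\tW$ (Remark (3) after Theorem \ref{1-19}), and $\ell_J$ is \emph{not} the restriction of $\ell$ to $\tW_J$ (see \S\ref{st-qua}). Theorem \ref{a-red} produces a triple $(x',K',u')$ with $x'$ $\d$-straight and $x'\in{}^{K'}\tW$, but the passage from $(x',K',u')$ to the standard quadruple conjugates by an element $z\in{}^JW_0$, which changes lengths in $\tW$; so $u\in C_{\min}$ (minimal for $\ell_J$) does not give $ux\in\CO_{\min}$ (minimal for $\ell$), the proposed identity $\ell(ux)=\ell_J(u)+\<2\rho,\bar\nu_{x,\d}\>$ fails in general, and you cannot invoke Theorem \ref{3-3} to say that $T_\CO$ is the image of $T_{ux}$.

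Your Step 2 is the actual content of the theorem and is only asserted. Since $i_J$ is defined through the Bernstein--Lusztig presentation, $i_J(T^J_{ux})$ is not the Iwahori--Matsumoto element $T_{ux}$ of $\tilde H$: its expansion in the $T$-basis of $\tilde H$ involves the $\th_\l$'s, and Proposition \ref{ww'}(1), which only removes length-preserving conjugations by simple reflections inside $\tilde H$, gives no handle on this comparison. The proof in \cite[Theorem B]{HN15} (this survey gives no proof, it quotes that result) rests on condition (2) of the standard quadruple, namely $\<\a,\nu_{x,\d}\>>0$ for all $\a\in\Pi\smallsetminus J$, which makes the relevant elements $J$-alcove elements in the sense of \cite{GHN}; it is this alcove-element property, via a length induction, that produces the congruence between $T$-elements of $\tilde H$ and images under $\bar i_J$ modulo $[\tilde H,\tilde H]$, and simultaneously absorbs the discrepancy between $\ell$ and $\ell_J$ from Step 1. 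Your sketch never uses this positivity condition, and without it the claimed congruence $i_J(T^J_{ux})\equiv T_{ux}$ does not hold in general, so the argument as proposed cannot be completed by ``tracking the discrepancy'' between the two bases.
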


Note that the relation between Iwahori-Matsumoto basis and Bernstein-Lusztig basis of $\tilde H$ is complicated. The relation between the minimal length elements in $\CO$ (with respect to the length function of $\tW$) and the minimal length elements in $C$ (with respect to the length function of $\tW_J$) is also complicated. It is amazing that these two complexities cancel each other out. This leads to the above matching between Iwahori-Matsumoto basis and Bernstein-Lusztig basis of the cocenter of $\tilde H$. 

Compared to the finite Hecke algebras, it is more difficult to prove that $\{T_\CO\}$ is linearly independent in the cocenter of extended affine Hecke algebras. The reason is that both $\overline{\tilde H}$ and $R(\tilde H)$ are of infinite rank and thus one can't compare their rank directly to make the conclusion. For equal parameter case, linearly independence is proved in \cite{HN14} using Lusztig's $J$-ring. The general case is proved in \cite{CH2} using the cocenter-representation duality that we are going to discuss.

\subsection{Cocenter-representation duality for affine Hecke algebras}\label{6}

\smallskip

In this section, we fix a nonzero parameter $\fbq$ in $\BC$ and we simply write $\tilde H$ instead of $\tilde H_\fbq$. 

\subsubsection{Elliptic quotient} Elliptic representation theory, introduced by Arthur \cite{A}, studies the Grothendieck group of certain representations of a Lie-theoretic
group modulo those induced from proper parabolic subgroups. The elliptic
theory of representations of semisimple $p$-adic groups and
Iwahori-Hecke algebras is further studied intensively, e.g., Schneider-Stuhler
\cite{SS}, Bezrukavnikov \cite{Bez}, Reeder \cite{Re}, Opdam-Solleveld
\cite{OS}. 

For an affine Hecke algebra $\tilde H$ (of a given nonzero parameters in $\BC$), the {\it elliptic quotient} is defined to be $$R(\tilde H)_{ell}=R(\tilde H)_\BC/\sum_{J \subsetneqq \Pi} i_J(R(\tilde H_J)_\BC).$$

Opdam and Solleveld in \cite{OS} studied the affine Hecke algebras for positive parameters and showed that 

\begin{theorem}
Let $\fbq$ be a positive parameter function on $\tilde H$. Then  

(1) The dimension of $R(\tilde H)_{ell}$ is at most the number of elliptic conjugacy classes of $\tW$.

(2) The inequivalent discrete series forms an orthogonal set of $R(\tilde H)_{ell}$. 
\end{theorem}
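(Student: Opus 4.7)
The plan is to leverage the trace pairing $\overline{\tilde H} \times R(\tilde H) \to \BC$ together with the Euler--Poincar\'e (EP) pairing on $R(\tilde H)$ and to bound both sides against an elliptic version of the cocenter. First, define the elliptic cocenter $\overline{\tilde H}_{ell} := \overline{\tilde H} \big/ \sum_{J \subsetneq \Pi} \bar i_J(\overline{\tilde H_J})$. By the identity $T_\CO = \bar i_J(T^J_{Cx})$ for the Iwahori--Matsumoto basis of $\overline{\tilde H}$ indexed by a standard quadruple $(J,x,K,C)$, every $T_\CO$ attached to a quadruple with $J \subsetneq \Pi$ is killed in $\overline{\tilde H}_{ell}$, so what survives is spanned by the $T_\CO$ coming from quadruples with $J = \Pi$, i.e.\ from elliptic conjugacy classes of $\tW$. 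This immediately yields $\dim \overline{\tilde H}_{ell} \leq \#\{\text{elliptic classes of } \tW\}$.

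Next I would connect the two elliptic objects via the EP-pairing $EP(\pi,\pi') := \sum_i (-1)^i \dim \mathrm{Ext}^i_{\tilde H}(\pi,\pi')$, which is well-defined since finite-dimensional $\tilde H$-modules admit finite projective resolutions (Schneider--Stuhler, or its Opdam--Solleveld algebraic variant in the positive-parameter setting). Using Bernstein's second adjointness for the adjoint pair $(i_J, r_J)$, one checks that $EP(i_J(\sigma),\pi') = 0$ whenever $J \subsetneq \Pi$, so EP descends to a symmetric bilinear form on $R(\tilde H)_{ell}$. The decisive analytic step, which is due to Opdam at positive parameters, is to identify this form with the trace pairing of $R(\tilde H)_{ell}$ against $\overline{\tilde H}_{ell}$ and to show that it is non-degenerate on $R(\tilde H)_{ell}$; combined with the first step this gives $\dim R(\tilde H)_{ell} \leq \dim \overline{\tilde H}_{ell} \leq \#\{\text{elliptic classes of } \tW\}$, proving (1).

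For (2), take inequivalent irreducible discrete series $\pi, \pi'$. Schur's lemma gives $\Hom_{\tilde H}(\pi,\pi') = 0$; moreover, the square-integrability of their matrix coefficients together with the compact-type piece of the Schneider--Stuhler resolution forces $\mathrm{Ext}^i_{\tilde H}(\pi,\pi') = 0$ for all $i \geq 0$ when $\pi \not\cong \pi'$. Hence $EP(\pi,\pi') = 0$ in $R(\tilde H)_{ell}$, which is exactly the asserted orthogonality. The principal obstacle lies in step two: the identification of the EP-pairing with the trace-cocenter pairing and the non-degeneracy of the induced form on $R(\tilde H)_{ell}$ is genuinely analytic, requiring Opdam's Plancherel theorem and wave-packet decomposition for positive-parameter affine Hecke algebras. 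The combinatorial piece (dimension bound on $\overline{\tilde H}_{ell}$ via standard quadruples) and the Ext-vanishing piece between inequivalent discrete series are by comparison formal once the right resolution is available.
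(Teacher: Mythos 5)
The paper itself contains no proof of this statement: it is quoted from Opdam--Solleveld \cite{OS}, and the surrounding text explicitly records that their method is analytic, passing from $\tilde H$ to its Schwartz completion; there the dimension bound is obtained by comparing, via scaling of the (positive) parameters, the elliptic theory of $\tilde H$ with that of $\BC[\tW]$ at $\fbq=1$, whose elliptic space has dimension equal to the number of elliptic conjugacy classes of $\tW$. Your outline ultimately defers its ``decisive analytic step'' to the same machinery (Plancherel theorem, wave packets), so at best it is a reduction to the result being cited; but the bridge you propose has concrete gaps. First, the combinatorial step misidentifies the surviving classes: in the standard-quadruple parametrization, $J=\Pi$ means the Newton point of the class is central (the \emph{rigid} classes of Part III), not that the class is elliptic. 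For instance the $\tW$-conjugacy class of a simple reflection $s_1$ has a quadruple with $J=\Pi$, yet it is not elliptic, and in fact $T_{\CO}$ for this class already lies in $\bar i_{\{1\}}(\overline{\tilde H_{\{1\}}})$ and so dies in your quotient. Thus ``what survives is spanned by the $T_\CO$ with $J=\Pi$, i.e.\ the elliptic classes'' is false as stated, and the inequality $\dim\overline{\tilde H}_{ell}\le \sharp\{\text{elliptic classes of }\tW\}$ needs an actual argument describing which $T_\CO$ are killed, not just those with $J_\CO\subsetneq\Pi$.

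Second, and more seriously, your key step is not well-formed: the trace pairing $\overline{\tilde H}\times R(\tilde H)_\BC\to\BC$ does not descend to a pairing of $\overline{\tilde H}_{ell}$ with $R(\tilde H)_{ell}$, since for example $1=T_e$ lies in $\bar i_J(\overline{\tilde H_J})$ for every $J$ and $Tr(1,\pi)=\dim\pi\neq 0$, and symmetrically characters of parabolically induced modules do not vanish on the cocenter. So ``identify the EP form with the trace pairing of $R(\tilde H)_{ell}$ against $\overline{\tilde H}_{ell}$ and show it is nondegenerate'' cannot simply be handed to Opdam's results; the inequality $\dim R(\tilde H)_{ell}\le\dim\overline{\tilde H}_{ell}$ (or its substitute) is precisely the content to be proved, and in \cite{OS} it is reached by the different, scaling-based route indicated above. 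Similarly, for (2) the assertion that square-integrability ``together with the compact-type piece of the Schneider--Stuhler resolution'' forces $\mathrm{Ext}^i_{\tilde H}(\pi,\pi')=0$ for inequivalent discrete series is not a proof: Schneider--Stuhler concerns $p$-adic groups, and the actual argument requires the comparison $\mathrm{Ext}^\bullet_{\tilde H}(\pi,\pi')\cong\mathrm{Ext}^\bullet_{\mathcal{S}}(\pi,\pi')$ for tempered modules over the Schwartz algebra together with projectivity/injectivity of discrete series in that category --- exactly the analytic core of \cite{OS} (one also needs $EP(\pi,\pi)\neq 0$ to know the discrete series are nonzero in $R(\tilde H)_{ell}$, which you do not address). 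So the architecture (EP pairing, vanishing on proper parabolic inductions, orthogonality of discrete series) points in the right direction, but the two load-bearing steps are respectively misstated and outsourced to the theorem you are asked to prove.
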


In particular, one has an upper bound of the number of irreducible discrete series for affine Hecke algebra of positive parameters. A lower bound can be obtained by counting the central characters of the discrete series. This leads to the classification of irreducible discrete series for affine Hecke algebras of positive parameters in \cite{OS2}. 

The method of Opdam-Solleveld is analytic, passing from $\tilde H$ to its Schwartz algebra, a certain topological completion of $\tilde H$. For Hecke algebras of $p$-adic groups, the elliptic quotient and its relation with the trace map was also studied in \cite{BDK} and analytic methods were used to obtain an upper bound of the dimension of the elliptic quotient. 

\subsubsection{Rigid cocenter and rigid quotient} Here is our motivation to develop a different method to study elliptic representation theory for affine Hecke algebras. 

First, we would like to understand the affine Hecke algebras for arbitrary parameters (not just the positive parameters) and for representations over a field of positive characteristic. It is desirable to have a more algebraic method. 

Second, we would like to put the elliptic quotient in the framework of ``cocenter-representation duality''. Namely, the elliptic quotient $R(\tilde H)_{ell}$ corresponds to a subspace of $\overline{\tilde H}$ via the trace map $Tr: \overline{\tilde H} \times R(\tilde H)_\BC \to \BC$. What is this subspace? Results in \cite{BDK} indicates that this subspace is very complicated and may not have a nice explicit description. 

The idea in \cite{CH2} is to replace the elliptic quotient by the so-called rigid quotient. For simplicity, we only consider the extended affine Hecke algebras associated to semisimple root data. The reductive root data can be reduced to semisimple ones via \cite[\S 8.1]{CH2}.

We define the {\it rigid cocenter} $$\overline{\tilde H}^\rig =\text{span}\{T_\CO; \nu_\CO=0\}$$ and the {\it rigid quotient} $$R(\tilde H)_{\rig}=R(\tilde H)_\BC/\<i_J(\s)-i_J(\s \chi_t); J \subset \Pi, \s \in R(\tilde H_{J}), t \in T^J\>,$$ the quotient of $R(\tilde H)$ by the difference of induced modules. Both the rigid cocenter and the rigid quotient are finite dimensional since the root datum is semisimple. 

It is proved in \cite[Theorem 1.1]{CH2} that

\begin{theorem}\label{rig-pairing}
(1) For generic parameters, the trace map $Tr: \bar H \times R(\tilde H)_\BC \to \BC$ induces a perfect pairing \[Tr: \overline{\tilde H}^\rig  \times R(\tilde H)_{\rig} \to \BC.\]

In particular, $\dim R(\tilde H)_{\rig}=\dim \overline{\tilde H}^\rig$ equals the number of conjugacy classes in $\tW$ whose Newton points equal to zero. 

(2) For arbitrary nonzero parameters, the induced map $Tr: \overline{\tilde H}^\rig \to R(\tilde H)_{\rig}^*$ is surjective. 
\end{theorem}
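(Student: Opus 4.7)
The plan is to establish the perfect pairing in (1) in three stages, and then derive (2) by a specialization argument.

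First, I would verify well-definedness of the bilinear form on $\overline{\tilde H}^\rig \times R(\tilde H)_{\rig}$. By the trace--induction reciprocity $Tr(h, i_J(\sigma)) = Tr(r_J(h), \sigma)$, this reduces to showing that for every $T \in \overline{\tilde H}^\rig$ and every $J \subsetneq \Pi$, the element $r_J(T) \in \overline{\tilde H_J}$ satisfies $Tr(r_J(T), \sigma) = Tr(r_J(T), \sigma \chi_t)$ for all $\sigma$ and all $t \in T^J$. To prove this I would use the Bernstein--Lusztig description $T_\CO = \bar i_{J'}(T^{J'}_{Cx})$ from the theorem stated just before this proposition and analyze how $r_J$ acts on such elements, exploiting the constraint $\nu_\CO = 0$, which forces the associated standard quadruple to have shape $(\Pi, x, K, C)$ with $x \in \Omega$.

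Second, I would compute both dimensions. By Theorem \ref{1-19}, $\overline{\tilde H}^\rig$ has a natural basis indexed by equivalence classes of standard quadruples $(\Pi, x, K, C)$; this set is finite, of cardinality $N$ say, because $\Omega$ is finite in the semisimple case, $K$ ranges over subsets of $\tilde \Pi$ with $W_K$ finite, and each finite $W_K$ has only finitely many elliptic twisted conjugacy classes. On the representation side, I would decompose $R(\tilde H)$ into Bernstein-type summands via parabolic induction--restriction, noting that the rigid quotient isolates exactly the ``fully induced, central-character-independent'' piece. At generic parameters, specialization via Tits' deformation to $\BC[\tW]$ reduces this count to a purely group-theoretic computation on $\tW$ which also yields $N$.

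Third, given that both sides have the same finite dimension $N$, perfectness reduces to one-sided nondegeneracy, e.g.\ injectivity of $Tr \colon \overline{\tilde H}^\rig \to R(\tilde H)_\rig^*$. For each basis vector $T_\CO$ attached to $(\Pi, x, K, C)$, I would construct, via induction from the finite-type parahoric subalgebra $\tilde H_K$ twisted by the length-zero element $x \in \Omega$, an irreducible module whose trace detects $T_\CO$ and annihilates the other basis vectors. This separation reduces to the analogous nondegeneracy for the elliptic pairing in finite-type Hecke algebras, which is available after Geck--Pfeiffer, Reeder, and Opdam--Solleveld. For part (2), at an arbitrary nonzero parameter the basis $\{T_\CO\}$ exists uniformly by Theorem \ref{3-3}, so $\dim \overline{\tilde H}^\rig = N$; meanwhile upper semicontinuity in the parameter gives $\dim R(\tilde H)_\rig \le N$. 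Perfectness at generic parameters then yields surjectivity of $Tr$ for all nonzero parameters.

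The main obstacle will be Step 3: producing a family of test modules that separates the finite basis $\{T_\CO\}$ requires genuine control of induced representations for general, possibly unequal, parameters, and one does not have at hand the $J$-ring techniques that make the equal-parameter case tractable. A second delicate point is the well-definedness in Step 1: showing that $r_J$ sends rigid elements to elements pairing trivially with the subspace $\langle \sigma - \sigma \chi_t \rangle$ requires moving fluently between the Iwahori--Matsumoto and Bernstein--Lusztig pictures, for which the identification $T_\CO = \bar i_{J'}(T^{J'}_{Cx})$ is indispensable.
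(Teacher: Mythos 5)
The survey does not actually prove this statement; it is quoted from \cite[Theorem 1.1]{CH2}, so your proposal has to be measured against the proof there. Your skeleton (well-definedness of the pairing via adjunction between $i_J$ and a restriction map on cocenters, a dimension count, then one-sided nondegeneracy) is reasonable, but two of your steps do not hold up. In Step 2 you treat $\{T_\CO;\ \nu_\CO=0\}$ as a \emph{basis} of $\overline{\tilde H}^\rig$. For arbitrary (in particular unequal) parameters only the spanning statement (Theorem \ref{3-3}) is available at this point: linear independence is the Basis Theorem, which the survey explicitly lists as a \emph{consequence} of the theorem you are proving (the equal-parameter case uses Lusztig's $J$-ring, the general case is proved in \cite{CH2} via this very duality). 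So you may only use $\dim \overline{\tilde H}^\rig \le N$, where $N$ is the number of conjugacy classes of $\tW$ with Newton point zero; the correct architecture is an inequality chain of the form $N \ge \dim \overline{\tilde H}^\rig \ge \operatorname{rank}(Tr) = \dim R(\tilde H)_{\rig} \ge N$, in which the lower bound for $\dim R(\tilde H)_{\rig}$ and the surjectivity of the trace map must be established independently. Similarly, "Tits' deformation to $\BC[\tW]$" is not available: the affine Hecke algebra is infinite dimensional, and transferring the count of $\dim R(\tilde H)_{\rig}$ across parameters requires the analytic families of Theorem \ref{family} (which itself rests on Lusztig's graded affine Hecke algebras and \cite{CH}), not the finite-dimensional Tits argument.

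The more serious gap is Step 3, which you correctly flag as the main obstacle but which is essentially the entire content of the theorem. Producing, for each quadruple $(\Pi,x,K,C)$, an irreducible module induced from $H_K$ twisted by $x\in\Omega$ whose trace "detects" $T_\CO$ and kills the other spanning vectors amounts to inverting the rigid character table, i.e.\ to the nondegeneracy you are trying to prove; such induced modules are rarely irreducible, modules attached to different $K$ are not orthogonal to the spanning set in any block-triangular way, and no reduction of this separation problem to finite-type elliptic nondegeneracy is given. The proof in \cite{CH2} does not proceed this way: it passes to graded affine Hecke algebras via Lusztig's reduction theorems, uses Euler--Poincar\'e and elliptic pairings at fixed central characters (in the spirit of Reeder and Opdam--Solleveld, together with \cite{CH}), and an analytic deformation in the parameters. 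Finally, your deduction of part (2) is invalid: surjectivity of $Tr: \overline{\tilde H}^\rig \to R(\tilde H)_{\rig}^*$ at an arbitrary nonzero parameter does not follow from perfectness at generic parameters together with an (unproved) "upper semicontinuity" of $\dim R(\tilde H)_{\rig}$ --- the rank of the specialized trace map can drop, exactly as the density theorem fails for the full cocenter at roots of unity --- and in \cite{CH2} surjectivity for arbitrary nonzero parameters is proved by a separate argument rather than by specialization from the generic case.
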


As some consequences,

\begin{itemize}
\item Trace Paley-Wiener Theorem: For arbitrary nonzero parameters, the image of the map $Tr: \overline{\tilde H} \to R(\tilde H)^*$ is $R(\tilde H)^*_{good}$. 

\item Density Theorem: For generic parameters, the map $Tr: \overline{\tilde H} \to R(\tilde H)^*$ is injective. 

\item Basis Theorem: The set $\{T_\CO; \CO \in \tW/\tW_\D\}$ forms a basis of $\overline{\tilde H}$. 
\end{itemize}




\smallskip

We also have the following deformation theorem. 

\begin{theorem}\label{family}
For generic parameters $\fbq$, there exists a basis $\{V_{\pi, \fbq}\}$ of $R(\tilde H)_\BC$ such that for any $w \in \tW$ and any $\pi$, the action of $T_w$ on $V_{\pi, \fbq}$ depends analytically on $\fbq$. 
\end{theorem}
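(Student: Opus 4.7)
The plan is to construct the family $\{V_{\pi, \fbq}\}$ by combining the standard quadruple parametrization of conjugacy classes in $\tW$ (Theorem \ref{1-19}) with the rigid cocenter--rigid quotient duality of Theorem \ref{rig-pairing}, and then extending from the rigid piece to all of $R(\tilde H)_\BC$ via parabolic induction twisted by unramified central characters. The crucial observation is that the Iwahori-Matsumoto basis $\{T_\CO\}$ of $\overline{\tilde H}$ is defined by a purely combinatorial recipe on $\tW$ and is \emph{independent} of $\fbq$, so any dual basis of $R(\tilde H)_{\rig}$ obtained by inverting the trace pairing matrix against this fixed basis will automatically depend rationally (hence analytically on the generic locus) on $\fbq$.

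First, for each $\d$-stable $J \subset \Pi$ I would apply Theorem \ref{rig-pairing}(1) to the parabolic subalgebra $\tilde H_J$: for generic $\fbq$ this yields a basis $\{V_{C, J, \fbq}\}$ of $R(\tilde H_J)_{\rig}$ dual to the rigid basis $\{T^J_\CO : \nu_\CO = 0\}$ of $\overline{\tilde H_J}^{\rig}$, indexed by elliptic $\Ad(x)\circ\d$-twisted conjugacy classes $C$ of finite parahoric subgroups $W_K \subset \tW_J$. Each $V_{C, J, \fbq}$ is the image of an actual virtual representation of $\tilde H_J$ lifted through a chosen analytic section of $R(\tilde H_J) \twoheadrightarrow R(\tilde H_J)_{\rig}$. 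Then for every standard quadruple $(J, x, K, C)$ attached to a conjugacy class $\CO$ of $\tW$, I set
\[
V_{\CO, \fbq} \;:=\; i_J\bigl(V_{C, J, \fbq} \otimes \chi_{t(x)}\bigr),
\]
where $\chi_{t(x)} \in T^J$ is the unramified central character determined by the translation part of $x$, so that the Newton point $\nu_{x, \d}$ is encoded in the twist. By the remark after Theorem \ref{1-19}, the element $x$ is uniquely determined up to the action of $\Omega_J$ on the straight class, so $\chi_{t(x)}$ is well defined on equivalence classes of standard quadruples.

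To verify $\{V_{\CO, \fbq}\}_{\CO \in \tW/\tW_\d}$ is a basis of $R(\tilde H)_\BC$, I would pair it with the basis $\{T_\CO\}$ of $\overline{\tilde H}$ given by the Basis Theorem (already established in the excerpt as a consequence of Theorem \ref{rig-pairing}). Using compatibility of the trace pairing with parabolic induction (Frobenius reciprocity in the cocenter, which identifies $\bar i_J$ with the dual of $r_J$), the pairing matrix between $\{T_\CO\}$ and $\{V_{\CO', \fbq}\}$ becomes block-upper-triangular once the classes are ordered by the partial order on Newton points, with diagonal blocks given by the rigid pairings on each Levi. Each diagonal block is non-degenerate for generic $\fbq$ by Theorem \ref{rig-pairing}(1) applied to $\tilde H_J$, so the global pairing matrix is generically invertible and $\{V_{\CO, \fbq}\}$ is a basis. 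Analyticity of the matrix entries of $T_w$ on each $V_{\CO, \fbq}$ follows because parabolic induction in the Bernstein-Lusztig presentation, twisting by the character $\chi_t$, and inversion of the finite rigid pairing matrix are each analytic in $\fbq$ on the generic locus.

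The main obstacle will be rigorously controlling the interaction between different Levi subalgebras as $\fbq$ varies: one must rule out generic collapses $V_{\CO, \fbq} \equiv V_{\CO', \fbq}$ arising from coincidences of cuspidal support for quadruples with distinct $(J, C)$ but related $x$, and one must choose the analytic sections $R(\tilde H_J)_{\rig} \hookrightarrow R(\tilde H_J)$ compatibly for different $J$ so that the block-triangular structure holds uniformly in $\fbq$. I expect this to be handled by the upper-triangularity of parabolic induction with respect to the partial order on Newton points (a reflection of Theorem \ref{tri-par} on the cocenter side), which forces any degeneration locus to be a proper analytic subset of the parameter space, and hence the construction succeeds on a generic open set.
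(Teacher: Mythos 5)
There is a genuine gap, and it sits exactly where the real work of the theorem lies. Your construction hinges on choosing, for each $J$, ``an analytic section of $R(\tilde H_J)\twoheadrightarrow R(\tilde H_J)_{\rig}$'' lifting the dual basis of the rigid pairing to actual virtual representations varying analytically in $\fbq$. But that is precisely the statement of Theorem \ref{family} at the Levi level: Theorem \ref{rig-pairing} is, for each \emph{fixed} generic $\fbq$, an assertion about a pairing of finite-dimensional vector spaces, and it provides no mechanism whatsoever for selecting representatives in $R(\tilde H_J)$ that deform analytically as $\fbq$ moves. The genuine difficulty of the deformation theorem is that the set of irreducible modules and their composition behavior jump as $\fbq$ crosses walls (this is why Opdam--Solleveld needed the Schwartz completion in \cite{OS2}), and nothing in the rigid duality controls those jumps. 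Relatedly, your opening claim that a dual basis ``automatically depends rationally on $\fbq$'' because $\{T_\CO\}$ is combinatorial is vacuous: the entries $Tr(T_\CO,V_{\fbq})$ of the pairing matrix only make sense after an analytic family $V_{\fbq}$ has been produced, which is what you are trying to prove. (There is also a circularity with the literature: the perfect pairing of Theorem \ref{rig-pairing} is itself proved in \cite{CH2} via deformation arguments.) The paper's route avoids all of this by passing to Lusztig's graded affine Hecke algebras \cite{L1} and invoking the deformation theorem for graded algebras from \cite{CH}; the reduction from $\tilde H$ to the graded algebra is analytic in the parameters, and that is the substitute for the ``analytic section'' you assume.

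A second, independent problem is spanning. Your family $V_{\CO,\fbq}=i_J(V_{C,J,\fbq}\otimes\chi_{t(x)})$ is indexed by the conjugacy classes of $\tW$, hence is countable, with the inducing character pinned down by the straight element $x$ of the quadruple from Theorem \ref{1-19}. But $R(\tilde H)_\BC$ has uncountable dimension: there are irreducible finite-dimensional modules over every central character in the complex torus quotient, and these are linearly independent in the Grothendieck group. Pairing a countable family against the countable basis $\{T_\CO\}$ of $\overline{\tilde H}$ can at best establish linear independence (and even the notion of an ``invertible'' infinite pairing matrix needs care); it can never establish that the family spans. To have any chance of a basis you must let $\chi_t$ range over all of $T^J$, i.e.\ work with continuous families $i_J(\sigma\otimes\chi_t)$, which again forces you to confront how the finitely many inducing data $\sigma$ on each Levi vary with $\fbq$ --- the problem your proposal defers to the unproved analytic section.
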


For affine Hecke algebra with positive parameters, a similar result was obtained by Opdam and Solleveld in \cite{OS2} using Schwartz algebras. The idea of our proof (without the restriction on positive parameters) is to use Lusztig's graded affine Hecke algebras \cite{L1} and the deformation theorem for graded affine Hecke algebras obtained in \cite{CH}. The passage from affine Hecke algebras to graded affine Hecke algebras is analytic. This is the reason that we have the analytic deformation theorem here. However, we expect that there exists a family of representations depending algebraically on $\fbq$. 

\subsubsection{Class polynomials}\label{3.6.3}

Now suppose that all the parameters $q_s$ are equal. We write $q=q_s$ for any $s \in \tilde \BS$. For any $w \in \tW$, there exists $F_{w, \CO} \in \BZ[q]$ for each conjugacy class $\CO$ of $\tW$ such that \begin{equation}\label{class-a} T_w \equiv \sum_{\CO} F_{w, \CO} T_\CO \mod [\tilde H, \tilde H].\end{equation}

The polynomials $F_{w, \CO}$ are called the {\it class polynomials} and can be computed inductively on $w$ as we discussed in \S \ref{dim-deg}. Note that the set $\{T_\CO\}$ is a basis of $\overline{\tilde H}$. Thus the polynomials $F_{w, \CO}$ are uniquely determined by the equality \ref{class-a} and thus is independent of the choice of the reduction procedure in \S \ref{dim-deg}. 

We also have that \begin{equation}\label{class-b} Tr(T_w, V)=\sum_{\CO} F_{w, \CO} Tr(T_\CO, V).\end{equation} for any finite dimensional representation $V$ of $\tilde H$. Therefore, the character value of any element in $\tilde H$ is known if one knows the character value of all the minimal length elments, together with the class polynomials. Moreover, by the density Theorem for generic parameter $q$, the polynomials $F_{w, \CO}$ are also determined by the equality \ref{class-b}. This gives a representation-theoretic definition of the class polynomials. 

As we have seen in \S \ref{dim-deg}, the class polynomials (especially the leading term of the class polynomial), have found important application in the arithmetic geometry. It is desirable to have an explicit formula for the class polynomials. This is a challenging problem. Some computations for type $\tilde A_2$ is done by Yang \cite{Yang}. 

\subsection{The ``Modular case"}

\subsubsection{Naive kernel conjecture}
Now we move to the ``modular case''. Here instead of complex representations, we consider representations over an algebraically closed field $\kk$ of positive characteristic. We also consider the case where the parameter is a root of unity. In this situation, one can't expect to have a perfect pairing as in Theorem \ref{rig-pairing} (1). However, we expect that the cocenter-representation duality fails in a ``controllable'' way and thus provide useful information on the representations of affine Hecke algebras in the ``modular case''. 

We still have a surjective map $Tr: \overline{\tilde H}^\rig \to R(\tilde H)^*_{rig}$, but it fails to be injective. It is interesting to see whether the whole kernel comes from the non-semisimplicity of the parahoric subalgebras. 


Let us make it precise. Let $K \subset \tilde \BS$. If $W_K$ is finite, then we say that the subalgebra $H_K$, generated by $T_w$ for $w \in W_K$, is a {\it parahoric subalgebra} of $\tilde H$. In this case, $H_K$ is a finite Hecke algebra. We set $\Omega(K)=\{\t \in \Omega; \t K \t \i=K\}$ and define the {\it extended parahoric subalgebra} $$H_K^\sharp=H_K \rtimes \kk[\Omega(K)].$$ We have a natural map $\bar H_K^\sharp \to \overline{\tilde H}^\rig$. The composition $\bar H_K^\sharp \to \overline{\tilde H}^\rig \to R(\tilde H)_{\rig}^*$ is not injective in general. It contains $\ker(\bar H_K^\sharp \to R(H_K^\sharp)^*)$ in its kernel. 

Now we introduce the {\it naive kernel}. Let $\ker^{naive}(Tr: \overline{\tilde H}^\rig \to R(\tilde H)_{\rig}^*)$ be the sum of the image in $\overline{\tilde H}^\rig$ of $\ker(\bar H_K^\sharp \to R(H_K^\sharp)^*)$, where $H_K$ runs over all the parahoric subalgebras of $\tilde H$. 

\begin{conjecture}\label{naive}
The naive kernel $\ker^{naive}(Tr: \overline{\tilde H}^\rig \to R(\tilde H)_{\rig}^*)$ equals the kernel of the trace map $Tr: \overline{\tilde H}^\rig \to R(\tilde H)_{\rig}^*$.
\end{conjecture}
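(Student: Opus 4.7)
The plan is to prove the two inclusions separately. The easy direction, that $\ker^{naive} \subseteq \ker$, is essentially immediate from the definition of the trace map and restriction of representations: if $h \in \bar H_K^\sharp$ maps to zero in $R(H_K^\sharp)^\ast$, then for any $V \in R(\tilde H)$ the restriction $V|_{H_K^\sharp}$ is well-defined, and $Tr(h,V) = Tr(h, V|_{H_K^\sharp}) = 0$, so the image of $h$ in $\overline{\tilde H}^\rig$ pairs trivially with all of $R(\tilde H)_{\rig}$.

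For the hard direction, I would use the parametrization of rigid conjugacy classes provided by Theorem \ref{1-19}: rigid classes are in bijection with equivalence classes of quadruples $(\Pi,x,K,C)$ where $x \in \Omega$ is basic, $K \subset \tilde \BS$ with $W_K$ finite and $\Ad(x)\delta(K)=K$, and $C$ is an elliptic $\Ad(x)\delta$-twisted conjugacy class of $W_K$. This exhibits the rigid cocenter as the sum of the images of the \emph{elliptic} parts $\bar H_K^{\sharp,\rm ell}$ of the extended parahoric subalgebras, and induces a filtration $F^{\le r}\overline{\tilde H}^\rig$ by the size of $K$. The goal is to set up a dual filtration on $R(\tilde H)_{\rig}$ so that on the associated graded, the trace pairing decomposes as a direct sum over $W$-orbits of pairs $(x,K)$ of the pairings $\bar H_K^{\sharp,\rm ell} \times R(H_K^\sharp)_{\rm ell} \to \kk$.

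Once this compatibility is in place, the conjecture reduces to a statement about finite extended Hecke algebras $H_K^\sharp$: namely, that the intersection of the elliptic cocenter $\bar H_K^{\sharp,\rm ell}$ with $\ker(\bar H_K^\sharp \to R(H_K^\sharp)^\ast)$ coincides with the kernel of the induced map $\bar H_K^{\sharp,\rm ell} \to R(H_K^\sharp)_{\rm ell}^\ast$. Since $H_K^\sharp$ is finite dimensional and $R(H_K^\sharp)$ separates the two direct summands corresponding to elliptic and parabolically induced characters (up to an explicit finite analysis one can do type by type), this finite reduction should be tractable.

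The main obstacle is the middle step: establishing the dual filtration on $R(\tilde H)_{\rig}$ compatibly with the pairing in arbitrary characteristic or at roots of unity. In the generic case treated by Theorem \ref{rig-pairing}, one uses a Bezrukavnikov/Opdam-Solleveld style orthogonal decomposition of the elliptic quotient indexed by standard Levi subalgebras, but in the modular or root-of-unity setting this decomposition generally fails to split, and one must replace it by a filtration whose associated graded has the correct shape. Concretely, one needs to control how the functors $r_J$ and $i_J$ of parabolic restriction and induction interact with the rigid quotient when the Hecke algebra $\tilde H_J$ is non-semisimple; this is where input beyond the generic-parameter deformation of Theorem \ref{family} is required, and where a careful analysis of the failure of Bernstein's second adjointness modulo $p$ or at roots of unity will be the technical heart of the argument.
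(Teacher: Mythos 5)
This statement is not a theorem of the paper: it is Conjecture \ref{naive}, one of the two new conjectures the author proposes, and the paper contains no proof of it at all, only supporting evidence (the explicit description of the trace map for affine $0$-Hecke algebras, the $SL_3$ and $PGL_3$ examples, and consistency with the rigid determinant Conjecture \ref{rig-det}). So there is no proof in the paper to compare yours against; a complete argument here would be a new result, and your text should not be presented as a proof of a known statement.

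Judged on its own terms, your proposal establishes only the inclusion $\ker^{naive}\subseteq\ker$, which is correct but is already observed in the paper (the composition $\bar H_K^\sharp \to \overline{\tilde H}^\rig \to R(\tilde H)_{\rig}^*$ kills $\ker(\bar H_K^\sharp \to R(H_K^\sharp)^*)$, since the trace of an element of $H_K^\sharp$ on a $\tilde H$-module is computed from its restriction). The reverse inclusion is where the whole content of the conjecture lies, and your middle step --- a filtration of $R(\tilde H)_{\rig}$ dual to the filtration of $\overline{\tilde H}^\rig$ by the support $K$, with graded pairing splitting into elliptic pairings for the finite extended algebras $H_K^\sharp$ --- is exactly what is not known away from generic parameters, as you yourself concede. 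Two concrete problems: (i) the rigid quotient is cut out by the Levi-type subalgebras $\tilde H_J$, $J\subset\Pi$ (Bernstein--Lusztig presentation), whereas the naive kernel is built from the parahoric subalgebras $H_K$, $K\subset\tilde\BS$ (Iwahori--Matsumoto presentation); these are transverse structures, and you give no mechanism relating restriction to $H_K^\sharp$ with the relations $i_J(\s)-i_J(\s\chi_t)$ defining $R(\tilde H)_{\rig}$ once the orthogonality underlying Theorem \ref{rig-pairing} is unavailable (the deformation statement of Theorem \ref{family} is explicitly for generic parameters and is obtained analytically, so it gives no handle at roots of unity or in characteristic $p$). (ii) Even granting such a graded decomposition, the images of the various $\bar H_K^\sharp$ in $\overline{\tilde H}^\rig$ overlap (via $\approx$-equivalence of minimal length elements and conjugation by $\Omega$; compare the $SL_3$ example at $\Phi_2(q)=0$, where the three maximal $K$ contribute the same kernel element, versus $\Phi_3(q)=0$, where they contribute independent ones), and ruling out additional relations beyond the naive kernel is precisely the conjecture; the concluding ``finite analysis one can do type by type'' for $H_K^\sharp$, and the appeal to a yet-to-be-understood failure of second adjointness, are placeholders rather than arguments. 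In short: the easy half is fine, the parametrization via Theorem \ref{1-19} with $J=\Pi$, $x\in\Omega$ is a reasonable starting point, but the technical heart remains open, so this is a research program, not a proof.
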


If this conjecture holds, then one may reduce the study of parametrization of irreducible modular representations of extended affine Hecke algebras to the study of parametrization of irreducible modular representations of extended finite Hecke algebras. The latter case has been studied extensively by Geck and Jacon \cite{GJ11}. By combining all these together, one obtain the rank of the rigid quotient and the elliptic quotient of $R(\tilde H)$. 

In particular, if all the extended parahoric subalgebras of $\tilde H$ are semisimple, then the parameterization of irreducible modules of $\tilde H$ are independent of the change of parameters $\fbq$. If $q_s=q \in \BC^\times$ for all $s \in \tilde \BS$, then the condition that all the extended parahoric subalgebras are semisimple is equivalent to the condition that $q$ is not a root of the Poincar\'e polynomial of $\tilde H$. The classification of irreducible representations in the case where $q$ is not a root of unity is obtained by Kazhdan and Lusztig \cite{KL} . It is proved later by Xi \cite{Xi} that the Kazhdan-Lusztig classification remains valid if $q$ is not a root of Poincar\'e polynomial. 

Now we provide some examples. 

\begin{example}
Let $G=SL_3$. Then $\Omega=\{1\}$. By Example \ref{sl3}, the rigid cocenter $\overline{\tilde H}^\rig$ has a basis $\{T_1=1, T_{s_1}, T_{s_1 s_2}, T_{s_0 s_1}, T_{s_0 s_2}\}$. There are three maximal (extended) parahoric subalgebras: $H_K$ for $K=\{1, 2\}, \{0, 1\}, \{0, 2\}$. They are finite Hecke algebras associated to the group $S_3$. 

The Poincare polynomial $P_{S_3}(q)=\Phi_3(q) \Phi_2(q)$, where $\Phi_i$ is the $i$-th cyclotomic polynomial. If $q$ is not a root of $P_{S_3}(q)$, then the three parahoric Hecke algebras are semisimple. In particular, the map $Tr: \bar H_K \to R(H_K)^*$ is injective and the naive kernel is trivial. The conjectural rank of $R(\tilde H)_\rig$ equals the dimension of $\bar \CH^\rig$, which is $5$. 

If $\Phi_2(q)=0$, then the kernel of the map $Tr: \bar H_{\{1, 2\}} \to R(H_{\{1, 2\}})^*$ is spanned by $T_{s_1}+1$. Note that the image of $T_{s_1}+1, T_{s_2}+1, T_{s_0}+1$ in $\overline{\tilde H}^\rig$ are the same. Thus the naive kernel is spanned by the image of $T_{s_1}+1$ in $\overline{\tilde H}^\rig$. The conjectural rank of $R(\tilde H)_\rig$ equals the dimension of $\overline{\tilde H}^\rig$ minus $1$, which is $4$. 

If $\Phi_3(q)=0$, then the kernel of the map $Tr: \bar H_{\{1, 2\}} \to R(H_{\{1, 2\}})^*$ is spanned by $(q+1) T_{s_1 s_2}+(q+2) T_{s_1}+1$. The naive kernel is spanned by the image of $(q+1) T_{s_1 s_2}+(q+2) T_{s_1}+1$, $(q+1) T_{s_0 s_2}+(q+2) T_{s_0}+1$, $(q+1) T_{s_0 s_1}+(q+2) T_{s_0}+1$ in $\overline{\tilde H}^\rig$. The conjectural rank of $R(\tilde H)_\rig$ equals the dimension of $\overline{\tilde H}^\rig$ minus $3$, which is $2$. 

Here the characteristic of $\kk$ does not affect the conjectural dimension of $R(\tilde H)_\rig$. The following table lists the conjectural rank of $R(\tilde H)_\rig$ for various choice of $q$ and $char(\kk)$. 

\[\begin{tabular}{|c | c | c|}
\hline
$SL_3$ &  $char(\kk) \neq 3$ & $char(\kk)=3$ \\ \hline
$\Phi_3(q) \Phi_2(q) \neq 0$ &    5       &  5 \\ \hline
$\Phi_2(q)=0$       &    4       & 4 \\ \hline
$\Phi_3(q)=0$       &    2      &  2 \\
\hline
\end{tabular}\]
\end{example}

\begin{example}
Let $G=PGL_3$. Then $\Omega=\mu_3$. By Example \ref{gl3}, the rigid cocenter $\overline{\tilde H}^\rig$ has a basis $\{T_1=1, T_{s_1}, T_{s_1 s_2}, \t, \t^2\}$, where $\t$ is a nontrivial element in $\Omega$. There are four maximal extended parahoric subgroups: $H_K$ for $K=\{1, 2\}, \{0, 1\}, \{0, 2\}$ and $H_{\emptyset}^\sharp$, the group algebra of $\Omega$. The first three are conjugate by $\Omega$ and we only need to consider the contribution of $\ker(\bar H_K^\sharp \to R(H_K^\sharp)^*)$ for $K=\{1, 2\}$ and $K=\emptyset$ to the naive kernel. 

If $\Phi_2(q)=0$ or $\Phi_3(q)=0$, then the kernel of $Tr: \bar H_{\{1, 2\}} \to R(H_{\{1, 2\}})^*$ is $1$-dimensional and this makes an one-dimensional contribution of the naive kernel. 

If $char(\kk)=3$, then the only irreducible representation of $\Omega$ is trivial and the kernel of $Tr: \bar H_{\emptyset}^\sharp \to R(H_{\emptyset}^\sharp)^*$ is $2$-dimensional. This makes a two-dimensional contribution of the naive kernel. 

The following table lists the conjectural rank of $R(\tilde H)_\rig$ for various choice of $q$ and $char(\kk)$. 
\[
\begin{tabular}{|c | c | c|}
\hline
$PGL_3$ &  $char(\kk) \neq 3$ & $char(\kk)=3$ \\ \hline
$\Phi_3(q) \Phi_2(q) \neq 0$ &    5       &  3 \\ \hline
$\Phi_2(q)=0$       &    4       & 2 \\ \hline
$\Phi_3(q)=0$       &     4       & 2 \\
\hline
\end{tabular}
\]

Here the characteristic of $\kk$ does affect the conjectural dimension of $R(\tilde H)_\rig$. It is also interesting to compare this example with the previous example to see how the different isogeny classes affect the conjectural rank of $R(\tilde H)_\rig$. 
\end{example}

\subsubsection{Rigid determinant}

Recall that in \S \ref{finite-ct}, we discussed the character table for finite Hecke algebras. In particular, we may compute the determinant of the character table. It is a polynomial of the generic parameters $\fbq$ and it is determined by the finite Hecke algebra (up to sign depending on the ordering of the irreducible representations). 

We may define the rigid character table for extended affine Hecke algebras (with generic complex parameters) as well. Here we use a family of representations, depending analytically  
on the parameters, whose images in $R(\tilde H)_\rig$ form a basis. The existence of such family is proved in Theorem \ref{family}. We define the rigid determinant as the determinant of the rigid character table. It is again a polynomial of the parameters $\fbq$ and it is determined up to scalar. 

We denote by $\det^\rig_\spadesuit$ the rigid determinant of an extended affine Hecke algebra of type $\spadesuit$ and by $\det_\heartsuit$ for the determinant of the character table of an (extended) finite Hecke algebra of type $\heartsuit$. 

\begin{conjecture}\label{rig-det}
Let $\tilde H$ be an extended affine Hecke algebra with generic parameter $\fbq$. Then the rigid determinant $\det^\rig \in \kk[\fbq]$ is a factor of $\Pi_{K} \det_{H^\sharp_K}$, where $H_K$ runs over all the parahoric subalgebras of $\tilde H$.
\end{conjecture}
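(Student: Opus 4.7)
The strategy is to organize both sides of the rigid trace pairing by extended parahoric subalgebras and thereby reduce the rigid character table to a block matrix whose diagonal blocks are elliptic character tables of the $H_K^\sharp$. The organizational principle comes from Theorem~\ref{1-19} together with the Bernstein--Lusztig presentation $T_\CO = \bar i_J(T^J_{Cx})$: every basis vector $T_\CO$ of $\overline{\tilde H}^{\rig}$ corresponds to a triple $(K, x, C)$ in which $K \subsetneq \tilde \BS$ has $W_K$ finite, $x \in \Omega(K)$, and $C$ is an elliptic $\Ad(x)$-twisted conjugacy class of $W_K$, and each such $T_\CO$ lies in the image under $\bar i_K$ of the elliptic part of $\bar H_K^\sharp$. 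Modulo the action of $\Omega$, these triples enumerate the rigid cocenter basis, yielding a direct sum decomposition $\overline{\tilde H}^{\rig} = \bigoplus_{[K]} \bar i_K(\bar H_K^\sharp)^{\rm ell}$.

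Next I would produce a matching basis of the rigid quotient $R(\tilde H)_{\rig}$, depending analytically on $\fbq$ via Theorem~\ref{family}, and parametrized by the same triples $(K, x, C)$. The idea is to attach to each triple an irreducible tempered $\tilde H$-module $V_{(K,x,C),\fbq}$ obtained from an elliptic tempered module of $H_K^\sharp$ by an induction-type construction; for positive parameters this is the Opdam--Solleveld parametrization of the elliptic quotient via the Schwartz algebra, and for arbitrary generic parameters one passes through graded affine Hecke algebras using the analytic deformation in \cite{CH,CH2}. By Theorem~\ref{rig-pairing}(1), the resulting square character matrix is invertible at generic $\fbq$.

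With these two bases in place, the key step is to show that the rigid character table $M(\fbq)$ is block triangular with respect to the partial order on $\Omega$-orbits $[K]$ given by containment: the entry indexed by $((K,x,C),(K',x',C'))$ vanishes unless $K$ is $\Omega$-conjugate to a subset of $K'$. This should follow from compatibility of the trace map with parabolic restriction, together with the fact that an element in the image of $\bar i_K$ has vanishing trace on any representation obtained by induction strictly from a parahoric not containing (a conjugate of) $K$. The diagonal block at $[K]$ is then the elliptic character submatrix of $H_K^\sharp$, whose determinant divides $\det_{H_K^\sharp}$ up to a unit. Taking determinants of $M(\fbq)$ gives $\det^{\rig} \mid \prod_K \det_{H_K^\sharp}$ in $\kk[\fbq]$.

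The main obstacle is the second step: constructing a basis of $R(\tilde H)_{\rig}$ parametrized by parahoric-elliptic data, for arbitrary (not merely positive) generic parameters, and varying analytically in $\fbq$. For positive parameters the tempered induction from elliptic representations of $H_K^\sharp$ is well understood, but for general $\fbq$ an algebraic substitute is needed; the graded affine Hecke algebra reduction of \cite{CH} is the natural tool, but matching parametrizations through the deformation, and upgrading a generic-parameter divisibility of rational functions to a genuine divisibility of polynomials in $\kk[\fbq]$, will both require careful control of the behavior at specializations where individual $\det_{H_K^\sharp}$ degenerate.
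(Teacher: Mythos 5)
First, a point of order: Conjecture~\ref{rig-det} is stated in the paper as an open conjecture. The paper offers no proof, only supporting evidence (the $Sp(4)$ and $PSp(4)$ examples computed in \cite{CH2}, where $\det^\rig$ is an explicit alternating product of determinants of extended parahoric character tables) and the remark that only the maximal extended parahoric subalgebras should be needed. So there is no proof in the paper to compare against; what you have written is a strategy sketch, and it must be judged on whether it could plausibly close the conjecture.

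Your skeleton is consistent with the paper's expectation, but each of its three pillars has a genuine gap. (a) The bookkeeping in step one is not quite right: for generic $\fbq$ the basis of $\overline{\tilde H}^\rig$ is indexed by conjugacy classes with zero Newton point, i.e.\ by equivalence classes of quadruples $(\Pi,x,K,C)$ in the sense of Theorem~\ref{1-19}, and this equivalence allows conjugation by $\Ad(x)$-fixed elements of $\tW$, not only by $\Omega$; your $\Omega$-orbits of triples $(K,x,C)$ parametrize the $\approx$-classes (the ``strong equivalence'' used for $0$-Hecke algebras), which can be strictly finer, so the claimed direct sum $\bigoplus_{[K]}\bar i_K(\bar H_K^\sharp)^{\mathrm{ell}}$ needs an argument that no fusion occurs across the chosen blocks. (b) The decisive step --- a basis of $R(\tilde H)_\rig$ indexed by the same parahoric-elliptic data, valid for arbitrary generic (not just positive) parameters, together with the vanishing statement that forces block triangularity of the trace pairing --- is exactly what is missing, as you acknowledge; Theorem~\ref{family} only gives an analytic family, and the Opdam--Solleveld parametrization is not available off the positive-parameter locus, so this is the conjecture in disguise rather than a reduction of it. Moreover the ``diagonal block'' is an elliptic pairing matrix for $H_K^\sharp$, which is only well defined up to basis changes of $R(H_K^\sharp)$ modulo induced representations, and it is not automatic that its determinant divides $\det_{H_K^\sharp}$ in $\kk[\fbq]$ --- indeed the paper's examples show denominators $\det_{A_1(q_i)}$, $\det_{\mu_2}$ appearing, so the diagonal blocks themselves must be alternating products and the divisibility claim for each block requires its own proof. (c) Finally, even granting block triangularity at generic parameters, this yields a factorization of rational or analytic functions on the generic locus; promoting it to divisibility in the polynomial ring $\kk[\fbq]$ (where the individual $\det_{H_K^\sharp}$ vanish on hypersurfaces) needs a separate argument controlling the vanishing loci, which the sketch does not supply. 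In short, the plan is a reasonable organizing picture, matching the shape of the known examples, but it does not yet contain a proof of any of the three substantive assertions it rests on.
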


\begin{remark}
In fact, in the conjecture we only need to take the product of the maximal extended parahoric subalgebras. And we also expect that there is an explicit formula for the rigid determinant, as an alternating product of the determinants of the character tables for various of (partially) extended parahoric subalgebras, as we will see in the examples below. However, due to its complexity, we do not formulate the (conjectural) explicit formula here. 

Also this conjecture predicts that the pairing between the rigid cocenter and the rigid quotient is a perfect pairing if all the extended parahoric subalgebras are semisimple (i.e., the determinant of the character tables are invertible). This coincides with the prediction from the naive kernel conjecture \ref{naive}. 
\end{remark}

Now we give two examples to support the rigid determinant conjecture. We also provide explicit formulas for the rigid determinants in these examples. 

\begin{example}
Let $G=Sp(4)$ and let $\tilde H$ be the affine Hecke algebra attached to the affine diagram of type $C_2$ with three parameters
\begin{equation*}\label{c2-aff}
\xymatrix{q_0& q_1\ar@{<=}[l]& q_2\ar@{=>}[l]}
\end{equation*}

The rigid cocenter $\overline{\tilde H}^\rig$ has a basis $\{T_1=1, T_{s_0}, T_{s_1}, T_{s_2},  T_{s_0 s_1}, T_{s_0 s_2}, T_{s_1 s_2}, T_{s_0 s_1 s_0 s_1}, T_{s_1 s_2 s_1 s_2}\}$. There are three maximal parahoric subalgebras: $H_K$ for $K=\{1, 2\}, \{0, 1\}, \{0, 2\}$. Here $T_{s_i}$ for $i=0, 1, 2$ each appears in the cocenters of two out of three maximal parahoric subalgebras. We have 
\begin{gather*} \det{}_{C_2(q_0, q_1)}=(1+q_0)^2(1+q_1)^2 (1+q_0 q_1)(q_0+q_1), \\ 
\det{}_{A_1(q_0) \times A_1(q_2)}=(1+q_0)^2 (1+q_2)^2, \\
\det{}_{C_2(q_1, q_2)}=(1+q_1)^2(1+q_2)^2 (1+q_1 q_2)(q_1+q_2), \\
\end{gather*}

The rigid determinant is computed in \cite[Example 7.9]{CH2}. We have $$\det{}^\rig_{\tilde C_2(q_0, q_1, q_2)}=\frac{\det{}_{C_2(q_0, q_1)} \det{}_{A_1(q_0) \times A_1(q_2)} \det{}_{C_2(q_1, q_2)}}{\det{}_{A_1(q_0)} \det{}_{A_1(q_1)} \det{}_{A_1(q_2)}}.$$
\end{example}

\begin{example}
Let $G=PSp(4)$ and let $\tilde H$ be the affine Hecke algebra attached to the affine diagram of type $C_2$ with three parameters
\begin{equation*}
\xymatrix{q_2& q_1\ar@{<=}[l]& q_2\ar@{=>}[l]\ar@{<->}@/_1pc/[ll]}
\end{equation*}

The rigid cocenter $\overline{\tilde H}^\rig$ has a basis $\{T_1=1, T_{s_1}, T_{s_2},  T_{s_0 s_2}, T_{s_1 s_2}, T_{s_1 s_2 s_1 s_2}, \t, T_{s_1} \t, T_{s_0} \t\}$. There are four maximal extended parahoric subalgebras: $H_K^\sharp$ for $K=\{1, 2\}, \{0, 1\}, \{0, 2\}, \{1\}$. The first two are conjugate by $\Omega$. The elements $T_{s_1}, T_{s_2}, \t$ each appears twice in the cocenters of the three isomorphism classes of maximal parahoric subalgebras. We have 
\begin{gather*} \det{}_{C_2(q_1, q_2)}=(1+q_1)^2(1+q_2)^2 (1+q_1 q_2)(q_1+q_2), \\
\det{}_{(A_1(q_2) \times A_1(q_2)) \rtimes \mu_2}=4 (1+q_2)^4, \\
\det{}_{A_1(q_1) \times \mu_2}=4(1+q_1).
\end{gather*}

The rigid determinant is computed in \cite[Example 7.10]{CH2}. We have $$\det{}^\rig_{{\tilde C_2 \rtimes \mu_2}(q_1, q_2)}=\frac{\det{}_{C_2(q_1, q_2)} \det{}_{(A_1(q_2) \times A_1(q_2)) \rtimes \mu_2} \\det{}_{A_1(q_1) \times \mu_2}}{8 \det{}_{A_1(q_1)} \det{}_{A_1(q_2)} \det{}_{\mu_2}}.$$
\end{example}

\subsection{$0$-Hecke algebras}
If all the parameters equal to zero, then we say that the corresponding Hecke algebra a {\it $0$-Hecke algebra}. In this subsection, we discuss the cocenter and representations of affine $0$-Hecke algebras. The purpose is two-folded: 
\begin{itemize}
\item Affine $0$-Hecke algebras serve as models for the pro-$p$ Iwahari-Hecke algebras, which play an important role in the study of mod-$p$ representations of $p$-adic groups. 

\item For affine $0$-Hecke algebras, the cocenter-representation duality fails. The good news is that it fails in a ``controllable'' way. This serves as an evidence of the naive kernel conjecture \ref{naive}. 
\end{itemize}

\subsubsection{The cocenter}
Let $W$ be a finite or (extended) affine Weyl group. As we have seen in \S \ref{finite-ct} and \S \ref{6}, the cocenter of $H_{\fbq}$ (for nonzero parameters $\fbq$) has a standard basis indexed by $W_{\min}/\sim$. For $0$-Hecke algebras, the situation is different. 

By Proposition \ref{3-2} the induction argument still works and the cocenter of $H_0$ is spanned by the image of $T_w$, where $w$ runs over the elements in $W_{\min}$. However, for $w \sim w'$, $T_w$ and $T_{w'}$ may have different image in $\bar H_0$. 

By Proposition \ref{ww'} (1), if $w \approx w'$, then the image of $T_w$ and $T_{w'}$ in $\bar H_0$ are the same. For any $\approx$-equivalence class $\Sigma$ of $W_{\min}$, we denote by $T_{\Sigma}$ the image of $T_w$ in $\bar H_0$ for any $w \in \Sigma$. Then 

\begin{theorem}
The set $\{T_{\Sigma}; \Sigma \in W_{\min}/\approx\}$ forms a $\BZ$-basis of $\bar H_0$. 
\end{theorem}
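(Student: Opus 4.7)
The well-definedness of $T_\Sigma$ as an element of $\bar H_0$ is immediate from Proposition \ref{ww'}(1) specialized at $\fbq = 0$. For spanning, I adapt the induction in Proposition \ref{3-2}. If $w \notin W_{\min}$, apply the Red-Min property (Theorem \ref{a-min}(1) in the affine case, Theorem \ref{f-min}(1) in the finite case) to produce $w' \approx w$ and $s \in \tilde \BS$ with $v := sw's < w'$, so that $\ell(sv) = \ell(vs) = \ell(v) + 1$ and $T_{w'} = T_s T_v T_s$. Using the zero-parameter relation $T_s^2 = -T_s$, one computes
\[T_w \equiv T_{w'} \equiv T_v T_s^2 = -T_v T_s = -T_{sw'} \pmod{[H_0, H_0]}.\]
Since $\ell(sw') < \ell(w)$, induction on length expresses any $T_w$ as a $\BZ$-linear combination of $\{T_\Sigma\}_{\Sigma \in W_{\min}/\approx}$, with all coefficients equal to $\pm 1$.

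The heart of the proof is linear independence. My strategy is to construct, for each $\Sigma \in W_{\min}/\approx$, a $\BZ$-linear functional $\phi_\Sigma : H_0 \to \BZ$ that vanishes on $[H_0, H_0]$ and such that the matrix $\bigl(\phi_\Sigma(T_{\Sigma'})\bigr)$ is unitriangular in an ordering by length of representatives. For a finite Coxeter group $W$, the natural candidates are Norton's characters of the $0$-Hecke algebra: for each $I \subseteq \BS$ the one-dimensional representation $\chi_I$ given by $\chi_I(T_s) = -1$ if $s \in I$ and $\chi_I(T_s) = 0$ otherwise. These are traces of $H_0$-modules, hence vanish on commutators; moreover $\chi_I(T_w) = (-1)^{\ell(w)}$ when $w$ has a reduced expression lying in $W_I$, and $\chi_I(T_w) = 0$ otherwise. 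Combining this with the never-fuse property (Theorem \ref{neverfuse}), which attaches to each conjugacy class a minimal parabolic type containing a representative, one obtains the required separation of the $T_\Sigma$'s.

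For an extended affine Weyl group $\tW$, I would lift this using the standard quadruple parameterization (Theorem \ref{1-19}), together with the fact that $\CO_{\min}$ is a single $\approx$-class for any conjugacy class $\CO$ of $\tW$: for straight $\CO$ this is recorded in \S\ref{str}, and for non-straight $\CO$ it follows from Theorem \ref{par-main}(4) combined with the ellipticity statement Theorem \ref{f-min}(3) applied after the reduction in Theorem \ref{a-red}. For a straight class $\Sigma$ corresponding to $(J, x, \emptyset, \{1\})$, detect $T_\Sigma$ via a character of the commutative subalgebra in the Bernstein--Lusztig presentation, twisted by the length-zero element $x \in \Omega_J$. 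For a non-straight class attached to $(J, x, K, C)$, glue this straight-class functional with a Norton-type character of the finite parahoric subalgebra $H_{K,0}$ detecting $C$. The principal obstacle is verifying that these glued functionals vanish on the full commutator $[H_0, H_0]$ and not merely on parabolic commutators; this requires a careful compatibility between the Iwahori--Matsumoto and Bernstein--Lusztig bases at zero parameter, where the quadratic relation $T_s^2 = -T_s$ produces delicate sign cancellations that must match the $\approx$-equivalence structure.
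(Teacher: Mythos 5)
Your well-definedness and spanning arguments are fine and essentially reproduce Proposition \ref{ww'}(1) and Proposition \ref{3-2} specialized at $\fbq=0$ (the single-term recursion $T_w\equiv -T_{sw'}$ is exactly what that proof gives when $c(s)=0$). The genuine gap is in your linear independence strategy. For a finite Coxeter group, every functional you propose is the trace of an $H_0$-module: by Norton's theorem the $\l_J$ are \emph{all} the irreducible $H_0$-modules, so any such functional factors through the trace map $\bar H_0 \to R(H_0)^*$. But this map has a nonzero kernel: as recorded later in this section (quoting \cite[Proposition 5.6]{He-0}), the kernel is spanned by the differences $T_\Sigma-T_{\Sigma'}$ for $\approx$-classes with the same support. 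Concretely, in type $C_2$ the classes of $s_1s_2$ and of $s_1s_2s_1s_2$ both have support $\{1,2\}$ and even length, so $\l_I(T_{s_1s_2})=\l_I(T_{s_1s_2s_1s_2})$ for every $I\subset\BS$; no combination of Norton characters separates them, and Theorem \ref{neverfuse} is of no help here because it says nothing that distinguishes two elliptic classes of the same $W_J$ at the level of characters. So character-theoretic functionals cannot prove independence of $\{T_\Sigma\}$; this is precisely the failure of cocenter-representation duality at $\fbq=0$ that the theorem is meant to quantify. One must either deduce independence from the basis theorem at nonzero parameters by a specialization/deformation argument, as in \cite[Theorem 5.5]{He-0}, or argue directly inside $H_0$ with the commutator $[H_0,H_0]$ as in \cite[\S 2.4]{HNx}; the survey itself only cites these two routes, and neither uses characters.

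A second error undermines the affine part of your plan: it is not true that $\CO_{\min}$ is a single $\approx$-class for every conjugacy class $\CO$ of $\tW$. Theorem \ref{a-min}(2) only gives a single $\sim$-class; the stronger $\approx$-statement holds for straight classes (\cite[Theorem 3.8]{HN14}) and, via Theorem \ref{par-main}(4), for distinguished partial classes, but not in general. For instance, in the affine Weyl group of $SL_3$ the simple reflections $s_0,s_1,s_2$ are all conjugate and of minimal length, yet each is its own $\approx$-class, giving three distinct elements $T_{s_0},T_{s_1},T_{s_2}$ of $\bar H_0$. Indeed, if your claim were true the basis would be indexed by conjugacy classes rather than by $W_{\min}/\approx$, contradicting the statement being proved; the correct indexing in the affine case is by \emph{strong} equivalence classes of standard quadruples, which is strictly finer than the equivalence of Theorem \ref{1-19}. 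Your proposed gluing of functionals, indexed as if $\approx$-classes matched conjugacy classes, therefore miscounts from the start, in addition to the unresolved verification (which you flag yourself) that the glued functionals kill all of $[H_0,H_0]$.
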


For finite Hecke algebras, it is proved in \cite[Theorem 5.5]{He-0}, where the basis theorem of $\bar H_{\fbq}$ (for nonzero parameters $\fbq$) is used. A different and simpler approach is found in \cite[\S 2.4]{HNx}, which works for both finite and affine Hecke algebras. 

\subsubsection{Finite $0$-Hecke algebras} In this subsection, we assume that $W$ is a finite Coxeter group. Recall that $$\G=\{(J, C); J \subset \BS, C \text{ is an elliptic conjugacy class of } W_J\}.$$ It is proved in Proposition \ref{1-10} that the set of equivalence classes of $\G$ is in natural bijection with the set of conjugacy classes of $W$. The following result is proved in \cite[Corollary 3.2]{He-0}. 

\begin{proposition}
The map $$\G \to W_{\min}/\approx, \qquad (J, C) \mapsto C_{\min}$$ gives a bijection from $\G$ to the $\approx$-equivalence classes of $W_{\min}$. 
\end{proposition}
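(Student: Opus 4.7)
The plan is to verify that $(J, C) \mapsto C_{\min}$ is well-defined, injective, and surjective. The main difficulty lies in well-definedness, specifically in showing that $C_{\min}$ is genuinely a full $\approx_W$-equivalence class of $W_{\min}$ and not a proper subset of one.

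First I would show that for $(J, C) \in \G$, the subset $J$ is minimal among $J_0 \subseteq \BS$ with $\CO \cap W_{J_0} \neq \emptyset$, where $\CO = W \cdot C$. Indeed, if some $J_0 \subsetneq J$ met $\CO$, applying Theorem~\ref{f-min}(1) inside $W_{J_0}$ to an element of $\CO \cap W_{J_0}$ would produce a pair $(J_1, C_1) \in \G$ also representing $\CO$, with $|J_1| \le |J_0| < |J|$; this contradicts Proposition~\ref{1-10}, which forces equivalent pairs in $\G$ to have $W$-conjugate (hence equinumerous) first coordinates. Minimality combined with Theorem~\ref{neverfuse} yields $C = \CO \cap W_J$, and the classical Geck--Pfeiffer identity $\ell(C_{\min}) = \ell(\CO_{\min})$ then gives $C_{\min} \subseteq \CO_{\min} \subseteq W_{\min}$. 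By Theorem~\ref{f-min}(3), $C_{\min}$ is a single $\approx_{W_J}$-class and is therefore contained in a single $\approx_W$-class.

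The technical heart is the saturation of $C_{\min}$ under $\approx_W$: given $w \in C_{\min}$ and $s \in \BS$ with $\ell(sws) = \ell(w)$, show $sws \in C_{\min}$. The case $s \in J$ is immediate, since $sws \in W_J$ lies in the same $W_J$-class $C$ with the same length. For $s \notin J$ I would argue the only way to preserve length is $sws = w$. Since $w, w^{-1} \in W_J$ and $\alpha_s \notin \Phi_J$, both $w(\alpha_s)$ and $w^{-1}(\alpha_s)$ are positive, so $\ell(sw) = \ell(w) + 1$; the length hypothesis then forces $s$ to be a right descent of $sw$, i.e., $s(w(\alpha_s)) < 0$. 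Since $s$ negates only $\alpha_s$ among positive roots, this yields $w(\alpha_s) = \alpha_s$. Decomposing $\alpha_s = u + v$ with $u$ in $V_J := \mathrm{span}(\alpha_j : j \in J)$ and $v$ in the orthogonal complement (with respect to the $W$-invariant bilinear form on the reflection representation), and using that $W_J$ fixes $V_J^{\perp}$ pointwise, we obtain $w(u) = u$. Ellipticity of $w$ in $W_J$ (no nonzero $W_J$-fixed vector in $V_J$) then forces $u = 0$, so $s$ is orthogonal to $V_J$ and commutes with every generator of $W_J$; in particular $sws = w \in C_{\min}$.

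Injectivity and surjectivity both rest on the observation that every $w \in C_{\min}$ satisfies $\supp(w) = J$ (otherwise $w$ would lie in a proper standard parabolic of $W_J$, contradicting ellipticity of $C$). For injectivity, if $(J_1, C_1)$ and $(J_2, C_2)$ both map to a common $\approx_W$-class and $w$ is in the common image, then $J_1 = \supp(w) = J_2$, and both $C_i$ coincide as the $W_{J_1}$-conjugacy class of $w$. For surjectivity, given an $\approx_W$-class $\Sigma \subseteq W_{\min}$ and $w \in \Sigma$, set $J = \supp(w)$ and let $C$ be the $W_J$-conjugacy class of $w$; if $C$ were not elliptic in $W_J$, the classification implicit in Proposition~\ref{1-10} applied inside $W_J$ would supply a min length element of $C$ whose support has cardinality strictly less than $|J|$, contradicting $\supp(w) = J$. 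Well-definedness (shown above) then forces $C_{\min} = \Sigma$. The principal obstacle throughout is the root-system argument in the third paragraph; the rest is bookkeeping combining Proposition~\ref{1-10}, Theorem~\ref{neverfuse}, and Theorem~\ref{f-min}.
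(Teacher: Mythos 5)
Your well-definedness and saturation arguments essentially work, and the ``technical heart'' can even be shortened: once you know $w(\alpha_s)=\alpha_s$ you already get $w s w^{-1}=s$, hence $sws=w$, so the decomposition $\alpha_s=u+v$ and the appeal to ellipticity are superfluous (also, in your minimality argument, Theorem~\ref{f-min}(1) alone does not hand you an elliptic pair; you should pass to a minimal subset of $J_0$ meeting $\CO$ and invoke Theorem~\ref{neverfuse} -- a harmless repair). The genuine gap is in surjectivity. You set $J=\supp(w)$ for $w\in W_{\min}$, let $C$ be the $W_J$-class of $w$, and argue that if $C$ were not elliptic, then Proposition~\ref{1-10} applied inside $W_J$ would produce a minimal length element of $C$ whose support has fewer than $|J|$ elements, ``contradicting $\supp(w)=J$''. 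There is no contradiction at that point: what you obtain is \emph{some} $u\in C_{\min}$ with small support, and nothing you have established implies that all minimal length elements of $C$ (equivalently of $\CO$, since here $\ell(C_{\min})=\ell(\CO_{\min})=\ell(w)$) have supports of the same cardinality. They are all $\sim$-equivalent by Theorem~\ref{f-min}(2), but $\sim$ does not preserve supports (already in $S_3$ one has $s_1\sim s_2$); the relation $\approx$ does preserve supports, but $C_{\min}$ is known to be a single $\approx$-class only when $C$ is elliptic, by Theorem~\ref{f-min}(3) -- exactly the hypothesis you are trying to verify. So the step is circular: constancy of $|\supp|$ on $\CO_{\min}$ is essentially equivalent to the assertion that the $W_{\supp(w)}$-class of a minimal length element is elliptic.

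The missing ingredient is precisely the classical Geck--Pfeiffer fact that an element of minimal length in its $W$-conjugacy class has minimal length in its $W_{\supp(w)}$-class and that this class is cuspidal (elliptic) in $W_{\supp(w)}$. This is the companion of the length identity $\ell(C_{\min})=\ell(\CO_{\min})$ that you do allow yourself to quote for well-definedness; both live in \cite[\S 3.2]{GP00}, and they are what the paper's own source for this proposition, \cite[Corollary 3.2]{He-0}, relies on (the paper gives no independent proof, only this citation). If you quote that fact, your injectivity and surjectivity bookkeeping closes immediately; if you insist on proving it, it requires a genuine argument as in \cite[\S 3.2]{GP00} or \cite{He-0}, not the cardinality contradiction you sketch.
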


The irreducible representations of $H_0$ are easy to construct. 
For any $J \subset \BS$, let $\l_J$ be the one-dimensional representation of $H_0$ defined by 
\[
\l_J(T_s)=\begin{cases} -1, & \text{ if } s \in J; \\ 0, & \text{ if } s \notin J. \end{cases}
\]

By \cite{No}, the set $\{\l_J\}_{J \subset \BS}$ is the set of all the irreducible representations of $H_0$. 

The following result \cite[Proposition 5.6]{He-0} describes how the cocenter-representation duality fails for $H_0$. 

\begin{proposition}
The trace map $\bar H_0 \to R(H_0)$ is surjective and the kernel is spanned by $T_{\Sigma}-T_{\Sigma'}$, where $\Sigma, \Sigma'$ are $\approx$-equivalence classes of $W_{\min}$ of the same support. 
\end{proposition}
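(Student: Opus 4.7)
The plan is to compute $\lambda_J(T_\Sigma)$ explicitly for every $J\subset\BS$ and every $\approx$-equivalence class $\Sigma$ in $W_{\min}$, and then deduce surjectivity and the kernel description from a standard Möbius-inversion argument on the Boolean lattice of subsets of $\BS$.

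First, from the defining relations at $\fbq = 0$, one has $T_s^2=-T_s$, so $\lambda_J(T_s)=-1$ for $s\in J$ and $\lambda_J(T_s)=0$ for $s\notin J$. For any reduced expression $w=s_{i_1}\cdots s_{i_k}$ this gives $\lambda_J(T_w)=(-1)^{\ell(w)}$ if $\supp(w)\subset J$ and $0$ otherwise. Under the bijection from Proposition~\ref{1-10} followed by the identification of $\G$ with $W_{\min}/\!\approx$, a class $\Sigma$ corresponds to a pair $(K,C)$ with $C$ an elliptic $W_K$-conjugacy class, and any $w\in\Sigma$ is a minimum-length element of $C$ with $\supp(w)=K$. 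Hence
\[
\lambda_J(T_\Sigma) \;=\; \begin{cases} (-1)^{\ell(\Sigma)}, & K\subset J, \\ 0, & \text{otherwise.}\end{cases}
\]

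The crucial technical input is the parity identity $\ell(\Sigma)\equiv|K|\pmod 2$ for any $\Sigma$ of support $K$. I would prove this as follows: since $w\in W_K$ is a product of reflections acting on the $|K|$-dimensional reflection representation $V_K$, $\det(w)=(-1)^{\ell(w)}$. Because $w$ is elliptic in $W_K$, there is no $+1$ eigenvalue, so the characteristic polynomial of $w$ on $V_K$ factors into cyclotomic polynomials $\Phi_d$ with $d\ge 2$. Every $\Phi_d$ with $d\ge 3$ has even degree, and only $\Phi_2(t)=t+1$ contributes a $-1$ eigenvalue, so the number $m$ of $-1$ eigenvalues of $w$ satisfies $m\equiv|K|\pmod 2$; combined with $\det(w)=(-1)^m$ this forces $\ell(w)\equiv|K|\pmod 2$. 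Consequently $\lambda_J(T_\Sigma)=(-1)^{|K|}$ if $K\subset J$ and $0$ otherwise, depending only on the support of $\Sigma$.

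With this formula, group the columns of the trace matrix by support. Choosing any $\Sigma_K$ with $\supp(\Sigma_K)=K$ for each $K\subset\BS$ (such a $\Sigma_K$ exists, since the trivial class works for $K=\emptyset$ and a Coxeter element of $W_K$ is elliptic for $K\ne\emptyset$), the resulting $2^{|\BS|}\times 2^{|\BS|}$ submatrix $\bigl((-1)^{|K|}\cdot[K\subset J]\bigr)_{J,K}$ is upper-triangular for the subset order with nonzero diagonal, giving surjectivity. For the kernel, let $\sum_\Sigma c_\Sigma T_\Sigma$ lie in the kernel and set $b_K:=\sum_{\supp(\Sigma)=K} c_\Sigma$; the conditions $\lambda_J\bigl(\sum_\Sigma c_\Sigma T_\Sigma\bigr)=0$ read $\sum_{K\subset J}(-1)^{|K|}b_K=0$ for all $J\subset\BS$, and Möbius inversion on the subset lattice (equivalently, induction on $|J|$) yields $b_K=0$ for all $K$. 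This is exactly the statement that $\sum_\Sigma c_\Sigma T_\Sigma$ is a linear combination of differences $T_\Sigma-T_{\Sigma'}$ with $\supp(\Sigma)=\supp(\Sigma')$. The only genuinely nonformal step is the parity lemma; everything else is Möbius inversion on the Boolean lattice together with the existence of Coxeter elements in parabolic subgroups.
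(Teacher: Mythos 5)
Your proof is correct, and since the survey gives no argument for this statement (it is quoted from \cite[Proposition 5.6]{He-0}), your write-up is a valid self-contained substitute that follows the natural route: evaluate the one-dimensional characters $\l_J$ on the basis $\{T_\Sigma\}$ of $\bar H_0$, note that $\l_J(T_\Sigma)$ vanishes unless $\supp(\Sigma)\subset J$ and otherwise is a sign depending only on $\supp(\Sigma)$, and then conclude by triangularity and M\"obius inversion over the Boolean lattice of subsets of $\BS$, using a Coxeter element of $W_K$ to produce a class of each support $K$. You also correctly isolated the one non-formal ingredient, the parity identity $\ell(\Sigma)\equiv|\supp(\Sigma)| \pmod 2$: without it the same computation would only show that the kernel is spanned by \emph{signed} combinations $(-1)^{\ell(\Sigma')}T_\Sigma-(-1)^{\ell(\Sigma)}T_{\Sigma'}$, not by the differences in the statement. (One small bookkeeping point: the bijection you need is the one recorded immediately before the proposition, $\G\to W_{\min}/\approx$, $(J,C)\mapsto C_{\min}$, rather than Proposition \ref{1-10}, which concerns equivalence classes of $\G$ and conjugacy classes of $W$; this does not affect your argument.)

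One repair is needed in your proof of the parity lemma: the claim that the characteristic polynomial of an elliptic $w\in W_K$ on $V_K$ factors into cyclotomic polynomials uses rationality of the reflection representation and fails for the non-crystallographic types $H_3$, $H_4$, $I_2(m)$, which are included here since $W$ is only assumed to be a finite Coxeter group. The conclusion survives by the same determinant count: $w$ acts by a real matrix of finite order, so its non-real eigenvalues come in conjugate pairs, each pair contributing $+1$ to $\det(w)$ and $2$ to $\dim V_K=|K|$; ellipticity excludes the eigenvalue $+1$; hence the multiplicity $m$ of the eigenvalue $-1$ satisfies $m\equiv|K|\pmod 2$ and $(-1)^{\ell(w)}=\det(w)=(-1)^m=(-1)^{|K|}$. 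With that adjustment your argument is complete.
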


Now let us compare the trace function for $H_{\fbq}$ for generic parameters $\fbq$ and $H_0$. Let $\Sigma \in W_{\min}/\approx$ and $(J, C) \in \G$ be the associated pair. Then the trace function of $H_0$ remembers the support $J$ of the element $T_\Sigma$, but ignores the difference between various elliptic conjugacy classes of $W_J$. On the other hand, the trace function of $H_{\fbq}$ remembers the elliptic conjugacy classes, but ignore the difference between the various subsets of $\BS$ that are conjugate to each other. 

\subsubsection{Affine $0$-Hecke algebras}

Recall that $\tilde \G$ is the set of standard quadruples defined in \S\ref{st-qua} for ordinary conjugation action. It is proved in Theorem \ref{1-19} that the set of equivalence classes of $\tilde \G$ is in natural bijection with the set of conjugacy classes of $\tW$. 



For any $\Sigma \in \tW_{\min}/\approx$ corresponding to a standard quadruple $(J, x, K, C)$, we set $J_\Sigma=J$. We set \begin{gather*} \overline{\tilde H_0}^{\rig}=\oplus_{\Sigma \in \tW_{\min}/\approx, J_\Sigma=\Pi} \BZ T_\Sigma, \qquad \overline{\tilde H_0}^{\nrig}=\oplus_{\Sigma \in \tW_{\min}/\approx, J_\Sigma \subsetneqq \Pi} \BZ T_\Sigma \end{gather*}

Let $K \subset \tilde \BS$ with $W_K$ finite. Recall that $\Omega(K)=\{\t \in \Omega; \t(K)=K\}$. Let $\chi \in \Hom_\BZ(\Omega(K), \kk^\times)$. We extend $\chi$ to a $1$-dimensional $H_{a, 0} \rtimes \Omega(K)$-module, where $T^J_s$ acts by $-1$ if $s \in K$ and by $0$ if $s \in \tilde \BS \smallsetminus K$. Then we set $$\pi_{K, \chi}=\tilde \CH_0 \otimes_{H_{a, 0} \rtimes \Omega(K)} \chi.$$ We say that $(K, \chi) \sim (K', \chi')$ if they are conjugate by an element in $\Omega$. It is easy to see that if $(K, \chi) \sim (K', \chi')$, then $\pi_{K, \chi}$ is isomorphic to $\pi_{K', \chi'}$. 

We set $$R(\tilde \CH_0)_\rig=\oplus_{(K, \chi)/\sim; K \subset \tilde \BS \text{ with } W_K \text{ finite }} \BZ \pi_{K, \chi}.$$ The following result is proved in \cite[Proposition 5.1]{HNx}. 

\begin{theorem}
For $M \in R(\tilde \CH_0)$, $Tr(\overline{\tilde H_0}^{\nrig}, M)=0$ if and only if $M \in R(\tilde \CH_0)_\rig$. 
\end{theorem}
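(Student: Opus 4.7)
The plan is to prove both directions by directly analyzing how the basis elements $T_\Sigma$ of $\overline{\tilde H_0}$ act on the induced modules $\pi_{K,\chi}$, exploiting the standard quadruple parameterization of $\Sigma \in \tW_{\min}/\approx$.

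For the "if" direction, fix $(K,\chi)$ with $W_K$ finite and let $\Sigma$ have standard quadruple $(J,x,K',C)$ with $J \subsetneq \Pi$. The module $\pi_{K,\chi} = \tilde \CH_0 \otimes_{H_{a,0}\rtimes \Omega(K)} \chi$ has a natural $\kk$-basis indexed by minimal length representatives of cosets $\tW/(W_K \Omega(K))$, and on this basis $T_w$ acts by permutation-with-scalars dictated by the Iwahori-Matsumoto relations at $q=0$. I would choose a minimal length representative $w = ux \in \Sigma$ with $u \in C_{\min}$ and note that $J \subsetneq \Pi$ forces $\nu_x \neq 0$, so $x$ has a nonzero translation component in some direction $\alpha \in \Pi \setminus J$. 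Because $W_K$ is finite, translation by $\nu_x$ has no fixed cosets in $\tW/(W_K \Omega(K))$, and the trace of $T_w$ on $\pi_{K,\chi}$ therefore vanishes. This gives $Tr(T_\Sigma, \pi_{K,\chi}) = 0$ whenever $\Sigma$ is non-rigid, establishing $R(\tilde \CH_0)_{\rig} \subseteq \{M : Tr(\overline{\tilde H_0}^\nrig, M) = 0\}$.

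For the "only if" direction, I would prove that the restricted trace pairing
\[
\langle \, , \, \rangle : \overline{\tilde H_0}^\rig \times R(\tilde \CH_0)_{\rig} \to \kk
\]
is perfect. Granting this, any $M$ annihilated by $\overline{\tilde H_0}^\nrig$ has its trace function determined by its values on $\overline{\tilde H_0}^\rig$, and perfection of the rigid pairing lets one uniquely express $M$ (as an element of $R(\tilde \CH_0)$ modulo the trace-radical) as a $\kk$-combination of the $\pi_{K,\chi}$. Any residual element is annihilated by the entire cocenter, hence is zero by the density/basis theorem for $\tilde \CH_0$.

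Non-degeneracy of the rigid pairing is the main obstacle, and I would attack it by triangularity with respect to support. The rigid cocenter has basis indexed by quadruples $(\Pi, x, K, C)$ with $x \in \Omega$, $W_K$ finite, and $C$ an elliptic $\Ad(x)$-twisted class in $W_K$; the rigid representations are indexed by $(K,\chi)/\sim$. I would order both bases by the subset $K \subset \tilde \BS$ under inclusion (modulo $\Omega$-conjugation). The key computation is that $Tr(T_\Sigma, \pi_{K,\chi})$ is zero unless $K'$ is $\Omega$-conjugate into $K$ (by a Frobenius reciprocity argument using that $\chi$ extended by zero outside $K$ kills any basis element involving a $T_s$ with $s \notin K$ acting generically), and when $K' = K$ up to conjugacy, the computation reduces to the known pairing for the (extended) finite $0$-Hecke algebra $H_{W_K,0} \rtimes \Omega(K)$ between its $\approx$-classes of elliptic minimal-length elements and its one-dimensional characters $\chi$, which is invertible by the analysis of the finite $0$-Hecke cocenter cited earlier. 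The chief technical challenge is controlling the off-diagonal terms for $K' \subsetneq K$, which requires a careful reduction using $\approx$-moves to push any minimal length representative of $\Sigma$ into $H_{W_K,0} \rtimes \Omega(K)$ before evaluating the character of $\chi$, mirroring the Geck-Pfeiffer reduction in the affine $0$-Hecke setting.
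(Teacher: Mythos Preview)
Your proposal contains two substantive errors, one in each direction.

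\textbf{The ``if'' direction.} Your description of $\pi_{K,\chi}$ is wrong. The character $\chi$ is extended to a one-dimensional module for all of $H_{a,0}\rtimes\Omega(K)$ (with $T_s$ acting by $-1$ or $0$ according as $s\in K$ or not), and $\pi_{K,\chi}$ is induced from there up to $\tilde\CH_0=H_{a,0}\rtimes\Omega$. So $\pi_{K,\chi}$ has a basis indexed by $\Omega/\Omega(K)$, not by $\tW/(W_K\Omega(K))$, and your ``translation has no fixed cosets'' picture does not apply. The correct (and simpler) argument is: write $w=w_a\tau$ with $w_a\in W_a$, $\tau\in\Omega$. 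If $\Sigma$ is non-rigid then $\nu_w\neq 0$, and since $\Omega$ is finite (semisimple root datum) also $\nu_{w_a}\neq 0$, so $w_a$ has infinite order and cannot lie in any finite parahoric $W_{K'}$. Hence $T_{w_a}$ kills the one-dimensional module $\chi$ for every $K$, and the trace on the induced module vanishes.

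\textbf{The ``only if'' direction.} Your plan hinges on proving that the restricted pairing $\overline{\tilde H_0}^{\rig}\times R(\tilde\CH_0)_{\rig}\to\kk$ is perfect. It is not, and the paper says so explicitly in the paragraph immediately following the theorem: the map $Tr:\overline{\tilde H_0}^{\rig}\to R(\tilde\CH_0)_{\rig}^*$ is surjective but \emph{not injective}. Concretely, your claimed diagonal block---the pairing between elliptic $\approx$-classes of $W_K$ and the characters $\chi$---is degenerate: for the finite $0$-Hecke algebra the kernel of $\bar H_0\to R(H_0)^*$ is spanned by $T_\Sigma-T_{\Sigma'}$ for $\Sigma,\Sigma'$ of the same support (see the discussion of finite $0$-Hecke algebras just before this subsection). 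Different elliptic classes in the same $W_K$ therefore give identical traces on every $\pi_{K,\chi}$, so your triangular matrix has non-invertible diagonal blocks and the argument collapses. The ``only if'' direction genuinely requires a different idea---one must classify all simple $\tilde\CH_0$-modules (or at least show that any $M$ with vanishing non-rigid traces decomposes over the $\pi_{K,\chi}$), not merely separate $R(\tilde\CH_0)_{\rig}$ from its complement via the rigid pairing.
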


In other words, the trace map $Tr: \overline{\tilde H_0} \to R(\tilde \CH_0)$ induces a map $$Tr: \overline{\tilde H_0}^\rig \to R(\tilde \CH_0)_\rig^*.$$ This map is surjective, but not injective. Similar to the finite $0$-Hecke algebra case, the trace map on the rigid cocenter remembers the support $K$ of the element $T_\Sigma$, but ignore the difference between various elliptic conjugacy classes of $W_K$. Thus the above Theorem provides an evidence to the naive kernel conjecture \ref{naive}. 

\smallskip

In \cite{Vig05}, Vign\'eras introduces the Bernstein-Lusztig basis $\{E_w; w \in \tW\}$ of $\tilde \CH_0$. She also gives the definition of the supersingular modules of $\tilde H_0$. By definition, a module $M$ of $\tilde H_0$ is {\it supersingular} if $E_w M=0$ for $w \in \tW$ with $\ell(w) \gg 0$. 

We have the following characterization of supersingular modules. 

\begin{proposition}
Let $M \in R(\tilde \CH_0)$. The following conditions are equivalent:

(1) $M$ is supersingular.

(2) $M \in \oplus_{(K, \chi)/\sim; K \subset \tilde \BS \text{ with } W_K, W_{\tilde \BS-K} \text{ finite}} \BZ \pi_{K, \chi}$.

(3) $Tr(\overline{\tilde H_0}^{nrig}+\iota(\overline{\tilde H_0}^{nrig}), M)=0$, where $\iota$ is the involution on $\tilde \CH_0$ defined in \cite[Corollary 2]{Vig05}. 
\end{proposition}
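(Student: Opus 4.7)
The plan is to establish the equivalences in two stages: first $(1)\iff(2)$ via direct analysis of the modules $\pi_{K,\chi}$, and then $(1)\iff(3)$ by leveraging the preceding proposition together with the involution $\iota$.

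I would begin with $(2)\Rightarrow(1)$. Fix $M=\pi_{K,\chi}=\tilde\CH_0\otimes_{H_{a,0}\rtimes\Omega(K)}\chi$ with both $W_K$ and $W_{\tilde\BS-K}$ finite. Using the Bernstein-Lusztig presentation of $\tilde\CH_0$ together with the explicit generators of the induced module, one obtains a spanning set for $M$ indexed by coset representatives, and the action of $E_w$ can be read off. Finiteness of both $W_K$ and $W_{\tilde\BS-K}$ forces the "directions" in which $w$ can grow without annihilating a given vector to be bounded, so $E_w$ acts by zero once $\ell(w)\gg 0$. Conversely, for $(1)\Rightarrow(2)$: the previous proposition already implies that a supersingular $M$ decomposes into constituents $\pi_{K,\chi}$ with $W_K$ finite. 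If some constituent had $W_{\tilde\BS-K}$ infinite, I would exhibit an explicit sequence $w_n$ of strictly increasing length with $E_{w_n}$ acting nonzero on $\pi_{K,\chi}$, contradicting supersingularity.

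For $(1)\iff(3)$, I would exploit the preceding proposition in conjunction with the involution $\iota$. That proposition gives the kernel of $\mathrm{Tr}|_{\overline{\tilde H_0}^{\nrig}}$ intrinsically: $\mathrm{Tr}(\overline{\tilde H_0}^{\nrig},M)=0$ iff $M\in R(\tilde\CH_0)_{\rig}$, i.e.\ all constituents of $M$ are $\pi_{K,\chi}$ with $W_K$ finite. The additional condition $\mathrm{Tr}(\iota(\overline{\tilde H_0}^{\nrig}),M)=0$ translates, via the adjointness of $\iota$ on the Grothendieck group, to $\iota^\ast M\in R(\tilde\CH_0)_{\rig}$; so (3) is equivalent to both $M$ and $\iota^\ast M$ being rigid. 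The decisive point is to show, using the explicit description of $\iota$ in \cite[Corollary 2]{Vig05}, that $\iota^\ast\pi_{K,\chi}\cong\pi_{K',\chi'}$ for some $K'$ with $W_{K'}$ finite if and only if $W_{\tilde\BS-K}$ is finite. In fact, after suitable twisting of the character, one expects $\iota$ to essentially exchange $K\leftrightarrow\tilde\BS-K$ on the supporting datum. Combining these, (3) collapses to $W_K$ and $W_{\tilde\BS-K}$ both finite, which is (2).

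The main obstacle will be the last step: pinning down the effect of $\iota$ on the modules $\pi_{K,\chi}$. The involution is defined combinatorially via its action on the Bernstein-Lusztig basis $E_w$, and translating this into an isomorphism of induced modules requires careful bookkeeping of how $\iota$ interchanges the roles of positive and negative root subsystems inside $\tW$ and how the character $\chi$ of $\Omega(K)$ transforms. Once this translation is in hand, the chain $(1)\iff(2)\iff(3)$ closes formally from the preceding proposition and the analysis in the first paragraph.
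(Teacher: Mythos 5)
The paper does not actually prove this proposition: it records that the equivalence $(1)\Leftrightarrow(2)$ is due to Vign\'eras \cite{V14-3}, and that the criterion $(3)$, together with a new proof of $(1)\Leftrightarrow(2)$, is given in \cite[Proposition 5.4]{HNx}. Your plan for $(3)\Leftrightarrow(2)$ is essentially the right one and matches the spirit of \cite{HNx}: apply the preceding theorem (vanishing of $Tr(\overline{\tilde H_0}^{\nrig},\cdot)$ iff the class is rigid) to both $M$ and its $\iota$-twist, and compute that twisting $\pi_{K,\chi}$ by $\iota$ (which sends $T_s\mapsto -1-T_s$) replaces the inducing character supported on $K$ by the one supported on $\tilde \BS\smallsetminus K$; note $\Omega(K)=\Omega(\tilde\BS\smallsetminus K)$, so this is coherent. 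Even here two details remain: you must verify the isomorphism of induced modules (not just of inducing characters), and you need a linear-independence statement to pass from ``$M\in R_{\rig}$ and $\iota^*M\in R_{\rig}$'' to ``the coefficients of all $\pi_{K,\chi}$ with $W_{\tilde\BS-K}$ infinite vanish''; the latter can be extracted from the trace pairing with the rigid cocenter but is not automatic.

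The genuine gap is in your treatment of $(1)\Leftrightarrow(2)$, which is where the real content lies. Your $(1)\Rightarrow(2)$ is circular: the preceding theorem gives information about $M$ only after you know that $Tr(\overline{\tilde H_0}^{\nrig},M)=0$ for supersingular $M$, i.e.\ (part of) $(1)\Rightarrow(3)$ — and in your scheme $(3)$ is only reached from $(2)$. Nothing in the proposal connects supersingularity, which is defined through the Bernstein--Lusztig elements $E_w$ for $\ell(w)\gg 0$, with the vanishing of traces of the Iwahori--Matsumoto classes $T_\Sigma$ with nonzero Newton point (or with a decomposition of $M$ into the $\pi_{K,\chi}$); this is precisely the classification-type input of Vign\'eras \cite{V14-3} or the cocenter argument of \cite{HNx}, and it must be proved, not invoked. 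Similarly, your $(2)\Rightarrow(1)$ is a heuristic (``bounded directions of growth'') rather than a computation: the actual argument evaluates the elements $E_w$ on the inducing character $T_s\mapsto -1$ for $s\in K$, $0$ otherwise, and uses both that $K$ meets every irreducible component of the affine diagram ($W_{\tilde\BS-K}$ finite) and omits a vertex in each ($W_K$ finite) to force $E_wM=0$ for $\ell(w)\gg0$; without that computation the implication is not established.
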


Here the equivalence between (1) and (2) is first obtained by Vign\'eras in \cite{V14-3}. The criterion (3) for the supersingular modules is given in \cite[Proposition 5.4]{HNx} and a new proof of the equivalence between (1) and (2) is also given in loc.cit.




\end{document}